\DeclareMathOperator{\Ker}{Ker}
\DeclareMathOperator{\Hom}{Hom}
\DeclareMathOperator{\End}{End}
\DeclareMathOperator{\Gal}{Gal}
\DeclareMathOperator{\Aut}{Aut}
\DeclareMathOperator{\Span}{Span}
\DeclareMathOperator{\Img}{Im}
\DeclareMathOperator{\Inf}{Inf}
\DeclareMathOperator{\mult}{mult}
\DeclareMathOperator{\supp}{supp}
\theoremstyle{plain}
\newtheorem{Theorem}{Theorem}[section]
\newtheorem{Proposition}[Theorem]{Proposition}
\newtheorem{Corollary}[Theorem]{Corollary}
\newtheorem{Lemma}[Theorem]{Lemma}
\theoremstyle{definition}
\newtheorem{Definition}[Theorem]{Definition}
\newtheorem{Remark}[Theorem]{Remark}
\newtheorem{Example}[Theorem]{Example}
\newtheorem{Construction}[Theorem]{Construction}
\newtheorem{Notation}[Theorem]{Notation}
\newcommand{\arr}{\ar@{>>}}
\newcommand{\dotar}{\ar@{.>}}
\newcommand{\dotarr}{\ar@{.>>}}
\newcommand{\id}{\mathrm{id}}
\newcommand{\pr}{\mathrm{pr}}
\newcommand{\N}{\mathbb{N}}
\newcommand{\Z}{\mathbb{Z}}
\newcommand{\E}{\mathcal{E}}
\newcommand{\PP}{\mathcal{P}}
\newcommand{\T}{\mathcal{T}}
\newcommand{\st}{|\,}
\newcommand{\fhat}{\hat{f}}
\newcommand{\zetahat}{\hat{\zeta}}
\newcommand{\Lambdahat}{{\hat{\Lambda}}}
\newcommand{\Ahat}{{\hat{A}}}
\newcommand{\Bhat}{{\hat{B}}}
\newcommand{\Chat}{{\hat{C}}}
\newcommand{\Ghat}{{\hat{G}}}
\newcommand{\Hhat}{{\hat{H}}}
\newcommand{\Bbar}{{\bar{B}}}
\newcommand{\Cbar}{{\bar{C}}}
\newcommand{\Gbar}{{\bar{G}}}
\newcommand{\Hbar}{{\bar{H}}}
\newcommand{\Ibar}{{\bar{I}}}
\newcommand{\Kbar}{{\bar{K}}}
\newcommand{\Lbar}{{\bar{L}}}
\newcommand{\Mbar}{{\bar{M}}}
\newcommand{\Nbar}{{\bar{N}}}
\newcommand{\Ubar}{{\bar{U}}}
\newcommand{\Zbar}{{\bar{Z}}}
\newcommand{\pbar}{{\bar{p}}}
\newcommand{\prbar}{{\overline{\mathrm{pr}}}}
\newcommand{\pibar}{{\bar{\pi}}}
\newcommand{\etabar}{{\bar{\eta}}}
\newcommand{\rhobar}{{\bar{\rho}}}
\newcommand{\epsbar}{{\bar{\eps}}}
\def\phi{\varphi}
\newcommand{\calA}{\mathcal{A}}
\newcommand{\calB}{\mathcal{B}}
\newcommand{\calC}{\mathcal{C}}
\newcommand{\calE}{\mathcal{E}}
\newcommand{\calF}{\mathcal{F}}
\newcommand{\calH}{\mathcal{H}}
\newcommand{\calM}{\mathcal{M}}
\newcommand{\calN}{\mathcal{N}}
\newcommand{\eps}{\varepsilon}
\newcommand{\na}{\mathrm{na}}
\newcommand{\ab}{\mathrm{ab}}
\newcommand{\directsum}{\oplus}
\renewcommand{\iff}{\Leftrightarrow}
\renewcommand{\implies}{\Rightarrow}
\newcommand{\isom}{\cong}
\newcommand{\normal}{\triangleleft}
\newcommand{\shortarrow}{\raisebox{1pt}{$\rightarrow$}}
\newcommand{\pwr}[2]{\,{}^{#1}\kern-1pt {#2}\,}
\newcommand{\fprod}[2][G]{\underset{#2\phantom{\, {#1}}}{{\prod}_{#1}}}
\newcommand*{\defeq}{\mathrel{\vcenter{\baselineskip0.5ex \lineskiplimit0pt
                     \hbox{\scriptsize.}\hbox{\scriptsize.}}}%
                     =}
\newcommand{\PPP}{\PP/\kern-5pt\sim}
\newcommand{\deriv}{\bar\partial}
\newcommand{\onto}{\twoheadrightarrow}
\newcommand{\xonto}[2][]{%
  \xrightarrow[#1]{#2}\mathrel{\mkern-14mu}\rightarrow
}
\newcommand{\mle}{\preccurlyeq}
\def\moverlay{\mathpalette\mov@rlay}
\def\mov@rlay#1#2{\leavevmode\vtop{%
   \baselineskip\z@skip \lineskiplimit-\maxdimen
   \ialign{\hfil$\m@th#1##$\hfil\cr#2\crcr}}}
\newcommand{\charfusion}[3][\mathord]{
    #1{\ifx#1\mathop\vphantom{#2}\fi
        \mathpalette\mov@rlay{#2\cr#3}
      }
    \ifx#1\mathop\expandafter\displaylimits\fi}
\newcommand{\dotcup}{\charfusion[\mathbin]{\cup}{\cdot}}
\newcommand{\bigdotcup}{\charfusion[\mathop]{\bigcup}{\cdot}}
\newcommand{\notetous}[1]{\textbf{[#1]}}
\newcommand{\subdemoinfo}[2]{\smallskip\noindent\textbf{#1:}
\emph{#2}\quad}
\numberwithin{equation}{section}
\begin{document}

\author{Dan Haran}
\title{Fundaments of epimorphisms of profinite groups
}

\maketitle
\thispagestyle{empty}

\begin{abstract}
We propose and develop a theory
that allows to characterize epimorphisms of profinite groups
in terms of indecomposable epimorphisms.
\end{abstract}

\clearpage
\pagenumbering{arabic}

\newpage
\section*{Introduction}

A central theme in Galois theory is
to understand a profinite group through
a series of its distinguished closed normal subgroups
(\cite{Efrat1},
\cite{Efrat2},
\cite{Efrat3},
\cite{Efrat-Minac},
\cite{Minac et al});
one could add to this topic
also the efforts to deduce properties of profinite groups
from their distinguished quotients
(\cite{Minac-Spira}, \cite{Minac-Tan}).
Although the research in this area is far from its peak,
sometimes even a more general and a more ambitious goal is required -
a characterization of an epimorphism
of profinite groups.
(This is more general in the sense that
a profinite group $G$ can be identified with the epimorphism
$G \onto 1$.)
In fact, 
the original motivation for this work
has been to describe 
a smallest embedding cover of a profinite group
and give a group-theoretic proof of its uniqueness
(\cite[Problem~36.2.25]{FJ}).

In this paper we provide a framework for a characterization
of epimorphisms,
the theory of \emph{fundaments}.

Given an epimorphism
$\pi \colon H \onto G$,
its \emph{fundament kernel} $M(\pi)$ 
is the intersection of the maximal normal subgroups of $H$ 
properly contained in $\Ker(\pi)$; 
quotienting by $M(\pi)$ yields the \emph{fundament} $\bar\pi$,
and $\pi$ is called \emph{fundamental} when $M(\pi)=1$.
Iterating this construction produces 
the \emph{fundament kernel series} 
$M_0(\pi)\ge M_1(\pi)\ge M_2(\pi)\ge\cdots$ 
with associated \emph{fundament series} of epimorphisms
$$
\cdots \xonto{\pi_3} G_2 \xonto{\pi_2} G_1 \xonto{\pi_1} G_0 = G,
$$
where $G_i = H/M_i(\pi)$ and 
each $\pi_{k}$ is the fundament of the preceding quotient $\pi_{k-1}$.
The intersection of all $M_i(\pi)$ is trivial,
so $\pi$ is recovered as the inverse limit of its successive fundaments.
Moreover,
two epimorphisms onto $G$
are isomorphic
if and only if
their fundament series are isomorphic.

Thus we are left with the task 
to classify fundamental epimorphisms,
up to isomorphism,
and to decide
when a series of fundamental epimorphisms
as above
is a fundament series of its inverse limit.
This is, essentially, what this paper is about.

It turns out that
fundamental epimorphisms
are fiber products of
(possibly infinitely many, even uncountably many)
indecomposable epimorphisms.
An epimorphism
$\phi \colon H \onto G$
is \emph{indecomposable}
if it is not an isomorphism
and every factorization
$H \xonto{\psi} E \xonto{\rho} G$
forces one of $\psi$ or $\rho$ to be an isomorphism.
Equivalently,
$\Ker \phi$ is 
a minimal normal subgroup of $H$
(and in particular finite).

Obviously,
two fiber products of indecomposable epimorphisms
onto the same group $G$
are isomorphic,
if
the cardinality of factors isomorphic to $\eta$
is the same in both of them,
for every indecomposable epimorphism $\eta$ onto $G$.
The converse is not true:
Let $C_n$ be the cyclic group of order $n$.
Then
$\eta_0 \colon C_2 \times C_2 \onto C_2$
and
$\eta_1 \colon C_4 \onto C_2$
are indecomposable.
The fiber product
of two copies of $\eta_1$
and
the fiber product
of $\eta_0$ and $\eta_1$
are both isomorphic
to the same epimorphism
$C_4 \times C_2 \onto C_2$.

Fortunately,
this problem arises only with 
indecomposable epimorphisms onto $G$
with the same abelian kernel,
which is then a simple $G$-module $A$.
The kernel of a fiber product of such epimorphisms
is the direct product of copies of $A$.
Sections~\ref{duality} develops a duality theory between
isotypic profinite semisimple modules of type $A$
and
vector spaces over the finite field $\End_G(A)$.
Then Section~\ref{second cohomology}
develops a duality theory between
fiber products of
indecomposable epimorphisms onto $G$
with kernel $A$
and
linear maps from these vector spaces into $H^2(G,A)$.
This allows us to classify the former by the latter.
Then,
in Section~\ref{fundaments},
we succeed to characterize fundamental epimorphisms
by invariants introduced there.

Throughout the whole paper
we use extensively
commutative squares
\begin{equation}\label{ex square}
\xymatrix@=20pt{
H 
\arr[rr]_{\theta} \arr[d]^{\pi}
&& H' \arr[d]^{\pi'}
\\
G \arr[rr]^{\theta_0} && G' \\
}
\end{equation}
of epimorphisms.
The key notions,
introduced in Section~\ref{section cartesian}
are \emph{cartesian squares}
---\eqref{ex square} is cartesian
if $H$ is the fiber product of $\theta_0$ and $\pi'$,
or, equivalently,
if $\theta$ maps $\Ker \pi$ bijectively onto $\Ker \pi'$---,
and the \emph{compactness} of cartesian squares,
---cartesian \eqref{ex square} is compact
if there is no proper subgroup $E$ of $H$
such that
$\pi(E) = G$ and $\theta(E) = H'$---, 
which later, in Section~\ref{mfp I},
extends to general fiber products.
We have also found very useful the notion of
a \emph{semi-cartesian} square
---\eqref{ex square} is semi-cartesian
if $\theta$ maps $\Ker \pi$ onto $\Ker \pi'$.

The theory of fundaments
characterizes epimorphisms.
But it
actually achieves more.
Given epimorphisms
$\pi \colon H \onto G$,
$\pi' \colon H' \onto G'$,
and
$\theta_0 \colon G \onto G'$,
we find the exact conditions
on the fundament series of $\pi$ and $\pi'$,
such that there exists an epimorphism
$\theta \colon H \onto H'$
with \eqref{ex square} semi-cartesian.

This generalization is essential
for an application of 
the theory developed here.
In the forthcoming work \cite{embed}
we give
a group-theoretic proof 
of the uniqueness of the smallest embedding cover.
Further applications will include
characterizations of projective and free profinite groups
via fundaments.

\bigskip

We fix the following notation and conventions
throughout the whole paper:

We use the word `family' for indexed family.

We use $\onto$ to denote an epimorphism of profinite groups.

An epimorphism $\eta \colon H \onto G$
is sometimes called
a \textbf{cover} (of $G$)
and sometimes 
an \textbf{extension} (of $G$).
Another epimorphism
$\pi \colon E \onto G$
is \textbf{dominated by $\eta$}
(or \textbf{$\eta$ dominates $\pi$}),
denoted $\pi \mle \eta$,
if there is
$\psi \colon H \onto E$
such that
$\eta = \pi \circ \psi$.
We say that
$\pi, \psi$ are \textbf{isomorphic},
denoted $\pi \isom_G \psi$,
if there is an isomorphism
$\psi \colon H \to E$
such that
$\eta = \pi \circ \psi$.

All groups in this article
are profinite,
and homomorphisms between groups are continuous;
a subgroup of a group is a closed subgroup.

We denote by $\prod_{i \in I} K_i$
the direct product of a family
of groups
$(K_i)_{i \in I}$.

\newpage 

\section{Indecomposable epimorphisms}

\begin{Definition}\label{indecomposable}
An epimorphism $\phi\colon H \onto G$ is
\textbf{indecomposable}
if it is not an isomorphism and
whenever $\psi \colon H \onto E$ and $\rho \colon E \onto G$
satisfy
$\phi = \rho\circ\psi$,
then either $\psi$ or $\rho$ is an isomorphism.
\end{Definition}

Let $H$ be a profinite group.
For $M \normal H$ let 
$\calN_M(H)$ denote the set of
the maximal ones among
all normal subgroups of $H$
strictly contained in $M$.

\begin{Lemma}\label{trivial}
Let $N \le M$ be closed normal subgroups of a profinite group $H$.
The following are equivalent:
\begin{itemize}
\item[(a)]
$N \in \calN_M(H)$;
\item[(b)]
$M/N$ is a minimal normal subgroup of $H/N$
(in particular, $M/N \ne 1$);
\item[(c)]
the canonical map
$H/N \onto H/M$
is indecomposable.
\end{itemize}
If (a), (b), or (c) holds,
then $N$ is open in $M$,
i.e., $M/N$ is a finite group.
\end{Lemma}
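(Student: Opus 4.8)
The plan is to prove (a) $\iff$ (b), then (b) $\iff$ (c), and finally the finiteness assertion, with the profinite hypothesis entering only in the last step.

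For (a) $\iff$ (b) I would invoke the correspondence theorem: the normal subgroups of $H$ containing $N$ are in inclusion-preserving bijection with the normal subgroups of $\Hbar \defeq H/N$ via $N'' \mapsto N''/N$, and this restricts to a bijection between those lying below $M$ and those lying below $M/N$. Now (a) unwinds to the statement that $N \subsetneq M$ and no normal subgroup $N''$ of $H$ satisfies $N \subsetneq N'' \subsetneq M$. A chain $N \subsetneq N'' \subsetneq M$ corresponds exactly to a chain $1 \subsetneq N''/N \subsetneq M/N$ in $\Hbar$, so (a) translates verbatim into ``$M/N \ne 1$ and no normal subgroup $L$ of $\Hbar$ satisfies $1 \subsetneq L \subsetneq M/N$'', which is precisely the assertion that $M/N$ is a minimal normal subgroup of $\Hbar$, i.e.\ (b). This step is purely formal.

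For (b) $\iff$ (c), write $A \defeq M/N$ and let $p \colon \Hbar \onto \Hbar/A = H/M$ be the canonical epimorphism, so $\Ker p = A$. I would set up a correspondence between factorizations of $p$ and normal subgroups $L \le A$ of $\Hbar$: a factorization $\Hbar \xonto{\psi} E \xonto{\rho} \Hbar/A$ with $\rho\circ\psi = p$ yields $L = \Ker \psi \le \Ker p = A$, and conversely each such $L$ gives $\Hbar \onto \Hbar/L \onto \Hbar/A$. Since a continuous surjection of profinite groups is a quotient map, $\psi$ identifies $E$ with $\Hbar/L$; then $\psi$ is an isomorphism iff $L = 1$, while $\rho$ is an isomorphism iff $L = A$. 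Hence $p$ is indecomposable iff $A \ne 1$ and the only normal subgroups of $\Hbar$ contained in $A$ are $1$ and $A$, i.e.\ iff $A$ is a minimal normal subgroup of $\Hbar$, which is (b). (Alternatively one may simply quote the characterization of indecomposability by minimality of the kernel recorded in the introduction.)

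Finally, assuming (b), I prove $A = M/N$ is finite. As $A \ne 1$, choose $a \in A \setminus \{1\}$; since the open normal subgroups of the profinite group $\Hbar$ form a base at the identity, there is an open $U \normal \Hbar$ with $a \notin U$, whence $A \cap U \subsetneq A$. But $A \cap U$ is a normal subgroup of $\Hbar$ contained in $A$, so minimality forces $A \cap U = 1$, and thus $A \isom A/(A \cap U) \isom AU/U \le \Hbar/U$, a finite group because $U$ is open. Therefore $M/N$ is finite and $N$ is open in $M$. I expect the first two equivalences to be bookkeeping via the correspondence theorem and the definition of indecomposability; the only place where the profinite hypothesis does genuine work is this finiteness, resting on the standard fact that a minimal closed normal subgroup meets some open normal subgroup trivially and so embeds into a finite quotient. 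That short step is the crux, and the one I would write most carefully.
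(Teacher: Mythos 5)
Your proposal is correct and takes essentially the same route as the paper: the correspondence theorem handles (a) $\iff$ (b), factorizations of the canonical epimorphism are matched with intermediate normal subgroups $N \le L \le M$ (the paper phrases this as (a) $\iff$ (c) while you go through (b) $\iff$ (c), an immaterial rerouting, and your quotient-map justification makes explicit the paper's ``without loss of generality''), and finiteness is obtained in both cases by intersecting $M/N$ with an open normal subgroup of $H/N$ and invoking minimality to force trivial intersection. No gaps.
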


\begin{proof}
Indeed,
\begin{align*}
(a) \quad
&\iff \quad
N < M \quad \quad \& \quad
(\not\exists L \normal H)\ N < L < M 
\\
&\iff \quad
M/N \ne 1 \quad \& \quad
(\not\exists \Lbar \normal H/N)\ 1 < \Lbar < M/N 
\quad \iff  \quad (b).
\end{align*}

(a) $\implies$ (c):
If $H/N \onto H/M$ is a composition of two epimorphisms,
then, without loss of generality,
they are the canonical maps
$H/N \onto H/L$ and $H/L \onto H/M$,
where $L \normal H$ and $N \le L \le M$.
By (a), either $N = L$, in which case $H/N \onto H/L$ is an isomorphism,
or $L = M$, in which case $H/L \onto H/M$ is an isomorphism.

(c) $\implies$ (a):
Let $L \normal H$ with $N \le L \le M$.
Then $H/N \onto H/M$ is the composition of 
$H/N \onto H/L$ and $H/L \onto H/M$.
If the former is an isomorphism, then $N = L$;
if the latter is an isomorphism, then $L = M$.
This gives (a).

Assume (b) and put $\Hbar = H/N$ and $\Mbar = M/N$.
As $\Mbar \ne 1$,
there is an open $\Ubar \normal \Hbar$ such that
$\Ubar \cap \Mbar < \Mbar$.
As $\Ubar \cap \Mbar \normal \Hbar$,
by (b) we have $\Ubar \cap \Mbar = 1$.
Thus $1$ is open in $\Mbar$,
that is, $\Mbar$ is finite.
\end{proof}

\begin{Remark}\label{rem: minimal normal subgroup}
Let
$\phi\colon H \onto G$
be an epimorphism.
It follows from Lemma~\ref{trivial}
(with $N = 1$) that:

(a)
$\phi$ is indecomposable
if and only if
$\Ker \phi$ is a minimal normal subgroup of $H$;

(b)
If $\phi$ is indecomposable,
then $\Ker \phi$ is finite.

(c)
A minimal normal subgroup of $H$ is finite.

\noindent
Furthermore

(d)
If $1 \ne M \normal H$, then
$\calN_M(H) \ne \emptyset$.

Indeed,
as $M \ne 1$,
by \cite[Remark 1.2.1(c)]{FJ}
there is an open $U \normal H$ such that
$N_0 \defeq M \cap U < M$.
Thus,
$N_0 \normal H$ is a proper subgroup of $M$.
Since $N_0$ is open in $M$,
there are only finitely many $N \normal H$
with $N_0 \le N < M$.
A maximal one among them is in $\calN_M(H)$.

(e)
However,
by (c),
non-trivial torsion free profinite groups
do not have minimal normal subgroups.
\end{Remark}

\begin{Lemma}\label{image and preimage}
Let $\pi \colon H \onto \Hbar$ be an epimorphism.
Let $M \normal H$ and $\Mbar = \pi(M)$.
\begin{enumerate}
\item[(a)]
If $M$ is a minimal normal subgroup of $H$
and $\Mbar \neq 1$,
then $\Mbar$ is a minimal normal subgroup of $\Hbar$
and
$\pi|_{M} \colon M \onto \Mbar$ is an isomorphism.
\item[(b)]
Suppose that $M \cap \Ker \pi = 1$.
If $\Mbar$ is a minimal normal subgroup of $\Hbar$,
then $M$ is a minimal normal subgroup of $H$.
\item[(c)]
Let $\Nbar \in \calN_\Mbar(\Hbar)$.
Then $N \defeq \pi^{-1}(\Nbar) \cap M$
is in $\calN_M(H)$ and $\pi(N) = \Nbar$.
\end{enumerate}
\end{Lemma}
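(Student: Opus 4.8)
All three parts concern how $\pi$ behaves on $M$ and on the normal subgroups lying below $M$, so the unifying idea is to analyse the restriction $\pi|_M$ and the correspondence of normal subgroups it induces. I would therefore organise the argument in the order (a), (b), (c), using the earlier parts inside the later ones.

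For (a), I would first examine $M \cap \Ker \pi$. It is a normal subgroup of $H$ contained in $M$, so minimality of $M$ forces it to equal $1$ or $M$; the value $M$ would give $\Mbar = \pi(M) = 1$, against the hypothesis, so $M \cap \Ker\pi = 1$ and $\pi|_M \colon M \to \Mbar$ is a continuous bijective homomorphism, hence an isomorphism of profinite groups. For the minimality of $\Mbar$ I would take an arbitrary $\bar L \normal \Hbar$ with $\bar L \le \Mbar$ and pull it back to $L = \pi^{-1}(\bar L) \cap M$, which is normal in $H$ as an intersection of two normal subgroups and satisfies $L \le M$. Minimality of $M$ then gives $L \in \{1, M\}$, hence $\bar L = \pi(L) \in \{1, \Mbar\}$; as $\Mbar \ne 1$, this is exactly the minimality of $\Mbar$.

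Part (b) reuses the same restriction map. The hypothesis $M \cap \Ker\pi = 1$ makes $\pi|_M \colon M \to \Mbar$ an isomorphism, so $M \ne 1$ because $\Mbar \ne 1$. To prove minimality I would take $1 \ne L \normal H$ with $L \le M$; then $L \cap \Ker\pi \le M \cap \Ker\pi = 1$, so $\pi|_L$ is injective and $\bar L \defeq \pi(L)$ is a nontrivial normal subgroup of $\Hbar$ contained in $\Mbar$. Minimality of $\Mbar$ yields $\bar L = \Mbar = \pi(M)$, and injectivity of $\pi|_M$ together with $L \le M$ upgrades this to $L = M$. For (c) I would separate the two assertions. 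That $\pi(N) = \Nbar$ is direct: $\pi(N) \subseteq \Nbar \cap \Mbar = \Nbar$, while each $\bar n \in \Nbar \le \Mbar = \pi(M)$ lifts to some $m \in M \cap \pi^{-1}(\Nbar) = N$, giving the reverse inclusion; since $\Nbar < \Mbar$ this also shows $N < M$. For $N \in \calN_M(H)$ the cleanest route passes to the quotient: by Lemma~\ref{trivial}, $\Nbar \in \calN_\Mbar(\Hbar)$ says $\Mbar/\Nbar$ is minimal normal in $\Hbar/\Nbar$, and $\pi$ induces an epimorphism $\bar\pi \colon H/N \onto \Hbar/\Nbar$ (well defined since $\pi(N) = \Nbar$) carrying $M/N$ onto $\Mbar/\Nbar$. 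The decisive computation is $(M/N) \cap \Ker\bar\pi = (M \cap \pi^{-1}(\Nbar))/N = N/N = 1$, which holds precisely because $N = \pi^{-1}(\Nbar) \cap M$. Part (b), applied to $\bar\pi$ with $M/N$ in the role of $M$, then gives that $M/N$ is minimal normal in $H/N$, and Lemma~\ref{trivial} converts this back to $N \in \calN_M(H)$.

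The main obstacle is the maximality claim in (c). A direct approach would set up an inclusion-preserving bijection between the normal subgroups of $H$ in the interval $[N,M]$ and those of $\Hbar$ in $[\Nbar,\Mbar]$, but recovering $L$ from $\pi(L)$ requires the containment $M \cap \Ker\pi \le N$. Routing through the quotient $\bar\pi \colon H/N \onto \Hbar/\Nbar$ compresses exactly this verification into the single identity $(M/N)\cap\Ker\bar\pi = 1$ and lets me reuse part (b) essentially verbatim, which is why I would prove the parts in the stated order.
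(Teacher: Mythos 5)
Your proposal is correct, and for parts (a) and (b) it coincides with the paper's proof (the paper also pulls back $\Lbar \le \Mbar$ to $L = \rho^{-1}(\Lbar) \normal H$ with $\rho = \pi|_M$, and pushes forward $L \le M$ using injectivity of $\rho$); the only cosmetic difference is that you establish $M \cap \Ker\pi = 1$ separately first, while the paper extracts injectivity of $\rho$ from the same pullback computation. In part (c) your route genuinely differs in organization: the paper stays in $H$ and argues maximality directly, observing that $N = \rho^{-1}(\Nbar)$ contains $\Ker\rho$, so any intermediate $L \normal H$ with $N < L < M$ would give $\Nbar < \rho(L) < \Mbar$ (strictness preserved because $\Ker\rho \le N \le L$), contradicting $\Nbar \in \calN_\Mbar(\Hbar)$. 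You instead pass to the quotient $\bar\pi \colon H/N \onto \Hbar/\Nbar$, verify $(M/N) \cap \Ker\bar\pi = 1$, and reduce to part (b) together with the equivalence (a) $\iff$ (b) of Lemma~\ref{trivial}. The two arguments rest on the same pivotal identity --- your computation $(M \cap \pi^{-1}(\Nbar))/N = 1$ is exactly the paper's $\Ker\rho \le N$ read in the quotient --- but yours buys reuse of already-proved statements at the cost of introducing the quotient, whereas the paper's is shorter and self-contained within the subgroup lattice of $H$. Both are complete; in particular your verification that $\pi(N) = \Nbar$ (lifting elements of $\Nbar$ through $\Mbar = \pi(M)$ into $N$) matches the paper's observation that $N = \rho^{-1}(\Nbar)$, so no gap remains.
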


\begin{proof}
Let $\rho \colon M \onto \Mbar$ be the restriction of $\pi$.

(a)
Let $1 \le \Lbar < \Mbar$ be a normal subgroup of $\Hbar$.
Then $\pi^{-1}(\Lbar) \normal H$,
hence $L \defeq \pi^{-1}(\Lbar) \cap M \normal H$.
But $L = \rho^{-1}(\Lbar)$.
Hence $L < M$.
As $M$ is minimal and $1 \le L < M$, we have $L = 1$,
hence $\rho$ is injective and
$\Lbar = \rho(L) = 1$.
Thus $\Mbar$ is minimal.

(b)
Let $1 < L \leq M$ be a normal subgroup of $H$.
Then $\Lbar =\pi(L)$ is normal in $\Hbar$ and $\Lbar \leq \Mbar$.
By assumption, $\rho \colon M \onto \Mbar$ is an isomorphism,
hence $\Lbar = \rho(L)$ is not trivial.
As $\Mbar$ is minimal, we have $\Lbar = \Mbar$.
Apply $\rho^{-1}$ to get $L = M$.

(c)
Let $N_0 = \pi^{-1}(\Nbar)$.
Then $N_0 \normal H$,
hence $N = N_0 \cap M \normal H$.
But $N = \rho^{-1}(\Nbar)$,
hence $\pi(N) = \rho(N) = \Nbar < \Mbar$,
whence $N < M$,
and $N$ is open in $M$,
because $\Nbar$ is open in $\Mbar$.
Thus,
$N \normal H$ is a proper subgroup of $M$.
If there were $L \normal H$ such that $N < L < M$,
then $\Lbar \defeq \rho(L) = \pi(L) \normal \Hbar$
and $\Nbar < \Lbar < \Mbar$, 
because $\Ker \rho \le N$,
a contradiction.
Thus $N \in \calN_M(H)$.
\end{proof}

\begin{Remark}\label{product of simples}
Let $M \ne 1$ be a minimal normal subgroup of a profinite group $H$.
By Remark~\ref{rem: minimal normal subgroup}(c)
there is an open $U \normal H$ such that $U \cap M = 1$.
Then $\Hbar = H/U$ is a finite group.
By Lemma~\ref{image and preimage}(a),
the image $\Mbar$ of $M$ in $\Hbar$
is a minimal normal subgroup of $\Hbar$.
Therefore (\cite[Theorem 4.3A]{DM})
$\Mbar$ is the direct product of finite simple groups
conjugate to each other in $\Hbar$.
As the quotient map $H \onto \Hbar$ restricts to an isomorphism
$M \to \Mbar$,\ 
$M$ is the direct product of finite simple groups
conjugate to each other in $H$.
%
\end{Remark}

Let $G$ be a profinite group.
A finite $G$-module $A \ne 0$
is \textbf{simple}
if $A$ has no submodule $B$
with $0 \subsetneqq B \subsetneqq A$.

Let
$\phi\colon H \onto G$
be an epimorphism of profinite groups
and let $K$ be its kernel.
If $K$ is abelian,
it is customary to view $K$ as a $G$-module,
with $G$-action induced from the conjugation in $H$;
if $L \le K$,
then $L$ is a $G$-submodule of $K$
if and only if
$L$ is a normal subgroup of $H$ contained in $K$.
One can generalize this to a non-abelian $K$
(but abelian $L$):

\begin{Lemma}\label{action}
Let
$\phi\colon H \onto G$
be an epimorphism of profinite groups
and let 
$L$ be a normal subgroup of $H$ contained in $Z(\Ker \phi)$.
Then
\begin{itemize}
\item[(a)]
The restriction of the conjugation in $H$ to $L$
defines a homomorphism
$\hat\rho \colon H \to \Aut(L)$
such that
$\hat\rho(\Ker \phi) = 1$.
Hence,
$\hat\rho$ induces a homomorphism
$\rho \colon G \to \Aut(L)$,
which turns $L$ into a $G$-module.
The action is given by
${}^g x = h x h^{-1}$,
where $g \in G$, $x \in L$, and
$h$ is an arbitrary element of $H$ such that $\phi(h) = g$.
\item[(b)]
$A \le Z(\Ker \phi)$
is a minimal normal subgroup of $H$
if and only if
$A$ is a simple $G$-module.
If this is the case,
then $A$ is finite.
\item[(c)]
If $A = \Ker \phi$ is abelian,
then
$\phi$ is indecomposable
if and only if
$A$ is a simple $G$-module.
\item[(d)]
Let
$\phi'\colon H' \onto G$
and
$\eps \colon H \onto H'$ 
be epimorphisms
such that
$\phi' \circ \eps = \phi$.
Then
$G$ acts on $\eps(L)$ as a subgroup of $Z(\Ker \phi')$
and
$\eps|_{L} \colon L \onto \eps(L)$
is $G$-equivariant.
\item[(e)]
Suppose there is a commutative diagram
\begin{equation*}
\xymatrix{
H \arr[r]_{\eps} \arr[d]^{\phi} & H' \arr[d]^{\phi'} \\
G \arr[r]^{\pi} & G' \rlap{.}
}
\end{equation*}
such that
$\eps(\Ker \phi) = \Ker \phi'$.
Then
$G'$ acts on $\eps(L)$ as a subgroup of $Z(\Ker \phi')$
via $\phi'$, as in (a),
so that
${}^{\pi(g)}(\eps(x)) = \eps({}^g x)$,
for all $x \in L$ and $g \in G$.
\end{itemize}
\end{Lemma}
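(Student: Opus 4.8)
The plan is to handle the five parts in order, because (b) and (c) rest on (a), and (d), (e) just re-run the mechanism of (a) for the image group. The single fact used everywhere is that $L \le Z(\Ker \phi)$ forces $L$ to be abelian and to commute elementwise with all of $\Ker \phi$.

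For (a) I would set $\hat\rho(h)$ to be the conjugation $x \mapsto hxh^{-1}$; since $L \normal H$ this is an automorphism of $L$, and continuity of the group operations makes $\hat\rho \colon H \to \Aut(L)$ a continuous homomorphism. The crux is that for $h \in \Ker \phi$ and $x \in L$ centrality of $L$ in $\Ker \phi$ gives $hxh^{-1} = x$, i.e. $\hat\rho(\Ker \phi)=1$; hence $\hat\rho$ factors through $H/\Ker \phi \isom G$ to yield $\rho \colon G \to \Aut(L)$, and the displayed formula ${}^g x = hxh^{-1}$ for any $h$ with $\phi(h)=g$ is precisely the induced action, well defined because the choice of $h$ is immaterial modulo $\Ker \phi$.

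For (b) I would invoke the dictionary furnished by (a): for a subgroup $B$ of $A$ (where $A \le Z(\Ker \phi)$ is normal in $H$, so that (a) makes $A$ a $G$-module), $B$ is a $G$-submodule if and only if $B \normal H$, since conjugation in $H$ acts on $A$ through $\rho\circ\phi$. Thus $A$ has no $H$-normal subgroup strictly between $1$ and $A$ exactly when it has no submodule strictly between $0$ and $A$, which is the asserted equivalence. Finiteness then follows from Remark~\ref{rem: minimal normal subgroup}(c), a minimal normal subgroup of a profinite group being finite (and a simple $G$-module is finite by definition), so either condition forces $A$ finite. Part (c) is immediate: when $A=\Ker \phi$ is abelian we have $Z(\Ker \phi)=A$, so Remark~\ref{rem: minimal normal subgroup}(a) identifies indecomposability of $\phi$ with $A$ being a minimal normal subgroup of $H$, which by (b) is simplicity of $A$.

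For (d) and (e) the only step needing an argument is $\eps(L) \le Z(\Ker \phi')$; everything else is formal. Normality $\eps(L)\normal H'$ is automatic from surjectivity of $\eps$, and $\eps(L)\le\Ker \phi'$ follows from $\phi'\circ\eps=\phi$ in (d) and from $\eps(\Ker \phi)=\Ker \phi'$ in (e). To see that $\eps(L)$ centralizes $\Ker \phi'$, I would write an arbitrary $y\in\Ker \phi'$ as $y=\eps(h)$ with $h\in\Ker \phi$ --- possible because $\eps$ is onto with $\phi'(\eps(h))=\phi(h)$ in (d), and by the hypothesis $\eps(\Ker \phi)=\Ker \phi'$ in (e) --- and then for $x\in L$ compute $\eps(x)y=\eps(xh)=\eps(hx)=y\eps(x)$, using $xh=hx$ from $x\in Z(\Ker \phi)$. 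Once this holds, part (a) applies to $\phi'$ and the subgroup $\eps(L)$, producing the $G$- (resp.\ $G'$-) action. Equivariance is then a one-line verification: choosing $h$ with $\phi(h)=g$, the element $\eps(h)$ satisfies $\phi'(\eps(h))=\phi(h)=g$ in (d) and $\phi'(\eps(h))=\pi(\phi(h))=\pi(g)$ by commutativity of the square in (e), so ${}^{g}(\eps(x))=\eps(h)\eps(x)\eps(h)^{-1}=\eps(hxh^{-1})=\eps({}^g x)$, with $\pi(g)$ in place of $g$ for (e). I expect no genuine obstacle here; the only care needed is tracking which preimage realizes the action and confirming that surjectivity of $\eps$ (resp.\ the kernel hypothesis) is actually what lets one pull kernel elements back into $H$.
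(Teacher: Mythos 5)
Your proof is correct and follows essentially the same route as the paper: the one-line observation $[L,\Ker\phi]=1$ for (a), the dictionary between $G$-submodules of $A$ and normal subgroups of $H$ for (b), Remark~\ref{rem: minimal normal subgroup}(a) for (c), and the computation $\eps(x)\eps(h)=\eps(xh)=\eps(hx)=\eps(h)\eps(x)$ together with $\phi'(\eps(h))=\pi(g)$ for the last two parts. The only cosmetic difference is that the paper proves (e) first and obtains (d) as the special case $G'=G$, $\pi=\id_G$ (noting $\eps(\Ker\phi)=\Ker\phi'$ there), whereas you run the identical argument for (d) and (e) in parallel.
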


\begin{proof}
Let $K = \Ker \phi$.

(a)
As $L \le Z(K)$, we have $[L,K] = 1$, and hence
$\hat\rho(K) = 1$.

(b)
Obviously,
$A \le Z(K)$
is a $G$-module
if and only if
$A$ is a normal subgroup of $H$.
Applying the attribute "minimal"
to both notions 
we get the equivalence.
The  finiteness follows from
Remark~\ref{rem: minimal normal subgroup}(c).

(c)
This follows from (b) by
Remark~\ref{rem: minimal normal subgroup}(a).

(e)
Let $K' = \Ker \phi'$.
We have
$\eps(K) = K'$,
hence
$\eps(L)$ is a normal subgroup of $H'$
contained in $Z(K')$.
So $G'$ acts on it, by (a).
Let 
$g \in G$ and $x \in L$, and choose
$h \in H$ such that $\phi(h) = g$.
Then $\phi'(\eps(h)) = \pi(g)$,
hence
${}^{\pi(g)}(\eps(x)) = \eps(h) \eps(x) (\eps(h))^{-1} = \eps(hxh^{-1})
= \eps({}^g x)$.

(d)
This follows from (e) with $G' = G$ and $\pi = \id_G$.
Notice that
$\eps(K) = \Ker \phi'$.
\end{proof}

\section{Cartesian squares}\label{section cartesian}

\begin{Definition}\label{cartesian}
Consider a commutative diagram
\begin{equation}\label{cartesian square diagram}
\xymatrix{
H \arr[r]_\eta \arr[d]^\beta & G \arr[d]^\phi \\
B \arr[r]^\alpha & A \\
}
\end{equation}
Let $p = \alpha \circ \beta = \phi \circ \eta$
and put
$K_1 = \Ker \beta$,
$K_2 = \Ker \eta$,
and
$K = \Ker p$.
We call \eqref{cartesian square diagram}
a \textbf{cartesian square}
(\cite[Definition 25.2.2]{FJ})
if it satisfies the following equivalent conditions:
\begin{itemize}
\item[(a)]
Up to an isomorphism,
$H$ is the fiber product
$B \times_{A} G$ of $\alpha$ and $\phi$,
with coordinate projections $\beta$ and $\eta$.
\item[(b)]
Given homomorphisms
$\beta' \colon E \to B$
and
$\eta' \colon E \to G$
such that
$\alpha \circ \beta' = \phi \circ \eta'$,
there is a unique homomorphism
$\eps \colon E \to H$
such that
$\beta' = \beta \circ \eps$
and 
$\eta' = \eta \circ \eps$.
\item[(c)]
$K = K_1 \times K_2$,
that is,
$K_1 \cap K_2 = 1$
and 
$K = K_1 K_2$.
\item[(d)]
$\beta$ maps $\Ker \eta$ isomorphically onto $\Ker \alpha$.
\item[(e)]
For all $b \in B$, $g \in G$ 
such that
$\alpha(b) = \phi(g)$
there is a unique $h \in H$
such that
$\beta(h) = b$ and $\eta(h) = g$.
\end{itemize}
The equivalence of the conditions is
\cite[Proposition 25.2.1,
Lemma 25.2.4,
and
Lemma 25.2.5]{FJ},
except for
(e) $\implies$ (c),
which is trivial,
and
\newline\noindent
(d) $\implies$ (c):
As $\beta|_{K_2}$ is injective,
$K_1 \cap K_2 = 1$.
As $\beta(K_2) = \Ker \alpha$,
we have
$K = \Ker(\alpha \circ \beta) = \beta^{-1}(\Ker \alpha) = 
(\Ker \beta) K_2 = K_1 K_2$.
\end{Definition}

 \medskip

\begin{Corollary}\label{cartesian indecomposable}
In a cartesian square \eqref{cartesian square diagram}
$\alpha$ is indecomposable
if and only if
$\eta$ is indecomposable.
\end{Corollary}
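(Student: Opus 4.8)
The plan is to reduce indecomposability to the minimality of kernels and then to transport that minimality across the map $\beta$ using the cartesian conditions. By Remark~\ref{rem: minimal normal subgroup}(a), $\eta$ is indecomposable precisely when $K_2 = \Ker\eta$ is a minimal normal subgroup of $H$, and $\alpha$ is indecomposable precisely when $\Ker\alpha$ is a minimal normal subgroup of $B$. So it suffices to show that $K_2$ is minimal normal in $H$ if and only if $\beta(K_2)=\Ker\alpha$ is minimal normal in $B$.

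The two facts I would extract from the cartesian square are exactly conditions (c) and (d) of Definition~\ref{cartesian}. Condition (d) says that $\beta$ restricts to an isomorphism $\beta|_{K_2}\colon K_2 \to \Ker\alpha$; in particular $\beta(K_2)=\Ker\alpha$ and $K_2 \ne 1 \iff \Ker\alpha \ne 1$. Condition (c) gives $K_1 \cap K_2 = 1$, that is, $K_2 \cap \Ker\beta = 1$. These are precisely the hypotheses needed to apply Lemma~\ref{image and preimage} to the epimorphism $\beta\colon H \onto B$ with $M = K_2$ and $\Mbar = \beta(K_2) = \Ker\alpha$.

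For the forward implication I would assume $\eta$ indecomposable, so that $K_2$ is minimal normal in $H$; then $K_2 \ne 1$, whence $\Ker\alpha = \beta(K_2) \ne 1$, and Lemma~\ref{image and preimage}(a) yields that $\Ker\alpha$ is minimal normal in $B$, i.e.\ $\alpha$ is indecomposable. For the converse I would assume $\alpha$ indecomposable, so that $\Ker\alpha$ is minimal normal in $B$; since $K_2 \cap \Ker\beta = 1$ by (c) and $\Mbar = \Ker\alpha$ by (d), Lemma~\ref{image and preimage}(b) applied to $\beta$ with $M = K_2$ shows that $K_2$ is minimal normal in $H$, i.e.\ $\eta$ is indecomposable.

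There is no serious obstacle here; the only point to watch is matching kernels correctly — namely, verifying that the image $\beta(K_2)$ is genuinely equal to $\Ker\alpha$ (which is exactly cartesian condition (d), not merely an inclusion) and that the transversality $K_2 \cap \Ker\beta = 1$ holds (cartesian condition (c)). Once these identifications are in place, the two parts of Lemma~\ref{image and preimage} do all the work, and the ``not an isomorphism'' clause in the definition of indecomposability is automatic, since a minimal normal subgroup is by definition nontrivial.
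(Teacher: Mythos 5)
Your proof is correct and follows exactly the paper's own route: the paper proves this corollary by citing Lemma~\ref{image and preimage}(a),(b) together with Remark~\ref{rem: minimal normal subgroup}(a), and your argument simply spells out the verification of those lemmas' hypotheses via conditions (c) and (d) of Definition~\ref{cartesian}. Nothing is missing; you have just made explicit what the paper leaves implicit.
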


\begin{proof}
This follows from
(a) and (b) of
Lemma~\ref{image and preimage}
by Remark~\ref{rem: minimal normal subgroup}(a).
\end{proof}

\begin{Lemma}\label{lattice}
Let \eqref{cartesian square diagram} be a cartesian square.
Then $\beta$ maps
the lattice of normal subgroups of $H$ contained in $\Ker \eta$
isomorphically onto
the lattice of normal subgroups of $B$ contained in $\Ker \alpha$.
\end{Lemma}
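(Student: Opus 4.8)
The plan is to extract everything from cartesian condition (d) in Definition~\ref{cartesian}, which says that $\beta$ restricts to an isomorphism $\beta|_{K_2} \colon K_2 \to \Ker\alpha$, where $K_2 = \Ker\eta$. This single fact already produces an order-preserving bijection between \emph{all} subgroups of $H$ contained in $K_2$ and all subgroups of $B$ contained in $\Ker\alpha$, namely $L \mapsto \beta(L)$, with inverse $L' \mapsto \beta^{-1}(L') \cap K_2 = (\beta|_{K_2})^{-1}(L')$. Since $\beta|_{K_2}$ is a (topological) group isomorphism, both maps preserve inclusion and closedness, so this is an isomorphism of subgroup lattices. Consequently the whole lemma reduces to showing that the two \emph{normality} conditions match up under this bijection: for $L \le K_2$, one has $L \normal H$ if and only if $\beta(L) \normal B$.

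First I would dispatch the easy implication. If $L \normal H$ with $L \le K_2$, then surjectivity of $\beta$ gives $\beta(L) \normal \beta(H) = B$, and $\beta(L) \le \beta(K_2) = \Ker\alpha$. This uses nothing beyond $\beta$ being an epimorphism.

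The converse is where the cartesian hypothesis does the actual work. Suppose $L \le K_2$ and $L' \defeq \beta(L) \normal B$; I must show $L \normal H$. Fix $h \in H$ and $x \in L$. Because $K_2 \normal H$, the conjugate $hxh^{-1}$ still lies in $K_2$; and since $\beta|_{K_2}$ is injective with $L = \beta^{-1}(L') \cap K_2$, the membership $hxh^{-1} \in L$ is \emph{equivalent} to $\beta(hxh^{-1}) \in L'$. But $\beta(hxh^{-1}) = \beta(h)\,\beta(x)\,\beta(h)^{-1}$, and this lies in $L'$ because $\beta(x) \in L'$ and $L' \normal B$. Hence $hxh^{-1} \in L$, proving $L \normal H$. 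Combining the two implications shows that $L \mapsto \beta(L)$ carries the stated lattice of normal subgroups of $H$ onto the stated lattice of normal subgroups of $B$, and being an inclusion-preserving bijection with inclusion-preserving inverse, it is a lattice isomorphism.

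The only (modest) obstacle is making sure the normality transfer in the converse genuinely invokes the injectivity of $\beta$ on $K_2$: without it one could conclude only that $hxh^{-1} \in \beta^{-1}(L')$, which is weaker than $hxh^{-1}\in L$. It is precisely condition (d) that collapses this gap, since it forces $L$ to be the \emph{full} preimage of $L'$ inside $K_2$. Everything else is formal once $\beta|_{K_2}$ is recognized as a group isomorphism.
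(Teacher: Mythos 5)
Your proof is correct and follows essentially the same route as the paper: both arguments rest on condition (d) of Definition~\ref{cartesian}, namely that $\beta$ restricts to an isomorphism $\Ker\eta \to \Ker\alpha$, with the forward normality transfer coming from surjectivity of $\beta$ and the converse from identifying $L$ as the full preimage $\beta^{-1}(\beta(L)) \cap \Ker\eta$. The only cosmetic difference is that the paper concludes the converse by observing that $\beta^{-1}(L') \cap \Ker\eta$ is an intersection of two normal subgroups of $H$, where you carry out the equivalent elementwise conjugation computation.
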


\begin{proof}
The restriction $\delta \colon \Ker \eta \to \Ker \alpha$
of $\beta$
is an isomorphism, so it maps
the lattice of subgroups of $\Ker \eta$
isomorphically onto
the lattice of subgroups of $\Ker \alpha$.
If $L \le \Ker \eta$ is normal in $H$,
then $\delta(L) = \beta(L)$ is normal in $B$.
Conversely, if $L \le \Ker \alpha$ is normal in $B$,
then $\delta^{-1}(L) = \beta^{-1}(L) \cap \Ker \eta$ is normal in $H$.
\end{proof}

We will need also a weakening of the cartesian property:

\begin{Definition}\label{semi-cartesian}
A commutative diagram
\eqref{cartesian square diagram}
is a \textbf{semi-cartesian square}
if it satisfies the following equivalent properties:
\begin{itemize}
\item[(a)]
$\Ker(\phi \circ \eta) = (\Ker \beta) (\Ker(\phi)$.
\item[(b)]
$\beta(\Ker \eta) = \Ker \alpha$.
\item[(c)]
$\eta(\Ker \beta) = \Ker \phi$.
\item[(d)]
(Push-out)
Given homomorphisms
$\alpha' \colon B \to A'$
and
$\phi' \colon G \to A'$
such that
$\alpha' \circ \beta = \phi' \circ \eta$,
there is a unique homomorphism
$\lambda \colon A \to A'$
such that
$\alpha' = \lambda \circ \alpha$
and 
$\phi' = \lambda \circ \phi$.
\begin{equation}\label{push-out}
\xymatrix@=9pt{
H \arr[rr]_\eta \arr[dd]^{\beta} && G \arr[dd]_{\phi} \ar@/^1pc/[rddd]^{\phi'}
\\
\\
B \arr[rr]^\alpha \ar@/_1pc/[rrrd]_{\alpha'}
&& A \dotar[rd]^(.4){\lambda} \\
&&& A'
}
\end{equation}
\item[(e)]
If
epimorphisms
$\alpha'$,
$\phi'$,
and
$\pi$
make the following diagram commute
\begin{equation}\label{push-in}
\xymatrix@=10pt{
H \arr[dd]_{\beta} \arr[rr]^{\eta} 
&& G \arr[dd]^{\phi}
\labelmargin-{3pt}
\arr[ld]_{\phi'} 
\labelmargin+{3pt}
\\
& A' \arr[rd]^(.40){\pi}
\\
\labelmargin-{3pt}
B  \arr[ru]^{\alpha'}
\labelmargin+{3pt}
\arr[rr]_{\alpha} && A
}
\end{equation}
then $\pi$ is an isomorphism.
\item[(f)]
For all $b \in B$, $g \in G$ 
such that
$\alpha(b) = \phi(g)$
there is $h \in H$
such that
$\beta(h) = b$ and $\eta(h) = g$.
\end{itemize}
\end{Definition}

\begin{proof}[Proof of the equivalence of the properties]
Put $p = \phi \circ \eta = \alpha \circ \beta$.
We may assume that
$B = H/K_1$,
$G = H/K_2$,
$A = H/K$,
where $K_1, K_2 \le K$; \
$\beta$, $\eta$, and $p \colon H \onto H/K$
are the quotient maps;
and
$\alpha, \phi$
are the maps induced from
$\eta, \beta$,
respectively.

(a) $\iff$ (b):
$\beta(\Ker \eta) = \beta(K_2) = K_1K_2/K_1$
and
$\Ker \alpha = K/K_1$.
This gives the equivalence.

(a) $\iff$ (c):
$\eta(\Ker \beta) = \eta(K_1) = K_1K_2/K_2$
and
$\Ker \phi = K/K_1$.
This gives the equivalence.

(a) $\implies$ (d):
The uniqueness of $\lambda$ follows already from 
$\lambda \circ \phi = \phi'$.
Let 
$p' = \alpha' \circ \beta = \phi' \circ \eta$.
We have $K_1, K_2 \le \Ker p'$,
hence $\Ker p = K_1 K_2 \le \Ker p'$.
So
there is $\lambda \colon A \to A'$
such that
$\lambda \circ p = p'$.
Thus
$\lambda \circ \alpha \circ \beta = \alpha' \circ \beta$,
whence
$\lambda \circ \alpha = \alpha'$.
Similarly
$\lambda \circ \phi = \phi'$.

(d) $\implies$ (a):
We have shown in `(a) $\implies$ (d)'
that if we replace 
$\alpha$, $\phi$
by
$H/K_1 \to H/K_1K_2$, $H/K_2 \to H/K_1K_2$, 
then the square has the universal property (d).
This universal property implies 
the uniqueness of
$\alpha$, $\phi$ in \eqref{cartesian square diagram},
up to an isomorphism.
Hence $K_1K_2 = K$.

(a) $\implies$ (e):
We may assume that
$A' = H/L$,
where $K_1,K_2 \le L \le K$,
and
$\alpha', \phi',\pi$
are the quotient maps
$H/K_1 \onto H/L$,
$H/K_2 \onto H/L$,
and
$H/L \onto H/K$,
respectively.
Then $K_1 K_2 \le L \le K$.
By (a),
$L = K$,
hence $\pi$ is an isomorphism.

(e) $\implies$ (a):
Let $A' = H/K_1 K_2$
and let
$\alpha' \colon H/K_1 \onto H/K_1 K_2$,
$\phi' \colon H/K_2 \onto H/K_1 K_2$,
$\pi \colon H/K_1 K_2 \onto H/K$
be the quotient maps.
As $\pi$ is an isomorphism,
$K_1 K_2 = K$.

(c) $\implies$ (f):
Choose $h' \in H$ such that $\beta(h') = b$
and let $g' = \eta(h')$.
Then $\phi(g') = \phi(\eta(h')) = \alpha(b) = \phi(g)$,
hence there is $g'' \in \Ker \phi$
such that $g = g' g''$.
By (c) there is $h'' \in \Ker \beta$ such that
$\eta(h'') = g''$.
Then $\eta(h' h'') = g$
and $\beta(h' h'') = b$.

(f) $\implies$ (c):
Clearly $\eta(\Ker \beta) \le \Ker \phi$.
Conversely,
let $g \in \Ker \phi$.
Then $\beta(1) = \phi(g)$.
By (f) there is $h \in H$ 
such that
$h \in \Ker \beta$ and $\eta(h) = g$.
\end{proof}

\begin{Remark}\label{iso is semi}
Clearly,
if $\alpha$ is an isomorphism,
then \eqref{cartesian square diagram}
is semi-cartesian.
\end{Remark}

\begin{Corollary}\label{indecomposable semi}
Let \eqref{cartesian square diagram} be a commutative diagram
with $\alpha$ indecomposable.
The following are equivalent:
\begin{itemize}
\item[(a)]
\eqref{cartesian square diagram}
is a semi-cartesian square.
\item[(b)]
There is no $\gamma \colon G \onto B$
such that
$\gamma \circ \eta = \beta$.
\item[(c)]
$\Ker \eta \not\subseteq \Ker \beta$.
\end{itemize}
\end{Corollary}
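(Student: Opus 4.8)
The plan is to prove the two equivalences (b) $\iff$ (c) and (a) $\iff$ (c) separately. The equivalence (b) $\iff$ (c) is purely formal and uses only that $\eta$ and $\beta$ are epimorphisms, while (a) $\iff$ (c) is where the indecomposability of $\alpha$ enters, through the minimality of $\Ker\alpha$ recorded in Remark~\ref{rem: minimal normal subgroup}(a).

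For (b) $\iff$ (c) I would argue via the universal property of $\eta$ as a quotient map. An epimorphism $\gamma\colon G\onto B$ with $\gamma\circ\eta=\beta$ exists if and only if $\beta$ factors through $\eta$, which, since $\eta$ is surjective, holds precisely when $\Ker\eta\subseteq\Ker\beta$. Indeed, if $\Ker\eta\subseteq\Ker\beta$ then $\eta$ induces a well-defined continuous map $\gamma$ on $G\cong H/\Ker\eta$ by $\gamma(\eta(h))=\beta(h)$, and $\gamma$ is an epimorphism because $\beta$ is; conversely, any such $\gamma$ forces $\Ker\eta\subseteq\Ker(\gamma\circ\eta)=\Ker\beta$. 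Negating both sides converts this into the equivalence of (b) and (c).

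For (a) $\iff$ (c) I would first record the inclusion that always holds. Since $\alpha\circ\beta=\phi\circ\eta$, we have $\alpha(\beta(\Ker\eta))=\phi(\eta(\Ker\eta))=1$, so $\beta(\Ker\eta)\subseteq\Ker\alpha$; moreover $\beta(\Ker\eta)$ is normal in $B$ because $\Ker\eta\normal H$ and $\beta$ is surjective. By Remark~\ref{rem: minimal normal subgroup}(a), indecomposability of $\alpha$ means $\Ker\alpha$ is a minimal normal subgroup of $B$, and in particular $\Ker\alpha\neq 1$ since $\alpha$ is not an isomorphism. Hence minimality forces $\beta(\Ker\eta)\in\{1,\Ker\alpha\}$. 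Consequently the semi-cartesian condition $\beta(\Ker\eta)=\Ker\alpha$ (Definition~\ref{semi-cartesian}(b)) is equivalent to $\beta(\Ker\eta)\neq 1$, which in turn says exactly $\Ker\eta\not\subseteq\Ker\beta$. This yields (a) $\iff$ (c).

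There is no serious obstacle here; the argument is short once the right formulation of semi-cartesian is chosen. The only points requiring a little care are the standing inclusion $\beta(\Ker\eta)\subseteq\Ker\alpha$ together with the normality of $\beta(\Ker\eta)$ in $B$, and the observation that $\Ker\alpha\neq 1$, so that the dichotomy coming from minimality genuinely separates the semi-cartesian case from its failure. Selecting the characterization of semi-cartesian via $\beta(\Ker\eta)=\Ker\alpha$, rather than one of the other equivalent forms in Definition~\ref{semi-cartesian}, is precisely what makes the minimality step immediate.
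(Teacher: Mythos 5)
Your proposal is correct, but it takes a genuinely different route from the paper on the main equivalence. The paper proves (a) $\iff$ (b) directly, working with negations and criterion (e) of Definition~\ref{semi-cartesian}: if the square fails to be semi-cartesian, there is a push-in diagram \eqref{push-in} whose comparison map $\pi$ is not an isomorphism, and indecomposability of $\alpha = \pi\circ\alpha'$ then forces $\alpha'$ to be an isomorphism, yielding $\gamma = (\alpha')^{-1}\circ\phi'$; conversely a given $\gamma$ produces such a diagram with $\pi = \alpha$. This is exactly the factorization pattern reused in Proposition~\ref{indecomposable cartesian square} for compactness, so the paper's choice keeps the two results visibly parallel. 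You instead prove (a) $\iff$ (c) via criterion (b) of Definition~\ref{semi-cartesian}: the standing inclusion $\beta(\Ker\eta)\le\Ker\alpha$, normality and closedness of $\beta(\Ker\eta)$ in $B$, and minimality of $\Ker\alpha$ (Remark~\ref{rem: minimal normal subgroup}(a), with $\Ker\alpha\ne 1$) give the dichotomy $\beta(\Ker\eta)\in\{1,\Ker\alpha\}$, so semi-cartesian is equivalent to $\Ker\eta\not\subseteq\Ker\beta$. Your argument is shorter and makes the role of minimality transparent, at the cost of losing the structural analogy with the compactness criterion; both treatments handle (b) $\iff$ (c) identically via the first isomorphism theorem (where, incidentally, the paper's proof mislabels that final step as ``($\neg$b) $\iff$ ($\neg$a)'').
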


\begin{proof}
We prove the equivalence of the negations of these assertions:

($\neg$a) $\implies$ ($\neg$b):
There is a commutative diagram \eqref{push-in} 
in which $\pi$ is not an isomorphism.
As $\alpha$ is indecomposable,
$\alpha'$ is an isomorphism.
Then $\gamma := (\alpha')^{-1} \circ \phi' \colon G \onto B$
satisfies
$\gamma \circ \eta = \beta$.

($\neg$b) $\implies$ ($\neg$a):
If there is
$\gamma \colon G \onto B$
such that
$\gamma \circ \eta = \beta$,
put
$A' = B$,
$\alpha' = \id_B$,
$\phi' = \gamma$,
and
$\pi = \alpha$.
Then \eqref{push-in} commutes,
but $\pi$ is not an isomorphism.
So
\eqref{cartesian square diagram}
is not semi-cartesian,
by Definition~\ref{semi-cartesian}(e).

($\neg$b) $\iff$ ($\neg$a):
By the first isomorphism theorem,
there is 
$\gamma \colon G \onto B$
such that
$\gamma \circ \eta = \beta$,
if and only if
$\Ker \eta \subseteq \Ker \beta$.
\end{proof}

\begin{Corollary}\label{twist semi}
Suppose that $\phi$
in a semi-cartesian square
\eqref{cartesian square diagram}
is the composition
$\phi = \phi_2 \circ \phi_1$
of two epimorphisms.
Then 
($\phi_1 \circ \eta, \phi_2, \beta, \alpha)$
is a semi-cartesian square.
\end{Corollary}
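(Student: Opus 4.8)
The plan is to exhibit the quadruple $(\phi_1 \circ \eta, \phi_2, \beta, \alpha)$ as a commutative square and then verify one of the equivalent semi-cartesian conditions of Definition~\ref{semi-cartesian}. Writing $\phi_1 \colon G \onto A_1$ and $\phi_2 \colon A_1 \onto A$ for the two factors of $\phi$, the proposed square has $H$ and $B$ down the left side and $A_1$ and $A$ down the right, with top arrow $\phi_1 \circ \eta \colon H \onto A_1$, right arrow $\phi_2$, left arrow $\beta$, and bottom arrow $\alpha$. All four maps are epimorphisms (compositions and the given factors of epimorphisms), and commutativity is immediate: $\phi_2 \circ (\phi_1 \circ \eta) = (\phi_2 \circ \phi_1) \circ \eta = \phi \circ \eta = \alpha \circ \beta$, the last equality being the commutativity of the original square.

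For the semi-cartesian property I would invoke condition (c) of Definition~\ref{semi-cartesian}. For the new square it reads $(\phi_1 \circ \eta)(\Ker \beta) = \Ker \phi_2$ (the top map applied to the kernel of the left map equals the kernel of the right map). The key chain of equalities is
$$
(\phi_1 \circ \eta)(\Ker \beta) = \phi_1\bigl(\eta(\Ker \beta)\bigr) = \phi_1(\Ker \phi) = \phi_1\bigl(\phi_1^{-1}(\Ker \phi_2)\bigr) = \Ker \phi_2 .
$$
Here the second equality is condition (c) for the original square, namely $\eta(\Ker \beta) = \Ker \phi$; the third uses $\Ker \phi = \Ker(\phi_2 \circ \phi_1) = \phi_1^{-1}(\Ker \phi_2)$; and the fourth uses the surjectivity of $\phi_1$, which gives $\phi_1\bigl(\phi_1^{-1}(S)\bigr) = S$ for every subset $S$ of $A_1$.

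There is no serious obstacle: the argument is a short diagram chase. The only point requiring a moment of thought is the choice of which equivalent semi-cartesian condition to verify. Condition (c) is convenient because it keeps the factorization $\phi = \phi_2 \circ \phi_1$ on the target side, where the identity $\phi_1\bigl(\phi_1^{-1}(\Ker \phi_2)\bigr) = \Ker \phi_2$ does all the work; verifying condition (b) instead, i.e.\ $\beta(\Ker(\phi_1 \circ \eta)) = \Ker \alpha$, would force one to analyze the preimage $\eta^{-1}(\Ker \phi_1)$ and is less direct.
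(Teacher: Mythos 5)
Your proof is correct, and it takes a slightly different route through the same definition than the paper does. The paper verifies condition (a) of Definition~\ref{semi-cartesian} (the kernel-product identity): it must show $(\Ker \beta)\,(\Ker(\phi_1 \circ \eta)) = \Ker(\alpha \circ \beta)$, and gets this by a squeeze — the hypothesis gives $(\Ker \beta)(\Ker \eta) = \Ker(\alpha \circ \beta)$, and $\Ker \eta \le \Ker(\phi_1 \circ \eta) \le \Ker(\phi \circ \eta) = \Ker(\alpha \circ \beta)$, so the product can only grow up to $\Ker(\alpha \circ \beta)$ and no further. You instead verify condition (c), pushing $\Ker \beta$ forward along the new top map: $(\phi_1 \circ \eta)(\Ker \beta) = \phi_1(\Ker \phi) = \phi_1\bigl(\phi_1^{-1}(\Ker \phi_2)\bigr) = \Ker \phi_2$. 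Both arguments are one-line diagram chases of equal weight; the paper's works entirely inside $H$ and uses only monotonicity of the subgroup lattice, while yours works on the target side and additionally invokes the surjectivity of $\phi_1$ (harmless, since all maps here are epimorphisms). One small quibble with your closing remark: condition (b) for the new square, namely $\beta(\Ker(\phi_1 \circ \eta)) = \Ker \alpha$, is not actually less direct — it yields to exactly the same squeeze the paper uses, since $\Ker \alpha = \beta(\Ker \eta) \le \beta(\Ker(\phi_1 \circ \eta)) \le \beta(\Ker(\alpha \circ \beta)) = \Ker \alpha$; so the asymmetry you perceived between (b) and (c) is not real.
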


\begin{proof}
By assumption,
$(\Ker \beta)(\Ker \eta) = \Ker (\alpha \circ \beta)$.
But
$$
\Ker \eta \le \Ker(\phi_1 \circ \eta)
\le \Ker(\phi \circ \eta) = \Ker (\alpha \circ \beta),
$$
hence
$(\Ker \beta)(\Ker(\phi_1 \circ \eta)) = \Ker (\alpha \circ \beta)$.
\end{proof}

\begin{Lemma}\label{expand}
Consider two commutative diagrams

\noindent
\begin{minipage}{.495\linewidth}
\begin{equation}\label{semi1}
\xymatrix@=24pt{
H' \arr[r]_{\eta'} \arr[d]^{\beta'} & G \arr[d]^\phi \\
B \arr[r]^\alpha & A \\
}
\end{equation}
\end{minipage}
\begin{minipage}{.495\linewidth}
\begin{equation}\label{semi2}
\xymatrix@=24pt{
H \arr[r]_\eta \arr[d]^\beta & G \arr[d]^\phi \\
B \arr[r]^\alpha & A \\
}
\end{equation}
\end{minipage}
\newline
and let
$\psi \colon H' \to H$
be a homomorphism
such that
$\beta' = \beta \circ \psi$
and
$\eta' = \eta \circ \psi$.
\begin{itemize}
\item[(a)]
If $\psi$ is surjective, then
\eqref{semi1} is semi-cartesian
if and only if
\eqref{semi2} is semi-cartesian.
\item[(b)]
If
\eqref{semi1} is semi-cartesian
and
\eqref{semi2} is cartesian,
then $\psi$ is surjective.
\end{itemize}
\end{Lemma}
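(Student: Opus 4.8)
For (a) the plan is to reduce each semi-cartesianness to the kernel characterization (c) of Definition~\ref{semi-cartesian} and transport it across $\psi$. That characterization says \eqref{semi2} is semi-cartesian iff $\eta(\Ker\beta) = \Ker\phi$, and \eqref{semi1} is semi-cartesian iff $\eta'(\Ker\beta') = \Ker\phi$; crucially $\Ker\phi$ is literally the same subgroup of $G$ in both squares. From $\beta' = \beta\circ\psi$ one reads off $\Ker\beta' = \psi^{-1}(\Ker\beta)$, so $\eta'(\Ker\beta') = \eta(\psi(\psi^{-1}(\Ker\beta)))$. When $\psi$ is surjective, $\psi(\psi^{-1}(\Ker\beta)) = \Ker\beta\cap\psi(H') = \Ker\beta$, whence $\eta'(\Ker\beta') = \eta(\Ker\beta)$. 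Comparing both sides to the common $\Ker\phi$ then gives $\eta'(\Ker\beta') = \Ker\phi \iff \eta(\Ker\beta) = \Ker\phi$, which is exactly the asserted equivalence. (One could argue symmetrically via characterization (b), using $\beta'(\Ker\eta') = \beta(\Ker\eta)$ and the common $\Ker\alpha$.)

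For (b) the plan is to combine the \emph{existence} clause of semi-cartesianness of \eqref{semi1} with the \emph{uniqueness} clause of cartesianness of \eqref{semi2}. Let $h\in H$ be arbitrary and put $b = \beta(h)$ and $g = \eta(h)$; commutativity of \eqref{semi2} gives $\alpha(b) = \phi(g)$. Since \eqref{semi1} is semi-cartesian, characterization (f) of Definition~\ref{semi-cartesian} produces $h'\in H'$ with $\beta'(h') = b$ and $\eta'(h') = g$. Then $\psi(h')$ satisfies $\beta(\psi(h')) = \beta'(h') = b = \beta(h)$ and $\eta(\psi(h')) = \eta'(h') = g = \eta(h)$, so both $h$ and $\psi(h')$ lie over the pair $(b,g)$. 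Now the cartesian square \eqref{semi2}, through the uniqueness in characterization (e) of Definition~\ref{cartesian}, forces $\psi(h') = h$. Hence $h\in\psi(H')$, and since $h$ was arbitrary, $\psi$ is surjective.

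The step I expect to be the crux — and the precise reason the cartesian hypothesis in (b) cannot be relaxed to semi-cartesian — is this final identification. The existence clause (f) by itself only guarantees that $\psi(H')$ meets every fibre of the pair $(\beta,\eta)$, i.e. that $\beta(\psi(H')) = B$ and $\eta(\psi(H')) = G$, which is far from $\psi(H') = H$. It is the uniqueness of the lift in the cartesian square that matches the element $\psi(h')$ produced over $(b,g)$ with the given $h$, thereby pinning down $\psi(H') = H$. Note also that no topological subtleties enter: the argument exhibits each $h$ explicitly as a value of $\psi$, rather than first passing to the closed subgroup $\psi(H')$ and arguing by density or compactness.
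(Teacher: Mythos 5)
Your proposal is correct. For part (b) your argument coincides with the paper's proof: take $h \in H$, set $(b,g) = (\beta(h),\eta(h))$, lift via criterion (f) of Definition~\ref{semi-cartesian} applied to \eqref{semi1} to get $h' \in H'$ over $(b,g)$, and invoke the uniqueness clause of Definition~\ref{cartesian}(e) for \eqref{semi2} to force $\psi(h') = h$; your closing diagnosis of why cartesianness cannot be weakened to semi-cartesianness is exactly right, since (f) alone only yields $\beta(\psi(H')) = B$ and $\eta(\psi(H')) = G$.

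For part (a) you take a genuinely different, though equally short, route. The paper transports the push-out universal property (d) of Definition~\ref{semi-cartesian}: since $\psi$ is surjective, a pair $(\alpha',\phi')$ satisfies $\alpha' \circ \beta = \phi' \circ \eta$ if and only if $\alpha' \circ \beta' = \phi' \circ \eta'$, so (d) holds for \eqref{semi1} if and only if it holds for \eqref{semi2}. You instead transport the kernel criterion (c): from $\beta' = \beta \circ \psi$ one gets $\Ker \beta' = \psi^{-1}(\Ker \beta)$, and surjectivity of $\psi$ gives $\psi(\psi^{-1}(\Ker\beta)) = \Ker\beta \cap \psi(H') = \Ker\beta$, hence $\eta'(\Ker\beta') = \eta(\Ker\beta)$, and both sides are tested against the one subgroup $\Ker\phi$ of $G$, which the two squares share. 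Both verifications are one line once the right characterization is chosen; the paper's is more categorical and transfers the universal property wholesale, while yours is more concrete and makes visible precisely where surjectivity enters. Indeed, your computation shows slightly more than the lemma states: without surjectivity one still has $\eta'(\Ker\beta') = \eta(\Ker\beta \cap \psi(H')) \subseteq \eta(\Ker\beta) \subseteq \Ker\phi$, so semi-cartesianness of \eqref{semi1} already implies that of \eqref{semi2} for an arbitrary homomorphism $\psi$, and only the converse direction needs $\psi$ surjective.
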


\begin{proof}
(a)
The universal property (d) of Definition~\ref{semi-cartesian}
holds for
\eqref{semi1}
if and only if
it holds for
\eqref{semi2}.

(b)
Let $h \in H$.
Then
$\alpha(\beta(h)) = \phi(\eta(h))$.
By Definition~\ref{semi-cartesian}(f)
there is
$h' \in H'$
such that
$\beta'(h') = \beta(h)$
and
$\eta'(h') = \eta(h)$,
that is,
$\beta(\psi(h')) = \beta(h)$
and
$\eta(\psi(h')) = \eta(h)$.
By Definition~\ref{cartesian}(e),
$\psi(h') = h$.
\end{proof}

\begin{Definition}\label{compact cartesian square}
A cartesian square
of epimorphisms of profinite groups
\eqref{cartesian square diagram}
is \textbf{compact} if
it satisfies the following equivalent properties:
\begin{itemize}
\item[(a)]
No proper closed subgroup of $H$ is mapped
by $\beta$ onto $B$
and by $\eta$ onto $G$.
\item[(b)]
If $E$ is a profinite group
and $\beta' \colon E \onto B$ and $\eta' \colon E \onto G$
satisfy
$\alpha \circ \beta' = \phi \circ \eta'$,
then the unique homomorphism
$\psi \colon E \to H$ such that
$\beta \circ \psi = \beta'$ and $\eta \circ \psi = \eta'$
is surjective.
\item[(c)]
If
$\alpha' \colon B \onto A'$,
$\phi' \colon G \onto A'$,
and
$\pi \colon A' \onto A$
satisfy
$\pi \circ \alpha' = \alpha$
and
$\pi \circ \phi' = \phi$,
then $\pi$ is an isomorphism.
\item[(d)]
There are no
$\alpha' \colon B \onto A'$,
$\phi' \colon G \onto A'$,
and an indecomposable
$\pi \colon A' \onto A$
such that
$\pi \circ \alpha' = \alpha$
and
$\pi \circ \phi' = \phi$.
\end{itemize}
\end{Definition}

\begin{proof}[Proof of the equivalence of the properties]

~\newline\indent
(a) $\implies$ (b):
Let $H_0 = \psi(E)$.
Then $\beta(H_0) = \beta'(E) = B$
and $\eta(H_0) = \eta'(E) = G$,
hence, by (a), $H_0 = H$.

(b) $\implies$ (a):
Let $E \le H$ such that
$\beta(E) = B$
and $\eta(E) = G$.
Let $\beta' = \beta|_{B}$,
$\eta' = \eta|_{E}$,
and
let $\psi \colon E \to H$ be the inclusion.
Then
$\beta \circ \psi = \beta'$ and $\eta \circ \psi = \eta'$,
hence, by (b), $\psi$ is surjective.
Thus $E = H$.

(b) $\implies$ (c):
Lemma~\ref{expand}(a).
(c) $\implies$ (b):
Lemma~\ref{expand}(b).

(c) $\implies$ (d):
Clear, because indecomposable epimorphism is not an isomorphism.

(d) $\implies$ (c):
Let $\alpha',\phi', \pi$
such that
$\pi \circ \alpha' = \alpha$
and
$\pi \circ \phi' = \phi$.
If $\pi$ is not an isomorphism,
then it is the composition of two epimorphisms
$\pi = \pi'' \circ \pi'$,
with $\pi''$ indecomposable,
hence
$\pi'' \circ (\pi' \circ \alpha') = \alpha$
and
$\pi'' \circ (\pi' \circ \phi') = \phi$,
a contradiction to (d).
So, $\pi$ is an isomorphism.
\end{proof}

\begin{Remark}\label{square trivialities}
(\cite[Lemma 2.3]{FH})
In the following commutative diagram
of epimorphisms with three squares
(the left one, the right one, and \eqref{cartesian square diagram})
\begin{equation}\label{two squares}
\xymatrix@=24pt{
H
\arr[r]_{\zeta} \arr[d]^{\beta}
\arr@/^1pc/[rr]^{\eta}
& H' \arr[r]_{\eta'} \arr[d]^{\beta'}
& G \arr[d]^{\phi} 
\\
B
\arr@/_1pc/[rr]_{\alpha}
\arr[r]^{\gamma} & B' \arr[r]^{\alpha'} & A \\
}
\end{equation}
\begin{itemize}
\item[(a)]
If two of the squares are cartesian,
then so is the third one.
\item[(b)]
If all the three squares are cartesian,
then 
\eqref{cartesian square diagram} is compact
if and only if
both the left and right square are compact.
\end{itemize}
Consider a cartesian square \eqref{cartesian square diagram}.
\begin{itemize}[resume]
\item[(c)]
If there exist
either
$\gamma \colon B \onto B'$
and
$\alpha' \colon B' \onto A$
such that
$\alpha' \circ \gamma = \alpha$,
or
$\zeta \colon H \onto H'$
and
$\eta' \colon H' \onto G$
such that
$\eta' \circ \zeta = \eta$,
then there exists a commutative diagram \eqref{two squares}
with three cartesian squares.
\end{itemize}
\end{Remark}

\begin{Lemma}\label{two semi squares}
\begin{itemize}
\item[(a)]
If \eqref{cartesian square diagram} is semi-cartesian,
then the right-handed square in \eqref{two squares}
is semi-cartesian.
\item[(b)]
If both the right-handed
and the left-handed square in \eqref{two squares}
are semi-cartesian
then so is \eqref{cartesian square diagram}.
\end{itemize}
\end{Lemma}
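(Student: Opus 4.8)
The plan is to prove both parts through the kernel characterization of semi-cartesian squares, namely Definition~\ref{semi-cartesian}(c): a square is semi-cartesian exactly when its top horizontal map carries the kernel of its left vertical map \emph{onto} the kernel of its right vertical map. This turns the lemma into a bookkeeping statement about images of kernels under the composite $\eta = \eta' \circ \zeta$, with no fiber-product or universal-property argument needed.

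First I would record the two inclusions that hold in \emph{any} commutative diagram of the shape \eqref{two squares}. From $\beta' \circ \zeta = \gamma \circ \beta$ one gets $\zeta(\Ker \beta) \subseteq \Ker \beta'$, and from $\phi \circ \eta' = \alpha' \circ \beta'$ one gets $\eta'(\Ker \beta') \subseteq \Ker \phi$. Composing these, $\eta(\Ker \beta) = \eta'(\zeta(\Ker \beta)) \subseteq \eta'(\Ker \beta') \subseteq \Ker \phi$, so the only question in each part is whether the relevant inclusions are in fact equalities.

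For part (a), I would apply the hypothesis that \eqref{cartesian square diagram} is semi-cartesian, i.e. $\eta(\Ker \beta) = \Ker \phi$, to the chain just displayed: it forces $\eta'(\Ker \beta') = \Ker \phi$, which is precisely Definition~\ref{semi-cartesian}(c) for the right-handed square. Note that the left-handed square plays no role here, exactly as the statement of (a) reflects. For part (b), the left-handed square being semi-cartesian reads $\zeta(\Ker \beta) = \Ker \beta'$ and the right-handed square being semi-cartesian reads $\eta'(\Ker \beta') = \Ker \phi$; substituting the former into the latter yields $\eta(\Ker \beta) = \eta'(\zeta(\Ker \beta)) = \eta'(\Ker \beta') = \Ker \phi$, which is Definition~\ref{semi-cartesian}(c) for the composite square \eqref{cartesian square diagram}.

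I do not expect a genuine obstacle; the only point requiring care is matching the orientation of each of the three squares to the correct instance of the criterion --- identifying which map is the ``top horizontal'' and which vertical kernels are being compared --- so that the three instances of property (c) chain together as above. Once that dictionary is fixed, each part is essentially one line. An alternative, if one prefers not to single out the equivalent form (c), is to run the same argument with the push-out property Definition~\ref{semi-cartesian}(d) and the two-out-of-three behaviour of the universal arrows, but the kernel-image route is shorter and more transparent.
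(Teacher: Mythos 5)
Your proof is correct. For part (b) it coincides with the paper's own argument, which likewise runs through criterion (c) of Definition~\ref{semi-cartesian}: substituting $\zeta(\Ker \beta) = \Ker \beta'$ into $\eta'(\Ker \beta') = \Ker \phi$ gives $(\eta' \circ \zeta)(\Ker \beta) = \Ker \phi$. For part (a) you take a slightly different route: the paper disposes of it by a one-line appeal to the push-in criterion (e) --- any witness $(\alpha', \phi', \pi)$ against the right-handed square composes with $\gamma$ to a witness against \eqref{cartesian square diagram}, so $\pi$ must be an isomorphism --- whereas you stay with criterion (c), observing that commutativity alone already yields the chain $\eta(\Ker \beta) = \eta'(\zeta(\Ker \beta)) \subseteq \eta'(\Ker \beta') \subseteq \Ker \phi$, so the hypothesis $\eta(\Ker \beta) = \Ker \phi$ squeezes the middle term to equality. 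Both arguments are equally short; yours has the modest advantage of using a single criterion uniformly for both parts and of making the two inclusions that hold in any such diagram explicit, while the paper's version of (a) avoids any kernel computation by exploiting the universal characterization. Your closing remark about orientation is the right caution: the only place the argument could go wrong is in reading criterion (c) for the left-handed square as $\zeta(\Ker \beta) = \Ker \beta'$ (top map applied to the kernel of the left vertical, compared with the kernel of the right vertical), and you have matched this correctly.
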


\begin{proof}
(a)
This follows from criterion (e) of Definition~\ref{semi-cartesian}.

(b)
This follows from criterion (c) of Definition~\ref{semi-cartesian}:
If
$\zeta(\Ker \beta) = \Ker \beta'$
and
$\eta'(\Ker \beta') = \Ker \phi$,
then
$(\eta' \circ \zeta)(\Ker \beta) = \Ker \phi$.
\end{proof}

Here is a fundamental example of a compact cartesian square:

\begin{Proposition}\label{indecomposable cartesian square}
A cartesian square \eqref{cartesian square diagram}
with $\alpha$ indecomposable
is compact
if and only if
there is no $\gamma \colon G \onto B$
such that
$\alpha \circ \gamma = \phi$.
\end{Proposition}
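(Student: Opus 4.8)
The plan is to read off the claim directly from criterion (d) of Definition~\ref{compact cartesian square}, exploiting that $\alpha$ is indecomposable. Recall that by (d) the square fails to be compact precisely when there exist epimorphisms $\alpha' \colon B \onto A'$ and $\phi' \colon G \onto A'$ together with an \emph{indecomposable} $\pi \colon A' \onto A$ satisfying $\pi \circ \alpha' = \alpha$ and $\pi \circ \phi' = \phi$. I will show that the existence of such a triple is equivalent to the existence of an epimorphism $\gamma \colon G \onto B$ with $\alpha \circ \gamma = \phi$; this yields the stated equivalence by negation.

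For the implication from a failure of compactness to the existence of $\gamma$, I would start from such a triple $(\alpha', \phi', \pi)$. The key step is that the factorization $\alpha = \pi \circ \alpha'$ together with the indecomposability of $\alpha$ (Definition~\ref{indecomposable}) forces one of $\alpha'$, $\pi$ to be an isomorphism; since $\pi$ is indecomposable it is not an isomorphism, so $\alpha'$ must be. I then set $\gamma \defeq (\alpha')^{-1} \circ \phi' \colon G \onto B$, which is an epimorphism because $\phi'$ is and $\alpha'$ is invertible, and compute $\alpha \circ \gamma = (\pi \circ \alpha') \circ (\alpha')^{-1} \circ \phi' = \pi \circ \phi' = \phi$, as required.

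Conversely, given an epimorphism $\gamma \colon G \onto B$ with $\alpha \circ \gamma = \phi$, I would manufacture a witness to the failure of (d) by the most economical choice: put $A' = B$, $\alpha' = \id_B$, $\phi' = \gamma$, and $\pi = \alpha$. Then $\pi \circ \alpha' = \alpha$ and $\pi \circ \phi' = \alpha \circ \gamma = \phi$ hold by construction, $\alpha'$ and $\phi'$ are epimorphisms, and $\pi = \alpha$ is indecomposable by hypothesis; hence the square is not compact.

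The proof is thus short, and the only real content is the first implication. The one point that deserves care is that criterion (d) a priori allows an arbitrary intermediate quotient $A'$ of $B$, and it is precisely the indecomposability of $\alpha$ that collapses every such intermediate stage to the case $A' \isom B$ with $\pi \isom \alpha$; from that collapse the map $\gamma$ reading $\phi$ through $\alpha$ drops out at once. I would also check, in both directions, that the maps produced are genuine epimorphisms (so that the arrows $\onto$ are justified), which is automatic here since $\gamma$ is obtained by composing an epimorphism with an isomorphism and $\phi'$ is taken to be $\gamma$ itself.
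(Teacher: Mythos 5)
Your proof is correct and takes essentially the same approach as the paper: the same collapse of the intermediate quotient via indecomposability of $\alpha$ (forcing $\alpha'$ to be an isomorphism and yielding $\gamma = (\alpha')^{-1} \circ \phi'$), and the same witness $A' = B$, $\alpha' = \id_B$, $\phi' = \gamma$, $\pi = \alpha$ in the converse direction. The only cosmetic difference is that you negate criterion (d) of Definition~\ref{compact cartesian square} (indecomposable $\pi$) where the paper negates the equivalent criterion (c) ($\pi$ not an isomorphism); since both implications use only that $\pi$ fails to be an isomorphism, nothing of substance changes.
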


\begin{proof}
By Definition~\ref{compact cartesian square}(c),
square \eqref{cartesian square diagram}
is not compact
if and only if
\begin{itemize}
\item[(c)]
there are
$\alpha' \colon B \onto A'$,
$\phi' \colon G \onto A'$,
and
$\pi \colon A' \onto A$
such that
$\pi \circ \alpha' = \alpha$,
$\pi \circ \phi' = \phi$,
and
$\pi$ is not an isomorphism.
\end{itemize}
If (c) holds,
then, as $\alpha$ is indecomposable and $\pi$ is not isomorphism,
$\alpha'$ is an isomorphism.
Then $\gamma := (\alpha')^{-1} \circ \phi' \colon G \onto B$
satisfies
$\alpha \circ \gamma = \phi$.

Conversely,
if
$\gamma \colon G \onto B$
satisfies
$\alpha \circ \gamma = \phi$,
then
$A' = B$,
$\alpha' = \id_B$,
$\phi' = \gamma$,
and
$\pi = \alpha$
satisfy (c).
\end{proof}

\begin{Corollary}\label{foot not compact}
If \eqref{cartesian square diagram} is a non-compact cartesian square,
then there is a diagram \eqref{two squares}
of cartesian squares,
in which $\alpha'$ is indecomposable
and the right-handed square is not compact.
\end{Corollary}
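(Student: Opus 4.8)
The plan is to read off everything required directly from the data that witnesses the failure of compactness, and then to invoke the structural results already in place, with no genuinely new construction needed.

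First I would unpack non-compactness via the criterion of Definition~\ref{compact cartesian square}. Since \eqref{cartesian square diagram} is cartesian but not compact, property~(d) of that definition fails, and its negation supplies epimorphisms $\gamma \colon B \onto B'$ and $\delta \colon G \onto B'$ together with an \emph{indecomposable} epimorphism $\alpha' \colon B' \onto A$ such that $\alpha' \circ \gamma = \alpha$ and $\alpha' \circ \delta = \phi$. (Here I am simply renaming the objects of Definition~\ref{compact cartesian square}(d): its group $A'$ becomes $B'$, its two maps $B \onto A'$ and $G \onto A'$ become $\gamma$ and $\delta$, and its indecomposable $\pi \colon A' \onto A$ becomes $\alpha'$.) In particular this already produces the desired factorization $\alpha = \alpha' \circ \gamma$ through an indecomposable $\alpha'$.

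Next I would assemble the diagram. Because $\gamma \colon B \onto B'$ and $\alpha' \colon B' \onto A$ satisfy $\alpha' \circ \gamma = \alpha$, Remark~\ref{square trivialities}(c) produces a commutative diagram \eqref{two squares} all three of whose squares are cartesian, with bottom row $B \xrightarrow{\gamma} B' \xrightarrow{\alpha'} A$ and right-hand vertical map the original $\phi$. By construction $\alpha'$ is indecomposable, so it remains only to verify that the right-hand square $(\eta', \beta', \phi, \alpha')$ fails to be compact. Since that square is cartesian with indecomposable foot $\alpha'$, Proposition~\ref{indecomposable cartesian square} applies: it is compact exactly when there is no epimorphism $G \onto B'$ composing with $\alpha'$ to give $\phi$. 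But the map $\delta \colon G \onto B'$ obtained above is precisely such a map, as $\alpha' \circ \delta = \phi$. Hence the right-hand square is not compact, which completes the argument.

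The proof is thus a direct chaining of the three cited results, so I do not expect a serious obstacle. The only point demanding attention is the bookkeeping of the identifications: one must recognize that the indecomposable $\pi$ emerging from Definition~\ref{compact cartesian square}(d) is to play the role of the \emph{bottom} map $\alpha'$ of \eqref{two squares} (with $A'$ becoming $B'$), rather than the role of the quotient $\pi$ at the level of $A$; and, dually, that the companion map $\delta$ witnessing the factorization of $\phi$ is simultaneously the witness demanded by Proposition~\ref{indecomposable cartesian square} for the non-compactness of the new right-hand square. Once these correspondences are set up correctly, the single datum produced by the failure of compactness delivers both the indecomposable factorization and the non-compactness of the right square at once.
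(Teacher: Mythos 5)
Your proposal is correct and follows the paper's own proof essentially verbatim: negate Definition~\ref{compact cartesian square}(d) to extract $\gamma$, $\phi'$ (your $\delta$), and the indecomposable $\alpha'$, invoke Remark~\ref{square trivialities}(c) to build the diagram of cartesian squares, and apply Proposition~\ref{indecomposable cartesian square} with $\delta$ as the witness that the right-hand square is not compact. The bookkeeping of identifications you flag is handled the same way in the paper, so there is nothing to add.
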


\begin{proof}
By Definition~\ref{compact cartesian square}
there are
$\gamma \colon B \onto B'$,
$\phi' \colon G \onto B'$,
and an indecomposable
$\alpha' \colon B' \onto A$
such that
$\alpha' \circ \gamma = \alpha$
and
$\alpha' \circ \phi' = \phi$.
By Remark~\ref{square trivialities}(c),
there exists a commutative diagram \eqref{two squares}
with cartesian squares.
By Proposition~\ref{indecomposable cartesian square}
the right-handed square is not compact.
\end{proof}

\begin{Lemma}\label{cube}
Let 
\begin{equation}\label{cube diagram}
\xymatrix@=15pt{ 
& \Hhat \arr[rr]^(.6){\hat{\eta}} \arr'[d][dd]^(.4){\hat{\beta}}
\arr[ld]_{\tau_H}
&& \Ghat \arr[dd]^(.6){\hat{\phi}} \arr[ld]^(.6){\tau_G} \\
H \arr[rr]^(.7){\eta} \arr[dd]_(.65){\beta} && G \arr[dd]_(.65){\phi} & \\
& \Bhat \arr'[r][rr]^{\hat{\alpha}} \arr[ld]_(.3){\tau_B}
&& \Ahat \arr[ld]^{\tau_A} \\
B \arr[rr]^{\alpha} && A &
}
\end{equation}
be a commutative diagram of epimorphisms
which is a cube with all faces
(resp. the top, the bottom, the front, and the right face)
cartesian squares.
If the top face is compact,
then so is the bottom face.
\end{Lemma}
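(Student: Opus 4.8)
The plan is to prove compactness of the bottom face by verifying criterion (c) of Definition~\ref{compact cartesian square} for it. So I would start from epimorphisms $\alpha'\colon B\onto A'$, $\phi'\colon G\onto A'$ and $\pi\colon A'\onto A$ satisfying $\pi\circ\alpha'=\alpha$ and $\pi\circ\phi'=\phi$, and show that $\pi$ is an isomorphism. The underlying idea is to transport this factorization upward along the vertical maps $\tau$, invoke the compactness of the top face to trivialise the transported comparison map, and then descend the conclusion back to $\pi$.

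The transport requires one auxiliary fact that is not among the hypotheses: the \emph{back} face $(\Bhat,\Ahat,A,B)$ is also cartesian. I would establish this by a kernel chase using criterion (d) of Definition~\ref{cartesian}. Cartesianness of the top, front and bottom faces says, respectively, that $\hat\beta$ maps $\Ker\hat\eta$ isomorphically onto $\Ker\hat\alpha$, that $\tau_H$ maps $\Ker\hat\eta$ isomorphically onto $\Ker\eta$, and that $\beta$ maps $\Ker\eta$ isomorphically onto $\Ker\alpha$. Composing the inverse of the first with the second and the third, and using the commutativity $\tau_B\circ\hat\beta=\beta\circ\tau_H$ of the left face, I would identify the restriction of $\tau_B$ to $\Ker\hat\alpha$ as an isomorphism onto $\Ker\alpha$. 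By criterion (d) of Definition~\ref{cartesian} this is exactly the statement that the back face is cartesian. (Only the top, front and bottom faces enter here; the right face is reserved for the next step.)

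With the back face cartesian I would form $\hat A':=\Ahat\times_A A'$, the fibre product of $\tau_A$ and $\pi$, with projections $\hat\pi\colon\hat A'\onto\Ahat$ and $\sigma\colon\hat A'\onto A'$. The pairings $\hat\alpha':=(\hat\alpha,\,\alpha'\circ\tau_B)\colon\Bhat\to\hat A'$ and $\hat\phi':=(\hat\phi,\,\phi'\circ\tau_G)\colon\Ghat\to\hat A'$ are well defined, because the back and right faces commute and $\pi\circ\alpha'=\alpha$, $\pi\circ\phi'=\phi$. They are surjective precisely because the back and right faces are cartesian: these faces give $\Bhat\cong B\times_A\Ahat$ and $\Ghat\cong G\times_A\Ahat$, so any prescribed pair of coordinates in $\hat A'$ can be lifted (surjectivity of $\alpha'$, resp. $\phi'$, supplies the missing $A'$-coordinate). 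Since $\hat\pi\circ\hat\alpha'=\hat\alpha$ and $\hat\pi\circ\hat\phi'=\hat\phi$ with all three maps epimorphisms, criterion (c) of Definition~\ref{compact cartesian square} applied to the \emph{compact} top face forces $\hat\pi$ to be an isomorphism. Finally, $\hat\pi$ is the base change of $\pi$ along the epimorphism $\tau_A$, so $\Ker\hat\pi\cong\Ker\pi$; thus $\hat\pi$ being an isomorphism yields $\Ker\pi=1$, and as $\pi$ is already surjective it is an isomorphism. This is what criterion (c) demands for the bottom face, so the bottom face is compact.

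The step I expect to demand the most care is the surjectivity of $\hat\alpha'$ and $\hat\phi'$ onto the fibre product $\hat A'$: this is the only place where cartesianness of the side faces is genuinely used, and it is the reason the preliminary derivation that the back face is cartesian cannot be skipped. By contrast, the well-definedness of the maps, the identity $\hat\pi\circ\hat\alpha'=\hat\alpha$, and the passage from $\hat\pi$ an isomorphism to $\pi$ an isomorphism are all routine, the last being just the elementary fact that base change along a surjection preserves the kernel.
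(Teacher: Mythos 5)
There is a genuine gap, and it is concrete: you have misidentified the faces of the cube, so your argument proves a different implication than the one asserted. In \eqref{cube diagram} the \emph{top} face is the square $(\hat\eta,\tau_H,\tau_G,\eta)$ joining the hatted to the unhatted groups ($\Hhat,\Ghat,G,H$), and the \emph{bottom} face is $(\hat\alpha,\tau_B,\tau_A,\alpha)$ on $(\Bhat,\Ahat,A,B)$ --- this is explicit in the paper's proof, which tests non-compactness of the bottom face with data $\alpha'\colon B\onto A'$, $\tau_{A'}\colon \Ahat\onto A'$, $\pi\colon A'\onto A$ satisfying $\pi\circ\tau_{A'}=\tau_A$. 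Your test data $\phi'\colon G\onto A'$ with $\pi\circ\phi'=\phi$ is instead the compactness criterion for the \emph{front} face $(\eta,\beta,\alpha,\phi)$, and the compactness you invoke at the crucial step is that of the all-hatted square $(\hat\eta,\hat\beta,\hat\alpha,\hat\phi)$, which is the \emph{back} face, not the top one. So what you actually prove is ``back face compact $\implies$ front face compact'': a true statement (it is the lemma applied to a rotated cube, since all six faces are cartesian), but its hypothesis is not available --- the lemma only assumes the top face compact --- and its conclusion is not the one required. The same rotation infects the preliminary step: the face $(\Bhat,\Ahat,A,B)$ whose cartesianness you carefully derive is the bottom face, which under the ``resp.''\ reading is already a hypothesis, while the back face that you treat as hypothesized is one of the two faces the paper must derive (via the concatenation argument of Remark~\ref{square trivialities}(a)).

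The good news is that your skeleton is exactly right once re-oriented, and then it becomes the paper's proof read directly rather than contrapositively. Given the correct bottom-face test data $\alpha'\colon B\onto A'$, $\tau_{A'}\colon \Ahat\onto A'$, $\pi\colon A'\onto A$, one must base-change along $\phi$ (not along $\tau_A$): form $G'=A'\times_A G$ with projections $\phi'\colon G'\onto A'$ and $\rho\colon G'\onto G$, lift to epimorphisms $\eta'\colon H\onto G'$ and $\tau_{G'}\colon \Ghat\onto G'$ --- surjectivity coming from the cartesianness of the front face and of the square $(A',A,G,G')$ via Lemma~\ref{expand}(b), just as your surjectivity argument for $\hat\alpha'$, $\hat\phi'$ did --- and then apply compactness of the \emph{top} face (criterion (c), with $\rho\circ\eta'=\eta$ and $\rho\circ\tau_{G'}=\tau_G$) to conclude that $\rho$ is an isomorphism; since $\rho$ is the base change of $\pi$ along the epimorphism $\phi$, it follows that $\pi$ is an isomorphism. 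Your closing remark that base change along a surjection preserves the kernel is correct and is precisely the descent step needed; only the choice of which corner to base-change at, and which face's compactness to spend, must be fixed.
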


\begin{proof}
If the above mentioned four faces are cartesian,
then so is the left face.
Indeed,
the concatenation of the top and the right face
is
the concatenation of the left and the bottom face.
Hence by Remark~\ref{square trivialities}(a)
all these faces,
-- in particular the left one --
are cartesian.
Similarly the bottom face is cartesian.

Assume that the bottom face
$(B,A,\Ahat,\Bhat)$
is not compact.
Then there is a group $A'$
and epimorphisms
$\pi \colon A' \onto A$,
$\alpha' \colon B \onto A'$,
and
$\tau_{A'} \colon \Ahat \onto A'$
such that
$\pi \circ \alpha' = \alpha$,
$\pi \circ \tau_{A'} = \tau_A$,
and
$\pi$ is not an isomorphism.
Put 
$G' = A' \times_A G$
and let $\phi' \colon G' \onto A'$ and $\rho \colon G' \onto G$
be the coordinate projections.
The universal property of the fiber product
yields homomorphisms
$\eta'$ and $\tau_{G'}$
such that the following diagram commutes:
\begin{equation*}
\xymatrix@=8pt{ 
&&&& &&&\Ghat
\dotar[dllll]_{\tau_{G'}}
\arr[ddlll]^{\tau_G}
\arr[ddd]^{\hat{\phi}}
\\
&&& G'
\arr[rd]^{\rho}
\arr'[d][ddd]_{\phi'}
\\
H \arr[rrrr]^{\eta} \arr[ddd]_{\beta}
\dotar[rrru]^{\eta'}
&&&& G \arr[ddd]_(.35){\phi}
\\
&&&& &&&\Ahat
\arr[ddlll]^{\tau_A}
\dotarr[dllll]_(.35){\tau_{A'}}
\\
&&& A' \arr[rd]^{\pi} \\
B \arr[rrrr]_{\alpha}
\dotarr[rrru]^{\alpha'}
&&&& A \\
}
\end{equation*}
As
$(B,A,G,H)$ and $(A',A,G,G')$
are cartesian squares,
$\eta'$ is surjective,
by Lemma~\ref{expand}.
so is $\eta'$ (\cite[Lemma 25.2.3(b)]{FJ}).
Similarly,
$\tau_{G'}$ is surjective.
As $\pi$ is not an isomorphism,
neither $\rho$ is.
It follows that the top face in \eqref{cube diagram}
is not compact.
\end{proof}

\begin{Lemma}\label{inverse limit of compact squares}
Let
$(H_i, \rho_{ji} \colon H_j \onto H_i)_{i,j \in I}$
and
$(G_i, \pi_{ji} \colon G_j \onto G_i)_{i,j \in I}$
be two inverse systems
over the same directed set $I$.
For every $i \in I$ let
$\beta_i \colon H_i \onto G_i$
such that
\eqref{invlim1} below

\noindent
\begin{minipage}{.495\linewidth}
\begin{equation}\label{invlim1}
\xymatrix@=24pt{
H_j \arr[d]^{\beta_j} \arr[r]^{\rho_{ji}}
& H_i \arr[d]^{\beta_i} 
\\
G_j \arr[r]^{\pi_{ji}}
& G_i
}
\end{equation}
\end{minipage}
\begin{minipage}{.495\linewidth}
\begin{equation}\label{invlim2}
\xymatrix@=27pt{
H \arr[d]^{\beta} \arr[r]^{\rho_i}
& H_i \arr[d]^{\beta_i} 
\\
G \arr[r]^{\pi_i}
& G_i 
}
\end{equation}
\end{minipage}
\newline
is a cartesian square
for all $j \ge i$.
Let
$H = \varprojlim_{i \in I} H_i$
and
$G = \varprojlim_{i \in I} G_i$,
with projections
$\rho_i \colon H \onto H_i$
and
$\pi_i \colon G \onto G_i$,
respectively.
Then $(\beta_i)_{i \in I}$ induce
$\beta \colon H \onto G$
such that
\eqref{invlim2} above
is a cartesian square,
for all $i \in I$.
If
\eqref{invlim1}
are compact,
for all $j \ge i$,
then
\eqref{invlim2}
are compact,
for all $i \in I$.
\end{Lemma}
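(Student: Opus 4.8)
The plan is to establish three things in order: that the maps $\beta_i$ induce an epimorphism $\beta$, that each square \eqref{invlim2} is cartesian, and that compactness descends to the limit. First, commutativity of the squares \eqref{invlim1} makes $(\beta_i)_{i\in I}$ a morphism of inverse systems, so it induces $\beta=\varprojlim_i\beta_i\colon H\to G$ with each \eqref{invlim2} commuting. Since the transition maps $\rho_{ji},\pi_{ji}$ are epimorphisms and $I$ is directed, the projections $\rho_i,\pi_i$ are surjective (a standard fact for profinite groups, cf.\ \cite{FJ}). Then $\beta(H)$ is a closed subgroup of $G$ with $\pi_i(\beta(H))=\beta_i(\rho_i(H))=G_i$ for all $i$, and a closed subgroup of $G=\varprojlim_iG_i$ surjecting onto every $G_i$ must be all of $G$; hence $\beta$ is an epimorphism.

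To see that \eqref{invlim2} is cartesian I would use criterion (d) of Definition~\ref{cartesian}: it suffices to show that $\beta$ maps $\Ker\rho_i$ isomorphically onto $\Ker\pi_i$. As the indices $j\ge i$ are cofinal in $I$, one has $\Ker\rho_i=\varprojlim_{j\ge i}\Ker\rho_{ji}$ and $\Ker\pi_i=\varprojlim_{j\ge i}\Ker\pi_{ji}$, with transition maps induced by the restrictions of $\rho_{kj},\pi_{kj}$. Each \eqref{invlim1} with $j\ge i$ is cartesian, so by Definition~\ref{cartesian}(d) the map $\beta_j$ restricts to an isomorphism $\Ker\rho_{ji}\xrightarrow{\sim}\Ker\pi_{ji}$; these isomorphisms are compatible with the transition maps by commutativity of the squares, so passing to the inverse limit yields an isomorphism $\Ker\rho_i\to\Ker\pi_i$, which is exactly the restriction of $\beta$. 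Thus \eqref{invlim2} is cartesian.

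For compactness I would use criterion (a) of Definition~\ref{compact cartesian square}. Let $E\le H$ be a closed subgroup with $\beta(E)=G$ and $\rho_i(E)=H_i$; I must show $E=H$. Put $E_k=\rho_k(E)$. For every $k\ge i$, commutativity gives $\beta_k(E_k)=\pi_k(\beta(E))=G_k$ and $\rho_{ki}(E_k)=\rho_i(E)=H_i$, so $E_k$ is a closed subgroup of $H_k$ mapping onto $G_k$ under $\beta_k$ and onto $H_i$ under $\rho_{ki}$. The square \eqref{invlim1} at levels $(k,i)$ is compact by hypothesis, so its criterion (a) forces $E_k=H_k$. Since a closed subgroup of a profinite inverse limit is the inverse limit of its projections, and the indices $k\ge i$ are cofinal, $E=\varprojlim_k\rho_k(E)=\varprojlim_{k\ge i}H_k=H$. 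Hence no proper closed subgroup witnesses non-compactness, and \eqref{invlim2} is compact.

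The inverse-limit facts used here—surjectivity of the projections and the identity $E=\varprojlim_k\rho_k(E)$ for closed $E$—are routine. The one load-bearing idea is in the last part: compactness of the limit square at level $i$ can be tested cofinally, by projecting a putative witness $E$ to each level $k\ge i$, where it is controlled by the already-compact pair-square $(k,i)$. I expect this cofinal reduction, rather than any calculation, to be the real content of the proof.
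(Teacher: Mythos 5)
Your proposal is correct, and its second half --- the compactness argument --- coincides with the paper's own: project a putative witness $E \le H$ to $E_k = \rho_k(E)$, observe that $\beta_k(E_k) = G_k$ and $\rho_{ki}(E_k) = H_i$ for all $k \ge i$, invoke compactness of the square \eqref{invlim1} at the pair $(k,i)$ to force $E_k = H_k$, and conclude $E = H$ by cofinality; this cofinal reduction is indeed the load-bearing idea, exactly as you predicted. Where you diverge is the cartesian half. The paper verifies criterion (e) of Definition~\ref{cartesian} by a direct element-chase: given $h_i \in H_i$ and $g \in G$ with $\beta_i(h_i) = \pi_i(g)$, each cartesian square \eqref{invlim1} at $(j,i)$ yields a \emph{unique} $h_j \in H_j$ with $\beta_j(h_j) = \pi_j(g)$ and $\rho_{ji}(h_j) = h_i$, and that very uniqueness forces the compatibility $\rho_{kj}(h_k) = h_j$, so the family assembles into the required unique $h \in H$. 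You instead verify criterion (d), identifying $\Ker \rho_i = \varprojlim_{j \ge i} \Ker \rho_{ji}$ and $\Ker \pi_i = \varprojlim_{j \ge i} \Ker \pi_{ji}$ and taking the inverse limit of the level-wise isomorphisms $\beta_j|_{\Ker \rho_{ji}} \colon \Ker \rho_{ji} \to \Ker \pi_{ji}$. Both routes are sound, and it is worth noting what each buys: the paper's element-chase delivers surjectivity of $\beta$ for free (given $g \in G$, choose any $h_i$ over $\pi_i(g)$ and the constructed $h$ maps to $g$), whereas your kernel argument requires the separate surjectivity step you correctly supply --- $\beta(H)$ is closed and surjects onto every $G_i$, hence equals $G$ --- but in exchange is more structural, replacing the uniqueness bookkeeping by the standard facts that kernels and isomorphisms pass to inverse limits.
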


\begin{proof}
Fix $i \in I$.

Let $h_i \in H_i$ and $g \in G$
such that
$\pi_i(g) = \beta_i(h_i)$.
For every $j \ge i$
put $g_j = \pi_j(g)$.
There is a unique $h_j \in H_j$
such that
$\beta_j(h_j) = g_j$ and $\rho_{ji}(h_j) = h_i$.
If $k \ge j \ge i$,
then
$$
\beta_j(\rho_{kj}(h_k)) 
= \pi_{kj}(\beta_k(h_k))
= \pi_{kj}(g_k)
= g_j
\quad
\textnormal{and}
\quad
\rho_{ki}(\rho_{kj}(h_k)) = \rho_{ki}(h_k) = h_i,
$$
so, by the uniqueness of $h_j$,
$\rho_{kj}(h_k) = h_j$.
So there is a unique $h \in H$
such that
$\rho_j(h) = h_j$ for every $j \ge i$.
Obviously, this $h$ is the unique element of $H$
such that
$\beta(h) = g$ and $\rho_i(h) = h_i$.
Therefore
\eqref{invlim2}
is cartesian.

Let $H ' \le H$
such that
$\beta(H') = G$
and
$\rho_i(H') = H_i$.
Put $H'_j = \rho_j(H')$
for all $j \in I$.
Then
$\beta_j(H'j) = G_j$
and 
$\rho_{ji}(H'_j) = H'_i$
for all $j \ge i$.
As \eqref{invlim1} is compact,
$H'_j = H_j$
for all $j \in I$.
Hence
$H' = H$.
Therefore \eqref{invlim2} is compact.
\end{proof}

\section{Multiple fiber products}\label{mfp I}

Let us fix a family
$\calH = (\eta_i\colon H_i\to G)_{i \in I}$
of homomorphisms,
for the moment
\textit{nonempty} (until Remark~\ref{empty}(b)).
Consider the category
\begin{multline*}
\calC(\calH) = 
\{(H,\{p_i\}_{i \in I}, \pbar) \st
H \textnormal{ is a group, }
\pbar \colon H \to G
\textnormal{ is a homomorphism, and }
\\
p_i \colon H \to H_i
\textnormal{ is a homomorphism}
\textnormal{ such that }
\eta_i \circ p_i = \pbar,
\textnormal{ for every } i \in I\}.
\end{multline*}
A morphism
$p \colon (H,\{p_i\}_{i \in I}, \pbar) \to (H',\{p'_i\}_{i \in I}, \pbar')$
in this category
is a homomorphism 
$p \colon H \to H'$
such that
$p_i = p'_i \circ p$ for every $i \in I$;
notice that this implies that
$\pbar = \pbar' \circ p$.
Indeed,
$
\pbar' \circ p =
\eta_i \circ p'_i \circ p =
\eta_i \circ p_i =
\pbar
$.

The \textbf{fiber product of $\calH$}
is the terminal object
$(\fprod{i \in I} H_i,\{\pr_{I,i}\}_{i \in I}, \eta_I)$
in $\calC(\calH)$.
In other words,
$(\fprod{i \in I} H_i,\{\pr_{I,i}\}_{i \in I}, \eta_I) \in \calC(\calH)$,
and if
$(H,\{p_i\}_{i \in I}, \pbar) \in \calC(\calH)$,
then there is a unique homomorphism
$p_I \colon H \to \fprod{i \in I} H_i$
such that
$\pr_{I,i} \circ p_I = p_i$ for every $i \in I$.
We write
$H = \fprod{i \in I} H_i$,
if this $p_I$ is an isomorphism.

The fiber product exists and is unique,
up to a unique isomorphism.
Namely,
(\cite[Section 2]{BHH})
let
\begin{equation}\label{concrete}
\fprod{i \in I} H_i =
\Big\{
(h_i)_{i \in I} \in \prod_{i \in I} H_i
\st
\eta_i(h_i) = \eta_j(h_j)
\textnormal{ for all } i,j \in I
\Big\},
\end{equation}
let
$\pr_{I,j}\colon \fprod{i \in I} H_i \to H_j$
be the projection on the $j$-th coordinate,
for every $j \in I$,
and let
${\eta_I = \eta_j\circ \pr_{I,j} \colon \fprod{i \in I} H_i \to G}$,
for any $j \in I$
(the definition is independent of the choice of $j$).
Then
$(\fprod{i \in I} H_i, \{\pr_{I,i}\}_{i \in I}, \eta_I)$
has the above universal property.
(We will usually use this concrete presentation.)
Its uniqueness is an abstract nonsense.

We sometimes write
$(H,\{p_i\}_{i \in I})$)
instead of
$(H,\{p_i\}_{i \in I},\pbar)$.
In particular, in the case of a fiber product,
we usually abbreviate
$(\fprod{i \in I} H_i,\{\pr_{I,i}\}_{i \in I}, \eta_I)$
or
$(\fprod{i \in I} H_i,\{\pr_{I,i}\}_{i \in I})$
to
$\fprod{i \in I} H_i$.

We call 
$\eta_I \colon \fprod{i \in I} H_i \to G$
the \textbf{structure map of $\fprod{i \in I} H_i$}.

In Remark~\ref{properties of fiber product} below
we introduce the notation
$K_j \defeq \Ker \pr_{I,I\smallsetminus \{j\}}$,
for every $j \in I$.

\begin{Remark}\label{empty}
(a)
If $I = \{j\}$ is a singleton,
then
$\fprod{i \in I} H_i = H_j$
and $\eta_I = \eta_j$.

(b)
The definition of 
$\fprod{i \in I} H_i$
does not change,
if we replace $I$ with $I' \supset I$,
where $\eta_i = \id_G$ for all $i \in I' \smallsetminus I$.
Indeed,
the fiber product of the family
$\calH' = (\eta_i\colon H_i\to G)_{i \in I'}$
is also the fiber product of $\calH$.
This presentation allows us to define
$\fprod{i \in I} H_i$ and $\eta_I$
also for $I = \emptyset$ 
--
simply replace $I$ with $\{j\}$,
where $\eta_j = \id_G \colon G \to G$.
Thus, by (a),
$\fprod{i \in \emptyset} H_i = G$ and $\eta_\emptyset = \id_G$.
Applying this, we may
\textit{drop the assumption $I \ne \emptyset$}
from now on.
\end{Remark}

\begin{Remark}\label{properties of fiber product}
We list a few easy properties of 
$(\fprod{i \in I} H_i,\{\pr_{I,i}\}_{i \in I}, \eta_I)$:

(a)
Given $I' \subseteq I$,
we may form the fiber product
$(\fprod{i\in I'} H_i, \{\pr_{I',i}\}_{i \in I}, \eta_{I'})$
of $\calH' = \{\eta_i\colon H_i\to G)_{i \in I'}$.
Let $\pr_{I,I'} \colon \fprod{i \in I} H_i \to \fprod{i\in I'} H_i$
be the projection on the coordinates in $I'$.
It is a homomorphism.
We have
$\pr_{I,j} = \pr_{I',j} \circ \pr_{I,I'}$,
for every $j \in I'$.

It is easy to see that
$\fprod{i \in I} H_i = \varprojlim_{I'} \fprod{i\in I'} H_i$,
where $I'$ runs through the finite subsets of $I$.

(b)
Let $j \in I$.
We identify $\Ker \eta_j$ 
via $\pr_{I,j}$
with
\[
K_j \defeq
\{(h_i)_{i \in I} \st h_i \in H_i \textnormal{ and } h_i=1
\textnormal{ for every } i\neq j \}
= \Ker \pr_{I,I\smallsetminus \{j\}}
\normal \fprod{i \in I} H_i.
\]
Then
$\Ker \eta_I = \prod_{i \in I} K_i$,
$\Ker \pr_{I,j} = \prod_{i \neq j} K_i$
for every $j \in I$,
and
$\Ker \pr_{I,I'} = \prod_{i \in I \smallsetminus I'} K_i$
for every $I' \subseteq I$.

(c)
Let $j \in I$ such that $\eta_j$ is surjective
and $\Ker \eta_j$ is abelian.
Then $K_j \le Z(\prod_{i \in I} K_i) = Z(\Ker \eta_I)$.
Hence,
by Lemma~\ref{action}(a),
$K_j$ and $\Ker \eta_j$ are $G$-modules.
By Lemma~\ref{action}(d),
the identification in (b) preserves the $G$-action.
Thus,
$\Ker \eta_j \isom_G K_j$.

(d)
The fiber product has the following transitivity property:
If $I$ is a disjoint union
$I = \bigdotcup_{\lambda \in \Lambda} I_\lambda$,
then
\begin{equation*}
\fprod{i \in I} H_i =
\fprod{\lambda \in \Lambda} \big(\fprod{i \in I_\lambda} H_i\big).
\end{equation*}
(Here
the left fiber product on the right handed side
-- the one over $\Lambda$ --
is associated with the family
$(\eta_{I_\lambda} \colon
\fprod{i \in I_\lambda} H_i \to G)_{\lambda \in \Lambda}$.)

Indeed,
the right handed side satisfies the universal property of 
$\fprod{i \in I} H_i$.

(e)
In particular,
if $I = I' \dotcup \{j\}$, then
$\fprod{i \in I} H_i =
(\fprod{i \in I'} H_i) \times_G H_j$,
that is,
\begin{equation}\label{simple partition}
\xymatrix@=30pt{
\fprod{i \in I} H_i \ar[r]_{\pr_{I,I'}} \ar[d]^{\pr_{I,j}}
& \fprod{i \in I'} H_i \ar[d]^{\eta_{I'}}\\
H_j \ar[r]_{\eta_j} & G \\
}
\end{equation}
is a cartesian square.


(f)
Let $j \in I$.
If $\eta_j$ is surjective,
then so is $\pr_{I,I \smallsetminus \{j\}}$.
If all $\eta_i$ are surjective,
then so are $\eta_I$,
$\pr_{I,j}$, for all $j \in I$,
and $\pr_{I,I'}$, for all $I' \subseteq I$.

(g)
Assume that all $\eta_i$ are surjective
and let $j \in I$.
By Corollary~\ref{cartesian indecomposable}
applied to \eqref{simple partition},
$\eta_j$ is indecomposable
if and only if
$\pr_{I,I\smallsetminus \{j\}}$ is indecomposable.

(h)
Let $\pi \colon E \to G$ be a homomorphism.
For every $i \in I$ let
$p_i \colon H_i \times_G E \to E$
be the projection on the second
coordinate.
Then
$(p_i)_{i \in I}$ gives rise to the fiber product
$\fprod[E]{i \in I} (H_i \times_G E)$.
It is easy to see that
$\fprod[E]{i \in I} (H_i \times_G E) =
(\fprod{i \in I} H_i) \times_G E$.
\end{Remark}

We will be interested in the question,
when
$(H,\{p_i\}_{i \in I}) \in \calC(\calH)$
is a fiber product of $\calH$.
This can be first simplified as follows:

\begin{Lemma}\label{injective surjective}
Let
$(H,\{p_i\}_{i \in I}) \in \calC(\calH)$
and let
$p_I \colon H \to \fprod{i \in I} H_i$
be the induced map.
Then
\begin{itemize}
\item[(a)]
$\Ker p_I = \bigcap_{i \in I} \Ker p_i$,
hence
$p_I$ is injective
if and only if
$\bigcap_{i \in I} \Ker p_i = 1$.
\item[(b)]
$p_I$ is surjective
if and only if
the map $p_{I'} \colon H \to \fprod{i \in I'} H_i$
induced from
$(H,\{p_i\}_{i \in I'}) \in \calC(\{\eta_i)_{i \in I'})$
is surjective,
for every finite $I' \subseteq I$.
\end{itemize}
\end{Lemma}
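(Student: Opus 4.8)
The plan is to reduce everything to the concrete presentation \eqref{concrete}, under which $p_I$ is the coordinatewise map $h\mapsto(p_i(h))_{i\in I}$: this tuple lies in $\fprod{i \in I} H_i$ because $\eta_i(p_i(h))=\pbar(h)$ does not depend on $i$, and it is the unique element whose $j$-th coordinate is $p_j(h)$, so it equals $p_I(h)$. Part~(a) is then immediate: $h\in\Ker p_I$ if and only if $p_i(h)=1$ for all $i$, i.e.\ if and only if $h\in\bigcap_{i\in I}\Ker p_i$, and the injectivity criterion is the special case $\bigcap_{i\in I}\Ker p_i=1$.

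For part~(b) the organizing tool is the identification $\fprod{i \in I} H_i=\varprojlim_{I'}\fprod{i \in I'} H_i$ of Remark~\ref{properties of fiber product}(a), where $I'$ ranges over the finite subsets of $I$ and the transition maps are the projections, together with the factorization $p_{I'}=\pr_{I,I'}\circ p_I$. This factorization holds by uniqueness in the universal property, since $\pr_{I',i}\circ(\pr_{I,I'}\circ p_I)=\pr_{I,i}\circ p_I=p_i$ for every $i\in I'$. The forward implication is then routine: if $p_I$ is surjective then, as $\pr_{I,I'}$ is surjective (Remark~\ref{properties of fiber product}(f)), each $p_{I'}=\pr_{I,I'}\circ p_I$ is a composite of surjections, hence surjective.

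The converse is the crux, and the step I expect to be the main obstacle, because it is where the (possibly infinite) index set and the topology genuinely enter. Assume every $p_{I'}$ is surjective. Since $H$ is profinite, $S\defeq p_I(H)$ is a compact, hence closed, subgroup of the profinite group $\fprod{i \in I} H_i$, and $\pr_{I,I'}(S)=p_{I'}(H)=\fprod{i \in I'} H_i$ for every finite $I'$; that is, $S$ surjects onto every term of the inverse system.

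I would finish with the standard compactness argument that such an $S$ must be the whole group. Given $x\in\fprod{i \in I} H_i$, put $S_{I'}\defeq S\cap\pr_{I,I'}^{-1}(\pr_{I,I'}(x))$ for finite $I'\subseteq I$; each $S_{I'}$ is closed and nonempty (because $\pr_{I,I'}(S)$ is everything), and the family has the finite intersection property: for finite $I'_1,\dots,I'_n$ their union $I''$ is finite and, by the transitivity $\pr_{I'',I'_k}\circ\pr_{I,I''}=\pr_{I,I'_k}$ of the projections, $S_{I''}\subseteq S_{I'_k}$ for each $k$. Compactness of $S$ then yields some $s\in\bigcap_{I'}S_{I'}$; taking $I'=\{i\}$ shows $\pr_{I,i}(s)=\pr_{I,i}(x)$ for every $i$, so $s=x$ and $x\in S$. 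Hence $S=\fprod{i \in I} H_i$ and $p_I$ is surjective. The only genuinely non-formal ingredient is the compactness of $S$; everything else is bookkeeping with the universal property and with the transitivity of the projections.
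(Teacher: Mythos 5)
Your proposal is correct and takes essentially the same route as the paper: (a) is read off the concrete presentation \eqref{concrete}, and (b) uses the identification $\fprod{i \in I} H_i = \varprojlim_{I'} \fprod{i \in I'} H_i$ together with the factorization $p_{I'} = \pr_{I,I'} \circ p_I$, your closed-image/finite-intersection-property argument being exactly the standard inverse-limit fact that the paper compresses into ``$p_I$ is the inverse limit of the $(p_{I'})$, from which the assertion follows.'' The only caveat is that your appeal to Remark~\ref{properties of fiber product}(f) for the surjectivity of $\pr_{I,I'}$ presupposes that all $\eta_i$ are epimorphisms --- an assumption absent from the lemma's statement, where the $\eta_i$ are mere homomorphisms --- but the forward implication of (b) genuinely fails without it (take $\eta_1 = \id_G$ and $\eta_2$ non-surjective, $H$ the fiber product itself), so the paper's own terse proof rests on the same tacit hypothesis, under which the lemma is in fact later applied.
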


\begin{proof}
(a) Clear from \eqref{concrete}.

(b)
By Remark~\ref{properties of fiber product}(a),
$\fprod{i \in I} H_i = \varprojlim_{I'} \fprod{i \in I'} H_i$,
where $I'$ varies over all finite subsets of $I$;
we have 
$p_{I'} = \pr_{I,I'} \circ p_I$ for every such $I'$.
Thus $p_I$ is the inverse limit of the $(p_{I'})$,
from which the assertion follows.
\end{proof}

While (a) above
is a satisfactory criterion for injectivity,
(b) only reduces the question of surjectivity to a finite 
set of indices.

More can be said about surjectivity when
$\calH = (\eta_i\colon H_i\onto G)_{i \in I}$
is a family of epimorphisms.

\begin{Lemma}\label{surjectivity}
Let
$(H,\{p_i\}_{i \in I}, \pbar) \in \calC(\calH)$.
Assume that $\eta_i, p_i$ are epimorphisms,
for every $i \in I$.
For every $J \subseteq I$
let 
$p_J \colon H \to \fprod{i \in J} H_i$
be the unique map induced by
$(p_i)_{i \in J}$.
The following are equivalent:
\begin{itemize}
\item[(a)]
$p_I$ is surjective.
\item[(b)]
For all
$I' \subseteq I$
and $j \in I \smallsetminus I'$,
if $p_{I'}$ is surjective then
\begin{equation}\label{ss}
\xymatrix@=21pt{
H \arr[r]_{p_{I'}} \arr[d]^{p_j}
& \fprod{i \in I'} H_i \arr[d]^{\eta_{I'}}\\
H_j \arr[r]_{\eta_j} & G \\
}
\end{equation}
is semi-cartesian.
\item[(c)]
The same as (b), but only with $I'$ finite.
\end{itemize}
\end{Lemma}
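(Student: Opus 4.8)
The plan is to isolate a single one-step observation and then bootstrap it along finite subsets. Fix $I' \subseteq I$ and $j \in I \smallsetminus I'$, and suppose $p_{I'}$ is surjective, so that \eqref{ss} is a commutative square of epimorphisms (the maps $\eta_{I'}, \eta_j, p_j$ are surjective by hypothesis and Remark~\ref{properties of fiber product}(f)). By Remark~\ref{properties of fiber product}(e) we may identify $\fprod{i \in I' \cup \{j\}} H_i$ with $(\fprod{i \in I'} H_i) \times_G H_j$, and under this identification the induced map $p_{I' \cup \{j\}}$ sends $h \mapsto (p_{I'}(h), p_j(h))$. Since this fiber product consists exactly of the pairs $(g,b)$ with $\eta_{I'}(g) = \eta_j(b)$, surjectivity of $p_{I' \cup \{j\}}$ means precisely that every such pair is hit, which is verbatim criterion (f) of Definition~\ref{semi-cartesian} for \eqref{ss}. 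Thus I obtain the key equivalence: \emph{when $p_{I'}$ is surjective, \eqref{ss} is semi-cartesian if and only if $p_{I' \cup \{j\}}$ is surjective.}

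With this in hand I would run the cycle (a) $\Rightarrow$ (b) $\Rightarrow$ (c) $\Rightarrow$ (a). For (a) $\Rightarrow$ (b): assuming $p_I$ surjective and $p_{I'}$ surjective, the relation $p_{I' \cup \{j\}} = \pr_{I, I' \cup \{j\}} \circ p_I$ (from the universal property and Remark~\ref{properties of fiber product}(a)) together with surjectivity of $\pr_{I, I' \cup \{j\}}$ (Remark~\ref{properties of fiber product}(f)) shows $p_{I' \cup \{j\}}$ is surjective, so \eqref{ss} is semi-cartesian by the key equivalence. The implication (b) $\Rightarrow$ (c) is immediate, since (c) is just (b) restricted to finite $I'$.

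The substance is in (c) $\Rightarrow$ (a). By Lemma~\ref{injective surjective}(b), $p_I$ is surjective once $p_{I''}$ is surjective for every finite $I'' \subseteq I$, so I would prove the latter by induction on $|I''|$. The base cases are $p_\emptyset = \pbar = \eta_i \circ p_i$ (a composite of epimorphisms) and $p_{\{i\}} = p_i$, both surjective by hypothesis. For $|I''| \geq 1$, choose $j \in I''$ and set $I' = I'' \smallsetminus \{j\}$; then $I'$ is finite with $|I'| < |I''|$, so $p_{I'}$ is surjective by induction. As $I'$ is finite, hypothesis (c) makes \eqref{ss} semi-cartesian, and the key equivalence upgrades this to surjectivity of $p_{I''} = p_{I' \cup \{j\}}$, completing the induction.

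The only genuine obstacle is the first paragraph: one must recognize that the elementwise lifting property defining surjectivity of the map into the binary fiber product is exactly the semi-cartesian criterion (f), and that every finite fiber product is built up one factor at a time as in Remark~\ref{properties of fiber product}(e). Everything after that is a formal reduction to finite index sets via Lemma~\ref{injective surjective}(b) and an induction. I would only take minor care that the square \eqref{ss} actually consists of epimorphisms before invoking the semi-cartesian equivalences, which is exactly why the surjectivity of $p_{I'}$ is built into hypotheses (b) and (c).
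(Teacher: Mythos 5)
Your proof is correct and follows the paper's argument in all essentials: the same reduction to finite index sets via Lemma~\ref{injective surjective}(b), the same induction adding one factor at a time through the binary decomposition of Remark~\ref{properties of fiber product}(e), and the same one-step equivalence between semi-cartesianity of \eqref{ss} and surjectivity of $p_{I'\cup\{j\}}$, which the paper obtains by applying Lemma~\ref{expand}(a) and (b) to the cartesian inner square in \eqref{sss}. Your only deviation is cosmetic: you verify that equivalence directly from criterion (f) of Definition~\ref{semi-cartesian} via the concrete description of the binary fiber product, which amounts to inlining the proof of Lemma~\ref{expand} in this special case.
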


\begin{proof}
(a) $\implies$ (b):
Let 
$I' \subseteq I$
and
$j \in I \smallsetminus I'$,
and put
$J = I' \dotcup \{j\}$.
As $p_I$ is surjective,
so are
$p_J = \pr_{I,J} \circ p_I$
and
$p_{I'} = \pr_{I,I'} \circ p_I$.
By Remark~\ref{properties of fiber product}(e)
the inner square in the following diagram
is cartesian.
\begin{equation}\label{sss}
\xymatrix@=12pt{
H 
\arr[dddr]_{p_j}
\ar[rd]^(.65){p_J}
\arr[rrrrrd]^{p_{I'}}
\\
& \fprod{i \in J} H_i \arr[rrrr]_{\pr_{J,I}} \arr[dd]^{\eta_j} &&&&
\fprod{i \in I'} H_i \arr[dd]^{\eta_{I'}}
\\
\\
& H_j \arr[rrrr]^{\eta_j} &&&& G
}
\end{equation}
By Lemma~\ref{expand}(a),
the outer square is semi-cartesian.

\noindent
(b) $\implies$ (c):
Clear.

\noindent
(c) $\implies$ (a):
By Lemma~\ref{injective surjective}(b)
it suffices to show that
$p_J$ is surjective for every finite subset $J$ of $I$.
We proceed by induction on $|J|$.
If $J = \emptyset$ or $|J| = 1$, 
then $p_J$ is surjective.

So assume $|J| \ge 2$.
Let $j \in J$
and put $I' = J \smallsetminus \{j\}$.
By the induction hypothesis,
$p_{I'}$ is surjective.
By Remark~\ref{properties of fiber product}(e)
the inner square in \eqref{sss}
is cartesian.
By Lemma~\ref{expand}(b),
$p_J$ is surjective.
\end{proof}

We have the following generalization of
\cite[Lemma 25.2.4]{FJ}:

\begin{Lemma}\label{kernels of a fiber product}
Let
$(H,\{p_i\}_{i \in I}, \pbar) \in \calC(\calH)$
with $|I| \ge 2$.
Assume that $\eta_i, p_i$ are epimorphisms,
for every $i \in I$.
Put 
$L_j = \bigcap_{i \ne j} \Ker p_i$,
for every $j \in I$,
and
$L = \Ker \pbar$.
Then the induced map
$p_I \colon H \to \fprod{i \in I} H_i$
is a group isomorphism
if and only if
$L = \prod_{i \in I} L_i$.
\end{Lemma}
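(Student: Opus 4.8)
The plan is to prove the two implications separately, using the concrete description of the fiber product from \eqref{concrete} together with the surjectivity criterion of Lemma~\ref{surjectivity} and the kernel bookkeeping of Remark~\ref{properties of fiber product}(b). Recall that $p_I \colon H \to \fprod{i \in I} H_i$ is an isomorphism if and only if it is both injective and surjective; by Lemma~\ref{injective surjective}(a), injectivity amounts to $\bigcap_{i \in I} \Ker p_i = 1$. The key translation is that under $p_I$ the subgroup $L_j = \bigcap_{i \ne j} \Ker p_i$ should correspond to the ``$j$-th axis'' $K_j = \Ker \pr_{I,I \smallsetminus \{j\}}$ of the fiber product, whose identification with $\Ker \eta_j$ is recorded in Remark~\ref{properties of fiber product}(b). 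So the whole statement is really a comparison of the decomposition $\Ker \pbar = \prod_{i} L_i$ with the known decomposition $\Ker \eta_I = \prod_i K_i$.

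\textbf{The easy direction ($p_I$ isomorphism $\implies L = \prod_i L_i$).} First I would assume $p_I$ is an isomorphism and transport everything along it. Since $\eta_I \circ p_I = \pbar$, we have $p_I(L) = \Ker \eta_I$. Moreover $p_I$ carries $\Ker p_j = p_I^{-1}(\Ker \pr_{I,j})$, so $p_I(\Ker p_j) = \Ker \pr_{I,j} = \prod_{i \ne j} K_i$ by Remark~\ref{properties of fiber product}(b); intersecting over $i \ne j$ gives $p_I(L_j) = K_j$. The equality $\Ker \eta_I = \prod_i K_i$ from Remark~\ref{properties of fiber product}(b) then pulls back through the isomorphism $p_I$ to $L = \prod_i L_i$, as desired. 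This direction is essentially formal.

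\textbf{The hard direction ($L = \prod_i L_i \implies p_I$ isomorphism).} Here I would prove injectivity and surjectivity in turn. For injectivity, note $\bigcap_i \Ker p_i \subseteq L_j$ for each $j$, and since the $L_i$ sit inside $L$ as a \emph{direct} product, an element lying in all $L_i$ simultaneously must be trivial; hence $\bigcap_i \Ker p_i = 1$ and $p_I$ is injective by Lemma~\ref{injective surjective}(a). For surjectivity I would verify criterion (b) of Lemma~\ref{surjectivity}: given $I' \subseteq I$ and $j \in I \smallsetminus I'$ with $p_{I'}$ surjective, I must show the square \eqref{ss} is semi-cartesian, i.e.\ (using Definition~\ref{semi-cartesian}(c)) that $p_j(\Ker p_{I'}) = \Ker \eta_j$. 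The inclusion $\subseteq$ is automatic; for $\supseteq$ I would take $x \in \Ker \eta_j \cong L_j$ and use the hypothesis $L = \prod_i L_i$ to realize $x$ as the $j$-component of a genuine element of $H$ lying in $\bigcap_{i \in I'} \Ker p_i = \Ker p_{I'}$, whose $p_j$-image is exactly $x$.

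\textbf{Main obstacle.} The delicate point is the $\supseteq$ inclusion in the surjectivity step: I must translate the abstract direct-product hypothesis $L = \prod_i L_i$ into the concrete statement that $L_j$ maps \emph{onto} $\Ker \eta_j$ under $p_j$ and sits inside $\Ker p_{I'}$. The containment $L_j = \bigcap_{i \ne j} \Ker p_i \subseteq \Ker p_{I'}$ holds whenever $j \notin I'$, so the real content is surjectivity of $p_j|_{L_j} \colon L_j \onto \Ker \eta_j$. I expect this to follow by a clean counting/closure argument from the direct decomposition: since $p_j$ restricted to $L_j$ has kernel $\bigcap_i \Ker p_i = 1$ (already shown) and since $L = \prod_i L_i$ forces the $L_i$ to be precisely the factors matching $K_i$, the map $L_j \to \Ker \eta_j$ is an isomorphism. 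To make this rigorous I would likely pass through $p_I$ itself, showing $p_I(L_j) = K_j$ as closed subgroups directly from $L = \prod_i L_i$ and the uniqueness of such product decompositions, and then invoke that $p_I$ is already injective. Care with the profinite topology---products here are closed subgroups and the direct product is the profinite one---is needed but should not cause genuine difficulty.
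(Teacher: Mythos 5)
Your overall architecture is sound and in fact tracks the paper's proof closely: the forward direction is the same transport-along-$p_I$ argument; injectivity via $\bigcap_{i \in I}\Ker p_i \le L_j \cap L_k = 1$ for $j \ne k$ (this is exactly where $|I|\ge 2$ enters) is correct; and reducing surjectivity to semi-cartesianness of the squares \eqref{ss} via Lemma~\ref{surjectivity} is also the paper's route (a small mislabel: the identity $p_j(\Ker p_{I'}) = \Ker\eta_j$ you want is criterion (b), not (c), of Definition~\ref{semi-cartesian} for that square, since $\beta = p_j$ and $\eta = p_{I'}$ there). But the step you yourself flag as the main obstacle --- surjectivity of $p_j|_{L_j} \colon L_j \to \Ker\eta_j$ --- is genuinely unproven, and your proposed rigorization is circular. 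Showing $p_I(L_j) = K_j$ is \emph{equivalent} to that surjectivity: the inclusion $p_I(L_j) \subseteq K_j$ is automatic (all coordinates $i \ne j$ vanish on $L_j$), and the $j$-th coordinate of $p_I$ restricted to $L_j$ is precisely $p_j|_{L_j}$, so ``passing through $p_I$'' gains nothing. Injectivity of $p_I$ only tells you that $L_j$ embeds into $K_j$; and there is no ``uniqueness of product decompositions'' available, because at this stage $p_I$ is not known to be surjective, so the hypothesis $L = \prod_{i} L_i$ and the target decomposition $\Ker\eta_I = \prod_i K_i$ live in different groups and cannot be compared.

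The gap closes with a short direct argument, which is exactly the decomposition trick the paper isolates. Since $p_j$ is surjective, $p_j(L) = \Ker\eta_j$: given $x \in \Ker\eta_j$, choose $h \in H$ with $p_j(h) = x$; then $\pbar(h) = \eta_j(x) = 1$, so $h \in L$. Now write $h = h_j h'$ with $h_j \in L_j$ and $h' \in \prod_{i \ne j} L_i$, using $L = \prod_i L_i$; since $L_i \le \Ker p_j$ for every $i \ne j$, the closed subgroup $\prod_{i\ne j}L_i$ lies in $\Ker p_j$, hence $p_j(h_j) = p_j(h) = x$, and $h_j \in L_j \le \Ker p_{I'}$ because $I' \subseteq I \smallsetminus \{j\}$. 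This gives $p_j(\Ker p_{I'}) = \Ker\eta_j$, i.e.\ semi-cartesianness of \eqref{ss}, and you are done. The paper packages the same idea more efficiently as a single Claim valid for every $J \subseteq I$, namely $\Ker p_J = \prod_{i \in I \smallsetminus J} L_i$, which yields injectivity (the case $J = I$) and semi-cartesianness via criterion (a) of Definition~\ref{semi-cartesian}, in the form $(\Ker p_{I'})(\Ker p_j) = \prod_{i \in I} L_i = \Ker\pbar$, in one stroke; you should either prove that Claim or insert the two-line argument above where you currently appeal to a ``counting/closure argument''.
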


\begin{proof}
If $p_I$ is an isomorphism,
we may assume that $H = \fprod{i \in I} H_i$
and $p_i = \pr_{I,i}$ for every $i \in I$.
Then 
$L_i = K_i$ for every $i \in I$
and
$L = \Ker \eta_I = \prod_{i \in I} L_i$
by Remark~\ref{properties of fiber product}(b).

Conversely, assume
$L = \prod_{i \in I} L_i$.

For $J \subseteq I$ let
$p_J \colon H \to \fprod{i \in J} H_i$
be the unique map induced by
$(p_i)_{i \in J}$.

\subdemoinfo{Claim}{
$\Ker p_J = \prod_{i \in J^c} L_i$,
where
$J^c = I \smallsetminus J$.}

Indeed,
if $i \ne j$,
then $L_i \le \Ker p_j$.
Thus
$\prod_{i \in J^c} L_i \le \bigcap_{j \in J} \Ker p_j = \Ker p_J$.
Conversely,
let $h \in \Ker p_J$.
Then $h \in \Ker \pbar = \prod_{i \in I} L_i$,
hence 
$h = h_J h'$,
where
$h_J \in \prod_{i \in J} L_i$
and
$h' \in \prod_{i \in J^c} L_i$.
As noted above,
$h' \in \Ker p_J$,
hence
$h_J = h(h')^{-1} \in \Ker p_J$.
Thus
$h_J \in ( \prod_{i \in J} L_i ) \cap \Ker p_J$.
But
$\prod_{i \in J} L_i \le \bigcap_{i \in J^c} \Ker p_i$
and
$\Ker p_J = \bigcap_{i \in J} \Ker p_i$,
so
$h_J \in \bigcap_{i \in I} \Ker p_i = 1$.
Therefore $h_J = 1$,
whence
$h = h' \in \prod_{i \in J^c} L_i$.
This proves the claim.

Now let $J = I' \dotcup \{j\}$
be a subset of $I$.
Then
$$
(\Ker p_{I'})(\Ker p_j) =
(\prod_{i \notin I'} L_i) (\prod_{i \ne j} L_i) =
\prod_{i \in I} L_i = L = \Ker \pbar.
$$
Hence \eqref{ss} is semi-cartesian.
By Lemma~\ref{surjectivity},
$p_I$ is surjective.
\end{proof}

We generalize 
Definition~\ref{compact cartesian square}:

\begin{Definition}\label{compact fiber product}
Assume that $\{\eta_i\}_{i \in I}$
are epimorphisms
and that $I \ne \emptyset$.
The fiber product
$\fprod{i \in I} H_i$
is \textbf{compact} if
no proper closed subgroup of $\fprod{i \in I} H_i$ is mapped,
simultaneously for all $i \in I$,
by $\pr_{I,i}$ onto $H_i$.

Obviously, if
$\fprod{i \in I} H_i$
is compact and
$(H,\{p_i\}_{i \in I}, \pbar)$
with $p_i$ epimorphisms,
is in $\calC(\calH)$,
then the induced map
$p_I \colon H \to \fprod{i \in I} H_i$
is surjective,
because the subgroup $p_I(H)$ of
$\fprod{i \in I} H_i$
cannot be proper.
\end{Definition}

\begin{Lemma}\label{compact invlim}
Assume that all the $\eta_i$ are surjective.
Then
$\fprod{i \in I} H_i$
is compact
if and only if
$\fprod{i \in I'} H_i$
is compact
for every finite $I' \subseteq I$.
\end{Lemma}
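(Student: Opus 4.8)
The plan is to reduce the compactness of the full fiber product to that of the finite sub-fiber-products by realizing $\fprod{i \in I} H_i$ as an inverse limit of the finite ones and transferring the compactness property across this limit. The forward direction is essentially immediate: if the full fiber product is compact, then for any finite $I' \subseteq I$ I want to show $\fprod{i \in I'} H_i$ is compact, and this should follow by pulling back a hypothetical proper subgroup of $\fprod{i \in I'} H_i$ to $\fprod{i \in I} H_i$.

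For the converse --- the substantive direction --- I would argue by contrapositive. Suppose $\fprod{i \in I} H_i$ is \emph{not} compact, so there is a proper closed subgroup $E \lneq \fprod{i \in I} H_i$ that is mapped onto $H_i$ by $\pr_{I,i}$ for every $i \in I$. The key point is that $E$ being proper and closed in a profinite group means there is an open normal subgroup $U \normal \fprod{i \in I} H_i$ with $EU \ne \fprod{i \in I} H_i$. Using the presentation $\fprod{i \in I} H_i = \varprojlim_{I'} \fprod{i \in I'} H_i$ from Remark~\ref{properties of fiber product}(a), the open subgroup $U$ contains the kernel $\Ker \pr_{I,I'} = \prod_{i \in I \smallsetminus I'} K_i$ for some sufficiently large finite $I' \subseteq I$. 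Then I would set $E' = \pr_{I,I'}(E)$, a closed subgroup of $\fprod{i \in I'} H_i$; since $E$ surjects onto each $H_i$, the image $E'$ surjects onto each $H_i$ for $i \in I'$ (because $\pr_{I',i} \circ \pr_{I,I'} = \pr_{I,i}$). The crux is to verify that $E'$ is a \emph{proper} subgroup of $\fprod{i \in I'} H_i$: this is exactly where the choice of $U \supseteq \Ker \pr_{I,I'}$ enters, guaranteeing that $\pr_{I,I'}(E) \ne \fprod{i \in I'} H_i$, since otherwise $E$ together with $\Ker \pr_{I,I'} \le U$ would generate the whole group, contradicting $EU \ne \fprod{i \in I} H_i$.

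I expect the main obstacle to be the careful bookkeeping at this last step: confirming that the open normal subgroup $U$ can be chosen to contain $\Ker \pr_{I,I'}$ for a finite $I'$, and then correctly deducing properness of the image $E'$ from the relation $EU \ne \fprod{i \in I} H_i$. The essential profinite input is that closed proper subgroups are separated from the whole group by open normal subgroups, combined with the fact that the family $\{\Ker \pr_{I,I'}\}_{I' \text{ finite}}$ forms a cofinal system of open normal subgroups whose intersection is trivial. Once $E' \lneq \fprod{i \in I'} H_i$ surjects onto every $H_i$ with $i \in I'$, the finite fiber product $\fprod{i \in I'} H_i$ fails to be compact, establishing the contrapositive.

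For the forward direction, I would note that if some finite $\fprod{i \in I'} H_i$ were not compact, with witnessing proper subgroup $E'$, then $E = \pr_{I,I'}^{-1}(E')$ is a proper closed subgroup of $\fprod{i \in I} H_i$ that still surjects onto each $H_i$: for $i \in I'$ this is clear, and for $i \notin I'$ surjectivity holds because $\pr_{I,i}$ factors through coordinates whose preimage under $\pr_{I,I'}^{-1}$ is unconstrained, so $E$ contains all of $\Ker \pr_{I,I'}$ and hence maps onto $H_i$. This would contradict compactness of the full fiber product.
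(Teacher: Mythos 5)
Your proof is correct and is essentially the paper's argument in contrapositive form: both directions transfer witnessing subgroups between the full and finite fiber products via $\pr_{I,I'}^{-1}$ and $\pr_{I,I'}$ using the presentation $\fprod{i \in I} H_i = \varprojlim_{I'} \fprod{i \in I'} H_i$, and your separation of the proper closed subgroup $E$ by an open normal $U \supseteq \Ker \pr_{I,I'}$ is precisely the content of the fact the paper's direct argument uses, namely that a closed subgroup equals the inverse limit of its images $\pr_{I,I'}(H')$. One half-line in your forward direction should be spelled out: for $i \notin I'$, the inclusion $E \supseteq \Ker \pr_{I,I'}$ only yields $\pr_{I,i}(E) \supseteq \pr_{I,i}(K_i) = \Ker \eta_i$, and to conclude $\pr_{I,i}(E) = H_i$ you must add that $\eta_i(\pr_{I,i}(E)) = \eta_I(E) = \eta_{I'}(E') = G$, which follows since $E'$ surjects onto $H_j$ for some $j \in I'$ — exactly the step the paper writes out.
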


\begin{proof}
Assume that $\fprod{i \in I} H_i$ is compact
and let $I' \subseteq I$.
Let $H' \le \fprod{i \in I'} H_i$
such that
$\pr_{I',i}(H') = H_i$
for every $i \in I'$.
Let
$H = \pr_{I,I'}^{-1}(H') \le \fprod{i \in I} H_i$.
We claim that
$\pr_{I,i}(H) = H_i$
for every $i \in I$.
Indeed,
if $i \in I'$, then
$\pr_{I,i}(H) = \pr_{I',i}(\pr_{I,I'}(H)) = \pr_{I',i}(H') = H_i$.
If $i \in I \smallsetminus I'$,
denote
$E = \pr_{I,i}(H)$.
Then $E \le H_i$
and
$\eta_i(E) = \eta_I(H) = \eta_{I'}(H') = G$.
Let $K_i$ be as in Remark~\ref{properties of fiber product}(b),
then
$K_i \le H$,
hence 
$\Ker \eta_i = \pr_{I,i}(K_i) \le E$.
Thus $E = H_i$.

As $\fprod{i \in I} H_i$ is compact,
$H = \fprod{i \in I} H_i$.
Hence
$H' = \pr_{I,I'}(H) = \fprod{i \in I'} H_i$.

Now assume that $\fprod{i \in I'} H_i$ is compact
for every finite $I' \subseteq I$.
We have
$\fprod{i \in I} H_i = \varprojlim_{I'} \fprod{i \in I'} H_i$.
Let $H' \le \fprod{i \in I} H_i$
such that
$\pr_{I,i}(H') = H_i$
for every $i \in I$.
For every finite $I' \subseteq I$
put
$H'_{I'} = \pr_{I,I'}(H') \le \fprod{i \in I'} H_i$,
then 
$H = \varprojlim_{I'} H'_{I'}$.
But
$\pr_{I',i}(H'_{I'}) = \pr_{I,i}(H') = H_i$
for every $i \in I'$,
hence
$H'_{I'} = \fprod{i \in I'} H_i$
for every finite $I' \subseteq I$.
Thus $H' = \fprod{i \in I} H_i$.
\end{proof}

\begin{Lemma}\label{compact characterization}
Assume that $\{\eta_i\}_{i \in I}$
are epimorphisms
and $I \ne \emptyset$.
Then
$\fprod{i \in I} H_i$
is compact
if and only if
for every finite subset $I'$ of $I$
and every $j \in I \smallsetminus I'$
the following cartesian square,
with $J = I' \dotcup \{j\}$,
is compact.
\begin{equation}\label{simple partition 2}
\xymatrix@=15pt{
\fprod{i \in J} H_i \arr[rr]_{\pr_{J,I'}} \arr[d]^{\pr_{J,j}}
&& \fprod{i \in I'} H_i \arr[d]^{\eta_{I'}}\\
H_j \arr[rr]_{\eta_j} && G \\
}
\end{equation}
\end{Lemma}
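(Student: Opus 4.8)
The plan is to reduce the statement to the finite case via Lemma~\ref{compact invlim} and then, within the finite case, to build up the fiber product one factor at a time and track compactness through the resulting chain of cartesian squares using Remark~\ref{square trivialities}(b).

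First I would use Lemma~\ref{compact invlim} to replace $I$ by an arbitrary finite subset: since all $\eta_i$ are epimorphisms, $\fprod{i \in I} H_i$ is compact if and only if $\fprod{i \in I'} H_i$ is compact for every finite $I' \subseteq I$. So it suffices to prove the equivalence when $I$ itself is finite, and I would argue by induction on $|I|$. The base cases $|I| \le 1$ are trivial since there is then no $j$ to choose and the fiber product is a single $H_j$ (or $G$), which is vacuously compact.

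For the inductive step, write $I = I_0 \dotcup \{k\}$ for some fixed $k \in I$ and set $I_0 = I \smallsetminus \{k\}$. By Remark~\ref{properties of fiber product}(e), the square
\begin{equation*}
\xymatrix@=15pt{
\fprod{i \in I} H_i \arr[rr]_{\pr_{I,I_0}} \arr[d]^{\pr_{I,k}}
&& \fprod{i \in I_0} H_i \arr[d]^{\eta_{I_0}}\\
H_k \arr[rr]_{\eta_k} && G \\
}
\end{equation*}
is cartesian. The idea is to relate compactness of $\fprod{i \in I} H_i$ (as a multiple fiber product) to compactness of this single cartesian square together with compactness of $\fprod{i \in I_0} H_i$. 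Concretely, I expect that $\fprod{i \in I} H_i$ is compact \emph{if and only if} both $\fprod{i \in I_0} H_i$ is compact \emph{and} the displayed square is compact. One direction is easy: if $\fprod{i \in I} H_i$ is compact, a subgroup witnessing non-compactness of either the square or of $\fprod{i \in I_0} H_i$ can be pulled back (along $\pr_{I,I_0}$ or inflated by the kernel $K_k$) to a proper subgroup of $\fprod{i \in I} H_i$ surjecting onto every factor, a contradiction. For the converse, a proper subgroup $H'$ of $\fprod{i \in I} H_i$ surjecting onto all $H_i$ projects to a subgroup $\pr_{I,I_0}(H') \le \fprod{i \in I_0} H_i$ surjecting onto all factors in $I_0$, so by compactness of $\fprod{i \in I_0} H_i$ this projection is everything; then compactness of the cartesian square (criterion~(a) of Definition~\ref{compact cartesian square}, applied with $\beta = \pr_{I,I_0}$ and $\eta = \pr_{I,k}$) forces $H' = \fprod{i \in I} H_i$.

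Having established this recursive characterization, I would combine it with the induction hypothesis: $\fprod{i \in I_0} H_i$ is compact iff the squares \eqref{simple partition 2} are compact for all finite $I' \subseteq I_0$ with their distinguished element, and the one remaining square (the one for $J = I$, $I' = I_0$, $j = k$) is exactly the cartesian square above. This yields compactness of $\fprod{i \in I} H_i$ iff \emph{all} the squares \eqref{simple partition 2} for subsets of $I$ are compact, which is the claim. The main obstacle I anticipate is the converse direction of the recursive step: one must be careful that the square-compactness criterion is being applied to the correct cartesian square and that the projection argument genuinely produces the full fiber product rather than merely a large subgroup; here criterion~(a) of Definition~\ref{compact cartesian square} (no proper subgroup of the apex surjects onto both feet) is precisely what licenses the conclusion, so the delicate point is matching the combinatorics of indices in \eqref{simple partition 2} against the single-factor decomposition used in the induction.
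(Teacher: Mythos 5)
Your proposal is correct and takes essentially the same route as the paper: reduction to finite $I$ via Lemma~\ref{compact invlim}, then induction on $|I|$ using the decomposition $I = I' \dotcup \{j\}$ and the cartesian square of Remark~\ref{properties of fiber product}(e), where the induction hypothesis forces the projection of a candidate subgroup onto $\fprod{i \in I'} H_i$ to be all of it and compactness of the square then yields equality. The only cosmetic difference is in the forward direction, where the paper simply applies Lemma~\ref{compact invlim} to the finite subset $J$ instead of spelling out your pullback/kernel-inflation argument, which is the same reasoning packaged inside that lemma.
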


\begin{proof}
The square is cartesian
by Remark~\ref{properties of fiber product}(e).
By Lemma~\ref{compact invlim}
we may assume that $I$ is finite.

First assume that $\fprod{i \in I} H_i$ is compact
and consider diagram \eqref{simple partition 2}.
Let $H' \le \fprod{i \in I} H_i$
such that
$\pr_{J,j}(H') = H_j$
and
$\pr_{J,I'}(H') = \fprod{i \in I'} H_i$.
Then
$\pr_{J,i}(H') = \pr_{I',i}(\fprod{i \in I'} H_i) = H_i$
for every $i \in I'$.
By Lemma~\ref{compact invlim},
$\fprod{i \in J} H_i$ is compact,
hence
$H' = \fprod{i \in J} H_i$.
Thus \eqref{simple partition 2} is compact.

Conversely, 
assume that
\eqref{simple partition 2} is compact,
for all $I' \subseteq I$ and $j \in I \smallsetminus I'$.
We proceed by induction on $|I|$.

If $|I| = 1$, say, $I = \{j\}$,
then $\fprod{i \in I} H_i = H_j$ is compact.
So assume that
$|I| \ge 2$.

Choose $j \in I$
and 
let $I' = I \smallsetminus \{j\}$.
Put $J = I$.
As all subsets of $I'$ are also subsets of $I$,
by the induction hypothesis
$\fprod{i \in I'} H_i$ is compact.
Let $H' \le \fprod{i \in I} H_i$
such that
$\pr_{J,i}(H') = H_i$
for every $i \in J$.
Put $H'' = \pr_{J,I'}(H') \le \fprod{i \in I'} H_i$.
Then $\pr_{I',i}(H'') = \pr_{J,i}(H') = H_i$
for every $i \in I'$.
Hence 
$H'' = \fprod{i \in I'} H_i$.
Thus
$\pr_{J,j}(H') = H_i$
and
$\pr_{J,I'}(H') = \fprod{i \in I'} H_i$.
As
\eqref{simple partition 2} is compact,
$H' = \fprod{i \in J} H_i$.
Thus $\fprod{i \in J} H_i$ is compact.
\end{proof}

\begin{Lemma}\label{ff}
Let $I$ be a set
and for every $i \in I$ let
\eqref{carti}
below
be a compact cartesian square.
Assume that $\fprod{i \in I} H_i$ is compact.

\noindent
\begin{minipage}{.325\linewidth}
\begin{equation}\label{carti}
\xymatrix@=36pt{
H_i \arr[r]_{\eta_i} \arr[d]^{\beta_i} & G \arr[d]^{\phi} \\
B_i \arr[r]^{\alpha_i} & A \\
}
\end{equation}
\end{minipage}
\begin{minipage}{.325\linewidth}
\begin{equation}\label{cart2}
\xymatrix{
\fprod{i \in I} H_i
\arr[r]_{\eta_I} \arr[d]^{\hat{\beta}} & G \arr[d]^{\phi} \\
\fprod[A]{i \in I} B_i
\arr[r]^{\alpha_I} & A \\
}
\end{equation}
\end{minipage}
\begin{minipage}{.325\linewidth}
\begin{equation}\label{cart3}
\xymatrix{
\fprod{i \in I} H_i
\arr[r]_{\eta_I} \arr[d]^{\hat{\beta}} & G \arr[d]^{\phi} \\
\fprod[A]{i \in I} B_i
\arr[r]^{\alpha_I} & A \\
}
\end{equation}
\end{minipage}
\newline
Then
the cartesian square
\eqref{cart2}
is compact.
Here 
$\eta_I$
and
$\alpha_I$
are the structure maps of the fiber products
formed from
$\{\eta_i\}_{i \in I}$
and
$\{\alpha_i\}_{i \in I}$,
respectively,
and $\hat{\beta}$ is the unique map
such that
\eqref{cart3}
above
commutes, for every $j \in I$.
\end{Lemma}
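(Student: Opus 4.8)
The plan is to first verify that \eqref{cart2} is cartesian and then to deduce its compactness directly from Definition~\ref{compact cartesian square}(a), combining the two compactness hypotheses: that each \eqref{carti} is a compact cartesian square and that $\fprod{i \in I} H_i$ is a compact fiber product.

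Since each \eqref{carti} is cartesian, $H_i = B_i \times_A G$ with coordinate projections $\beta_i$ and $\eta_i$. Applying the base-change property Remark~\ref{properties of fiber product}(h) to the family $\{\alpha_i \colon B_i \to A\}$ over $A$ and the map $\phi \colon G \to A$ gives
$$
\fprod{i \in I} H_i = \fprod[G]{i \in I}(B_i \times_A G) = \Big(\fprod[A]{i \in I} B_i\Big)\times_A G,
$$
which says precisely that \eqref{cart2} is cartesian, with $\hat\beta$ and $\eta_I$ as its coordinate projections.

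For compactness I would apply criterion (a) of Definition~\ref{compact cartesian square}: let $E$ be a closed subgroup of $\fprod{i \in I} H_i$ with $\eta_I(E) = G$ and $\hat\beta(E) = \fprod[A]{i \in I} B_i$, and aim to force $E = \fprod{i \in I} H_i$. For each $i$ put $E_i = \pr_{I,i}(E) \le H_i$. From $\eta_i \circ \pr_{I,i} = \eta_I$ I get $\eta_i(E_i) = G$; from the defining relation $q_i \circ \hat\beta = \beta_i \circ \pr_{I,i}$ of $\hat\beta$ recorded in \eqref{cart3}, where $q_i \colon \fprod[A]{i \in I} B_i \onto B_i$ is the $i$-th projection (surjective by Remark~\ref{properties of fiber product}(f)), I get $\beta_i(E_i) = q_i(\hat\beta(E)) = q_i(\fprod[A]{i \in I} B_i) = B_i$. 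Thus $E_i$ is carried onto $G$ by $\eta_i$ and onto $B_i$ by $\beta_i$, so the compactness of \eqref{carti} forces $E_i = H_i$.

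Having shown $\pr_{I,i}(E) = H_i$ for every $i \in I$, I would conclude with the hypothesis that $\fprod{i \in I} H_i$ is a compact fiber product (Definition~\ref{compact fiber product}): a closed subgroup projecting onto each factor $H_i$ cannot be proper, so $E = \fprod{i \in I} H_i$, establishing the compactness of \eqref{cart2}. The point to handle with care --- rather than a deep obstacle --- is the interaction of the two distinct notions labelled \emph{compact}: the argument never requires $E$ to surject onto $\fprod[A]{i \in I} B_i$ in any coordinate-coherent way, only onto each $B_i$ separately, which is exactly the input the compact squares \eqref{carti} accept and exactly what the componentwise definition of fiber-product compactness delivers. (The degenerate case $I = \emptyset$ need not be treated separately, since the compactness hypothesis on $\fprod{i \in I} H_i$ already presupposes $I \ne \emptyset$.)
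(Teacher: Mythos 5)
Your proposal is correct and is essentially the paper's own argument: you project a candidate subgroup $E$ onto each factor via $\pr_{I,i}$, use the commuting relation $q_i \circ \hat\beta = \beta_i \circ \pr_{I,i}$ to show each image is mapped onto $B_i$ and onto $G$, invoke compactness of each square \eqref{carti} to get $\pr_{I,i}(E) = H_i$, and finish with compactness of the fiber product. The only cosmetic difference is that you verify cartesianity of \eqref{cart2} via the base-change property of Remark~\ref{properties of fiber product}(h), where the paper uses the explicit coordinate description $\hat\beta \colon (h_i)_{i \in I} \mapsto (\beta_i(h_i))_{i \in I}$; both are immediate.
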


\begin{proof}
The existence and the uniqueness of $\hat{\beta}$
follow from the universal property of
$\fprod[A]{i \in I} B_i$.
Using the explicit presentation \eqref{concrete} of
$\fprod{i \in I} H_i$
and a similar one of
$\fprod[A]{i \in I} B_i$,
\ $\hat{\beta}$ is
$(h_i)_{i \in I} \mapsto (\beta_i(h_i))_{i \in I}$.
It easily follows that \eqref{cart2} is a cartesian square.
Therefore, as $\phi$ is surjective, so is $\hat{\beta}$.
As the $\eta_i$ and $\alpha_i$ are surjective,
so are 
$\eta_I$ and $\alpha_I$.

To show that \eqref{cart2} is compact,
let $H' \le \fprod{i \in I} H_i$
such that
$\hat{\beta}(H') = \fprod[A]{i \in I} B_i$
and
$\eta_I(H') = G$.
Put $H'_j = \pr_{I,j}(H') \le H_j$ for every $j \in I$.
Then, from \eqref{cart3},
$\beta_j(H'_j) = B_j$
and
$\eta_j(H'_j) = \eta_I(H') = G$,
because $\eta_j \circ \pr_{I,j} = \eta_I$.
As \eqref{carti} is compact,
$H'_j = H_j$.
Thus, as $\fprod{i \in I} H_i$ is compact,
$H' = H$.
\end{proof}

\section{A duality theory for 
isotypic profinite semi\-simple modules}\label{duality}

In this section
let $R$ be a profinite ring
(\cite[Section 5.1]{RZ})
and let $K$ be
a \textbf{profinite $R$-module},
that is,
a profinite abelian group on which $R$ acts continuously.
Then $K$ is an inverse limit of finite $R$-modules
(\cite[Lemma 5.1.1(b)]{RZ}).
Thus,
a submodule $L$ of $K$ is profinite
if and only if
$L$ is closed in $K$.

Fix a finite simple $R$-module $A$.
We denote
$$
K^* \defeq \Hom_R(K,A) \defeq \{\phi \colon K \to A \st
\phi \textnormal{ is a continuous } R\textnormal{-homomorphism}\}
$$
and $F = A^* = \Hom_R(A,A) = \End_R(A)$.
We say that $K$ is \textbf{$A$-generated} if
$$
\bigcap_{\phi \in K^*} \Ker \phi = 0.
$$

\begin{Remark}\label{A-generated rudiments}
(a)
Let $N = \bigcap_{\phi \in K^*} \Ker \phi$.
Then $K/N$ is $A$-generated.
Indeed,
every $\phi \in K^*$ induces a unique $\phi' \in (K/N)^*$;
the map $\phi \mapsto \phi'$ is a bijection $K^* \to (K/N)^*$
and $\Ker \phi' = (\Ker \phi)/N$,
for every $\phi \in K^*$.
Thus
$$
\bigcap_{\phi' \in (K/N)^*} \Ker \phi' =
\bigcap_{\phi \in K^*} (\Ker \phi)/N =
(\bigcap_{\phi \in K^*} \Ker \phi)/N =
N/N = 0.
$$

(b)
A closed $R$-submodule $C$
of an $A$-generated $R$-module $K$
is also $A$-generated.
Indeed,
every $\phi \in K^*$ restricts to an element of $C^*$,
and so
$$
\bigcap_{\psi \in C^*} \Ker \psi \le
\bigcap_{\phi \in K^*} (C \cap \Ker \phi) \le
\bigcap_{\phi \in K^*} \Ker \phi = 0.
$$

(c)
Suppose that $K$ is finite (as a set).
Then there is an open ideal $I$ of $R$
that acts trivially on $K$,
so that $K$ is an $R/I$-module
(in the usual sense, without topological consideration).
Every submodule and every quotient module of $K$
is also an $R/I$-module.
Thus, for a finite $K$,
the theory of semisimple modules applies.

(d)
If $K$ is finite,
then $K^*$ is finite,
say, $K^* = \{\phi_i\}_{i=1}^n$,
and the map
$x \mapsto (\phi_i(x))_{i=1}^n$
is an $R$-homomorphism
$\pi \colon K \to \prod_{i=1}^n A_i = \bigoplus_{i=1}^n A_i$,
where $A_i \isom_R A$ for each $i$.
As $\prod_{i=1}^n A_i$ is isotypic semisimple,
so is $\pi(K)$, and hence
$\pi(K)$ is a direct sum of finitely many copies of $A$.
If $K$ is also $A$-generated,
$\pi$ is injective,
hence $K$ is
a direct sum of finitely many copies of $A$.
\end{Remark}

\begin{Example}\label{direct power of A}
Let $I$ be a set and let
$A^I$ be the direct product of $|I|$ copies of $A$,
that is,
$A^I = \prod_{i \in I} A_i$, where $A_i = A$ for every $i \in I$.
(If $I = \emptyset$, then $A^I = 0$.)
Then $A^I$ is $A$-generated.
Indeed,
for every $i \in I$ let $\phi_i \colon A^I \onto A = A_i$
be the projection on the $i$-th coordinate.
This is an element of $(A^I)^*$,
and $\bigcap_{i \in I} \Ker \phi_i = 0$.

Proposition~\ref{A-generated module is a free product}
will show
that every $A$-generated $R$-module is of this form.
\end{Example}

\begin{Remark}\label{vector space}
(a)
Let $\phi \in K^*$.
Then $\phi(K)$ is an $R$-submodule of the simple $R$-module $A$,
hence either $\phi = 0$ or $\phi$ is surjective.

(b)
By Schur's Lemma,
$F = \End_R(A)$ is a division ring; 
as $A$ is finite, so is $F$,
hence, by Wedderburn's little theorem,
$F$ is a field.
In particular, $A$ is a vector space over $F$:
here $F$ acts on $A$ by
$\alpha a = \alpha(a)$,
for $\alpha \in F$ and $a \in A$.
The actions of $R$ and $F$ on $A$ commute.

(c)
The field $F$ acts also on $K^*$, by
$\alpha \phi \defeq \alpha \circ \phi$,
for $\alpha \in F$ and $\phi \in K^*$.
It is easy to see that
$K^*$ is also a vector space over $F$.

(d)
%
A continuous homomorphism of $R$-modules
$\beta \colon K \to K'$
defines an $F$-linear map
$\beta^* \colon (K')^* \to K^*$ by
$\phi' \mapsto \phi' \circ \beta$.
Thus $K \mapsto K^*$ is a contravariant functor from
the category of profinite $R$-modules
to the category of vector spaces over $F$.
We restrict it to $A$-generated $R$-modules.
\end{Remark}

Dually to the definition of $K^*$ for an $R$-module we have the following:

\begin{Definition}\label{dual of a vector space}
Let $V$ be a vector space over $F$.
The set of maps
from $V$ to $A$
can be identified with the direct product $A^V$ of copies of $A$,
and as such it is a profinite $R$-module in the product topology;
explicitly, the $R$-action is given by
$(r\psi)(v) = r\psi(v)$,
where $r \in R$, $\psi \colon V \to A$, and $v \in V$.

This action is continuous:
First,
for every $a \in A$ there is an open neighborhood $R_a$ of $0$ in $R$
such that $R_a a = 0$.
Then $R' = \bigcap_{a \in A} R_a$
is an open neighborhood of $0$ in $R$
such that $R' A = 0$.
Hence $R'(A^V) = 0$.
%
%
%

Moreover, $A^V$ is $A$-generated, by Example~\ref{direct power of A}.

The subset $V^*$ of $A^V$
consisting of $F$-linear transformations $V \to A$
is clearly closed in $A^V$.
%
It is an $R$-module, since the actions of $R$ and $F$ on $A$ commute.
As $A^V$ is $A$-generated, so is $V^*$,
by Remark~\ref{A-generated rudiments}(b).

Notice that if $C$ is an $F$-linear basis of $V$,
then the restriction $A^V \onto A^C$ maps $V^*$ isomorphically onto $A^C$.
Thus $V^*$ is isomorphic to the direct product of
$|C| = \dim_F V$
copies of $A$.

An $F$-linear map
$S \colon V' \to V$
of vector spaces over $F$ defines
a continuous $R$-homomorphism
$S^* \colon V^* \to (V')^*$ by
$\psi \mapsto \psi \circ S$.
%
%
Thus $V \mapsto V^*$ is a contravariant functor from
the category of vector spaces over $F$
to the category of $A$-generated profinite $R$-modules.
\end{Definition}

\begin{Definition}\label{natural isomorphisms}
If $K$ is an $A$-generated $R$-module,
there is a natural homomorphism
$\Theta_K \colon K \to (K^*)^*$:
for every $b \in K$ the linear transformation
$\Theta_K(b) \colon K^* \to A$
is given by $\phi \mapsto \phi(b)$.

Similarly,
if $V$ is a vector space over $F$,
there is a natural homomorphism
$\Lambda_{V} \colon V \to (V^*)^*$:
for every $v \in V$ the continuous $R$-homomorphism
$\Lambda_{V}(v) \colon V^* \to A$
is given by $\psi \mapsto \psi(v)$.
\end{Definition}

\begin{Lemma}\label{categories equivalence diagram}
Let $\beta \colon K \to K'$ be a continuous homomorphism of
$A$-generated $R$-modules.
Then the following diagram commutes
$$
\xymatrix@=30pt{
K \ar[r]_{\Theta_{K}} \ar[d]^{\beta} & K^{**} \ar[d]^{\beta^{**}}\\
K' \ar[r]_{\Theta_{K'}} & (K')^{**}\rlap{.} \\
}
$$
\end{Lemma}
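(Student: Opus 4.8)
The plan is to prove commutativity by a direct diagram chase: fix an element of $K$, observe that both composites land in $((K')^*)^*$ and so are determined by their values on $(K')^*$, and then unwind the three defining formulas ($\Theta$, $\beta^*$, and $\beta^{**}$) until the two sides visibly coincide. There is no real content beyond bookkeeping, but one must keep straight which functor is being applied at each stage, since the symbol $(\cdot)^*$ denotes the module-dual $\Hom_R(-,A)$ in one direction (Remark~\ref{vector space}(d)) and the vector-space-dual of Definition~\ref{dual of a vector space} in the other.

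First I would fix $b \in K$. Both $\Theta_{K'}(\beta(b))$ and $\beta^{**}(\Theta_K(b))$ are elements of $((K')^*)^*$, that is, $F$-linear maps $(K')^* \to A$; hence it suffices to show that they agree after evaluating at an arbitrary $\phi' \in (K')^*$. By Definition~\ref{natural isomorphisms}, the left-then-bottom composite gives $\Theta_{K'}(\beta(b))$, whose value at $\phi'$ is $\phi'(\beta(b))$.

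For the top-then-right composite I would first recall the two functorial maps involved. By Remark~\ref{vector space}(d), $\beta^* \colon (K')^* \to K^*$ is the $F$-linear map $\phi' \mapsto \phi' \circ \beta$. Applying the functor of Definition~\ref{dual of a vector space} to this linear map yields $\beta^{**} = (\beta^*)^*\colon (K^*)^* \to ((K')^*)^*$, given by $\Psi \mapsto \Psi \circ \beta^*$. Therefore $\beta^{**}(\Theta_K(b)) = \Theta_K(b) \circ \beta^*$, and evaluating at $\phi'$ gives
$$
\Theta_K(b)\bigl(\beta^*(\phi')\bigr) = \Theta_K(b)(\phi' \circ \beta) = (\phi' \circ \beta)(b) = \phi'(\beta(b)),
$$
where the middle equality is the definition of $\Theta_K$.

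Since both composites send $\phi'$ to $\phi'(\beta(b))$, they are equal as maps $(K')^* \to A$, and as $b \in K$ was arbitrary the square commutes. The only place where care is needed—what I would flag as the ``obstacle,'' though it is merely notational—is correctly identifying $\beta^{**}(\Theta_K(b))$ as precomposition with $\beta^*$; everything else is immediate from the definitions of $\Theta$ and of the two dualities.
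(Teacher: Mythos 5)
Your proof is correct and is essentially identical to the paper's own argument: both fix $b \in K$, evaluate the two composites at an arbitrary $\phi' \in (K')^*$, and unwind the definitions of $\Theta$, $\beta^*$, and $\beta^{**}$ to obtain $\phi'(\beta(b)) = (\phi'\circ\beta)(b)$ on both sides. Your explicit identification of $\beta^{**}(\Theta_K(b))$ as $\Theta_K(b)\circ\beta^*$, with the correct duality functor applied at each stage, is exactly the step the paper performs via its equation~\eqref{interpretation}.
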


\begin{proof}
Let $\phi' \in (K')^*$
and $\psi \in K^*$.
Recall that
$\beta^*(\phi') = \phi' \circ \beta$
and
$\beta^{**}(\psi) = \psi \circ \beta^*$,
hence
\begin{equation}\label{interpretation}
\big(\beta^{**}(\psi)\big) (\phi') =
\big(\psi \circ \beta^* \big) (\phi') =
\psi \big(\beta^*(\phi')\big) =
\psi(\phi' \circ \beta)
.
\end{equation}
Let $b \in K$.
By the definition of $\Theta_{K'}$ we have
$$
\big((\Theta_{K'} \circ \beta)(b)\big) (\phi') =
\Big(\Theta_{K'} \big( \beta(b)\big)\Big) (\phi')=
\phi' \big(\beta(b) \big) =
(\phi' \circ \beta)(b)
$$
and on the other hand by \eqref{interpretation}
$$
\big((\beta^{**} \circ \Theta_{K})(b) \big) (\phi') =
\Big(\beta^{**} \big(\Theta_{K}(b) \big) \Big) (\phi') =
\big(\Theta_K(b) \big)(\phi' \circ \beta) =
(\phi' \circ \beta)(b)
.
$$
Thus
$\big((\Theta_{K'} \circ \beta)(b)\big) (\phi') =
\big((\beta^{**} \circ \Theta_{K})(b) \big) (\phi')$
for every $\phi' \in (K')^*$.
\end{proof}

Completely analogously we have:

\begin{Lemma}\label{categories equivalence diagram 2}
Let $S \colon V \to V'$ be an $F$-linear map of
vector spaces over $F$.
Then the following diagram commutes
$$
\xymatrix@=30pt{
V \ar[r]_{\Lambda_{V}} \ar[d]^{S} & V^{**} \ar[d]^{S^{**}}\\
V' \ar[r]_{\Lambda_{V'}} & (V')^{**} \\
}
$$
\end{Lemma}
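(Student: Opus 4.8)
The plan is to mirror the proof of Lemma~\ref{categories equivalence diagram}, transposing each step through the duality. Both composites $\Lambda_{V'} \circ S$ and $S^{**} \circ \Lambda_V$ are maps $V \to (V')^{**}$, and by Definition~\ref{dual of a vector space} an element of $(V')^{**} = \Hom_R((V')^*, A)$ is an $R$-homomorphism $(V')^* \to A$. So to show the square commutes I would fix an arbitrary $v \in V$ and an arbitrary $\psi' \in (V')^*$ and check that evaluating the two maps at $v$ and then at $\psi'$ yields the same element of $A$; since elements of $(V')^{**}$ are determined by such evaluations, this suffices.

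First I would record the variance bookkeeping. As $S \colon V \to V'$, Definition~\ref{dual of a vector space} gives $S^* \colon (V')^* \to V^*$ with $S^*(\psi') = \psi' \circ S$, whence $S^{**} = (S^*)^* \colon V^{**} \to (V')^{**}$ with $S^{**}(\Psi) = \Psi \circ S^*$. Likewise, by Definition~\ref{natural isomorphisms}, $\Lambda_V(v)$ is the map $\psi \mapsto \psi(v)$ and $\Lambda_{V'}(v')$ the map $\psi' \mapsto \psi'(v')$.

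Next I would unwind each side against $\psi'$. The left composite gives
\begin{equation*}
\big((\Lambda_{V'} \circ S)(v)\big)(\psi') = \big(\Lambda_{V'}(S(v))\big)(\psi') = \psi'(S(v)) = (\psi' \circ S)(v),
\end{equation*}
while the right composite, using $S^{**}(\Psi) = \Psi \circ S^*$ and $S^*(\psi') = \psi' \circ S$, gives
\begin{equation*}
\big((S^{**} \circ \Lambda_V)(v)\big)(\psi') = \big(\Lambda_V(v) \circ S^*\big)(\psi') = \big(\Lambda_V(v)\big)(\psi' \circ S) = (\psi' \circ S)(v).
\end{equation*}
As both reduce to $(\psi' \circ S)(v)$, the two evaluations agree for every $\psi'$, so $(\Lambda_{V'} \circ S)(v) = (S^{**} \circ \Lambda_V)(v)$ for every $v$, and the diagram commutes.

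I do not anticipate a genuine obstacle here: this is a formal diagram chase dual to the one already carried out for $\Theta$. The only point demanding attention is the variance bookkeeping --- correctly orienting $S^*$ and $S^{**}$ and remembering that the evaluation is paired against functionals $\psi' \in (V')^*$ rather than against vectors --- but once the definitions are substituted the identity in $A$ is immediate.
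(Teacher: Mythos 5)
Your proof is correct and takes essentially the same approach as the paper's: both arguments evaluate the two composites at an arbitrary $v \in V$ against an arbitrary $\psi' \in (V')^*$, unwind the definitions $S^*(\psi') = \psi' \circ S$ and $S^{**}(\phi) = \phi \circ S^*$, and reduce each side to $(\psi' \circ S)(v)$. Your variance bookkeeping matches the paper's conventions exactly, so nothing is missing.
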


\begin{proof}
Let $\psi' \in (V')^*$
and $\phi \in V^*$.
Recall that
$S^*(\psi') = \psi' \circ S$
and
$S^{**}(\phi) = \phi \circ S^*$,
hence
\begin{equation}\label{interpretation 2}
\big(S^{**}(\phi)\big) (\psi') =
\big(\phi \circ S^* \big) (\psi') =
\phi \big(S^*(\psi')\big) =
\phi(\psi' \circ S\big)
.
\end{equation}
Let $v \in V$.
By the definition of $\Lambda_{V'}$ we have
$$
\big(\Lambda_{V'} \circ S(v)\big) (\psi') =
\Big(\Lambda_{V'} \big( S(v)\big)\Big) (\psi')=
\psi' \big(S(v) \big) =
(\psi' \circ S)(v)
$$
and on the other hand by \eqref{interpretation 2}
$$
\big((S^{**} \circ \Lambda_{V})(v) \big) (\psi') =
\Big(S^{**} \big(\Lambda_{V}(v) \big) \Big) (\psi') =
\big(\Lambda_{V}(v) \big)(\psi' \circ S) =
(\psi' \circ S)(v)
.
$$
Thus
$\big(\Lambda_{V'} \circ S(v)\big) (\psi') =
\big((S^{**} \circ \Lambda_{V})(v) \big) (\psi')$
for every $\psi' \in (V')^*$.
\end{proof}

\begin{Theorem}\label{Duality}
Let $K$ be an $A$-generated profinite $R$-module
and let $V$ be a vector space over $F$.
Then
$\Theta_K \colon K \to (K^*)^*$
and
$\Lambda_{V} \colon V \to (V^*)^*$
are isomorphisms.

The $-^*$ functors establish an equivalence
between the categories of $A$-generated $R$-modules and
vector spaces over $F$.
\end{Theorem}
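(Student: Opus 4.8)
The plan is to prove that the natural maps $\Theta_K$ and $\Lambda_V$ are isomorphisms; the asserted equivalence of categories then follows formally, because the naturality of these transformations is already recorded in Lemmas~\ref{categories equivalence diagram} and~\ref{categories equivalence diagram 2}, so that the two $-^*$ functors become mutually quasi-inverse (contravariant) equivalences. Injectivity of both maps is soft. The kernel of $\Theta_K$ is $\{b \in K : \phi(b)=0 \text{ for all } \phi \in K^*\} = \bigcap_{\phi \in K^*}\Ker\phi$, which is $0$ since $K$ is $A$-generated. For $\Lambda_V$, given $0 \ne v \in V$ I would extend $v$ to a basis and take an $F$-linear $\psi \colon V \to A$ with $\psi(v) \ne 0$, so that $\Lambda_V(v)(\psi)=\psi(v)\ne 0$. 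The hard part will be surjectivity in the infinite-rank case, where no cardinality count is available.

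To show $\Lambda_V$ is surjective I would compute $V^{**}$ explicitly. Fix an $F$-basis $C$ of $V$. By Definition~\ref{dual of a vector space}, restriction gives an $R$-module isomorphism $\iota \colon V^* \xrightarrow{\sim} A^C$, $\psi \mapsto (\psi(c))_{c \in C}$, and dualizing yields an $F$-linear isomorphism $\iota^* \colon (A^C)^* \xrightarrow{\sim} V^{**}$. The key point is the description of $(A^C)^* = \Hom_R(A^C, A)$: since $A$ is finite, every continuous $R$-homomorphism $A^C \to A$ has open kernel, hence factors through a finite subproduct $A^{C'}$ with $C' \subseteq C$ finite; on $A^{C'} = \bigoplus_{c \in C'} A$ it is a finite combination $\sum_{c \in C'}\alpha_c\pi_c$ with $\alpha_c \in \End_R(A)=F$ and $\pi_c$ the coordinate projections. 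Thus the projections $\{\pi_c\}_{c\in C}$ form an $F$-basis of $(A^C)^*$, and $\iota^*(\pi_c)=\pi_c\circ\iota=[\psi\mapsto\psi(c)]=\Lambda_V(c)$. Hence $\Lambda_V$ carries the basis $C$ bijectively onto the basis $\{\iota^*(\pi_c)\}$ of $V^{**}$, so it is an isomorphism.

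For $\Theta_K$ I would avoid a structure theorem for $A$-generated modules and instead combine a triangle identity with the result just established. A direct check on functionals gives $\Theta_K^* \circ \Lambda_{K^*} = \id_{K^*}$: for $\phi \in K^*$ and $b \in K$ one has $\bigl(\Theta_K^*(\Lambda_{K^*}(\phi))\bigr)(b) = \Lambda_{K^*}(\phi)\bigl(\Theta_K(b)\bigr) = \Theta_K(b)(\phi) = \phi(b)$. Since $\Lambda_{K^*}$ is an isomorphism, $\Theta_K^*$ is an isomorphism, in particular injective. Now $K^{**}=(K^*)^*$ is, by Definition~\ref{dual of a vector space}, isomorphic to $A^C$ for a basis $C$ of $K^*$; as $K$ is compact and $\Theta_K$ is continuous, $\Theta_K(K)$ is a closed submodule. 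If $\Theta_K$ were not surjective, the nonzero profinite quotient $Q = K^{**}/\Theta_K(K)$ would have a simple quotient, which—being a simple quotient of a power of $A$, hence through the finite-subproduct argument a quotient of $A$—is isomorphic to $A$. Composing gives a nonzero $\Xi \in (K^{**})^* = K^{***}$ with $\Theta_K^*(\Xi)=\Xi\circ\Theta_K=0$, contradicting injectivity of $\Theta_K^*$. Therefore $\Theta_K$ is bijective, hence an isomorphism.

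The main obstacle is exactly the surjectivity of these maps when $V$ and $K$ are of infinite rank. On the vector-space side I dispose of it by the continuity argument that pins $\Hom_R(A^C,A)$ down to the finite-support functionals, matching $V$ term by term; on the module side the triangle identity $\Theta_K^*\circ\Lambda_{K^*}=\id$ transports surjectivity of $\Theta_K$ into injectivity of $\Theta_K^*$, which is verified through the fact that every nonzero quotient of $K^{**}\cong A^C$ still maps onto $A$. (For finite $A$-generated $K$ one could alternatively invoke Remark~\ref{A-generated rudiments}(d) and a cardinality count, but the triangle-identity argument handles all cases uniformly.)
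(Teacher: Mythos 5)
Your proof is correct, and while your treatment of $\Lambda_V$ coincides with the paper's (both identify $V^* \isom A^C$ for a basis $C$ via Definition~\ref{dual of a vector space}, then describe $(A^C)^*$ by the open-kernel/finite-subproduct argument of Lemma~\ref{basis for a free product} and check $\Lambda_V$ matches bases), your treatment of $\Theta_K$ takes a genuinely different route. The paper first proves the structure theorem (Proposition~\ref{A-generated module is a free product}, resting on the Chinese-Remainder Lemma~\ref{CRT}) that every $A$-generated module is isomorphic to $A^I$ for a basis $\{\phi_i\}_{i\in I}$ of $K^*$, and then $\Theta_K$ becomes literally the identity under the resulting identifications. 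You instead bypass the classification entirely: the triangle identity $\Theta_K^* \circ \Lambda_{K^*} = \id_{K^*}$, combined with the already-established bijectivity of $\Lambda_{K^*}$, forces $\Theta_K^* = \Lambda_{K^*}^{-1}$ to be injective, and you convert surjectivity of $\Theta_K$ into a separation statement --- any nonzero profinite quotient of $K^{**} \isom A^C$ admits a simple quotient, which by the finite-subproduct argument must be $\isom A$, yielding a nonzero functional killing the closed submodule $\Theta_K(K)$. All the supporting steps check out: $\Theta_K$ is continuous, so its image is closed by compactness; a nonzero profinite module surjects onto a nonzero finite module (by the inverse-limit presentation cited from \cite{RZ}), hence onto a simple one; and a continuous bijection of profinite modules is an isomorphism. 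Two minor remarks: you should verify (or simply cite Lemma~\ref{basis for a free product} for) the linear independence of the coordinate projections $\{\pi_c\}$, which you assert but do not check, and note that your route, while cleaner for the theorem itself, does not deliver Proposition~\ref{A-generated module is a free product} as a by-product --- a result the paper needs independently later (e.g., in Corollaries~\ref{submodule simplified} and \ref{A-gen ext is fiber product}), so the paper's detour through the structure theorem is not wasted effort in context.
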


We need some auxiliary results to prove the theorem.

\begin{Lemma}\label{basis for a free product}
Let $K = A^I$ and for every $i \in I$
let $\phi_i \colon K \onto A$
be the projection on the $i$-th coordinate.
Then $\{\phi_i \st i \in I\}$ is a basis of $K^*$ over $F$.
\end{Lemma}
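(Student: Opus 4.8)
The plan is to verify the two defining properties of a basis separately: the $F$-linear independence of the family $\{\phi_i\}$, and the fact that it spans $K^*$, i.e.\ that every $\phi \in K^*$ is a \emph{finite} $F$-linear combination of the projections. The independence is a short computation, whereas the spanning statement is the real content, and it is there that the continuity hypothesis built into $K^* = \Hom_R(K,A)$, together with the finiteness of $A$, becomes indispensable.

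For linear independence, I would denote by $\iota_j \colon A \to A^I$ the insertion of $A$ into the $j$-th coordinate (zero elsewhere); it is a continuous $R$-homomorphism satisfying $\phi_i \circ \iota_j = \id$ if $i = j$ and $\phi_i \circ \iota_j = 0$ otherwise. Suppose $\sum_{i \in S} \alpha_i \phi_i = 0$ for a finite $S \subseteq I$ and $\alpha_i \in F$. Composing with $\iota_j$ for a fixed $j \in S$, and recalling that $\alpha_i \phi_i = \alpha_i \circ \phi_i$, I obtain $\alpha_j = \alpha_j \circ \id = 0$ in $F = \End_R(A)$. As $j \in S$ was arbitrary, the family is $F$-linearly independent.

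For spanning, let $\phi \in K^* = \Hom_R(A^I, A)$. Since $A$ is finite, hence discrete, $\{0\}$ is open in $A$, so $\Ker \phi$ is an open subgroup of $A^I$. In the product topology every open neighbourhood of $0$ contains a basic subgroup $W = \{x \in A^I : x_i = 0 \text{ for all } i \in S_0\}$ for some \emph{finite} $S_0 \subseteq I$; hence $W \le \Ker \phi$, so $\phi$ factors through the projection $\pr_{S_0} \colon A^I \onto A^{S_0} = \bigoplus_{i \in S_0} A_i$ as $\phi = \bar\phi \circ \pr_{S_0}$ with $\bar\phi \in \Hom_R(A^{S_0}, A)$. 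As $S_0$ is finite, $\Hom_R(\bigoplus_{i \in S_0} A_i, A) = \bigoplus_{i \in S_0} \Hom_R(A_i, A) = \bigoplus_{i \in S_0} F$, so there are $\alpha_i \in F$ with $\bar\phi(y) = \sum_{i \in S_0} \alpha_i(y_i)$ for $y = (y_i)_{i \in S_0} \in A^{S_0}$. Then for $x \in A^I$ I get $\phi(x) = \bar\phi(\pr_{S_0}(x)) = \sum_{i \in S_0} \alpha_i(x_i) = \sum_{i \in S_0} (\alpha_i \circ \phi_i)(x)$, whence $\phi = \sum_{i \in S_0} \alpha_i \phi_i$, a finite $F$-combination of the $\phi_i$.

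The main obstacle is exactly the reduction to finite support in the spanning step: it is here that the continuity of the elements of $K^*$, together with the discreteness of the finite module $A$ and the product topology on $A^I$, is used to confine each $\phi$ to finitely many coordinates. Without continuity there would be $R$-homomorphisms $A^I \to A$ depending on infinitely many coordinates, and $\{\phi_i\}$ would fail to span; so in writing the proof I would make sure to state explicitly where the discreteness of $A$ and the structure of open neighbourhoods in $A^I$ enter.
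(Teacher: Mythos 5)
Your proof is correct and takes essentially the same route as the paper's: linear independence by evaluating on elements supported in a single coordinate, and spanning by using the continuity of $\phi \in K^*$ together with the discreteness of the finite module $A$ to confine $\phi$ to finitely many coordinates, then solving the finite case. The one mild difference is that you factor $\phi$ through the finite quotient $A^{S_0}$ via the open subgroup $W \le \Ker\phi$, which makes explicit that $\phi$ vanishes on the complementary product $\prod_{i \notin S_0} A_i$ --- a point the paper's pointwise verification (the step $\psi(b') = 0$) uses implicitly, relying on the open kernel being closed and hence containing that closed subgroup.
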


\begin{proof}
We first show that the set is linearly independent over $F$.
Let $\alpha_i \in F$,
almost all zero,
such that $\sum_i \alpha_i \phi_i = 0$.
Let $j \in I$ and let $a \in A$.
Define $b \in K$ by
$\phi_j(b) = a$ and $\phi_i(b) = 0$ for $i \neq j$.
Then
$0 = \left(\sum_i \alpha_i \phi_i\right)(b) = \alpha_j(a)$.
Hence $\alpha_j = 0$.

Now we show that
$\left(\phi_i\right)_{i \in I}$
spans $K^*$.
Write $K$ as $\prod_{i \in I} A_i$,
where $A_j = A$ for every $j \in I$;
explicitly,
$A_j = \bigcap_{i\neq j} \Ker \phi_i$.
Let $\psi \colon K \to A$ be an element of $K^*$.
For every $i \in I$
let $\alpha_i \colon A \to A$
be the restriction of $\psi$ to $A_i$.
This is an $R$-homomorphism and hence $\alpha_i \in F$.
As $\psi$ is continuous, $\Ker \psi$ is open in $K$,
and hence contains every $A_i$,
except for $i$ in a finite subset $I'$ of $I$.
Thus $\alpha_i = 0$ for all $i \notin I'$.
We claim that
$\psi = \sum_{j \in I} \alpha_j \phi_j$,
that is,
$\psi = \sum_{j \in I'} \alpha_j \phi_j$.

Indeed,
let $b \in K$.
Let $a_i$ be its $i$th coordinate.
Then $b = b' + \sum_{j\in I'} b_j$, where
$(b')_i =
\begin{cases}
0 &\textnormal{if } i \in I'\\
a_i &\textnormal{if } i \notin I'\\
\end{cases}
$
and
$(b_j)_i =
\begin{cases}
a_i &\textnormal{if } i=j\\
0 &\textnormal{otherwise }\\
\end{cases}
$.
Hence
$$
\psi(b) =
\psi(b') + \sum_{j \in I'} \psi(b_j) =
0 + \sum_{j \in I'} \alpha_j(a_j) =
\sum_{j \in I'} \alpha_j( \phi_j(b)) =
\left(\sum_{j \in I'} \alpha_j \phi_j\right) (b)
,
$$
whence
$\psi = \sum_{j \in I'} \alpha_j \phi_j
= \sum_{j \in I} \alpha_j \phi_j$.
\end{proof}

We will need a version of
the Chinese Remainder Theorem:

\begin{Lemma}\label{CRT}
Let $\phi_1,\ldots, \phi_n \in K^*$ be linearly independent over $F$.
Then the $R$-homomorphism
$K \to A^n$
given by
$b \mapsto (\phi_1(b),\ldots,\phi_n(b))$
is surjective.
\end{Lemma}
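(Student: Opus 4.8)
The plan is to prove the contrapositive: assuming that the $R$-homomorphism $\Phi \colon K \to A^n$, $b \mapsto (\phi_1(b),\ldots,\phi_n(b))$, is \emph{not} surjective, I would produce a nontrivial $F$-linear dependence among $\phi_1,\ldots,\phi_n$. The first observation is that $\Phi(K)$ is an $R$-submodule of $A^n$, and that $A^n$ is a finite semisimple $R$-module which is isotypic of type $A$: since $A$ is finite and simple, $A^n$ is finite, and by Remark~\ref{A-generated rudiments}(c) it is a module over a finite quotient $R/I$, where the classical structure theory of semisimple modules is available.

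Assuming $\Phi$ is not surjective, $\Phi(K)$ is a proper submodule of $A^n$, so it is contained in some maximal submodule $M \subsetneq A^n$. I would then argue that the simple quotient $A^n/M$ is isomorphic to $A$: any nonzero homomorphism from a copy $A \hookrightarrow A^n$ to the simple module $A^n/M$ is an isomorphism by Schur's Lemma (Remark~\ref{vector space}(b)), and not all such compositions can vanish, so $A^n/M \cong A$. Composing the quotient map with this isomorphism yields a nonzero $R$-homomorphism $\pi \colon A^n \onto A$ satisfying $\pi \circ \Phi = 0$.

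Next I would identify the relevant Hom-space. For a finite direct sum one has $\Hom_R(A^n, A) \cong \prod_{i=1}^n \Hom_R(A,A) = F^n$; concretely, every such $\pi$ has the form $(a_1,\ldots,a_n) \mapsto \sum_{i=1}^n c_i(a_i)$ for uniquely determined $c_i \in F = \End_R(A)$, and since $\pi \neq 0$ not all $c_i$ vanish. Unwinding $\pi \circ \Phi = 0$ then gives, for every $b \in K$,
$$
0 = \pi\big(\phi_1(b),\ldots,\phi_n(b)\big) = \sum_{i=1}^n c_i\big(\phi_i(b)\big) = \Big(\sum_{i=1}^n c_i \phi_i\Big)(b),
$$
where I use the $F$-action $c_i \phi_i = c_i \circ \phi_i$ on $K^*$ from Remark~\ref{vector space}(c). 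Hence $\sum_{i=1}^n c_i \phi_i = 0$ in $K^*$ with not all $c_i$ zero, contradicting the linear independence of the $\phi_i$.

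The module-theoretic generalities are routine. The one step that I expect to warrant care is the passage from ``non-surjective'' to ``annihilated by a nonzero homomorphism onto $A$'': this is exactly where the finiteness and isotypic semisimplicity of $A^n$ are essential, guaranteeing both the existence of a maximal submodule and that the resulting simple cokernel is again $A$. Beyond bookkeeping the $F$-action conventions, I anticipate no serious obstacle.
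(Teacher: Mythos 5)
Your proof is correct, but it takes a genuinely different route from the paper. The paper argues by induction on $n$: setting $N_i = \Ker \phi_i$ and $N = \bigcap_{i=1}^{n-1} N_i$, it reduces surjectivity to the Chinese-Remainder-style statement $N + N_n = K$; if $N \subseteq N_n$, then since $K/N \cong A^{n-1}$ with the induced $\bar\phi_1,\ldots,\bar\phi_{n-1}$ as coordinate projections, Lemma~\ref{basis for a free product} expresses $\bar\phi_n$ (hence $\phi_n$) as an $F$-combination of the others, a contradiction; then simplicity of $K/N_n \cong A$ forces $N + N_n = K$. You instead give a direct, non-inductive annihilator argument: if the image of $\Phi$ is proper in the finite isotypic semisimple module $A^n$, it lies in a maximal submodule whose simple quotient must be $\cong A$ (your Schur argument for this is sound, since the $n$ embedded copies of $A$ generate $A^n$ and cannot all die in the quotient), producing a nonzero $\pi \in \Hom_R(A^n,A) \cong F^n$ with $\pi \circ \Phi = 0$, i.e., a nontrivial relation $\sum_i c_i \phi_i = 0$. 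Both proofs ultimately rest on the same identification of homomorphisms out of a power of $A$ with $F$-combinations of coordinate projections (you verify the finite case $\Hom_R(A^n,A) = F^n$ directly, which is exactly the spanning half of Lemma~\ref{basis for a free product} for finite index sets). What your route buys is brevity and conceptual transparency: it makes the lemma visibly an instance of the duality ``$\Phi$ surjective iff $\Phi^*$ injective'' that the section later records in Remark~\ref{inj surj dual}. What the paper's induction buys is self-containedness: it needs only the simplicity of the single quotient $K/N_n$, and avoids invoking the semisimple structure theory of $A^n$ (existence of maximal submodules and classification of simple quotients), which you correctly flag as the one step requiring care and which is justified by finiteness and Remark~\ref{A-generated rudiments}(c).
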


\begin{proof}
By induction on $n$.
For every $i$ let $N_i = \Ker \phi_i$.

By induction hypothesis
$K \to \prod_{i=1}^{n-1} K/N_i$
is surjective.
Its kernel is
$N \defeq \bigcap_{i=1}^{n-1} N_i$,
so $K/N \onto \prod_{i=1}^{n-1} K/N_i$ is an isomorphism.
Therefore so is
$K/N \times K/N_n \onto \prod_{i=1}^{n} K/N_i$.
As $K \to \prod_{i=1}^{n} K/N_i$ is the composition of
$K \to K/N \times K/N_n$
and
the latter map,
we have to show that
$K \to K/N \times K/N_n$
is surjective,
that is, that
$N + N_n = K$.

Let $\Kbar = K/N$
and let $\bar\phi_1,\ldots, \bar\phi_{n-1} \colon \Kbar \to A$
be the maps induced from $\phi_1,\ldots, \phi_{n-1}$.
The above isomorphism
$\Kbar \to \prod_{i=1}^{n-1} K/N_i$
gives $\Kbar$ the structure of the direct product $A^{n-1}$
such that
$\bar\phi_1,\ldots, \bar\phi_{n-1}$ are the coordinate projections.
By Lemma~\ref{basis for a free product},
$\bar\phi_1,\ldots, \bar\phi_{n-1}$ are a basis of $\Kbar^*$.
Therefore,
if $N \subseteq N_n$,
the map $\bar\phi_n \colon \Kbar \to A$ induced from $\phi_n$
is a linear combination of $\bar\phi_1,\ldots, \bar\phi_{n-1}$,
say,
$\bar\phi_n = \sum_{i=1}^{n=1} \alpha_i \bar\phi_i$.
It follows that
$\phi_n = \sum_{i=1}^{n=1} \alpha_i \phi_i$,
a contradiction.
Thus,
$N \not\subseteq N_n$.
Hence
$N_n \subsetneqq N + N_n$.

As $A = K/N_n$ is a simple $R$-module,
there is no $R$-submodule $M$ of $K$
such that
$N_n \subsetneqq M \subsetneqq K$.
By assumption
$N_n \subsetneqq N + N_n \subseteq K$,
hence $N + N_n = K$.
\end{proof}

\begin{Proposition}\label{A-generated module is a free product}
Let $K$ be an $A$-generated $R$-module.
Let $\{\phi_i\}_{i \in I}$ be a basis of $K^*$ as a vector space over $F$.
Then 
$\phi \colon K \to A^I$
given by
$\phi(b) = \left( \phi_i(b)\right)_{i \in I}$
is an isomorphism of $R$-modules.
\end{Proposition}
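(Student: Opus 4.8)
The plan is to check that $\phi$ is a continuous $R$-homomorphism and that it is bijective; since both $K$ and $A^I$ are profinite, hence compact Hausdorff, any continuous bijection between them is a homeomorphism, so $\phi$ will automatically be an isomorphism of profinite $R$-modules. That $\phi$ is a continuous $R$-homomorphism is immediate: a map into a product is continuous exactly when each of its coordinates is, and every coordinate $\phi_i$ is a continuous $R$-homomorphism by hypothesis.

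For injectivity I would combine the spanning property of the basis with the $A$-generatedness of $K$. If $\phi(b)=0$, then $\phi_i(b)=0$ for all $i\in I$. Any $\psi\in K^*$ is a finite linear combination $\psi=\sum_i\alpha_i\phi_i$ with $\alpha_i\in F$, so $\psi(b)=\sum_i\alpha_i\phi_i(b)=0$. Hence $b\in\bigcap_{\psi\in K^*}\Ker\psi=0$, because $K$ is $A$-generated, and therefore $b=0$.

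Surjectivity is the heart of the matter, and I would reduce it to the finite case already settled in Lemma~\ref{CRT}. Since $K$ is compact and $\phi$ continuous, the image $\phi(K)$ is a closed $R$-submodule of $A^I$. Writing $A^I=\varprojlim_J A^J$ with $A^J=\prod_{i\in J}A_i$, where $J$ ranges over the finite subsets of $I$ and $\operatorname{pr}_J\colon A^I\to A^J$ is the projection onto the coordinates in $J$, I note that for each such $J$ the functionals $\{\phi_i\}_{i\in J}$ are linearly independent over $F$, being part of a basis. Lemma~\ref{CRT} then says that $b\mapsto(\phi_i(b))_{i\in J}$ maps $K$ onto $A^J$; equivalently $\operatorname{pr}_J(\phi(K))=A^J$ for every finite $J$.

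The remaining step, which I expect to be the only genuine obstacle, is to pass from surjectivity onto every finite factor to surjectivity onto the whole product. For this I would use the standard fact that a closed subset $S$ of an inverse limit of compact spaces satisfies $S=\varprojlim_J\operatorname{pr}_J(S)$: given a compatible family $(s_J)_J$ with $s_J\in\operatorname{pr}_J(S)$, the sets $\operatorname{pr}_J^{-1}(s_J)\cap S$ are nonempty closed subsets of the compact space $S$ and have the finite intersection property (for finitely many indices take the union of the corresponding $J$'s), so their total intersection is nonempty and realizes the family in $S$. Applying this to $S=\phi(K)$ together with $\operatorname{pr}_J(\phi(K))=A^J$ yields $\phi(K)=\varprojlim_J A^J=A^I$. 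Thus $\phi$ is surjective, and with injectivity and continuity it is an isomorphism of profinite $R$-modules.
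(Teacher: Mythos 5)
Your proof is correct and follows essentially the same route as the paper's: continuity coordinate-wise, injectivity from the spanning property plus $A$-generatedness, and surjectivity via Lemma~\ref{CRT} on finite subsets combined with the presentation $A^I=\varprojlim_{J} A^{J}$. The only difference is that you spell out the compactness\slash finite-intersection argument for passing from the finite levels to the full inverse limit, a step the paper leaves implicit with ``the assertion follows.''
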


\begin{proof}
Clearly $\phi$ is an abstract $R$-homomorphism.

It is continuous:
A basic open neighborhood of $0$ in $A^I$
is of the form
$\{(a_i)_{i \in I} \st a_i = 0 \text{ for all } i \in I'\}$,
for some finite subset $I'$ of $I$,
and its inverse image under $\phi$
is the open subset
$\bigcap_{i \in I'} \Ker \phi_i$.

We show that $\phi$ is injective.
Let $b \in K$ such that $\phi(b) = 0$.
Then $\phi_i(b) = 0$ for every $i \in I$.
As every $\rho \in K^*$
is a linear combination of finitely many $\phi_i$'s,
we have $\rho(b) = 0$ for every $\rho \in K^*$.
Therefore, since $K$ is $A$-generated,
$b \in \bigcap_{\rho\in K^*} \Ker \rho = \{0\}$,
that is, $b = 0$.

If $I'$ is finite subset of $I$,
the induced map
$\pr_{I,I'} \circ \phi \colon K \to A^{I'}$
is surjective
by Lemma~\ref{CRT}.
As
$A^I = \varprojlim\limits_{I' \subseteq I\textnormal{ finite}} A^{I'}$,
where the structure maps $A^I \to A^{I'}$
are the restrictions from $I$ to $I'$,
the assertion follows.
\end{proof}

\begin{proof}[Proof of Theorem~\ref{Duality}]
(a)
We first show that $\Theta_K$ is an isomorphism.

By Proposition~\ref{A-generated module is a free product}
we may assume that $K = A^I$ for some set $I$;
By Lemma~\ref{basis for a free product}
the coordinate projections
$(\phi_i \colon K \onto A)_{i \in I}$ are a basis of $K^*$.
As noted in Definition~\ref{dual of a vector space},
$(K^*)^*$ is the set of all functions from this basis to $A$;
hence it can be identified with $A^I$.

Under this identification $\Theta_K$ is the identity.
Indeed,
let $b = (a_i)_{i \in I} \in K$.
Then $\Theta_K(b)$ maps $\phi_i$ onto $\phi_i(b) = a_i$,
hence it is identified with the map $I \to A$
that maps $i$ onto $a_i$, that is, with $b$.

We now show that $\Lambda_{V}$ is an isomorphism.

Let $C$ be a basis of $V$.
By Definition~\ref{dual of a vector space},
$V^* = A^C$.
The coordinate projections of $A^C$ can be identified with the elements
of $C$ (a projection $\phi$ is identified with the unique $v \in C$
such that $\phi(v) \neq 0$).
By Lemma~\ref{basis for a free product}
$(V^*)^* = (A^C)^*$
is the vector space with basis consisting of these coordinate projections,
that is, it can be identified with a vector space with basis $C$,
that is, with $V$.

Under this identification $\Lambda_{V}$ is the identity on $C$
and hence also on $V$.
Indeed,
let $v \in C$.
Then $\Lambda_{V}(v)$ is the projection $\phi \colon A^C \onto A$
that maps $(a_u)_{u \in C}$ to $a_v$,
hence it is identified with $v$.

By Lemmas~\ref{categories equivalence diagram}
and \ref{categories equivalence diagram 2},
$\Theta_K$ and $\Lambda_{V}$ are natural isomorphisms between
the composition of the $-^*$ functors and the identity functors.
\end{proof}



%

\begin{Remark}\label{inj surj dual}
Let
$T \colon V \to W$
be an $F$-linear map of vector spaces over $F$.
By linear algebra we know that
\begin{itemize}
\item[(a)]
$T^* \colon W^* \to V^*$ is
injective
if and only if
$T$ is surjective;
\item[(b)]
$T^* \colon W^* \to V^*$ is
surjective
if and only if
$T$ is injective.
\end{itemize}
It follows by the duality that
if
$\alpha \colon K \to C$ is a continuous homomorphism
of $A$-generated $R$-modules, then
\begin{itemize}
\item[(c)]
$\alpha^* \colon C^* \to K^*$ is
surjective
if and only if
$\alpha$ is injective;
\item[(d)]
$\alpha^* \colon C^* \to K^*$ is
injective
if and only if
$\alpha$ is surjective.
\end{itemize}
\end{Remark}

\begin{Corollary}\label{A-generated submodule complement}
Let $L$ be a closed submodule of
an $A$-generated $R$-module $K$.
Then there is
an $A$-generated $R$-module $M$
such that
$K = L \directsum M$.
\end{Corollary}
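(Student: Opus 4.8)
The plan is to use the duality of Theorem~\ref{Duality} to reduce the existence of a complement to the trivial fact that every surjection of vector spaces splits, and then to dualize the resulting section back into a module retraction whose kernel is the desired $M$.

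First I would record that $L$, being a closed submodule of the $A$-generated module $K$, is itself $A$-generated by Remark~\ref{A-generated rudiments}(b); thus the inclusion $\iota \colon L \to K$ and all its duals live in the categories to which Theorem~\ref{Duality} and Remark~\ref{inj surj dual} apply. Since $\iota$ is injective, Remark~\ref{inj surj dual}(c) shows that $\iota^* \colon K^* \to L^*$ is a surjection of vector spaces over $F$. Every surjection of vector spaces admits an $F$-linear section, so I would choose $s \colon L^* \to K^*$ with $\iota^* \circ s = \id_{L^*}$.

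Next I would dualize once more. Applying the contravariant functor $-^*$ to $\iota^* \circ s = \id_{L^*}$ yields $s^* \circ \iota^{**} = \id_{L^{**}}$, where $s^* \colon K^{**} \to L^{**}$ is a continuous $R$-homomorphism by the construction in Definition~\ref{dual of a vector space}. The naturality identity of Lemma~\ref{categories equivalence diagram} applied to $\iota$, namely $\Theta_K \circ \iota = \iota^{**} \circ \Theta_L$, together with the fact that $\Theta_L, \Theta_K$ are isomorphisms (Theorem~\ref{Duality}), then lets me define the continuous $R$-homomorphism
$$
r \defeq \Theta_L^{-1} \circ s^* \circ \Theta_K \colon K \longrightarrow L
$$
and compute $r \circ \iota = \Theta_L^{-1} \circ s^* \circ \iota^{**} \circ \Theta_L = \Theta_L^{-1} \circ \Theta_L = \id_L$. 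Hence $r$ is a continuous $R$-linear retraction of the inclusion $\iota$.

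Finally I would set $M = \Ker r$, a closed submodule of $K$. Viewing $r$ as an endomorphism of $K$ and using $r|_L = \id_L$ (so that $r^2 = r$), the usual split-idempotent argument gives the internal direct sum $K = \Img r \oplus \Ker r = L \oplus M$, the summands intersecting trivially because $r$ is the identity on $L$. As a closed submodule of the $A$-generated module $K$, $M$ is $A$-generated by Remark~\ref{A-generated rudiments}(b), which completes the proof. I expect the only delicate point to be the bookkeeping of the contravariant double dual in the passage from $s$ to $r$ — in particular, invoking naturality to identify $\iota^{**}$ with $\iota$ through the isomorphisms $\Theta$; continuity of $r$ is not an issue, as it is built into the functor $V \mapsto V^*$ of Definition~\ref{dual of a vector space}.
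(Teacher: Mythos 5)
Your proof is correct and follows essentially the same route as the paper: both invoke Remark~\ref{inj surj dual}(c) to see that the dual of the inclusion $K^* \to L^*$ is surjective, split it by linear algebra, and dualize back via Theorem~\ref{Duality}. The only difference is presentational — where the paper dualizes the decomposition $K^* = L^* \oplus \Ker \iota^*$ in one line, you dualize the section $s$ into an explicit retraction $r = \Theta_L^{-1} \circ s^* \circ \Theta_K$ and take $M = \Ker r$, which just makes the paper's terse final step precise.
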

\begin{proof}
Let $\alpha \colon L \to K$ be the inclusion map.
By Remark~\ref{inj surj dual}(c),
$\alpha^* \colon K^* \to L^*$ is surjective.
Let $V = \Ker \alpha^*$.
By linear algebra
we may view $L^*$
as an $F$-subspace of $K^*$
such that
$K^* = L^* \directsum V$
and $\alpha^*$ is the identity on $L^*$.
By Theorem~\ref{Duality},
$K = L \directsum V^*$,
where
$V^*$ is $A$-generated.
\end{proof}

\begin{Corollary}\label{quotient is A-generated}
A quotient module of
an $A$-generated $R$-module
is $A$-gen\-er\-at\-ed.
\end{Corollary}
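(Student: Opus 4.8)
The plan is to read off this statement directly from the complement result, Corollary~\ref{A-generated submodule complement}, which has already done the real work. A quotient module of $K$ is, by the conventions of this section, a quotient $K/L$ by a \emph{closed} $R$-submodule $L$ (recall that a submodule is profinite exactly when it is closed). So I would start by fixing such an $L \le K$ and applying Corollary~\ref{A-generated submodule complement} to produce an $A$-generated $R$-module $M$ with $K = L \directsum M$.

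Then I would observe that the composite $M \hookrightarrow K \onto K/L$ is an isomorphism of $R$-modules: it is injective because $L \cap M = 0$ and surjective because $K = L + M$, and it is continuous in both directions since all maps involved are continuous homomorphisms of profinite modules. Hence $K/L \isom_R M$. As $M$ is $A$-generated and being $A$-generated is an isomorphism invariant, $K/L$ is $A$-generated, which is the assertion.

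There is essentially no obstacle remaining at this point; the substance has been absorbed into the splitting $K = L \directsum M$. It is worth noting why the naive direct route is less convenient: proving $\bigcap_{\bar\phi \in (K/L)^*}\Ker \bar\phi = 0$ by hand amounts to lifting sufficiently many $R$-linear functionals from $K/L$ back to $K$, i.e.\ to splitting the injection $(K/L)^* \hookrightarrow K^*$ coming from Remark~\ref{inj surj dual}. That splitting is precisely the dual incarnation of the complement $M$, so the cleanest presentation is to quote Corollary~\ref{A-generated submodule complement} rather than reprove it. The only point requiring a line of care is the insistence that $L$ be closed, since the complement corollary and the whole duality of Theorem~\ref{Duality} are stated for closed (equivalently, profinite) submodules.
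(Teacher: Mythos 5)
Your proposal is correct and coincides with the paper's own proof: both apply Corollary~\ref{A-generated submodule complement} to the kernel $L$ to obtain an $A$-generated complement $M$ with $K = L \directsum M$, and then identify the quotient $K/L$ with $M$. The only (harmless) addition on your side is the explicit remark that $L$ is closed, which in the paper's setting is automatic since $L$ arises as the kernel of a continuous epimorphism of profinite modules.
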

\begin{proof}
Let $K \onto M$
be an epimorphisms of $R$-modules,
such that
$K$ is $A$-generated.
Let $L$ be its kernel.
By Corollary~\ref{A-generated submodule complement},
there is
an $A$-generated $R$-module $M'$
such that
$K = L \directsum M'$.
Thus $M \isom K/L \isom M'$
is $A$-generated.
\end{proof}

\begin{Corollary}\label{submodule simplified}
Let $K$ be an $A$-generated $R$-module
and let $L$ be a closed submodule of $K$.
Then we can choose a basis
$\{\phi_i\}_{i \in I}$ of $K^*$ 
such that the isomorphism
$\phi \colon K \to A^I$
of Proposition~\ref{A-generated module is a free product}
defined by
$\phi(b) = \left( \phi_i(b)\right)_{i \in I}$
maps $L$ onto 
$A^J$ for some subset $J$ of $I$.
\end{Corollary}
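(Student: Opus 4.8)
The plan is to adapt a basis of $K^*$ to the submodule $L$ by means of the surjection $\alpha^*\colon K^*\onto L^*$ induced by the inclusion $\alpha\colon L\to K$, and then to transport the resulting splitting back to $K$ through the duality isomorphism of Proposition~\ref{A-generated module is a free product}. First I would record the two facts that set this up. Since $L$ is a closed submodule of the $A$-generated module $K$, it is itself $A$-generated by Remark~\ref{A-generated rudiments}(b); this is exactly what will let me apply Proposition~\ref{A-generated module is a free product} to $L$ as well as to $K$. And since $\alpha$ is injective, Remark~\ref{inj surj dual}(c) gives that $\alpha^*\colon K^*\to L^*$, $\phi\mapsto \phi|_L$, is surjective. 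I then set $V\defeq\Ker\alpha^*$, the space of functionals on $K$ vanishing on $L$.

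Next I would invoke plain linear algebra over $F$: choose an $F$-subspace $W$ of $K^*$ complementary to $V$, so that $K^*=W\directsum V$ and $\alpha^*$ restricts to an isomorphism $W\to L^*$. Picking a basis $\{\phi_i\}_{i\in J}$ of $W$ and a basis $\{\phi_i\}_{i\in J'}$ of $V$, and setting $I=J\dotcup J'$, yields a basis $\{\phi_i\}_{i\in I}$ of $K^*$. By construction $\{\phi_i|_L\}_{i\in J}=\{\alpha^*(\phi_i)\}_{i\in J}$ is a basis of $L^*$, whereas $\phi_i|_L=0$ for every $i\in J'$. Let $\phi\colon K\to A^I$, $b\mapsto(\phi_i(b))_{i\in I}$, be the associated isomorphism furnished by Proposition~\ref{A-generated module is a free product}.

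It then remains to verify that $\phi(L)=A^J$, where I read $A^J=\{(a_i)_{i\in I}\st a_i=0\text{ for all }i\in J'\}$ as a sub-product of $A^I$. The inclusion $\phi(L)\subseteq A^J$ is immediate, because $\phi_i(b)=0$ for $b\in L$ and $i\in J'$. For the reverse inclusion I would compose $\phi|_L$ with the coordinate projection onto the $J$-entries: this sends $b\in L$ to $(\phi_i(b))_{i\in J}=(\alpha^*(\phi_i)(b))_{i\in J}$, which is precisely the isomorphism $L\to A^J$ given by Proposition~\ref{A-generated module is a free product} for the basis $\{\alpha^*(\phi_i)\}_{i\in J}$ of $L^*$; in particular it is surjective onto $A^J$. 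Since that projection is the identity on $A^J$ and $\phi(L)$ already lies in $A^J$, this forces $\phi(L)=A^J$. The only delicate point — and the one place where the hypotheses genuinely enter — is ensuring that $L$ is itself $A$-generated so that Proposition~\ref{A-generated module is a free product} may be applied to it; once that is in hand, everything else is the formal bookkeeping of splitting a surjection of $F$-vector spaces, so I do not expect any serious obstacle.
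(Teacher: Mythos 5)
Your proof is correct, and every step is justified by results available at that point in the paper: $L$ is $A$-generated by Remark~\ref{A-generated rudiments}(b) because it is closed in $K$; the surjectivity of $\alpha^*\colon K^*\to L^*$ is Remark~\ref{inj surj dual}(c); and the final surjectivity onto $A^J$ is exactly Proposition~\ref{A-generated module is a free product} applied to $L$ with the basis $\{\alpha^*(\phi_i)\}_{i\in J}$ of $L^*$. Your route differs from the paper's in one respect. The paper first passes to a module-level complement, invoking Corollary~\ref{A-generated submodule complement} to write $K = L\directsum M$ with $M$ an $A$-generated module, and then assembles the basis of $K^*$ by extending bases of $L^*$ and $M^*$ by zero on the complementary summand; you instead split entirely on the dual side, choosing $W$ with $K^* = W \directsum \Ker\alpha^*$, and never produce $M$ at all. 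Since Corollary~\ref{A-generated submodule complement} is itself proved by precisely this dual splitting followed by an application of Theorem~\ref{Duality} to descend back to modules, your argument in effect inlines that corollary and skips the descent: you avoid the explicit use of Theorem~\ref{Duality} (beyond its implicit role behind Remark~\ref{inj surj dual}) at the price of having to verify $\phi(L)=A^J$ by hand --- which you do correctly, composing $\phi|_L$ with the projection onto the $J$-coordinates, recognizing that composite as the isomorphism of Proposition~\ref{A-generated module is a free product} for $L$, and then using that the projection is the identity on $A^J$ together with the easy inclusion $\phi(L)\subseteq A^J$. The paper's version is shorter on the page because the complement corollary is already established; yours is marginally more self-contained and makes explicit where the hypothesis that $L$ is closed (hence $A$-generated) genuinely enters.
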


\begin{proof}
By Corollary~\ref{A-generated submodule complement}
there is an $A$-generated $R$-module $M$
such that
$K = L \directsum M$.
Choose a basis
$\{\phi_i\}_{i \in J}$ of $L^*$
and extend every element of it to an element of $K^*$
by the zero on $M$.
Analogously
choose a basis
$\{\phi_i\}_{i \in J'}$ of $M^*$
and extend every element of it to an element of $K^*$
by the zero on $L$.
Put $I = J \dotcup J'$.
Then
$\{\phi_i\}_{i \in I}$
is a basis of $K^*$.
By Proposition~\ref{A-generated module is a free product},
applied to $L$ instead of $K$,\
$\phi$ maps $L$ onto $A^J$.
\end{proof}

\section{A duality theory for extensions by
isotypic profinite semisimple modules}
\label{second cohomology}

We consider the cohomology of a profinite group $G$
with coefficients in profinite modules
(i.e., objects in the category $\calC_R(G)$
in \cite[Section 2.6]{Symonds}).

A homomorphism of profinite $G$-modules
$\beta \colon K \to L$
induces a map $\beta_* \colon H^2(G,K) \to H^2(G,L)$.
By the correspondence between the second cohomology and group extensions
(\cite[p.~233]{RZ})
it induces a map between extensions of $G$ by $K$
to extensions of $G$ by $L$.
It is an exercise
to give the following
explicit description of the latter map:

\begin{Lemma}\label{map of extensions}
Let $G$ be a profinite group
and let
$\beta \colon K \to L$
be a homomorphism of profinite $G$-modules.
Let
\begin{equation}\label{2 extensions}
\calH \colon\
\xymatrix{
0 \shortarrow K \ar[r] & H \ar[r]^{\eta} & G \shortarrow 1
}
\quad
\textnormal{and}
\quad
\E \colon\
\xymatrix{
0 \shortarrow L \ar[r] & E \ar[r]^{\pi}  & G \shortarrow 1
}
\end{equation}
be two extensions.
Then $\E = \beta_*(\calH)$
if and only if
there exists a commutative diagram
\begin{equation}\label{map of extensions diagram}
\xymatrix{
\calH \colon &
0 \ar[r] & K \ar[r] \ar[d]^{\beta}& H \ar[r]^{\eta} \ar[d]^{\eps}& G \ar[r] \ar@{=}[d] & 1 \\
\E \colon &
0 \ar[r] & L \ar[r] & E \ar[r]^{\pi} & G \ar[r] & 1. \\
}
\end{equation}
Moreover, if such a diagram exists,
$\eps$ is unique up to a composition with an automorphism
$\omega$ of $E$ such that $\pi \circ \omega = \pi$
and $\omega|_{L} = \id_{L}$.

If $\beta$ is surjective\slash isomorphism, then so is $\eps$.
\end{Lemma}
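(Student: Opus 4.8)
The plan is to translate the equality $\E = \beta_*(\calH)$ into cocycle language and to build $\eps$ explicitly from compatible sections. Since $\eta$ is an epimorphism of profinite groups, it admits a continuous section $s \colon G \to H$, and the associated continuous $2$-cocycle $c(g_1,g_2) = s(g_1)s(g_2)s(g_1g_2)^{-1} \in K$ represents $[\calH] \in H^2(G,K)$. By functoriality of $H^2$ in the coefficients, $\beta_*[\calH]$ is represented by $\beta \circ c$, and the relation $\E = \beta_*(\calH)$ means precisely $[\E] = \beta_*[\calH]$ in $H^2(G,L)$.

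For the direction ($\Leftarrow$), given $\eps$ I would note that $\eps \circ s$ is a continuous section of $\pi$ (because $\pi \circ \eps = \eta$); since $\eps|_K = \beta$ and $c$ is $K$-valued, its cocycle is
$$(\eps s(g_1))(\eps s(g_2))(\eps s(g_1g_2))^{-1} = \eps\big(c(g_1,g_2)\big) = \beta\big(c(g_1,g_2)\big),$$
so $[\E] = \beta_*[\calH]$. For ($\Rightarrow$), I would start from a continuous section $t_0$ of $\pi$; its cocycle represents $[\E] = \beta_*[\calH]$, hence differs from $\beta\circ c$ by a continuous coboundary, and after multiplying $t_0$ by the corresponding continuous $1$-cochain $G \to L$ I may assume the adjusted section $t$ has cocycle exactly $\beta\circ c$. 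Writing each $h \in H$ uniquely as $h = k\,s(\eta(h))$ with $k = h\,s(\eta(h))^{-1} \in K$, I then set
$$\eps(h) = \beta\big(h\,s(\eta(h))^{-1}\big)\, t(\eta(h)).$$
This is continuous by construction, and using $s(g_1)s(g_2) = c(g_1,g_2)s(g_1g_2)$, $t(g_1)t(g_2) = \beta(c(g_1,g_2))t(g_1g_2)$, the $G$-equivariance of $\beta$, and the fact that conjugation by $s(g)$ (resp. $t(g)$) realizes the $G$-action on $K$ (resp. $L$), the check that $\eps$ is a homomorphism is a direct computation; commutativity of \eqref{map of extensions diagram} is immediate.

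For the uniqueness clause, given two such lifts $\eps,\eps'$ I would set $\gamma(h) = \eps'(h)\eps(h)^{-1}$. As $\pi\eps = \pi\eps' = \eta$, $\gamma$ is $L$-valued, and as $\eps,\eps'$ agree on $K$ it kills $K$ and descends to a continuous $\bar\gamma \colon G \to L$; comparing $\eps'(h_1h_2)$ with $\eps'(h_1)\eps'(h_2)$ shows $\bar\gamma$ is a continuous $1$-cocycle. Then $\omega(e) = \bar\gamma(\pi(e))\,e$ is an automorphism of $E$ with $\omega|_L = \id_L$ and $\pi\circ\omega = \pi$ satisfying $\eps' = \omega\circ\eps$, and conversely composing any admissible $\eps$ with such an $\omega$ again yields an admissible lift. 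Finally, $\eps(H)$ contains $\beta(K)$ and surjects onto $G$; if $\beta$ is surjective then $\beta(K)=L$, so $\eps(H)\supseteq L$ forces $\eps(H)=E$, and if $\beta$ is an isomorphism then additionally $\Ker\eps \subseteq K$ (since $\eta = \pi\eps$) with $\eps|_K = \beta$ injective, giving $\Ker\eps = 1$.

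The homomorphism verification is routine; the real content, and the only place the hypothesis is consumed, is the adjustment of $t$ so that its cocycle equals $\beta\circ c$ on the nose rather than merely up to a coboundary. The main obstacle is therefore the profinite bookkeeping: I must ensure continuous sections of $\eta$ and $\pi$ exist, that the coboundary correcting $t_0$ may be taken continuous, and that the resulting $\eps$ is continuous — all of which hold because we work throughout with continuous cochains in the sense of \cite{Symonds} and continuous sections of profinite epimorphisms are available.
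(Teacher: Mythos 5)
Your proposal is correct and follows essentially the same route as the paper's own argument: represent both extensions by cocycles attached to continuous sections, adjust the section of $\pi$ by a continuous coboundary so its cocycle is exactly $\beta\circ c$, define $\eps$ by $k\,s(g) \mapsto \beta(k)\,t(g)$, and obtain the uniqueness clause from a continuous $1$-cocycle $G \to L$ that twists $E$ by the automorphism $\omega$. Your explicit treatment of the surjective\slash isomorphism addendum (via $\eps(H) \supseteq \beta(K)$ and $\Ker\eps \subseteq K$) is a small but welcome completion of a point the paper leaves implicit.
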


Recall that
extensions~\eqref{2 extensions}
are \textbf{congruent}
(\cite[p.~100]{Ribes}),
if there is a commutative diagram
\eqref{map of extensions diagram}
with $K = L$ and $\beta = \id_K$.
Thus, given $\calH$,
an extension $\E$ such that
\eqref{map of extensions diagram}
commutes
is unique up to a congruence of extensions.
We identify congruent extensions,
and thus $\E$ is unique.

We will also have to classify extensions up to isomorphism:

Extensions \eqref{2 extensions}
-- even with non-abelian kernels $K,L$ --
are \textbf{isomorphic},
if there is an isomorphism of profinite groups
$\eps \colon H \to E$
such that
$\pi \circ \eps = \eta$.
If this is the case
and $K,L$ are abelian $G$-modules,
then 
$\beta := \eps|_K \colon K \to L$
is an isomorphism of profinite $G$-modules
such that \eqref{map of extensions diagram}
commutes;
thus we may assume that $L = K$
and $\beta \in \Aut_G(K)$,
and 
$\E = \beta_*(\calH)$.
Conversely, by Lemma~\ref{map of extensions},
every $\beta \in \Aut_G(K)$  defines
an extension isomorphic to $\calH$,
namely,
$\beta_*(\calH)$.
So,
$\Aut_G(K)$ acts on $H^2(G,K)$
and
the isomorphism classes of extensions of $G$
by $K$
bijectively correspond to the elements of
$H^2(G,K)/\Aut_G(K)$.

Since we deal with commutative diagrams with epimorphisms
of profinite groups,
it will be convenient to redefine the inflation map as follows:
Let $\phi \colon G \onto \Gbar$ be an epimorphism
of profinite groups.
A profinite $\Gbar$-module $K$ is also a $G$-module --
$G$ acts on $K$ by composing $\phi$ with the $\Gbar$-action on $K$.
The \textbf{inflation along $\phi$}
is the map
$\Inf_\phi \colon H^2(\Gbar,K) \to H^2(G,K)$
induced from the map on cochains
given by composing the $\Gbar$-cochains with $\phi$.
It is an exercise to show that:

\begin{Lemma}\label{inflation in diagram}
Let $\Gbar$ be a profinite group
and  $K$ a profinite $\Gbar$-module.
Let $\phi \colon G \onto \Gbar$
and let $G$ act on $K$ via $\phi$.
Let
$\alpha \colon B \onto \Gbar$
and 
$\eta \colon H \onto G$
be extensions with kernel $K$
(with the above action).
Then
$\eta$ is isomorphic to the inflation $\Inf_\phi(\alpha)$
of $\alpha$ along $\phi$
if and only if
there exists a cartesian square
\begin{equation}\label{mycartesian}
\xymatrix@=30pt{
H \arr[r]_{\eta} \arr[d]^{\beta} & G \arr[d]^{\phi}
\\
B \arr[r]^{\alpha} & \Gbar \rlap{.} \\
}
\end{equation}
\end{Lemma}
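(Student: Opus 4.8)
The plan is to show that the inflation $\Inf_\phi(\alpha)$ is, up to congruence, exactly the fiber-product extension $\pr_G\colon B\times_\Gbar G\onto G$; once this is established, both implications follow formally from Definition~\ref{cartesian}.

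First I would fix the data on the $\Gbar$-side. Identify $K$ with $\Ker\alpha\le B$ and choose a continuous section $s\colon\Gbar\to B$ of $\alpha$, which exists since $\alpha$ is an epimorphism of profinite groups. Let $x\in Z^2(\Gbar,K)$ be the associated cocycle, determined by $s(\sigma)s(\tau)=x(\sigma,\tau)\,s(\sigma\tau)$ for $\sigma,\tau\in\Gbar$; then $[x]$ is the class of $\alpha$ under the correspondence between $H^2(\Gbar,K)$ and extensions, and by the definition of inflation $\Inf_\phi(\alpha)$ is represented by the cocycle $y(g,g')=x(\phi(g),\phi(g'))$ on $G$.

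Next I would form $P=B\times_\Gbar G$ with coordinate projections $\pr_B\colon P\to B$ and $\pr_G\colon P\onto G$, and check that $\pr_G$ is an extension of $G$ whose kernel $\{(k,1)\st k\in K\}$ is carried by $\pr_B$ isomorphically onto $K=\Ker\alpha$; a one-line conjugation computation shows this identification is $G$-equivariant (the action on the kernel being ${}^{g}(k,1)=({}^{\phi(g)}k,1)$, i.e.\ the action of $G$ on $K$ via $\phi$). The heart of the proof is the cocycle computation: using the continuous section $t\colon G\to P$, $t(g)=(s(\phi(g)),g)$, of $\pr_G$, one obtains
\[
t(g)t(g')=\bigl(s(\phi(g))s(\phi(g')),\,gg'\bigr)=\bigl(x(\phi(g),\phi(g')),1\bigr)\,t(gg'),
\]
so that $\pr_G$ is represented by $y$. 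Hence $\pr_G\colon P\onto G$ is congruent to $\Inf_\phi(\alpha)$, and after identifying congruent extensions, $\Inf_\phi(\alpha)$ \emph{is} the fiber-product extension $\pr_G$. I expect this cocycle identity, together with the verification that the kernel identification is $G$-equivariant, to be the only real content; the rest is bookkeeping.

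Finally I would deduce both directions. If \eqref{mycartesian} is cartesian, then by Definition~\ref{cartesian}(a) there is an isomorphism $\eps\colon H\to P$ over $G$ (that is, $\pr_G\circ\eps=\eta$) with $\pr_B\circ\eps=\beta$; thus $\eta$ is isomorphic, as an extension of $G$, to $\pr_G=\Inf_\phi(\alpha)$. Conversely, if $\eta$ is isomorphic to $\Inf_\phi(\alpha)=\pr_G$, choose an isomorphism $\eps\colon H\to P$ with $\pr_G\circ\eps=\eta$ and set $\beta:=\pr_B\circ\eps\colon H\to B$. Then $\alpha\circ\beta=\alpha\circ\pr_B\circ\eps=\phi\circ\pr_G\circ\eps=\phi\circ\eta$, so \eqref{mycartesian} commutes, and since $\eps$ identifies $(H,\eta,\beta)$ with the fiber product $(P,\pr_G,\pr_B)$, the square is cartesian by Definition~\ref{cartesian}(a).
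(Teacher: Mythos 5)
Your proof is correct, but it is organized differently from the paper's own (commented-out) argument, which proves the two implications separately: for the forward direction the paper realizes $\alpha$ and $\Inf_\phi(\alpha)$ on twisted products $K\times\Gbar$ and $K\times G$ with multiplications given by the cocycles $x$ and $x\circ\phi^{(2)}$, defines $\beta(k,g)=(k,\phi(g))$ explicitly, and verifies by hand that this is a homomorphism making the square cartesian; for the converse it lifts a continuous section $u$ of $\alpha$ through the cartesian square to a section $v$ of $\eta$ with $\beta\circ v=u\circ\phi$ and compares cocycles. You instead prove a single identity --- that the fiber-product extension $\pr_G\colon B\times_\Gbar G\onto G$ is congruent to $\Inf_\phi(\alpha)$, via the section $t(g)=(s(\phi(g)),g)$ --- and then extract both implications formally from Definition~\ref{cartesian}(a). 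Your cocycle computation is essentially the paper's converse computation specialized to $H=B\times_\Gbar G$, so the computational heart overlaps, but the universal property of the fiber product lets you dispense entirely with the paper's explicit twisted-product construction and homomorphism check in the forward direction. What your route buys: economy, and it exhibits the fiber product as \emph{the} canonical model of the inflated extension, which is in fact how the paper uses the notion later (in Theorem~\ref{indecomposable quotients of a fiber product} the projection $H'=G\times_D E\onto G$ is taken to be $\Inf_\pi(\zeta)$ ``by definition''). What the paper's route buys: explicit formulas and a $\beta$ normalized to restrict to $\id_K$ on kernels, i.e.\ it tracks congruence rather than mere isomorphism. Two points you use correctly but tacitly and might spell out: the existence of a continuous section $s$ of the profinite epimorphism $\alpha$ (standard, and also invoked by the paper), and, in your converse, that the isomorphism $\eps$ need not restrict to the identity on kernels --- this is harmless precisely because cartesianness only demands that $\beta$ map $\Ker\eta$ isomorphically onto $\Ker\alpha$ (Definition~\ref{cartesian}(d)), so isomorphic (not necessarily congruent) extensions are handled as the statement requires.
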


Following this we write
$\eta \isom_G \Inf_\phi(\alpha)$
if there is a cartesian square \eqref{mycartesian},
even if $\Ker \alpha, \Ker \eta$ are not abelian.

\begin{Corollary}\label{H2 is extensions}
The following two categories are equivalent:
\begin{enumerate}
\item[(a)]
The class of pairs $(K,f)$,
where $K$ is a profinite $G$-module
and $f \in H^2(G,K)$;
morphism $(K,f) \to (L,g)$ in this category
is a $G$-homo\-mor\-phism $\beta \colon K \to L$
such that $g = \beta_* f = \beta \circ f$.
\item[(b)]
The class of profinite group extensions
$0 \to K \to H \to G \to 1$,
up to congruence,
where $K$ is a profinite $G$-module;
morphism in this category
is a commutative diagram \eqref{map of extensions diagram}.
\end{enumerate}
\end{Corollary}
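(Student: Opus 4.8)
The plan is to verify the equivalence by exhibiting the natural functor $\Phi$ from category (b) to category (a) and checking that it is essentially surjective and fully faithful; the two inputs I would lean on are the standard bijective correspondence between $H^2(G,K)$ and congruence classes of extensions of $G$ by $K$ (\cite[p.~233]{RZ}) and Lemma~\ref{map of extensions}, which is really the heart of the matter. On objects I would send an extension $0\to K\to H\to G\to 1$ to the pair $(K,f)$, where $f\in H^2(G,K)$ is its cohomology class. Since congruent extensions share the same class and we have agreed to identify congruent extensions in (b), this is well defined, and since every class $f$ is realized by some extension, $\Phi$ is bijective on objects up to the respective equivalences; in particular it is essentially surjective.

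On morphisms I would send a commutative diagram \eqref{map of extensions diagram}, i.e.\ a pair $(\beta,\eps)$, to the underlying $G$-homomorphism $\beta\colon K\to L$. The point of Lemma~\ref{map of extensions} is exactly that $\E=\beta_*(\calH)$ holds if and only if such a diagram exists; translated through the object correspondence this says that $\beta$ completes to a diagram precisely when $g=\beta_* f$. Hence $\beta$ is genuinely a morphism $(K,f)\to(L,g)$ of (a), and conversely every such $\beta$ is the image of at least one diagram. This gives fullness. Functoriality is then immediate: the identity diagram maps to $\id_K$, and concatenating two diagrams concatenates the underlying homomorphisms, so $\Phi$ respects composition.

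The step I expect to be the main obstacle is faithfulness, and it is caused precisely by the non-uniqueness clause of Lemma~\ref{map of extensions}: a given $\beta$ may admit several completions $\eps$, any two differing by an automorphism $\omega$ of $E$ with $\pi\circ\omega=\pi$ and $\omega|_{L}=\id_{L}$. So the naive forgetful map $(\beta,\eps)\mapsto\beta$ is not injective on Hom-sets unless the morphisms of (b) are read modulo this ambiguity. I would therefore make explicit, before asserting faithfulness, that two diagrams \eqref{map of extensions diagram} with the same $\beta$ are to be regarded as the same morphism of (b) (which by the lemma is equivalent to their completions differing by such an $\omega$). With this convention the induced map on Hom-sets is a bijection onto $\{\beta\colon K\to L \mid g=\beta_* f\}$, so $\Phi$ is fully faithful; combined with essential surjectivity this yields the claimed equivalence. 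The remaining verifications — continuity of the maps involved and compatibility of the $G$-actions — are routine and already packaged into Lemma~\ref{map of extensions} and the cited correspondence.
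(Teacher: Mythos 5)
Your proof is correct and follows essentially the same route as the paper, which states this as an immediate corollary of Lemma~\ref{map of extensions} together with the standard correspondence between $H^2(G,K)$ and congruence classes of extensions (\cite[p.~233]{RZ}), with no further argument given. Your explicit remark on faithfulness --- that diagrams sharing the same $\beta$ differ by an automorphism $\omega$ of $E$ with $\pi\circ\omega=\pi$ and $\omega|_{L}=\id_{L}$, i.e.\ by a self-congruence, and hence are identified once extensions are taken up to congruence --- is exactly the point the paper packages into the uniqueness clause of the lemma and the preceding convention of identifying congruent extensions.
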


For the rest of this section fix a
\emph{finite simple $G$-module $A$}. 
Thus $A$ is a finite simple $[[\Z G]]$-module,
and so we may apply the material of section~\ref{duality}
with $R = [[\Z G]]$.
Recall (Remark~\ref{vector space}(b))
that $F = \End_G(A)$ is a finite field.

\begin{Remark}\label{cohomology is a vector space}
We use the notation of \cite[Chapter II \S1]{Ribes}.

(a)
Notice that the
\textbf{group of non-homogeneous cochains}
$\Cbar^n(G,A)$
is a vector space over $F$:
For every $\alpha \in F$ we have
$\alpha f := \alpha \circ f$.
Moreover,
the boundary operator
$\deriv \colon \Cbar^n(G,A) \to \Cbar^{n+1}(G,A)$
commutes with $\alpha$,
hence it is an $F$-linear operator.

It follows that
$\Bbar^n(G,A) \subseteq \Zbar^n(G,A)$
are vector subspaces of $\Cbar^n(G,A)$.
Thus
$H^n(G,A) = \Zbar^n(G,A)/\Bbar^n(G,A)$
is a vector space over $F$.

(b)
In particular,
by Corollary~\ref{H2 is extensions},
the set of congruence classes 
of epimorphisms onto $G$
with kernel $A$ (as a $G$-module)
is also a vector space over $F$.
By Lemma~\ref{action}(c),
these epimorphisms are necessarily indecomposable.
In this sense we speak below
of linear combinations
of indecomposable epimorphisms onto $G$ with kernel $A$.

(c)
If $\pi \colon H \onto G$ is an epimorphism of profinite groups,
then $A$ is also a simple $H$-module
and $\End_H(A) = \End_G(A) = F$.
It is easy to see that the
inflation
$\Inf_\pi \colon H^n(G,A) \to H^n(H,A)$
is an $F$-linear map.
\end{Remark}

\begin{Lemma}\label{cocycle of a fiber product}
Let
$(H_i \onto G \st i \in I)$
be a family of extensions with kernel $A$,
as a $G$-module.
For every $i \in I$
let $f_i \in \Zbar^2(G,A)$
be a (non-homogeneous) cochain representing $H_i \onto G$.
Then
$\eta_I \colon \fprod{i \in I} H_i \onto G$
is represented by a cochain
$g \in \Zbar^2(G,A^I)$
given by
$g(\sigma,\tau) = \big(f_i(\sigma,\tau)\big)_{i \in I}$.
\end{Lemma}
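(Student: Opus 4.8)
The plan is to realize $\eta_I$ by an explicit continuous section built coordinatewise from sections of the individual $\eta_i$, and then simply read off its associated $2$-cocycle.

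First I would recall, as in the (omitted) proof of Lemma~\ref{map of extensions}, that a cochain $f_i \in \Zbar^2(G,A)$ represents $\eta_i \colon H_i \onto G$ precisely when there is a continuous section $s_i \colon G \to H_i$ of $\eta_i$ with
\[
s_i(\sigma) s_i(\tau) = f_i(\sigma,\tau)\, s_i(\sigma\tau), \qquad \sigma,\tau \in G,
\]
where $f_i(\sigma,\tau)$ is viewed as an element of $\Ker \eta_i$ under a fixed $G$-isomorphism $\Ker \eta_i \isom_G A$. So for each $i$ I fix such an $s_i$. At the same time I would record the kernel identification: by Remark~\ref{properties of fiber product}(b),(c) (the kernel $A$ being abelian and the $\eta_i$ surjective), $\Ker \eta_I = \prod_{i \in I} K_i$ with each $K_i$ identified $G$-equivariantly with $\Ker \eta_i \isom_G A$ via Lemma~\ref{action}, so that $\Ker \eta_I \isom_G A^I$ and a tuple $(k_i)_i$ with $k_i \in K_i$ corresponds to $(k_i)_i \in A^I$.

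Next I would assemble the product section. Define $s \colon G \to \fprod{i \in I} H_i$ by $s(\sigma) = (s_i(\sigma))_{i \in I}$. Since $\eta_i(s_i(\sigma)) = \sigma$ for every $i$, the tuple $(s_i(\sigma))_i$ satisfies the defining condition $\eta_i(h_i) = \eta_j(h_j)$ of the fiber product~\eqref{concrete}, so $s$ does take values there; moreover $\eta_I(s(\sigma)) = \sigma$, so $s$ is a section of $\eta_I$, and it is continuous because each $s_i$ is and the fiber product carries the subspace topology of $\prod_i H_i$. The cocycle attached to $s$ is then computed by coordinatewise multiplication in $\fprod{i \in I} H_i$:
\[
s(\sigma) s(\tau) = \big(s_i(\sigma) s_i(\tau)\big)_{i \in I} = \big(f_i(\sigma,\tau)\, s_i(\sigma\tau)\big)_{i \in I} = \big(f_i(\sigma,\tau)\big)_{i \in I} \cdot s(\sigma\tau).
\]
The factor $\big(f_i(\sigma,\tau)\big)_i$ lies in $\prod_i K_i = \Ker \eta_I$, and under the identification above it is exactly $g(\sigma,\tau) = (f_i(\sigma,\tau))_i \in A^I$. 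Hence $s(\sigma)s(\tau) = g(\sigma,\tau)\, s(\sigma\tau)$, which is precisely the statement that the continuous section $s$ gives rise to $g$; in particular $g \in \Zbar^2(G,A^I)$ and represents $\eta_I$.

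I expect the only delicate points to be bookkeeping rather than substance: first, that the chosen $s_i$ realize the given $f_i$ on the nose (not merely up to a coboundary), which is the standard fact underlying Lemma~\ref{map of extensions}; and second, the careful tracking of the three identifications $\Ker \eta_i \isom_G A$, $\Ker \eta_I = \prod_i K_i$, and $\prod_i K_i \isom_G A^I$, so that the coordinatewise product of kernel elements really corresponds to the tuple $(f_i(\sigma,\tau))_i$ in $A^I$ and the $G$-action matches. Once these are fixed, the cocycle identity for $g$ follows at once from the coordinatewise ones for the $f_i$, and no separate verification that $g$ is a cocycle is needed, since $g$ arises from an honest section of an extension.
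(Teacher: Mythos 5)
Your proposal is correct and follows essentially the same route as the paper's proof: both fix continuous sections $u_i$ of the $\eta_i$ realizing the cochains $f_i$ exactly, assemble the coordinatewise product section of $\eta_I$, and read off the cocycle $g(\sigma,\tau) = \big(f_i(\sigma,\tau)\big)_{i \in I}$ from the multiplication in the fiber product. The extra bookkeeping you flag (the kernel identifications via Remark~\ref{properties of fiber product}(b),(c) and the continuity of the product section) is precisely what the paper's proof asserts implicitly, so there is no gap.
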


\begin{proof}
Let $i \in I$.
We think of $f_i$ as a continuous function
$f_i \colon G\times G \to A$
with $\deriv f_i = 0$.
Thus
there is a continuous section $u_i \colon G \to H_i$
of $H_i \onto G$ such that
$$
u_i(\sigma) u_i(\tau) =
f_i(\sigma,\tau) u_i(\sigma \tau),
\qquad
\sigma, \tau \in G
.
$$
Now,
$\sigma \mapsto \big(u_i(\sigma)\big)_{i \in I}$
is a continuous section
$u \colon G \to \fprod{i \in I} H_i$ of $\fprod{i \in I} H_i \onto G$,
and the map $g \colon G \times G \to A^I$
given by
$g(\sigma,\tau) = \big(f_i(\sigma,\tau)\big)_{i \in I}$
is continuous and satisfies
$$
u(\sigma) u(\tau) =
g(\sigma,\tau) u(\sigma \tau),
\qquad
\sigma, \tau \in G
.
$$
Hence $g$ is a cochain 
and it represents $\fprod{i \in I} H_i \onto G$.
\end{proof}

\begin{Definition}\label{two categories}
Let $n \in \N$.
We will consider the following two categories:
\begin{itemize}
\item[(a)]
The class $\calH^n$ of pairs $(K,f)$,
where $K$ is a profinite $A$-generated $G$-module
and $f \in H^n(G,K)$;
morphism $(K_1,f_1) \to (K_2,f_2)$ in this category
is a $G$-homomorphism $\beta \colon K_1 \to K_2$
such that $f_2 = \beta_* f_1 = \beta \circ f_1$.
\item[(b)]
The class $\T^n$ of pairs $(V,S)$,
where $V$ is a vector space over $F$ and
$S \colon V \to H^n(G,A)$ is an $F$-linear transformation;
morphism $(V_1,S_1) \to (V_2,S_2)$ in this category
is
an $F$-linear map $T \colon V_2 \to V_1$ such that
$S_2 = S_1 \circ T$.
\end{itemize}
In particular, if $n = 2$,
then (a) is equivalent to
\begin{itemize}[resume]
\item[(a')]
The class of profinite group extensions
$0 \to K \to H \to G \to 1$,
up to congruence,
where $K$ is a profinite $A$-generated $G$-module;
morphism in this category
is a commutative diagram \eqref{map of extensions diagram}.
\end{itemize}
\end{Definition}

\begin{Construction}\label{two functors}
We construct two contravariant functors,
$X^n \colon \calH^n \to \T^n$ and $Y^n \colon \T^n \to \calH^n$,
which give an equivalence of categories.

(a)
\textit{
Construction of
$X^n \colon \calH^n \to \T^n$.
}
Let $(K,f) \in \calH^n$.
By Remark~\ref{vector space}(c),
$K^* = \Hom_G(K,A)$ is a vector space over $F$.
A $G$-homomorphism
$\phi \colon K \to A$
defines a map
$\phi_* \colon \Zbar^n(G,K) \to \Zbar^n(G,A)$ by
$f' \mapsto \phi \circ f'$.
Thus a representative $f' \in \Zbar^n(G,K)$ of $f$ defines a map
$$
S_{K,f'} \colon K^* \to \Zbar^n(G,A)
$$
by
$S_{K,f'}(\phi) = \phi_*(f') = \phi \circ f'$.
It is easy to see that
$S_{K,f'}$ is an $F$-linear transformation.
Hence it induces an $F$-linear transformation
\begin{equation}
S_{K,f} \colon K^* \to H^n(G,A).
\end{equation}
Notice that $S_{K,f}$ does not depend on the choice of $f'$
(and hence we may write $S_{K,f}(\phi) = \phi \circ f$):
If $f' \sim f''$, that is, $f' = f'' + \deriv g$
for some $g \in \Cbar^{n-1}(G,K)$,
then
$$
\phi \circ f' = \phi \circ f'' + \phi \circ \deriv g,
\quad
\textnormal{ where }
\phi \circ \deriv g = \deriv(\phi \circ g) \in \Bbar^n(G,K),
$$
hence $f',f''$ define the same map 
$S_{K,f} \colon K^* \to H^n(G,A)$.
Put $X^n(K,f) \defeq (K^*,S_{K,f})$.

We remark that if $n=2$
and we identify $(K,f)$ with an extension $\eta$ of $G$ by $K$,
by Corollary~\ref{H2 is extensions},
then
$S_{K,f}(\phi)$ is the extension $\pi$ of $G$ by $A$
such that the following diagram commutes:
\begin{equation}\label{induced}
\xymatrix{
0 \ar[r] & K \ar[r] \ar[d]^{\phi}& H \ar[r]_{\eta} \ar[d]
& G \ar[r] \ar@{=}[d] & 1
\\
0 \ar[r] & A \ar[r] & E \ar[r]^{\pi} & G \ar[r] & 1\rlap{,}
}
\end{equation}
i.e., 
$\Img S_{K,f}$
consists of all indecomposable extensions of $G$
dominated by $\eta$.

Let $\beta \colon (K_1,f_1) \to (K_2,f_2)$
be a morphism in $\calH^n$.
The $G$-homomorphism $\beta \colon K_1 \to K_2$
induces an $F$-linear map
$\beta^* \colon K_2^* \to K_1^*$
by $\psi \mapsto \psi \circ \beta$.
We claim that
$S_{K_2,f_2} = S_{K_1,f_1} \circ \beta^*$.
Indeed,
let $\psi \in K_2^*$.
Let $f' \in \Zbar^n(G,K_1)$ be a representative of $f_1 \in H^n(G,K_1)$.
Then $\beta \circ f' \in \Zbar^n(G,K_2)$ is a representative of
$f_2 = \beta_*(f_1) \in H^n(G,K_2)$.
Therefore
$$
S_{K_1,f'} \circ \beta^*(\psi) =
S_{K_1,f'}(\psi \circ \beta) =
\psi \circ \beta \circ f' =
S_{K_2,\beta \circ f'}(\psi) =
S_{K_2,\beta_*(f')}(\psi).
$$
Modulo $\Bbar^n(G,K)$
this gives
$S_{K_1,f_1} \circ \beta^*(\psi) =
S_{K_2,f_2}(\psi)$.

Thus $\beta^*$ is a morphism in $\T^n$.
Put $X^n(\beta) \defeq \beta^*$.

This completes the definition of $X^n \colon \calH^n \to \T^n$.


(b)
\textit{
Construction of $Y^n \colon \T^n \to \calH^n$.
}
Let $(V,S) \in \T^n$.
Lift $S \colon V \to H^n(G,A)$ to an $F$-linear transformation
$S' \colon V \to \Zbar^n(G,A)$.
Thus, for every $v \in V$, the map
$S'(v) \colon G^n \to A$
is continuous, i.e., locally constant, and satisfies
\begin{multline*}
0 =
\Big(\deriv \big(S'(v)\big)\Big)
(\sigma_1, \sigma_2, \ldots,\sigma_{n+1}) \defeq
\sigma_1
\big(S'(v)\big)(\sigma_2, \sigma_3, \ldots, \sigma_{n+1}) +
\\
\sum_{i=1}^n
(-1)^i
\big(S'(v)\big)(\sigma_1, \sigma_2, \ldots,
\sigma_i\sigma_{i+1},
\ldots,\sigma_{n+1}) 
+(-1)^n
\big(S'(v)\big)(\sigma_1, \sigma_2, \ldots, \sigma_n)  .
\end{multline*}
Recall
(Definition~\ref{dual of a vector space})
that $V^*$ is a profinite $G$-module.
Define a map
$f' \colon G^n \to V^*$
by
$
\big(f'(\sigma_1,\ldots,\sigma_n)\big)(v) =
\big(S'(v)\big)(\sigma_1,\ldots,\sigma_n)
$.

%
%
It is continuous:
A basic open subset $V^*$ is of the form
$$
R = \{{\phi \in V^*} \st S'(v_i) = a_i, \ i = 1,\ldots, n\},
\text{ for some } \{v_i\}_{i=1}^n \subseteq V, 
\{a_i\}_{i=1}^n \subseteq A.
$$
Then
$(f')^{-1}(R) = 
\{\sigma \in G^n \st S'(v_i)(\sigma) = a_i, \ i = 1,\ldots, n\}$
is open in $G^n$,
because $S'(v_i)$ is continuous for every $i$.

It is easy to check that
\begin{equation*}
\big( (\deriv f')(\sigma_1,\ldots,\sigma_{n+1})\big)(v) =
\big(\deriv \big(S'(v)\big)\big)(\sigma_1,\ldots,\sigma_{n+1})
,
\end{equation*}
hence, as $\deriv \big(S'(v)\big)=0$, we have
$f' \in \Zbar^n(G,V^*)$.
Thus $f'$ defines an element $f \in H^n(G,V^*)$,
the class of $f'$.
Put $Y^n(V,S) \defeq (V^*,f)$.

Notice that $f$ does not depend on the choice of $S'$:
If $S'' \colon V \to \Zbar^n(G,A)$ is another lifting of $S$,
it defines $f'' \in \Zbar^n(G,V^*)$.
As $S' - S'' \colon V \to \Bbar^n(G,A)$,
for every $v \in V$ there is
$g_v \in \Cbar^{n-1}(G,A)$ such that
$S'(v) - S''(v) = \deriv g_v$.
Define $g \in \Cbar^{n-1}(G,V^*)$ by
$\big(g(\sigma_1,\ldots,\sigma_{n-1})\big)(v) =
\big(g_v\big)(\sigma_1,\ldots,\sigma_{n-1})$.
Then
\begin{multline*}
\big((f'-f'')(\sigma_1,\ldots,\sigma_n)\big)(v) =
\big(S'(v)\big)(\sigma_1,\ldots,\sigma_n)
-
\big(S''(v)\big)(\sigma_1,\ldots,\sigma_n)
=
\\
=
\big(\deriv g_v\big)(\sigma_1,\ldots,\sigma_n)
=
\sigma_1 g_v(\sigma_2,\ldots, \sigma_n)
+
\\
\sum_{i=1}^n
(-1)^i
g_v(\sigma_1,\ldots, \sigma_i \sigma_{i+1},\ldots, \sigma_n)
+ (-1)^n
g_v(\sigma_1,\ldots, \sigma_n)
=
\\
\sigma_1 g(\sigma_2,\ldots, \sigma_n)(v)
+
\sum_{i=1}^n
(-1)^i
g(\sigma_1,\ldots, \sigma_i \sigma_{i+1},\ldots, \sigma_n)(v)
+
\\
(-1)^n
gv(\sigma_1,\ldots, \sigma_n)(v)
=
\big(\deriv g(\sigma_1,\ldots, \sigma_n)\big) (v),
\end{multline*}
hence $f'-f'' = \deriv g \in \Bbar^n(G,V^*)$,
whence $S',S''$ define the same element $f \in H^n(G,V^*)$.

Let $T \colon (V_1,S_1) \to (V_2,S_2)$
be a morphism in $\T^n$.
Then $T \colon V_2 \to V_1$ is an $F$-linear map
(that induces a continuous $G$-homomorphism
$T^* \colon V_1^* \to V_2^*$
by $T^*(\phi) = \phi \circ T$)
and $S_2 = S_1 \circ T$.
Lift $S_1$ to an $F$-linear map $S'_1 \colon V_1 \to \Zbar^n(G,A)$,
then $S'_2 = S'_1 \circ T$ lifts $S_2$.
For $i = 1, 2$
let $(V_i^*,f_i) = Y^n(V_i,S_i)$ and let $f_i' \in \Zbar^n(G,A)$
be a representative of $f_i$, defined by
$
\big(f_i'(\sigma)\big)(v) =
\big(S'_i(v)\big)(\sigma)
$,
for all $\sigma = (\sigma_1,\ldots,\sigma_n) \in G^n$ and $v \in V_i$.

Let $\sigma \in G^{n+1}$ and $v \in V_2$. Then
$\big(S'_2(v)\big)(\sigma) = \big(S'_1(T(v))\big)(\sigma)$,
hence
$$
\big(f_2'(\sigma)\big)(v) = \big(f_1'(\sigma)\big)\big(T(v)\big)
=\Big( T^*\big(f_1'(\sigma)\big)\Big)(v).
$$
This is true for every $v \in V_2$,
so
$f_2'(\sigma) = T^*\big(f_1'(\sigma)\big)$
for every $\sigma$,
whence $f_2' = T \circ f_1'$.
Thus $T^*$ is a morphism in $\calH^n$.
Put $Y^n(T) \defeq T^*$.

This completes the definition of $Y^n \colon \T^n \to \calH^n$.
\end{Construction}

\begin{Proposition}\label{equivalence of categories}
The categories $\T^n$ and $\calH^n$ are dually equivalent,
via the functors $X^n$ and $Y^n$.
\end{Proposition}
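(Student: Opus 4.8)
The plan is to promote the module/vector-space dualities of Theorem~\ref{Duality} to natural isomorphisms $Y^n\circ X^n\cong\id_{\calH^n}$ and $X^n\circ Y^n\cong\id_{\T^n}$, using the canonical maps $\Theta_K$ and $\Lambda_V$ of Definition~\ref{natural isomorphisms} as the components. Once I check that these maps respect the cohomological data (i.e. that they are morphisms in $\calH^n$, resp. $\T^n$, and not merely isomorphisms of underlying objects), invertibility comes from Theorem~\ref{Duality} and naturality is handed over to Lemmas~\ref{categories equivalence diagram} and~\ref{categories equivalence diagram 2}.

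First I would unwind $Y^nX^n$. Fix $(K,f)\in\calH^n$ and a representative $f'\in\Zbar^n(G,K)$ of $f$. Then $S'=S_{K,f'}\colon\phi\mapsto\phi\circ f'$ is an $F$-linear lift of $S_{K,f}$, so by Construction~\ref{two functors}(b) the object $Y^nX^n(K,f)=(K^{**},g)$ (note $K^{**}$ is $A$-generated, being a vector-space dual as in Definition~\ref{dual of a vector space}) is represented by the cocycle $g'\in\Zbar^n(G,K^{**})$ with $(g'(\sigma))(\phi)=(S'(\phi))(\sigma)=\phi(f'(\sigma))$ for $\sigma\in G^n$ and $\phi\in K^*$. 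By the definition of $\Theta_K$ this equals $(\Theta_K(f'(\sigma)))(\phi)$, so $g'=\Theta_K\circ f'$ and hence $g=(\Theta_K)_*f$. Thus $\Theta_K\colon K\to K^{**}$ is a morphism $(K,f)\to Y^nX^n(K,f)$ in $\calH^n$, and it is an isomorphism of $A$-generated $G$-modules by Theorem~\ref{Duality}, hence an isomorphism in $\calH^n$.

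Dually I would unwind $X^nY^n$. Fix $(V,S)\in\T^n$ and a lift $S'\colon V\to\Zbar^n(G,A)$, so that $Y^n(V,S)=(V^*,f)$ is represented by $f'$ with $(f'(\sigma))(v)=(S'(v))(\sigma)$. Applying $X^n$ gives $X^nY^n(V,S)=(V^{**},S_{V^*,f})$, and for $v\in V$ I compute $S_{V^*,f'}(\Lambda_V(v))=\Lambda_V(v)\circ f'$, whose value at $\sigma$ is $(f'(\sigma))(v)=(S'(v))(\sigma)$. Hence $\Lambda_V(v)\circ f'=S'(v)$, and therefore $S_{V^*,f}\circ\Lambda_V=S$. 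Reading the defining condition of a $\T^n$-morphism $(V,S)\to X^nY^n(V,S)$, this says precisely that $\Lambda_V^{-1}\colon V^{**}\to V$ underlies such a morphism (the requirement $S_{V^*,f}=S\circ\Lambda_V^{-1}$ is exactly the identity just obtained), and it is an isomorphism in $\T^n$ because $\Lambda_V$ is an isomorphism by Theorem~\ref{Duality}.

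Finally, for naturality: given $\beta\colon(K_1,f_1)\to(K_2,f_2)$ in $\calH^n$ one has $Y^nX^n(\beta)=(\beta^*)^*=\beta^{**}$, and the relevant square commutes on underlying $G$-modules by Lemma~\ref{categories equivalence diagram}; since a morphism in $\calH^n$ is determined by its underlying $G$-homomorphism, $\Theta$ is natural. Symmetrically, $X^nY^n$ sends $T$ to $T^{**}$, and Lemma~\ref{categories equivalence diagram 2} yields naturality of $\Lambda$, hence of $\Lambda^{-1}$, in $\T^n$. Having exhibited natural isomorphisms $Y^nX^n\cong\id_{\calH^n}$ and $X^nY^n\cong\id_{\T^n}$, the standard criterion for a (dual) equivalence of categories completes the argument. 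I expect the only genuine work to be the two cocycle identifications above, namely matching $g'$ with $\Theta_K\circ f'$ and $\Lambda_V(v)\circ f'$ with $S'(v)$; both require keeping the chain of dualizations straight, while the independence of all choices of representatives and lifts was already settled in Construction~\ref{two functors}.
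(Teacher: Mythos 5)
Your proof is correct and follows essentially the same route as the paper's: both arguments verify the two cocycle identities $g' = \Theta_K \circ f'$ and $S_{V^*,f'}\circ\Lambda_V = S'$ and then invoke Theorem~\ref{Duality} for invertibility, with naturality supplied by Lemmas~\ref{categories equivalence diagram} and~\ref{categories equivalence diagram 2}. Your write-up is in fact slightly more careful than the paper's in checking that $\Theta_K$ and $\Lambda_V^{-1}$ are genuinely morphisms in $\calH^n$ and $\T^n$ (respecting the contravariant morphism convention in $\T^n$), but this is an elaboration, not a different method.
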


\begin{proof}
We use the natural isomorphisms defined in 
Definition~\ref{natural isomorphisms}.

Let $(K,f) \in \calH^n$,
let $(V,S) = X^n(K,f)$,
and put $(C,g) = Y^n(V,S)$.
Then $C = \Theta_K(V)$ and $g = \Theta_K \circ f$.
Indeed,
$V = K^*$ and $C = V^* = K^{**}$;
by Theorem~\ref{Duality}, $K^{**} = \Theta_K(K)$.
Let $f' \in \Zbar^n(G,K)$ lift $f$.
Then $S$ is induced from $S' \colon K^* \to \Zbar^n(G,A)$
given by
$S'(\phi) = \phi \circ f'$
and $g$ is induced from $g' \in \Zbar^n(G,V^*)$ given by
$\big(g'(\sigma)\big)(\phi) = \big(S'(\phi) \big)(\sigma)$
for every $\phi \in V = K^*$ and every $\sigma \in G^{n+1}$.
Hence
$$
\big(g'(\sigma)\big)(\phi) =
\big(S'(\phi)\big)(\sigma) =
(\phi \circ f')(\sigma) =
\phi \big(f'(\sigma)\big)
.
$$
On the other hand,
by the definition of $\Theta_K$,
$$
[(\Theta_K \circ f')(\sigma)](\phi) =
\big[\Theta_K\big(f'(\sigma)\big)\big](\phi) =
\phi \big(f'(\sigma)\big)
.
$$
Hence
$g' = \Theta_K \circ f'$.
It follows that
$g = \Theta_K \circ f$.

Conversely,
let $(V,S) \in \T^n$,
let $(K,f) = Y^n(V,S)$,
and $(W,R) = X^n(K,f)$.
Then $W = \Lambda_{V}(V)$ and $R \circ \Lambda_{V} = S$.
Indeed,
$K = V^*$ and $W = K^* = V^{**}$;
by Theorem~\ref{Duality}, $V^{**} = \Lambda_{V}(V)$.
Lift $S$ to an $F$-linear map $S' \colon V \to \Zbar^n(G,A)$.
Then $f$ is induced from $f' \in \Zbar^n(G,V^*)$ given by
$f'(\sigma)(v) = \big(S'(v)\big)(\sigma)$,
and $R$ is induced from $R' \colon K^* \to \Zbar^n(G,A)$ given by
$R'(\phi) = \phi \circ f'$.
Let $v \in V$ and fix $\sigma \in G^{n+1}$.
Then
$R'\big(\Lambda_{V}(v)\big) = \Lambda_{V}(v) \circ f'$,
hence
$$
R'\big(\Lambda_{V}(v)\big)(\sigma)=
\big(\Lambda_{V}(v) \circ f'\big)(\sigma) =
\Lambda_{V}(v)\big(f'(\sigma)\big) =
\big(f'(\sigma)\big)(v) =
\big(S'(v)\big)(\sigma),
$$
whence
$R'\circ \Lambda_{V}(v) = S'(v)$.
It follows that
$R\circ \Lambda_{V}(v) = S(v)$.
\end{proof}

\begin{Example}\label{ex1}
Let $I$ be a set.
For every $i \in I$ let
\begin{equation}\label{individual extension}
\xymatrix{
0 \to A \ar[r] & H_i \ar[r]^{\eta_i} & G \to 1}
\end{equation}
be an extension of $G$ by the simple $G$-module $A$
and let
$f_i \in H^2(G,A)$
be the corresponding cocycle.
Let
\begin{equation}\label{fiber product extension}
\xymatrix{
0 \to K \ar[r] & \fprod{i \in I} H_i \ar[r]^{\eta_I} & G \to 1}
\end{equation}
be the fiber product extension
and let
$f \in H^2(G,A^I)$
be the corresponding cocycle.
Then $K = A^I$ and $(K,f) \in \calH^2$.
What is $(K^*, S_{K,f}) = X^2(A^I,f)$?

Let $\phi_i \colon K \onto A$
be the projection on the $i$-th coordinate.
By Lemma~\ref{basis for a free product},
$K^* = \bigoplus_{i \in I} F \phi_i$.
By Lemma~\ref{cocycle of a fiber product},
$f$ is given by
$\phi_i \circ f = f_i$, for every $i \in I$.
By Construction~\ref{two functors}(a),
$S_{K,f}(\phi_i) = f_i$,
for every $i \in I$.
This completely determines
$(K^*, S_{K,f})$,
up to isomorphism.

By elementary linear algebra 
$\dim K^* = |I|$
and
$\dim \Ker S_{K,f} = |I \smallsetminus I'|$,
where
$I'$ is a subset of $I$
such that
$(f_i)_{i \in I'}$
is a maximal linearly independent subset of
$(f_i)_{i \in I}$.
\end{Example}

\begin{Corollary}\label{A-gen ext is fiber product}
Let $\eta \colon H \onto G$
be an epimorphism
such that $K := \ker \eta$
is an $A$-generated module.
Let $\{\psi_i\}_{i \in I}$ be a basis of $K^*$.
Denote $f_i = S(\psi_i) \in H^2(G,A)$
and let \eqref{individual extension} be the corresponding extension,
with $\eta_i$ indecomposable,
for every $i \in I$.
Let $\fprod{i \in I} H_i$
be the fiber product of
$(\eta_i \colon H_i \onto G)_{i \in I}$.
Then there is an isomorphism
$\beta \colon H \to \fprod{i \in I} H_i$
such that
$\eta_I \circ \beta = \eta$.
\end{Corollary}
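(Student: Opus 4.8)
The plan is to run everything through the duality of Proposition~\ref{equivalence of categories} together with the computation of $X^2$ on a fiber product carried out in Example~\ref{ex1}. First I would identify $\eta$ with the object $(K,f) \in \calH^2$ via Corollary~\ref{H2 is extensions}, where $f \in H^2(G,K)$ is the class of the extension. Applying $X^2$ gives $X^2(K,f) = (K^*,S)$ with $S = S_{K,f}$, so that the hypothesis reads $f_i = S(\psi_i)$ for every $i \in I$.

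Next I would examine the fiber product side. Each $f_i \in H^2(G,A)$ determines the (indecomposable, by Remark~\ref{cohomology is a vector space}(b)) extension $\eta_i$, and the kernel of the fiber product $\eta_I \colon \fprod{i \in I} H_i \onto G$ is $A^I$ by Remark~\ref{properties of fiber product}(b); this module is $A$-generated by Example~\ref{direct power of A}, so $(A^I,f') \in \calH^2$, where $f'$ denotes the class of $\eta_I$. By Example~\ref{ex1}, $X^2(A^I,f') = ((A^I)^*, S')$, where the coordinate projections $\{\phi_i\}_{i \in I}$ form an $F$-basis of $(A^I)^*$ (Lemma~\ref{basis for a free product}) and $S'(\phi_i) = f_i$.

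With both images of $X^2$ in hand, the comparison is pure linear algebra inside $\T^2$. I would define the $F$-linear map $T \colon (A^I)^* \to K^*$ by $T(\phi_i) = \psi_i$; since it sends one basis to another it is an isomorphism of $F$-vector spaces. Because $S \circ T(\phi_i) = S(\psi_i) = f_i = S'(\phi_i)$ and both sides are $F$-linear, we obtain $S' = S \circ T$, so $T$ is an isomorphism $(K^*,S) \to ((A^I)^*, S')$ in $\T^2$; that is, $X^2(\eta) \cong X^2(\eta_I)$. I would then transport this back: since $X^2$ is half of an equivalence (Proposition~\ref{equivalence of categories}), it reflects isomorphisms, so $(K,f)$ and $(A^I,f')$ are isomorphic in $\calH^2$, i.e.\ (reading $\calH^2$ through its description $(a')$ in Definition~\ref{two categories}) the extensions $\eta$ and $\eta_I$ are isomorphic. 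Concretely this is a $G$-module isomorphism $\gamma \colon K \to A^I$ with $f' = \gamma_* f$, which by Lemma~\ref{map of extensions} lifts to a commutative diagram \eqref{map of extensions diagram} whose middle vertical map $\beta \colon H \to \fprod{i \in I} H_i$ is an isomorphism (because $\gamma$ is) and satisfies $\eta_I \circ \beta = \eta$, as required.

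I expect the only genuine subtlety to be bookkeeping: keeping the contravariance of $X^2$ straight, so that the comparison map $T$ points from $(A^I)^*$ to $K^*$ in accordance with the direction of morphisms in $\T^2$, and confirming that an isomorphism in $\calH^2$ yields an isomorphism of the ambient groups and not merely of the kernels. This last point is exactly what the final clause of Lemma~\ref{map of extensions} supplies, so no separate argument is needed.
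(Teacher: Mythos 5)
Your proposal is correct and takes essentially the same route as the paper's own proof: both identify $\eta$ with $(K,f)$, apply $X^2$, compute the fiber product's invariant via Example~\ref{ex1}, match bases by a linear isomorphism satisfying the required compatibility in $\T^2$, and transport back through the duality of Proposition~\ref{equivalence of categories} to obtain $\beta$ with $\eta_I \circ \beta = \eta$. The only cosmetic differences are the direction of your comparison map $T$ (the inverse of the paper's $\beta^*$) and your explicit appeal to the final clause of Lemma~\ref{map of extensions}, which the paper leaves implicit.
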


\begin{proof}
Let $f \in H^2(G,K)$
be the cocycle corresponding to $\eta$.
Then $(K,f) \in \calH^2$.
Put $(K^*,S) = X^2(K,f) \in \T^2$;
then $(K,f) = Y^2(K^*,S)$.

Then the fiber product extension
\eqref{fiber product extension} corresponds,
by Example~\ref{ex1}, to
$(\bigoplus_{i \in I} F \phi_i, S')$,
where
$S'(\phi_i) = f_i$,
for every $i \in I$.
The isomorphism
$\beta^* \colon K^* \to \bigoplus_{i \in I} F \phi_i$,
given by $\psi_i \mapsto \phi_i$,
for every $i \in I$,
satisfies
$S = S' \circ \beta^*$,
so it induces an isomorphism
$(\bigoplus_{i \in I} F \phi_i, S') \to (K^*,S)$
in $\T^2$.
By the duality
there is an isomorphism
$(K,f) \to (A^I, g)$ in $\calH^2$,
where $g$ is the cocycle that corresponds to the fiber product.
Thus
there is an isomorphism
$\beta \colon H \to \fprod{i \in I} H_i$
such that
$\eta_I \circ \beta = \eta$.
This isomorphism satisfies
$\beta^*(\phi) = \phi \circ \beta$,
for every $\phi \in (A^I)^*$.
\end{proof}

\begin{Remark}\label{basis choice}
In the above construction of the corresponding extension
\eqref{fiber product extension}
we have the freedom to choose the basis
$\{\psi_i\}_{i \in I}$
of $K^*$.
So we may assume that
$I = I_0 \dotcup I_1$,
where
$\{\psi_i\}_{i \in I_0}$ is a basis of $\Ker S$
and
$\{S(\psi_i)\}_{i \in I_1}$ is a basis of $S(K^*)$.
Then
$\{f_i\}_{i \in I_1}$ is a linearly independent subset
of $H^2(G,A)$
and
\eqref{individual extension} is a copy of the split extension,
for all $i \in I_0$.

Another choice of the basis gives the following:
\end{Remark}

\begin{Corollary}\label{isom with subgroup A}
Let $\eta \colon H \onto G$
be an epimorphism
such that $K := \ker \eta$
is an $A$-generated module.
Let $L$ be a normal subgroup of $H$ contained in $K$.
Then there is
a family of indecomposable epimorphisms
$\calH = (\eta_i\colon H_i\onto G \st i \in I)$,
with kernel $A$,
and a $G$-isomorphism
$\beta \colon H \to \fprod{i \in I} H_i$
such that
$\eta_I \circ \beta = \eta$
and
$\beta(L) = \{(k_i)_{i \in I} \in \prod_{i \in I} \Ker \eta_i \st
k_i = 0 \textnormal{ for all } i \notin I'\}
= \prod_{i \in I'} \Ker \eta_i$,
for some $I' \subseteq I$.
\end{Corollary}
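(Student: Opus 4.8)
The plan is to reduce the statement to the two dual-theoretic results already in hand---Corollary~\ref{submodule simplified} and Corollary~\ref{A-gen ext is fiber product}---and then to track $L$ through the resulting isomorphism. First I would note that, since $L$ is a (closed) normal subgroup of $H$ contained in the abelian group $K = \Ker \eta$, it is a closed $G$-submodule of the $A$-generated module $K$; this is the correspondence recalled just before Lemma~\ref{action}, together with the standing convention that all subgroups are closed.

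Next I would apply Corollary~\ref{submodule simplified} to choose a basis $\{\psi_i\}_{i \in I}$ of $K^*$ \emph{adapted to $L$}, so that the isomorphism $\phi \colon K \to A^I$, $\phi(b) = (\psi_i(b))_{i \in I}$, of Proposition~\ref{A-generated module is a free product} carries $L$ onto $A^{I'}$ for some $I' \subseteq I$. Feeding precisely this basis into Corollary~\ref{A-gen ext is fiber product} produces the family of extensions $\eta_i \colon H_i \onto G$ with $\Ker \eta_i = A$ attached to the classes $f_i = S(\psi_i) \in H^2(G,A)$---each indecomposable by Lemma~\ref{action}(c), as $A$ is simple---together with a $G$-isomorphism $\beta \colon H \to \fprod{i \in I} H_i$ satisfying $\eta_I \circ \beta = \eta$.

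It remains to compute $\beta(L)$, and this is the one step needing care: I must identify the restriction $\beta|_K \colon K \to \Ker \eta_I = A^I$ with the module isomorphism $\phi$ attached to the chosen basis. The final line of Corollary~\ref{A-gen ext is fiber product} records the dual relation $\bar\phi \circ \beta = \beta^*(\bar\phi)$ for $\bar\phi \in (A^I)^*$, under which the $i$-th coordinate projection $\bar\phi_i$ of $A^I$ corresponds to $\psi_i$; evaluating coordinatewise gives $\bar\phi_i(\beta(b)) = \psi_i(b)$ for all $b \in K$, that is, $\beta|_K = \phi$. Hence $\beta(L) = \phi(L) = A^{I'}$, and under the identification of $\Ker \eta_I = A^I$ from Remark~\ref{properties of fiber product}(b) the factor $A^{I'}$ is exactly $\{(k_i)_{i \in I} \in \prod_{i \in I} \Ker \eta_i \st k_i = 0 \textnormal{ for all } i \notin I'\} = \prod_{i \in I'} \Ker \eta_i$, as required.

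The main obstacle is the coordinate-matching just described: one must ensure that the basis handed to Corollary~\ref{A-gen ext is fiber product} is the very basis that aligns $L$ with a sub-product, and that the duality bookkeeping delivers $\beta|_K = \phi$ on the nose rather than $\phi$ post-composed with some $G$-automorphism of $A^I$ (which could permute or mix coordinates and spoil $\beta(L) = A^{I'}$). Everything else reduces to direct citation of the quoted results.
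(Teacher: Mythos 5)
Your proposal is correct and follows essentially the same route as the paper: the paper likewise builds a basis of $K^*$ adapted to the splitting $K = L \directsum M$ from Corollary~\ref{A-generated submodule complement} (you package this step as a citation of Corollary~\ref{submodule simplified}, whose proof is that very construction), feeds it into Corollary~\ref{A-gen ext is fiber product}, and uses the dual relation $\phi_i \circ \beta = \psi_i$ to conclude $x \in L \iff \beta(x) \in \prod_{i \in I'} \Ker \eta_i$. Your worry about $\beta|_K$ being $\phi$ only up to an automorphism of $A^I$ is resolved exactly as you indicate, since the ambiguity in $\beta$ from Lemma~\ref{map of extensions} restricts to the identity on the kernel, so the coordinate identities $\phi_i(\beta(b)) = \psi_i(b)$ hold on the nose.
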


\begin{proof}
By Corollary~\ref{A-generated submodule complement},
$K = L \directsum M$
for some normal subgroup $M$ of $H$ contained in $K$.
Let
$\{\psi_i\}_{i \in J}$
be a basis of $M^*$,
and extend every $\psi_i$ to $\psi_i \in K^*$
by $\psi_i|_L = 0$.
Complete $\{\psi_i\}_{i \in J}$ to a basis
$\{\psi_i\}_{i \in I}$
of $K^*$.
Let $I' = I \smallsetminus J$.


Put $(K^*,S) = X^2(K,f) \in \T^2$;
then $(K,f) = Y^2(K^*,S)$.

The construction in the proof of
Corollary~\ref{A-gen ext is fiber product},
gives an isomorphism
$\beta \colon H \to \fprod{i \in I} H_i$
such that
$\eta_I \circ \beta = \eta$.
Moreover,
the isomorphism
$\beta^* \colon K \to K$,
given by
$\beta^*(\phi) = \phi \circ \beta$,
for every $\phi \in K^*$,
maps $\phi_i$ onto $\psi_i$,
for every $i \in I$.

Let $x \in K$.
Then
\begin{multline}
x \in L
\iff
\psi(x) = 0 \textnormal{ for all } \psi \in M^*
\iff
\psi_i(x) = 0 \textnormal{ for all } i \in J
\iff
\\
\iff
\phi_i(\beta(x)) = 0 \textnormal{ for all } i \in J
\iff
\beta(x) \in \prod_{i \in I'} \Ker \eta_i.
\end{multline}
%
%
Thus $\beta(L) = \prod_{i \in I'} K_i$.
\end{proof}

\begin{Lemma}\label{duality and inflation}
Let $(K,f) \in  \calH^n(G)$.
Let $\theta \colon \Ghat \onto G$ be an epimorphism
and view the $G$-modules $A$, $K$
as $\Ghat$-modules via $\theta$.
Let
$\fhat = \Inf_\theta(f) \in H^n(\Ghat,K)$.
Then the following diagram commutes
$$
\xymatrix{
K \ar@{=}[r] \ar[d]^{S_{K,f}} & K \ar[d]^{S_{K,\fhat}}
\\
H^n(G, A) \ar[r]^{\Inf_\theta} & H^n(\Ghat, A)
}
$$
\end{Lemma}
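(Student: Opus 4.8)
The plan is to verify the commutativity directly at the level of non-homogeneous cochains; there is essentially no cohomological subtlety here, only the associativity of composition. First I would pin down the objects. Both vertical maps have domain $K^*=\Hom_G(K,A)$. Since $\theta$ is surjective, a continuous homomorphism $K\to A$ (resp. $A\to A$) is $\Ghat$-linear for the actions pulled back along $\theta$ if and only if it is $G$-linear; hence $\Hom_{\Ghat}(K,A)=\Hom_G(K,A)=K^*$ and $\End_{\Ghat}(A)=\End_G(A)=F$. In particular the defining intersection $\bigcap_{\phi\in K^*}\Ker\phi$ is unchanged, so $K$ is again $A$-generated as a $\Ghat$-module, $(K,\fhat)\in\calH^n(\Ghat)$, and $S_{K,\fhat}$ is defined on the same $F$-vector space $K^*$. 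This is exactly the content of the identity arrow on top of the square.

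Next I would fix a representative $f'\in\Zbar^n(G,K)$ of $f$ and write $\theta^{(n)}\colon\Ghat^{\,n}\to G^{\,n}$ for the map $(\hat\sigma_1,\dots,\hat\sigma_n)\mapsto(\theta(\hat\sigma_1),\dots,\theta(\hat\sigma_n))$. By the definition of inflation along $\theta$ (composing $G$-cochains with $\theta$), the composite $f'\circ\theta^{(n)}$ is a continuous element of $\Zbar^n(\Ghat,K)$ representing $\fhat=\Inf_\theta(f)$. Since, by Construction~\ref{two functors}(a), $S_{K,\fhat}$ is independent of the chosen representative, I may use $f'\circ\theta^{(n)}$ to compute it.

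The computation is then one line. For $\phi\in K^*$,
$$
\big(\Inf_\theta\circ S_{K,f}\big)(\phi)
=\Inf_\theta\big(\phi\circ f'\big)
=\big(\phi\circ f'\big)\circ\theta^{(n)}
=\phi\circ\big(f'\circ\theta^{(n)}\big)
=S_{K,\fhat}(\phi),
$$
all equalities understood in $H^n(\Ghat,A)$: the first is the definition of $S_{K,f}$, the second that $\Inf_\theta$ is induced by precomposition with $\theta^{(n)}$, the third is associativity of composition, and the last is the definition of $S_{K,\fhat}$ via the representative $f'\circ\theta^{(n)}$. As this holds for every $\phi\in K^*$, the square commutes.

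The only point needing care — and the closest thing to an obstacle — is the bookkeeping: that $\Inf_\theta$ and the push-forward $\phi_*=(\phi\circ-)$ literally commute as operations on cochains. But this is just the two ways of composing the chain of maps $\theta^{(n)}$, $f'$, $\phi$, together with the observation that both operations descend to cohomology (inflation is well defined on classes, and $S_{K,\fhat}$ is representative-independent), so that the displayed identity of cochains yields the desired identity of cohomology classes.
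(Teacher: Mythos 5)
Your proof is correct and follows essentially the same route as the paper's: fix a representative $f'\in\Zbar^n(G,K)$ of $f$, observe that $f'\circ\theta$ represents $\fhat$, and conclude by associativity of composition that $\Inf_\theta\circ S_{K,f'}=S_{K,f'\circ\theta}$ on cochains, which descends to cohomology by representative-independence. Your preliminary check that $\Hom_{\Ghat}(K,A)=\Hom_G(K,A)=K^*$ (so that both vertical maps share the same domain, implicitly correcting the $K$ in the diagram to $K^*$) is a welcome piece of bookkeeping that the paper leaves tacit, but it does not change the argument.
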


\begin{proof}
Let $f' \in \Zbar^n(G,K)$
be a representative of $f$.
Then $f' \circ \theta \in \Zbar^n(\Ghat,K)$
is a representative of $\fhat = \Inf_\theta(f)$.
As $\Inf_\theta$ is induced from
$\Inf_\theta \colon \Zbar^n(\Ghat,K) \to \Zbar^n(G,K)$,
it suffices to show that
$\Inf_\theta \circ S_{K,f'} = S_{K,f' \circ \theta}$.

Let $\phi \in K^*$. Then
$S_{K,f'}(\phi) = \phi \circ f'$,
hence
$$
(\Inf_\theta \circ S_{K,f'})(\phi) = 
\Inf_\theta(\phi \circ f') =
(\phi \circ f') \circ \theta =
\phi \circ (f' \circ \theta) =
S_{K,f' \circ \theta}(\phi).
$$
\end{proof}

\section{Multiple fiber products of indecomposable epimorphisms}
\label{mfp II}

The preceding section has established some properties of
fiber products
of families of indecomposable epimorphisms,
all of them
with the same abelian kernel.
We now want to extend it to general families
of indecomposable epimorphisms.

We first fix the notation.

\begin{Notation}\label{Lambda}
For a profinite group $G$
let
\begin{itemize}
\item[(a)]
$\Lambda_\ab(G)$ be the set of isomorphism classes of
finite simple $G$-modules;
\item[(b)]
$\Lambda_\na(G)$
be a set of representatives of the isomorphism classes 
of indecomposable epimorphisms onto $G$ 
with non-abelian kernel;
\item[(c)]
Put $\Lambda(G) = \Lambda_\ab(G) \dotcup \Lambda_\na(G)$.
\end{itemize}
(Although we will not need this,
notice that
$\Lambda_\na(G)$ is the non-abelian analogue of $\Lambda_\ab(G)$.
Indeed,
every indecomposable epimorphism $\eta \colon H \onto G$
with a non-abelian kernel 
is uniquely determined by its kernel $A$
and the outer $G$-action on $A$
(\cite[Theorem 6.6]{Br}) --
the non-abelian analogue of a $G$-module.)
\end{Notation}

\begin{Notation}\label{notation1}
In this section let
$\calH = (\eta_i\colon H_i\onto G \st i \in I)$
be a family of \textit{indecomposable} epimorphisms
and let
$\fprod{i \in I} H_i$
be its fiber product.
We keep and enhance
the notation from Section~\ref{mfp I}.
So,
\begin{itemize}
\item[(a)]
$\eta_I \colon \fprod{i \in I} H_i \onto G$
is the structure map of 
$\fprod{i \in I} H_i$
and
$K_j$
is the kernel of
$\pr_{I,I\smallsetminus\{j\}} \colon
\fprod{i \in I} H_i \onto \fprod{i\in I\smallsetminus\{j\}} H_i$,
for every $j \in I$.
\end{itemize}
Furthermore,

\begin{itemize}[resume]
\item[(b)]
$
\calC'(\calH) = 
\{(H,\{p_i\}_{i \in I}, \pbar) \st
H \textnormal{ is a group and }
\pbar \colon H \onto G
\textnormal{ and }
\\
p_i \colon H \onto H_i
\textnormal{ satisfy }
\eta_i \circ p_i = \pbar,
\textnormal{ for every } i \in I\} =
\\
\{(H,\{p_i\}_{i \in I}, \pbar) 
\in \calC(\calH) \st
p_i \textnormal{ is an epimorphism, for every } i \in I \}.
$
\item[(c)]
For every $J \subseteq I$ denote
$K_J = \prod_{i \in J} K_i$;
in particular,
$K_I = \Ker \eta_I$.
\item[(d)]
For every $A \in \Lambda_\ab(G)$ let
$I_A = \{i \in I \st \Ker \eta_i \isom_G A\}$
and
\\
$F_A = \End_G(A)$.
\item[(e)]
Let
$I_{\na} = \{i \in I \st \Ker \eta_i \in \Lambda_\na(G)\}$.
\item[(f)]
For every $\eta \in \Lambda_\na(G)$ let
$I_\eta = \{i \in I \st \eta_i \isom_G \eta\}$.
\end{itemize}
\end{Notation}

\begin{Lemma}\label{min normal subgr of fprod}
Let $L$ be a minimal normal subgroup of
$\fprod{i \in I} H_i$
contained in $K_I$.
Then
\begin{itemize}
\item[(a)]
if $L$ is non-abelian,
then $L = K_j$ for some $j \in I_\na$;
\item[(b)]
if $L$ is abelian,
then
$L \in \Lambda_\ab(G)$,
and $L \le K_{I_L}$.
\end{itemize}
\end{Lemma}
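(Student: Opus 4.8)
The plan is to analyze $L$ through the coordinate projections $\pr_{I,j} \colon \fprod{i \in I} H_i \onto H_j$. Since $L \le K_I = \Ker \eta_I = \prod_{i \in I} K_i$, for each $j$ we have $\pr_{I,j}(L) \le \pr_{I,j}(K_I) = \Ker \eta_j$, and $\Ker \eta_j$ is a minimal normal subgroup of $H_j$ because $\eta_j$ is indecomposable (Remark~\ref{rem: minimal normal subgroup}(a)). First I would apply Lemma~\ref{image and preimage}(a) to $\pr_{I,j}$ and the minimal normal subgroup $L$: whenever $\pr_{I,j}(L) \ne 1$, it is a minimal normal subgroup of $H_j$, so being nontrivial and contained in the minimal $\Ker \eta_j$ it equals $\Ker \eta_j = K_j$, and moreover $\pr_{I,j}|_L$ is an isomorphism. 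Setting $J = \{ j \in I \st \pr_{I,j}(L) \ne 1\}$, the indices outside $J$ give $L \le \Ker \pr_{I,j} = \prod_{i \ne j} K_i$, whence $L \le K_J$; and $J \ne \emptyset$, since otherwise $L \le \bigcap_{j} \Ker \pr_{I,j} = 1$. Thus $L \le K_J$ and $\pr_{I,j}|_L \colon L \to K_j$ is an isomorphism for every $j \in J$.

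For part (a), assume $L$ is non-abelian; then each $K_j$ with $j \in J$ is non-abelian. The key step is to show $|J| = 1$, which I would prove by contradiction. Pick distinct $j_1, j_2 \in J$. Since distinct coordinates of the fiber product commute, for $\ell \in L$ and $k \in K_{j_1}$ the commutator $[\ell,k]$ is supported on the $j_1$-coordinate, so $[\ell,k] \in K_{j_1}$; as $L \normal \fprod{i\in I} H_i$ we also have $[\ell,k] \in L$, hence $[\ell,k] \in L \cap K_{j_1}$. But an element of $L$ supported on $j_1$ has trivial $j_2$-coordinate and therefore vanishes, because $\pr_{I,j_2}|_L$ is injective; thus $L \cap K_{j_1} = 1$ and $[L,K_{j_1}] = 1$. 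Projecting to the $j_1$-coordinate and using $\pr_{I,j_1}(L) = K_{j_1}$ forces $K_{j_1}$ to be abelian, a contradiction. Hence $J = \{j\}$, and since $L \le K_j$ with $\pr_{I,j}|_{K_j} = \id$, we get $L = K_j$; as $L$ is non-abelian, $j \in I_\na$.

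For part (b), assume $L$ is abelian; then each $K_j$ with $j \in J$ is abelian, so $K_J = \prod_{j \in J} K_j \le \prod_{i \in I} Z(K_i) = Z(\Ker \eta_I)$, and therefore $L \le Z(\Ker \eta_I)$. Now Lemma~\ref{action}(a) endows $L$ with a $G$-module structure, and Lemma~\ref{action}(b), applied to the minimal normal subgroup $L \le Z(\Ker \eta_I)$, shows that $L$ is a simple $G$-module, i.e.\ it represents a class in $\Lambda_\ab(G)$. To locate $L$ inside $K_{I_L}$ I would upgrade the isomorphisms $\pr_{I,j}|_L \colon L \to K_j$ to $G$-module isomorphisms: applying Lemma~\ref{action}(d) to the relation $\eta_j \circ \pr_{I,j} = \eta_I$ shows that $\pr_{I,j}|_L$ is $G$-equivariant, so $L \isom_G \Ker \eta_j$ for every $j \in J$. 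Hence $J \subseteq I_L$ and $L \le K_J \le K_{I_L}$.

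The main obstacle is the non-abelian case: establishing $|J| = 1$ honestly requires the commutator bookkeeping above, exploiting that distinct fiber-product coordinates commute together with the injectivity of $\pr_{I,j_2}|_L$ to force $L \cap K_{j_1} = 1$. In the abelian case the only delicate point is confirming the $G$-equivariance of the coordinate isomorphisms, which is exactly what Lemma~\ref{action}(d) supplies; everything else is routine once $L \le Z(\Ker \eta_I)$ has been verified.
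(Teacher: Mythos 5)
Your proof is correct and follows essentially the same route as the paper: project $L$ to the coordinates, use Lemma~\ref{image and preimage}(a) to identify the nontrivial projections $\pr_{I,j}(L)$ with the minimal kernels $\Ker\eta_j$, establish $[L,K_j]=1$ for the relevant $j$, and invoke Lemma~\ref{action} for the $G$-module statements in (b). The only cosmetic difference is that you derive $L \cap K_{j_1} = 1$ from the injectivity of $\pr_{I,j_2}|_L$ for a second index, whereas the paper obtains $[K_j,L]=1$ directly from the minimality of $K_j$ whenever $K_j \ne L$, which then powers both parts at once.
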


\begin{proof}
Let $j \in I$.
If $K_j \ne L$,
then $K_j \cap L = 1$,
because $K_j \cap L$ is a proper subgroup of the minimal $K_j$.
Then $[K_j, L] =1$.

(a)
Assume that $L$ is non-abelian.
If $K_j \ne L$,
then, by the first paragraph of this proof,
$1 = [\pr_{I,j}(K_j), \pr_{I,j}(L)] =
[\Ker \eta_j, \Ker \eta_j]$,
a contradiction to $\Ker \eta_j \isom L$ being non-abelian.

(b)
Assume that $L$ is abelian.
Then, by the first paragraph of this proof,
$[K_i, L] =1$,
that is, $K_i \le C_{K_I}(L)$,
for every $i \in I$.
So
$K_I \le C_{K_I}(L)$,
that is,
$L \le Z(K_I)$.
By Lemma~\ref{action}(a),
$L$ is a $G$-module, that is, $L \in \Lambda_\ab(G)$.

Let $j \in I \smallsetminus I_L$.
Then 
$\pr_{I,j}(L) = 1$.
Indeed, assume the contrary.
As $L \le K_I$ and 
$\pr_{I,j}(K_I) = \Ker \eta_j$,
we have 
$\pr_{I,j}(L) \le \Ker \eta_j$.
So, by Lemma~\ref{image and preimage}(a),
$\pr_{I,j}(L) = \Ker \eta_j$
and
$L \isom \Ker \eta_j$
via $\pr_{I,j}$.
By Lemma~\ref{action}(d)
this is an isomorphism of $G$-modules,
and hence $j \in I_L$,
a contradiction.

Thus
$L \le \prod_{j \in I_L} K_j = K_{I_L}$.
\end{proof}

\begin{Proposition}\label{normal subgr of fprod}
Let $H = \fprod{i \in I} H_i$
and let $L$ be a normal subgroup of $H$ contained in $K_I = \Ker \eta_I$.
Then
$K_I = 
( \prod_{i \in I_{\na}} K_i )
\times
\prod_{A \in \Lambda_\ab(G)} K_{I_A}$
and
\begin{equation}\label{presentation}
L
=
\prod_{i \in I_{\na}} (L \cap K_i )
\times
\prod_{A \in \Lambda_\ab(G)} \big(L \cap K_{I_A} \big)
=
\prod_{i \in I_{\na,L}} K_i
\times
\prod_{A \in \Lambda_\ab(G)} \big(L \cap K_{I_A} \big),
\end{equation}
where
$I_{\na,L} = \{i \in I_{\na} \st K_i \le L\}$.
\end{Proposition}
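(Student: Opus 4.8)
The plan is to work with the decomposition $K_I=\prod_{i\in I}K_i$ of Remark~\ref{properties of fiber product}(b), in which each $K_i$ is a \emph{minimal} normal subgroup of $H$ (by Remark~\ref{properties of fiber product}(g) and Remark~\ref{rem: minimal normal subgroup}(a)), hence finite, and distinct $K_i$ commute elementwise. The first displayed identity for $K_I$ is then immediate from the partition $I=I_{\na}\dotcup\bigdotcup_{A\in\Lambda_\ab(G)}I_A$. Since each $K_i\normal H$, conjugation in $H$ preserves every coordinate, so the coordinate projections $q_i\colon K_I\onto K_i$ and the block projections $\rho_A\colon K_I\onto K_{I_A}$ are equivariant for the conjugation action of $H$; in particular $q_i(L)$ and $\rho_A(L)$ are normal subgroups of $H$. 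I will treat the non-abelian coordinates and the abelian isotypic blocks separately and then reassemble.

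For $i\in I_{\na}$ the group $K_i\isom_G\Ker\eta_i$ is a non-abelian minimal normal subgroup, hence a direct product of non-abelian finite simple groups (Remark~\ref{product of simples}) and therefore perfect. By equivariance $q_i(L)\normal H$ and $q_i(L)\le K_i$, so minimality gives $q_i(L)\in\{1,K_i\}$. Because distinct $K_j$ commute, for $x\in L$ and $k\in K_i$ one has $[x,k]=[q_i(x),k]$, whence $[L,K_i]=[q_i(L),K_i]$; this commutator lies in $L\cap K_i\le L$. Thus if $q_i(L)=K_i$ then $[L,K_i]=[K_i,K_i]=K_i\le L$, so in every case $q_i(L)\le L$, giving $q_i(L)=L\cap K_i\in\{1,K_i\}$. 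Hence $L\cap K_i=K_i$ precisely when $i\in I_{\na,L}$, and for each $x\in L$ the factors $q_i(x)$ $(i\in I_{\na})$ lie in $L$.

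Now set $M=\prod_A K_{I_A}$. As each non-abelian $K_i$ has trivial centre while each abelian $K_i$ equals its centre, $M=Z(K_I)=Z(\Ker\eta_I)$; thus $M\normal H$ and, by Lemma~\ref{action}(a), $M$ is a $G$-module with $K_{I_A}\isom_G A^{I_A}$ its isotypic block of type $A$ (Remark~\ref{properties of fiber product}(c), Example~\ref{direct power of A}). Then $S:=L\cap M$ is a closed $G$-submodule. The crux is the isotypic decomposition $S=\prod_A(S\cap K_{I_A})$, equivalently $\rho_A(S)\le S$ for every $A$; this is the step I expect to be the main obstacle, since the clean product splitting is an infinite (profinite) phenomenon. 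I would prove it by reduction to finite quotients: the open submodules $U_F=\{x\in M : x_i=0\ \text{for } i\in F\}$, with $F$ a finite subset of $\bigcup_A I_A$, form a fundamental system, each $M/U_F=\prod_{i\in F}K_i$ is a finite semisimple module whose $A$-isotypic component is the image of $K_{I_A}$, and the associated central idempotent of the finite quotient ring of $R=[[\Z G]]$ lifts to some $r\in R$ with $r s\equiv\rho_A(s)\pmod{U_F}$ for $s\in S$. As $rs\in S$ this yields $\rho_A(s)\in S+U_F$, and intersecting over $F$ gives $\rho_A(s)\in S$. Hence $S\cap K_{I_A}=\rho_A(S)$ and $s=\prod_A\rho_A(s)$ shows $S=\prod_A(S\cap K_{I_A})$.

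Finally I assemble the two parts. With $N=\prod_{i\in I_{\na}}K_i$, any $x\in L$ factors as $x=n_x m_x$, where $n_x=\prod_{i\in I_{\na}}q_i(x)\in N$ and $m_x:=n_x^{-1}x\in M$. By the second paragraph each $q_i(x)\in L$, and since $L$ is closed the (convergent) product $n_x$ lies in $L$; applying $q_i$ then gives $n_x\in\prod_{i\in I_{\na}}(L\cap K_i)=\prod_{i\in I_{\na,L}}K_i$. Consequently $m_x=n_x^{-1}x\in L\cap M=S$, which by the third paragraph equals $\prod_A(L\cap K_{I_A})$. Thus $x$ lies in the right-hand side of \eqref{presentation}; the reverse inclusion is clear because each displayed factor is contained in $L$. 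This establishes \eqref{presentation}.
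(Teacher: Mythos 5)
Your proof is correct, and it takes a genuinely different route from the paper's. The paper argues by induction on $|L|$ when $I$ is finite: it picks a minimal normal subgroup $N \le L$, identifies $N$ as some $K_j$ via Lemma~\ref{min normal subgr of fprod} after re-coordinatizing the relevant abelian block with Corollary~\ref{isom with subgroup A} (so the argument leans on the duality theory of Sections~\ref{duality} and~\ref{second cohomology}), quotients by $N = \Ker \pr_{I,I\smallsetminus\{j\}}$ and concludes with the third isomorphism theorem; infinite $I$ is then handled by an inverse-limit computation over finite subsets. You instead work directly in coordinates: the identity $[x,k]=[q_i(x),k]$ together with perfectness of the non-abelian $K_i$ (Remark~\ref{product of simples}) forces $q_i(L)=L\cap K_i\in\{1,K_i\}$, and for $M = Z(K_I) = \prod_{A} K_{I_A}$ you extract the isotypic splitting of $S = L\cap M$ from the finite semisimple quotients $M/U_F$, where the $A$-isotypic projection is multiplication by a central idempotent of the image of $R$ in $\End(M/U_F)$, concluding by closedness of $S$. (Note that no genuine idempotent lifting is needed: the idempotent already lies in the image ring, which is a quotient of $R$, so any preimage serves; and when $F\cap I_A=\emptyset$ the containment $\rho_A(s)\in S+U_F$ holds trivially.) Your route buys independence from Corollary~\ref{isom with subgroup A} and hence from the entire $-^*$ duality apparatus, at the modest price of standard Wedderburn theory on finite quotients; it also isolates explicitly the non-abelian phenomenon that each coordinate $q_i(x)$, $i\in I_\na$, of an element $x\in L$ again lies in $L$, which the paper's induction only yields implicitly. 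Your limit over the $U_F$ plays exactly the role of the paper's limit over finite $I'\subseteq I$. Two steps deserve to be made explicit in a final write-up: that the net of partial products $\prod_{i\in F} q_i(x)$ converges to $n_x$ (immediate, since distinct $K_i$ commute elementwise and basic open subgroups of the product constrain only finitely many coordinates), and that $S$ is an $R$-submodule because it is a closed normal subgroup of $H$ contained in $Z(\Ker\eta_I)$, so that Lemma~\ref{action}(a) applies.
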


\begin{proof}
The presentation of $K_I$
follows from
$I = I_{\na} \dotcup \bigdotcup_{A \in \Lambda_\ab(G)} I_A$.
If $i \in I_{\na}$, then
$
L \cap K_i = 
\begin{cases}
K_i & \textnormal{ if $K_i \le L$} \\
1 & \textnormal{ if $K_i \not\le L$}
\end{cases}
$,
because $K_i$ is a minimal normal subgroup in $H$.
Thus the two presentations of $L$ in \eqref{presentation} are equivalent.
We note that the above decomposition of $I$ also implies,
by Remark~\ref{properties of fiber product}(d),
that
\begin{equation}\label{decomp of H}
H = (\fprod{i \in I_{\na}} H_i)
\times_G
\fprod{A \in \Lambda_\ab}(\fprod{i \in I_A} H_i).
\end{equation}

First assume that $I$ is finite.
Then $K_I$ and $L$ are finite.
The case $L = 1$ being trivial,
we assume $|L| > 1$ and proceed by induction on $|L|$.
There is a minimal normal subgroup $N$ of $H$
contained in $L$.
By Lemma~\ref{min normal subgr of fprod}:
If $N$ is non-abelian,
there is $j \in I_{\na}$ such that
$N = K_j$.
If $N$ is abelian,
there is a unique $A \in \Lambda_\ab(G)$ such that $N \isom_G A$,
and
$N \le K_{I_A}$;
by Corollary~\ref{isom with subgroup A}
we may assume that 
$N = K_j$ for some $j \in I_A$.
Thus in both cases
$N$ is contained in the right handed side of
\eqref{presentation}.

Put $J = I \smallsetminus \{j\}$
and $\Hbar = \fprod{i \in J}H_i$.
Then $N$ is the kernel of
$\pr_{I,J} \colon H \onto \Hbar$.
By the induction hypothesis, the image $\Lbar$ of $L$ in $\Hbar$
is of the form
$$
\Lbar
=
\prod_{i \in J_{\na,\Lbar}} K_i \times
\prod_{A \in \Lambda_\ab(G)} \big( \Lbar \cap \prod_{i \in I_A \smallsetminus \{j\}} K_i \big),
$$
where
$J_{\na,\Lbar} = \{i \in I_{\na} \smallsetminus \{j\} \st
K_i \le \Lbar\}
= 
I_{\na,L} \smallsetminus \{j\}
$.
The right handed side of \eqref{presentation}
maps onto $\Lbar$
and contains $N$.
Hence by the third isomorphism theorem
that group is $L$.

Now assume that $I$ is infinite.
Then 
$H = \varprojlim_{I'} \fprod{i \in I'} H_i$
and
$K_I = \varprojlim_{I'} \prod_{i \in I'} K_i$,
where $I'$ runs through the collection of finite subsets of $I$.
The kernel of
$\pr_{I,I'} \colon H \onto \fprod{i \in I'} H_i$
is $K_{I \smallsetminus I'}$.
By the finite case,
the image
$L K_{I \smallsetminus I'}/K_{I \smallsetminus I'}$
of $L$ in $\fprod{i \in I'} H_i$
is of the form
\begin{multline*}
\prod_{i \in I'_{\na}}
(L K_{I \smallsetminus I'}/K_{I \smallsetminus I'})
\cap
(K_i K_{I \smallsetminus I'}/K_{I \smallsetminus I'})
\quad \times \quad
\\
\prod_{A \in \Lambda_\ab(G)}
(L K_{I \smallsetminus I'}/K_{I \smallsetminus I'})
\cap
(K_{I'_A} K_{I \smallsetminus I'}/K_{I \smallsetminus I'})
=
\\
=
\prod_{i \in I'_{\na}}
(L K_{I \smallsetminus I'}
\cap
K_i K_{I \smallsetminus I'}) /K_{I \smallsetminus I'}
\quad \times
\prod_{A \in \Lambda_\ab(G)}
(L K_{I \smallsetminus I'}
\cap
K_{I'_A} K_{I \smallsetminus I'}) /K_{I \smallsetminus I'}.
\end{multline*}
If $I' \supseteq I''$,
then the map
$\fprod{i \in I'} H_i \onto \fprod{i \in I''} H_i$
of the inverse system
maps
the factors into the corresponding factors,
that is,
\begin{itemize}
\item[$\bullet$]
$
(L K_{I \smallsetminus I'}
\cap
K_i K_{I \smallsetminus I'}) /K_{I \smallsetminus I'}
$
into
$
(L K_{I \smallsetminus I''}
\cap
K_i K_{I \smallsetminus I''}) /K_{I \smallsetminus I''}
$,
for every $i \in I'_{\na}$;
and
\item[$\bullet$]
$
(L K_{I \smallsetminus I'}
\cap
K_A K_{I \smallsetminus I'}) /K_{I \smallsetminus I'}
$
into
$
(L K_{I \smallsetminus I''}
\cap
K_A K_{I \smallsetminus I''}) /K_{I \smallsetminus I''}
$,
for every $A \in \Lambda_\ab(G)$.
\end{itemize}
Therefore
$L = \prod_{i \in I_{\na}} M_i \times \prod_{A \in \Lambda_\ab(G)} M_A$,
where,
for every $i \in I'_{\na}$,
$$
M_i =
\bigcap_{I'} 
(L K_{I \smallsetminus I'} \cap K_i K_{I \smallsetminus I'})
=
\big(
\bigcap_{I'} L K_{I \smallsetminus I'}
\big)
\cap
\big(
\bigcap_{I'} 
K_i K_{I \smallsetminus I'} 
\big)
=
L \cap K_i.
$$
Similarly,
$M_A = L \cap K_A$
for every simple $G$-module $A$.
This gives the desired presentation of $L$.
\end{proof}

We can simplify the presentation of $L$
if we present
$\fprod{i \in I} H_i$
as a fiber product of a different family
of indecomposable epimorphisms
(which depends on $L$):

\begin{Corollary}\label{isom with subgroup gen}
Let $H = \fprod{i \in I} H_i$
and let $L$ be a normal subgroup of $H$ contained in $\Ker \eta_I$.
Then there is a family
$\bar\calH = (\etabar_i \colon \Hbar_i \onto G)_{i \in I}$
of indecomposable epimorphisms
and an isomorphism
$\omega \colon 
\fprod{i \in I} H_i \to \fprod{i \in I} \Hbar_i$
such that
$\etabar_I \circ \omega = \eta_I$
and $\omega(L) = \prod_{i \in \Ibar} \Kbar_i$,
for some $\Ibar \subseteq I$.

Here
$\etabar_I \colon \fprod{i \in I} \Hbar_i \onto G$
is the fiber product of $\bar\calH$,
with projection maps
$\prbar_{I,\Ibar} \colon \fprod{i \in I} \Hbar_i \onto \fprod{i \in \Ibar} \Hbar_i$,
for $\Ibar \subseteq I$,
and
$\Kbar_i = \Ker \prbar_{I,I \smallsetminus\{i\}}$,
for every $i \in I$.
\end{Corollary}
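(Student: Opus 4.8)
The plan is to split $L$ along the canonical decomposition of $K_I$ furnished by Proposition~\ref{normal subgr of fprod}, to leave the non-abelian blocks untouched, and to straighten out each abelian block separately by Corollary~\ref{isom with subgroup A}. First I would invoke Proposition~\ref{normal subgr of fprod}, which supplies both the decomposition \eqref{decomp of H} of $H$ along $I = I_\na \dotcup \bigdotcup_{A \in \Lambda_\ab(G)} I_A$ and the matching presentation \eqref{presentation} of $L$, namely
$$
L = \prod_{i \in I_{\na,L}} K_i \times \prod_{A \in \Lambda_\ab(G)} (L \cap K_{I_A}).
$$
On the non-abelian part there is nothing to do: for $i \in I_\na$ I set $\Hbar_i = H_i$ and $\etabar_i = \eta_i$, and the factor $\prod_{i \in I_{\na,L}} K_i$ is already a product of kernels.

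For each $A \in \Lambda_\ab(G)$ I would apply Corollary~\ref{isom with subgroup A} to the block fiber product $\fprod{i \in I_A} H_i$, whose structure map is $\eta_{I_A}$ and whose kernel $K_{I_A} \isom A^{I_A}$ is $A$-generated, together with the normal subgroup $L \cap K_{I_A} \le K_{I_A}$. Since $\dim_{F_A}(K_{I_A})^* = |I_A|$ by Lemma~\ref{basis for a free product}, I may index the resulting family of indecomposable epimorphisms with kernel $A$ by $I_A$ itself; call it $(\etabar_i \colon \Hbar_i \onto G)_{i \in I_A}$. Corollary~\ref{isom with subgroup A} then yields a $G$-isomorphism $\beta_A \colon \fprod{i \in I_A} H_i \to \fprod{i \in I_A} \Hbar_i$ with $\etabar_{I_A} \circ \beta_A = \eta_{I_A}$ and $\beta_A(L \cap K_{I_A}) = \prod_{i \in I'_A} \Kbar_i$ for some $I'_A \subseteq I_A$, where the $\Kbar_i$ are the kernels of the block fiber product.

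Finally I would assemble these block data into the global object. Taking $\bar\calH = (\etabar_i)_{i \in I}$, the transitivity of fiber products (Remark~\ref{properties of fiber product}(d)) presents $\fprod{i \in I} \Hbar_i$ as the fibered product, over $I_\na \dotcup \Lambda_\ab(G)$, of the blocks $\fprod{i \in I_A} \Hbar_i$ and the singletons $H_i$ ($i \in I_\na$). The identity maps on the non-abelian blocks together with the maps $\beta_A$ then combine, by the universal property (concretely, $(h_i)_{i \in I} \mapsto$ apply $\beta_A$ to each subtuple $(h_i)_{i \in I_A}$ and the identity to the non-abelian coordinates), into a single $G$-isomorphism $\omega \colon \fprod{i \in I} H_i \to \fprod{i \in I} \Hbar_i$ with $\etabar_I \circ \omega = \eta_I$. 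Because $\omega$ respects the block decomposition and is the identity on each non-abelian $K_i$, it carries each kernel to the corresponding new kernel, whence
$$
\omega(L) = \prod_{i \in I_{\na,L}} \Kbar_i \times \prod_{A \in \Lambda_\ab(G)} \prod_{i \in I'_A} \Kbar_i = \prod_{i \in \Ibar} \Kbar_i,
\qquad
\Ibar = I_{\na,L} \dotcup \bigdotcup_{A} I'_A.
$$

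The one point needing care — the main obstacle — is the bookkeeping that identifies the block kernels $\Kbar_i \le \fprod{i \in I_A} \Hbar_i$ produced by $\beta_A$ with the global kernels $\Kbar_i = \Ker \prbar_{I,I \smallsetminus \{i\}}$ appearing in the statement. This identification follows from the compatibility of the projections $\prbar_{I_A, I_A \smallsetminus \{i\}}$ and $\prbar_{I, I \smallsetminus \{i\}}$ under the transitivity isomorphism, but it has to be verified explicitly in order to conclude that $\omega(L)$ is exactly the asserted product of global kernels rather than merely a subgroup isomorphic to it.
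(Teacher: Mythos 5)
Your proof is correct and takes essentially the same route as the paper's: split $L$ blockwise via Proposition~\ref{normal subgr of fprod}, straighten each abelian block with Corollary~\ref{isom with subgroup A}, and assemble the identities on the non-abelian factors with the block isomorphisms via Remark~\ref{properties of fiber product}(d). The bookkeeping point you flag---matching the block kernels to the global kernels $\Kbar_i$ under the transitivity isomorphism---is precisely the verification the paper leaves implicit, and it goes through as you describe.
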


\begin{proof}
If $I = I_A$ for some $A \in \Lambda_\ab(G)$,
the assertion follows by
Corollary~\ref{isom with subgroup A}.

In the general case
$I = I_\na \dotcup \bigdotcup_{A \in \Lambda_\ab(G)} I_A$,
so, by Remark~\ref{properties of fiber product}(d),
$$
\fprod{i \in I} H_i =  (\fprod{i \in I_\na} H_i) \times_G
\fprod{A \in \lambda_\ab(G)} (\fprod{i \in I_A} H_i).
$$
By Proposition~\ref{normal subgr of fprod},
$$
L = \prod_{i \in I_{\na,L}} K_i
\times
\prod_{A \in \Lambda_\ab(G)} \big(L \cap K_{I_A} \big).
$$
By the first paragraph,
for every $A \in \Lambda_\ab(G)$
there is an isomorphism
$\omega_A \colon \fprod{i \in I_A} H_i \onto \fprod{i \in I_A} \Hbar_i$
that maps $L \cap K_{I_A}$ onto $\prod_{i \in \Ibar_A} \Kbar_i$,
for some $\Ibar_A \subseteq I_A$.
Then
the identities of $H_i$,
for $i \in I_\na$,
together with the isomorphisms
$\omega_A$, 
for $A \in \Lambda_\ab(G)$,
define an isomorphism
$\omega$
that maps $L$ onto
$\prod_{i \in \Ibar} \Kbar_i$,
where
$\Ibar = I_{\na,L} \dotcup \bigdotcup_{A \in \Lambda_\ab(G)} \Ibar_A
\subseteq I$.
\end{proof}

Theorem~\ref{indecomposable quotients of a fiber product pi is id}
below is the main technical result of this paper:
It characterizes the indecomposable covers of 
a profinite group $G$
dominated by a fiber product of indecomposable covers of $G$.
We deduce it from an even more general characterization --
of indecomposable covers of a quotient of $G$.

\begin{Theorem}\label{indecomposable quotients of a fiber product}
Let
$\pi \colon G \onto D$,
and let
$\zeta \colon E \onto D$ be indecomposable.
Put $C =\Ker \zeta$;
if $C$ is non-abelian,
let
$\zetahat = \Inf_\pi \zeta$
and
$I_{\zetahat} = \{i \in I \st \eta_i \isom_G \zetahat\}$,
and
if $C$ is abelian,
let
$\Chat = \Inf_\pi C$
and
$I_{\Chat} = \{i \in I \st \Ker \eta_i \isom_G \Chat\}$.
Then
\begin{itemize}
\item[(*)]
there is
$\eps \colon \fprod{i \in I} H_i \onto E$
such that
the following square is semi-cartesian
\begin{equation}\label{start}
\xymatrix{
\fprod{i \in I} H_i 
\arr[rr]_{\eps} \arr[d]^{\eta_I} && E \arr[d]^{\zeta}
\\
G \arr[rr]^{\pi} && D \\
}
\end{equation}
\end{itemize}
if and only if
exactly one of the following two conditions holds:
\begin{itemize}[resume]
\item[(a)]
$C$ is non-abelian
and
$I_{\zetahat} \ne \emptyset$.
\item[(b)]
$C$ is abelian,
$I_{\Chat} \ne \emptyset$,
and $\zetahat$ is
a nontrivial linear combination
of $(\eta_i)_{i \in I_{\Chat}}$
over $F_{\Chat} = \End_G(\Chat)$ \
(in the sense of Remark~\ref{cohomology is a vector space}(b)).
\end{itemize}
Moreover,
if \eqref{start} is semi-cartesian,
then
\begin{itemize}[resume]
\item[(a')]
if $C$ is non-abelian,
there is a unique $j \in I$
such that
$\Ker \pr_{I,j} = \Ker \eps \cap \Ker \eta_I$;
this $j$ is in $I_{\zetahat}$ and 
there is $\eps_j \colon H_j \onto E$
such that
\eqref{one j} below commutes
and
the square in it is cartesian.

\item[(b')]
if $C$ is abelian,
there is
$\eps' \colon \fprod{i \in I_{\Chat}} H_i \onto E$
such that
$\eps = \eps' \circ \pr_{I,I_{\Chat}}$.
\end{itemize}
\vskip-.5cm
\noindent\begin{minipage}{.42\linewidth}
\begin{equation}\label{one j}
\xymatrix@=6pt{
\fprod{i \in I} H_i 
\arr@/_1pc/[dddr]_{\eta_I}
\arr[rd]^(.65){\pr_{I,j}}
\arr@/^1pc/[rrrrrd]^{\eps}
\\
& H_j \arr[rrrr]_{\eps_j} \arr[dd]^{\eta_j} &&&& E \arr[dd]^{\zeta}
\\
\\
& G \arr[rrrr]^{\pi} &&&& D
}
\end{equation}
\end{minipage}%
\begin{minipage}{.58\linewidth}
\begin{equation}\label{zetahat}
\xymatrix@=8pt{
\fprod{i \in I} H_i 
\arr[rd]^(.65){\pr_{I,I_\Chat}}
\arr@/_1pc/[rdddd]_{\eta_I}
\arr@/^1pc/[rrrrrrd]^{\eps}
\\
& \fprod{i \in I_\Chat} H_i \arr[rd]^(.65){\eps'} \arr[rrrrr]^{\epsbar} 
\arr[ddd]^{\eta_{I_\Chat}}
 &&&&& E \ar@{=}[d]
\\
&& H' \arr[rrrr]_{\rho} \arr[dd]^{\zetahat} &&&& E \arr[dd]^{\zeta}
\\
\\
& G \ar@{=}[r] & G \arr[rrrr]^{\pi} &&&& D
}
\end{equation}
\end{minipage}
\end{Theorem}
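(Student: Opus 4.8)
The plan is to rewrite the semi-cartesian condition on \eqref{start} via Definition~\ref{semi-cartesian}(c) as the single requirement $\eps(K_I)=C$, where $K_I=\Ker\eta_I=\prod_{i\in I}K_i$, and then to analyze how $\eps$ acts on the pairwise-commuting minimal normal subgroups $K_i$. Since $\zeta$ is indecomposable, $C=\Ker\zeta$ is a minimal normal subgroup of $E$ (Remark~\ref{rem: minimal normal subgroup}(a)), so for each $i$ the normal subgroup $\eps(K_i)\normal E$ contained in $C$ is either $1$ or $C$; and as $[K_i,K_{i'}]=1$ for $i\ne i'$, having $\eps(K_i)=\eps(K_{i'})=C$ for two distinct indices would force $C$ abelian. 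This dichotomy drives both the case split ($C$ abelian versus non-abelian) and the factorization statements (a$'$), (b$'$): the indices with $\eps(K_i)=C$ determine through which projection $\eps$ factors. Note also that minimality of $K_i$ makes $\eps|_{K_i}$ injective whenever $\eps(K_i)\ne1$, so in that case $K_i\isom\eps(K_i)\le C$.

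\textbf{Non-abelian case.} For (a)$\implies$(*) I would choose $j\in I_{\zetahat}$; then $\eta_j\isom_G\zetahat=\Inf_\pi\zeta$ gives, by Lemma~\ref{inflation in diagram}, a cartesian square with some $\eps_j\colon H_j\onto E$, and setting $\eps=\eps_j\circ\pr_{I,j}$ makes \eqref{one j} commute with $\eps(K_I)=\eps_j(\Ker\eta_j)=C$. Conversely, if (*) holds with $C$ non-abelian, the dichotomy shows that exactly one $j$ has $\eps(K_j)=C$ (at least one because $\eps(K_I)=C\ne1$ is generated by the $\eps(K_i)$, at most one by non-commutativity); hence $\Ker\pr_{I,j}=\prod_{i\ne j}K_i\le\Ker\eps$, so $\eps=\eps_j\circ\pr_{I,j}$, and the injectivity of $\eps|_{K_j}$ makes the resulting square for $\eta_j$ cartesian (Definition~\ref{cartesian}(d)), whence $\eta_j\isom_G\zetahat$ and $j\in I_{\zetahat}$. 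This yields (a$'$).

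\textbf{Abelian case (the crux).} Here the dichotomy, refined by $G$-equivariance (Lemma~\ref{action}(e)), shows $\eps(K_i)=C$ forces $K_i\isom_G\Ker\eta_i$ to map equivariantly over $\pi$ onto $C$, i.e.\ $\Ker\eta_i\isom_G\Chat$ and $i\in I_{\Chat}$; so $\eps$ factors through $\pr_{I,I_{\Chat}}$ as $\eps'\circ\pr_{I,I_{\Chat}}$, giving (b$'$), and by Lemma~\ref{expand}(a) the $\eps'$-square is semi-cartesian iff \eqref{start} is. I would then reduce to the family $(\eta_i)_{i\in I_{\Chat}}$ with $A$-generated kernel $K:=K_{I_{\Chat}}$ and, using the cartesian inflation square $H'=G\times_D E$ of $\zetahat$ (the lower square of \eqref{zetahat}), lift $\eps'$ to $\epsbar\colon\fprod{i\in I_{\Chat}}H_i\to H'$ over $G$; since $\rho\colon H'\onto E$ restricts to an isomorphism $\Ker\zetahat\to C$, the $\eps'$-square is semi-cartesian iff $\epsbar(K)=\Ker\zetahat$. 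The decisive translation is supplied by Lemma~\ref{map of extensions} together with the functor $X^2$ of Construction~\ref{two functors}: a $G$-homomorphism $\beta=\epsbar|_K\in K^*=\Hom_G(K,A)$ extends to such an $\epsbar$ over $G$ iff $[\zetahat]=\beta_*[\eta_{I_{\Chat}}]=S_{K,f}(\beta)$, where $f$ is the cocycle of $\eta_{I_{\Chat}}$, and the semi-cartesian condition $\epsbar(K)=\Ker\zetahat$ is exactly $\beta\ne0$ (as $A$ is simple). Since $S_{K,f}(\phi_i)=[\eta_i]$ by Example~\ref{ex1}, the existence of a nonzero $\beta=\sum_i\alpha_i\phi_i$ with $S_{K,f}(\beta)=\sum_i\alpha_i[\eta_i]=[\zetahat]$ is precisely the assertion that $\zetahat$ is a nontrivial $F_{\Chat}$-linear combination of $(\eta_i)_{i\in I_{\Chat}}$, namely condition (b); and Lemma~\ref{map of extensions} guarantees that the surjectivity of $\beta$ (automatic once $\beta\ne0$, as $A$ is simple) makes $\epsbar$, and hence $\eps$, surjective.

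The main obstacle I expect is the bookkeeping in this last translation: matching the abstract pushforward $\beta_*[\eta_{I_{\Chat}}]$ of Lemma~\ref{map of extensions} with the linear map $S_{K,f}$ of the duality of Section~\ref{second cohomology}, checking that ``nontrivial linear combination'' corresponds exactly to $\beta\ne0$ (equivalently to surjectivity onto the simple module $A$), and confirming that the existence clause of Lemma~\ref{map of extensions} delivers a genuine epimorphism $\eps$ rather than merely a homomorphism. By contrast, the case split and the factorizations (a$'$),(b$'$) are comparatively routine once the commuting-kernels dichotomy of the first paragraph is established.
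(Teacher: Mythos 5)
Your proof is correct, and its overall architecture matches the paper's: the abelian/non-abelian case split, factoring $\eps$ through $\pr_{I,j}$ resp.\ $\pr_{I,I_{\Chat}}$, pulling back through the fiber product $H' = G \times_D E$, and converting the abelian case into the statement $\beta_*(\eta_{I_{\Chat}}) = \zetahat$ via Lemma~\ref{map of extensions}. The genuine divergence is how you obtain the dichotomy and the factorizations (a$'$), (b$'$). The paper applies its structure theorem for normal subgroups of a fiber product, Proposition~\ref{normal subgr of fprod}, to $L = \Ker\eps \cap \Ker\eta_I$, and reads the case split off the direct-product decomposition of $C = K_I/L$; you instead argue directly with the images $\eps(K_i)$, using only that $C$ is a minimal normal subgroup of $E$ (so each $\eps(K_i)$ is $1$ or $C$, and $\eps|_{K_i}$ is an isomorphism when nonzero, by Lemma~\ref{image and preimage}(a)), that $[K_i,K_{i'}]=1$ for $i \ne i'$ (so two indices hitting $C$ force $C$ abelian), and Lemma~\ref{action}(e) (so in the abelian case any index hitting $C$ lies in $I_{\Chat}$). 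Your route is more elementary --- it sidesteps Proposition~\ref{normal subgr of fprod}, which the paper proves via Corollary~\ref{isom with subgroup A} and an inverse-limit argument --- at the cost of establishing only the location of the indices with $\eps(K_i) = C$ rather than the full shape of $L$, which the paper's method yields for free and reuses elsewhere (e.g.\ Corollary~\ref{isom with subgroup gen} and Proposition~\ref{gen epi}). Similarly, at the end you delegate the cocycle bookkeeping to the duality, via $S_{K,f}(\phi_i) = \eta_i$ (Example~\ref{ex1}) and linearity, where the paper redoes the computation by hand (continuity of $\beta$ gives finite support, then Lemma~\ref{cocycle of a fiber product}); these are the same calculation, since Example~\ref{ex1} itself rests on Lemma~\ref{cocycle of a fiber product}. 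The worries in your last paragraph are unfounded: ``nontrivial linear combination'' corresponds exactly to $\beta \ne 0$, which by simplicity of $\Chat$ is equivalent to surjectivity of $\beta$, and the final clause of Lemma~\ref{map of extensions} then makes the induced map, hence $\eps$, a genuine epimorphism, exactly as in the paper. Only one small step deserves writing out for (a$'$): besides $\Ker\pr_{I,j} \le \Ker\eps \cap \Ker\eta_I$ you need the reverse inclusion, which follows from your dichotomy by writing $x \in \Ker\eta_I$ as $x = x_j x'$ with $x_j \in K_j$, $x' \in \prod_{i \ne j} K_i$, so that $\eps(x) = \eps(x_j)$ and injectivity of $\eps|_{K_j}$ forces $x_j = 1$.
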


\begin{proof}
Let $H = \fprod{i \in I} H_i$.
First assume (*).
Let $K = \Ker \eta_I$ and $N = \Ker \eps$.
As \eqref{start} is semi-cartesian,
$\Ker \pi \circ \eta_I = \Ker \zeta \circ \eps = KN$.
Put $L = K \cap N$.
By Corollary~\ref{indecomposable semi},
$K \not\le N$, 
hence $L \subsetneqq K$.
We may assume that
$E = H/N$ and $\eps$ is the quotient map
$H \onto H/N$,
and $\zeta$ is the quotient map
$H/N \onto H/KN$.
Hence
$C = \Ker \zeta = K/L$.
By Proposition~\ref{normal subgr of fprod}
(using its notation
$\{K_i\}_{i \in I}$, $I_{\na}$, $I_{\na,L}$, $I_A$)
\begin{equation}\label{K and L}
K = 
\big( \prod_{i \in I_{\na}} K_i \big)
\times
\prod_{A \in \Lambda_\ab(G)} \big(\prod_{i \in I_A} K_i \big)
\ \textnormal{ and } \
L
=
\prod_{i \in I_{\na,L}} K_i \times
\prod_{A \in \Lambda_\ab(G)} \big( L \cap \prod_{i \in I_A} K_i \big),
\end{equation}
and hence,
by the second isomorphism theorem for groups,
\begin{equation}\label{C}
C = K/L =
\prod_{i \in I_{\na} \smallsetminus I_{\na,L}} K_i \times
\prod_{A \in \Lambda_\ab(G)}
\big( \prod_{i \in I_A} K_i \big) L /L .
\end{equation}
The right handed side is a direct product
of normal subgroups in $E$.
However, as $C$ is a minimal normal subgroup in $E$,
exactly one direct factor in this presentation
is non-trivial.
So, there are two cases:

(i)
$C$ is non-abelian.
Then the nontrivial factor in \eqref{C} is $K_j$
for a unique $j \in I_{\na}$.
Hence
$I_{\na} \smallsetminus I_{\na,L} = \{j\}$
and
$\prod_{i \in I_A} K_i \le L$
for every $A \in \Lambda_\ab(G)$.
Then
$K_j$ is non-abelian and \eqref{K and L} gives
$$
K \cap N =
L =
\prod_{i \in I_{\na \smallsetminus \{j\}}} K_i \times
\prod_{A} \prod_{i \in I_A} K_i 
=
\prod_{i \in I\smallsetminus \{j\}} K_i 
=
\Ker \pr_{I,j}.
$$
Thus
there is $\eps_j \colon H_j \onto E$
such that
$\eps_j \circ \pr_{I,j}= \eps$.
From the commutativity of \eqref{start} we deduce that
\eqref{one j} commutes.
By Lemma~\ref{expand}(a)
the square in \eqref{one j}
is semi-cartesian.
As
$\Ker \eps \cap \Ker \eta_I = \Ker \pr_{I,j}$,
we have
$\Ker \eps_j \cap \Ker \eta_j = 1$,
hence the square in \eqref{one j} is cartesian.
Thus
$\zetahat =\Inf_\pi \zeta \isom_G \eta_j$,
and hence
$j \in I_{\zetahat} \ne \emptyset$.
Finally, $j \in I$ with
$L = \Ker \pr_{I,j}$
is unique,
because
distinct $j$'s in $I$ give distinct
$\Ker \pr_{I,j}$'s.
This gives (a) and (a').

(ii)
$C$ is abelian.
Then
$I_{\na} \smallsetminus I_{\na,L} = \emptyset$,
and
$\prod_{i \in I_A} K_i \le L$
for all but a unique $A \in \Lambda_\ab(G)$.
This $A$ satisfies
$\prod_{i \in I_A} K_i \not\le N$,
that is,
$\eps(\prod_{i \in I_A} K_i) \ne 1$,
so there is $j \in I_A$
(hence $A \isom_G  K_j$)
such that
$\eps(K_j) \ne 1$.
As $K_j \le K = \Ker \eta_I$,
we have $\eps(K_j) \le \eps(K) = \Ker \zeta = C$.
By Lemma~\ref{image and preimage}(a),
the restriction $\eps|_{K_j} \colon K_j \to C$
is an isomorphism.
In particular,
$K_j \isom_G \Chat$,
by Lemma~\ref{action}(e),
and hence
$A \isom_G \Chat$.
So $j \in I_{\Chat}$,
whence $I_{\Chat} \ne \emptyset$,
and 
$$
L =
\prod_{i \in I_{\na}} K_i \times
\prod_{A' \ne \Chat} \prod_{i \in I_{A'}} K_i
\ \times 
\big( L \cap \prod_{i \in I_{\Chat}} K_i \big)
\supseteq
\kern-3pt
\prod_{i \in I_{\na}} K_i \times
\kern-2pt
\prod_{A' \ne \Chat} \prod_{i \in I_{A'}} K_i 
=
\Ker \pr_{I,I_{\Chat}}.
$$

So $\eps \colon H \onto E$
induces 
$\epsbar \colon \fprod{i \in I_{\Chat}} H_i \onto E$
such that
$\epsbar \circ \pr_{I,I_{\Chat}} = \eps$
in \eqref{zetahat}.
It follows that
$
\zeta \circ \epsbar \circ \pr_{I,I_{\Chat}} =
\zeta \circ \eps =
\pi \circ \eta_I = \pi \circ \eta_{I_{\Chat}} \circ \pr_{I,I_{\Chat}}
$,
and hence
$\zeta \circ \epsbar = \pi \circ \eta_{I_{\Chat}}$.
By Lemma~\ref{expand}(a)
the square formed by $\epsbar,\zeta, \eta_{I_{\Chat}}, \pi$
is still semi-cartesian.

Let $H' = G \times_{D} E$
and let $\rho \colon H' \onto E$ be the projection
of the fiber product.
The other projection
$H' \onto G$ is, by definition, $\zetahat = \Inf_\pi(\zeta)$.
The kernel of $\zetahat$ is $\Chat$
and $\rho$ maps $\Chat$ isomorphically onto $C$.
There is a unique homomorphism $\eps'$ such that 
\eqref{zetahat}
commutes.
By Lemma~\ref{expand}(b),
$\eps'$ is an epimorphism.

The restriction of $\eps'$ to
$K_{I_\Chat} = \Ker \eta_{I_\Chat}$
is a homomorphism $\beta$ of $G$-modules
such that
\begin{equation}\label{two extensions again}
\xymatrix{
0 \ar[r] & K_{I_\Chat} \ar[r] \ar[d]^{\beta}&
\fprod{i \in I_{\Chat}}H_i \arr[r]^{\eta_{I_\Chat}}
\arr[d]^{\eps'}& G \ar[r] \ar@{=}[d] & 1 \\
0 \ar[r] & \Chat \ar[r] & H' \arr[r]^{\zetahat} & G \ar[r] & 1. \\
}
\end{equation}
commutes.
Clearly, $\beta$ is surjective.
By Lemma~\ref{map of extensions},
$\beta_*(\eta_{I_\Chat}) = \zetahat$.

As $\beta$ is a $G$-homomorphism,
$\alpha_i := \beta|_{K_i}$ is a $G$-homomorphism,
for all $i \in I_\Chat$.
As $\beta$ is continuous,
$\alpha_i = 0$
for all but finitely many $i \in I_\Chat$.
So $\beta \colon K_{I_\Chat} = \prod_{i \in I_\Chat} K_i \onto \Chat$
is of the form
$(a_i)_{i \in I_\Chat} \mapsto \sum_i \alpha_i(a_i)$.
As $\beta$ is surjective,
not every $\alpha_i$ is zero.

For every $i \in I_\Chat$
choose $f_i \in \Zbar^2(G,\Chat)$ representing $\eta_i$.
By Lemma~\ref{cocycle of a fiber product},
$\eta_{I_\Chat} \colon H \to G$ is represented by a cochain
$g \in \Zbar^2(G,K_{I_\Chat})$
given by
$g(\sigma,\tau) = \big(f_i(\sigma,\tau)\big)_{i \in I_\Chat}$.
Then,
for all $\sigma, \tau \in G$,
$$
\big(\beta_*(g)\big) (\sigma,\tau) =
\beta\big(g(\sigma,\tau)\big) =
\beta\big(f_i(\sigma,\tau)_{i \in I_\Chat}\big) =
\sum_i \alpha_i f_i(\sigma,\tau) ,
$$
whence
$\beta_*(g) = \sum_i \alpha_i f_i$.
Hence
$\zetahat = \beta_*(\eta_{I\Chat}) = \sum_i \alpha_i \eta_i$.

This finishes the proof of (b) and (b').

Conversely,
suppose that (a) or (b) holds.
We have to show (*).

If (a) holds,
fix $j \in I_{\zetahat}$,
so that $\eta_j \isom_G \zetahat$.
By the definition of $\zetahat$
there is
$\eps_j \colon H_j \onto E$
be such that
the square in \eqref{one j} is cartesian.
Then
$\eps := \eps_j \circ \pr_{I,j}$
makes \eqref{one j} commutative,
and hence \eqref{start} commutes.
By Lemma~\ref{expand}(a) it is 
a semi-cartesian square.

If (b) holds,
the definition of $\zetahat$
gives a cartesian square
like the one in \eqref{zetahat},
and
$\zetahat = \sum_i \alpha_i \eta_i$
for some
$\{0\} \ne \{\alpha_i\}_{i \in I_{\Chat}} \subseteq F_{\Chat}$.
The map
$\beta \colon \prod_{i \in I_{\Chat}} K_i \to \Chat$
given by
$(a_i)_{i \in I_{\Chat}} \mapsto \sum_i \alpha_i(a_i)$
is a homomorphism of $G$-modules.
It is non-trivial and $\Chat$ is simple,
hence $\beta$ is surjective.
By Lemma~\ref{map of extensions}
it extends to 
$\eps' \colon \fprod{i \in I_{\Chat}} H_i \onto H'$
such that \eqref{two extensions again} commutes.
Let
$\epsbar = \rho \circ \eps'$
and
$\eps = \epsbar \circ \pr_{I,I_{\Chat}}$.
Then
\eqref{zetahat} commutes.
By Lemma~\ref{expand}(a),
\eqref{start} is a semi-cartesian square.
\end{proof}

We write out
the particular case of $\pi = \id_G$:

\begin{Theorem}
\label{indecomposable quotients of a fiber product pi is id}
Let
$\zeta \colon E \onto G$ be indecomposable.
Let $C =\Ker \zeta$;
if $C$ is non-abelian,
let
$I_\zeta = \{i \in I \st \eta_i \isom_G \zeta\}$,
and
if $C$ is abelian,
let
$I_C = \{i \in I \st \Ker \eta_i \isom_G C\}$.
%
%
Then there is
$\eps \colon \fprod{i \in I} H_i \onto E$
such that
$\zeta \circ \eps = \eta_I$
if and only if
exactly one of the following two conditions holds:
\begin{itemize}
\item[(a)]
$C$ is non-abelian
and
$I_\zeta \ne \emptyset$.
\item[(b)]
$C$ is abelian,
$I_C \ne \emptyset$,
and $\zeta$ is
a nontrivial linear combination
of $(\eta_i)_{i \in I_C}$
over $F_C = \End_G(C)$ \
(in the sense of Remark~\ref{cohomology is a vector space}(b)).
\end{itemize}
Moreover,
if $\eps$ is an epimorphism
such that
$\zeta \circ \eps = \eta_I$,
then
\begin{itemize}[resume]
\item[(a')]
if $C$ is non-abelian,
there is a unique $j \in I$
such that $\Ker \eps = \Ker \pr_{I,j}$;
this $j$ is in $I_\zeta$ and 
there is an isomorphism $\eps_j \colon H_j \to E$
such that
$\eps_j \circ \pr_{I,j}= \eps$
and
$\zeta \circ \eps_j = \eta_j$.

\item[(b')]
if $C$ is abelian,
there is
$\eps' \colon \fprod{i \in I_C} H_i \onto E$
such that
$\eps = \eps' \circ \pr_{I,I_C}$
and hence
$\zeta \circ \eps' = \eta_{I_C}$.
\end{itemize}
\end{Theorem}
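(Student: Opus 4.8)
The plan is to obtain this statement as the special case $D = G$, $\pi = \id_G$ of Theorem~\ref{indecomposable quotients of a fiber product}, so that essentially all of the work is already done and only a few specializations need to be checked.

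First I would record that inflation along the identity does nothing. By Lemma~\ref{inflation in diagram}, a cartesian square \eqref{mycartesian} whose right-hand edge is $\id_G$ forces $\eta \isom_G \alpha$; hence $\zetahat = \Inf_{\id_G}\zeta \isom_G \zeta$ in the non-abelian case, and $\Chat = \Inf_{\id_G} C \isom_G C$ as $G$-modules in the abelian case. Consequently $I_{\zetahat} = I_\zeta$, $I_{\Chat} = I_C$, and $F_{\Chat} = F_C$, so conditions (a) and (b) stated here are verbatim those of the general theorem.

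Next I would reconcile the two notions of existence. When $\pi = \id_G$ the square \eqref{start} commutes precisely when $\zeta \circ \eps = \eta_I$. Moreover its bottom edge $\alpha = \pi = \id_G$ is an isomorphism, so by Remark~\ref{iso is semi} any such commutative square is automatically semi-cartesian; equivalently, $\zeta \circ \eps = \eta_I$ implies $\Ker \eps \le \Ker \eta_I$, whence $\eta_I(\Ker \eps) = 1 = \Ker \pi$, which is exactly criterion (b) of Definition~\ref{semi-cartesian}. Therefore (*) of Theorem~\ref{indecomposable quotients of a fiber product} is equivalent here to the existence of an epimorphism $\eps$ with $\zeta \circ \eps = \eta_I$, and the main equivalence transfers with no change.

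Finally I would specialize the ``moreover'' clauses. For (a'), since $\Ker \eps \le \Ker \eta_I$ we have $\Ker \eps \cap \Ker \eta_I = \Ker \eps$, so the general (a') yields a unique $j \in I_\zeta$ with $\Ker \pr_{I,j} = \Ker \eps$ together with an epimorphism $\eps_j \colon H_j \onto E$ making \eqref{one j} commute and its inner square cartesian; but that square now has base map $\id_G$, so by Definition~\ref{cartesian}(d) the map $\eta_j$ carries $\Ker \eps_j$ isomorphically onto $\Ker \id_G = 1$, forcing $\Ker \eps_j = 1$, so $\eps_j$ is an isomorphism, and commutativity of \eqref{one j} gives $\eps_j \circ \pr_{I,j} = \eps$ and $\zeta \circ \eps_j = \eta_j$. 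For (b'), the general (b') supplies $\eps' \colon \fprod{i \in I_C} H_i \onto E$ with $\eps = \eps' \circ \pr_{I,I_C}$; then from $\zeta \circ \eps = \eta_I = \eta_{I_C} \circ \pr_{I,I_C}$ and the surjectivity of $\pr_{I,I_C}$ one cancels to obtain $\zeta \circ \eps' = \eta_{I_C}$. I expect no genuine obstacle: the only two points needing care are the automatic semi-cartesianness supplied by Remark~\ref{iso is semi} and the upgrade of $\eps_j$ from an epimorphism to an isomorphism, and both are immediate consequences of the base map being the identity.
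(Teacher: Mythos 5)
Your proposal is correct and takes essentially the same route as the paper, which introduces this theorem with the words ``We write out the particular case of $\pi = \id_G$'' of Theorem~\ref{indecomposable quotients of a fiber product} and supplies no further argument. The three specializations you check explicitly --- that inflation along $\id_G$ is trivial, that commutativity of \eqref{start} with bottom edge $\id_G$ is automatically semi-cartesian (Remark~\ref{iso is semi}, equivalently $\eta_I(\Ker \eps) = 1 = \Ker \pi$), and that the cartesian inner square of \eqref{one j} with base $\id_G$ forces $\Ker \eps_j = 1$ via Definition~\ref{cartesian}(d), upgrading $\eps_j$ to an isomorphism --- are exactly the routine verifications the paper leaves implicit, and all are carried out correctly.
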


\begin{Lemma}\label{predecompose}
For every $\lambda \in \Lambda(G)$
let $H_\lambda = \fprod{i \in I_\lambda} H_i$.
Then
$\fprod{\lambda \in \Lambda(G)} H_\lambda$,
the fiber product of
$(\eta_{I_\lambda} \colon H_\lambda \onto G)_{\lambda \in \Lambda(G)}$,
is compact.
\end{Lemma}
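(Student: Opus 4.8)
The plan is to prove compactness by the finite-stage criterion of Lemma~\ref{compact characterization} and to exclude each potential obstruction using Theorem~\ref{indecomposable quotients of a fiber product pi is id}. First I observe that by Remark~\ref{properties of fiber product}(f) each structure map $\eta_{I_\lambda}\colon H_\lambda \onto G$ is surjective, so $(\eta_{I_\lambda}\colon H_\lambda \onto G)_{\lambda \in \Lambda(G)}$ is a family of epimorphisms and Lemma~\ref{compact characterization} applies to it. By that lemma it suffices to show, for every finite $\Lambda' \subseteq \Lambda(G)$ and every $\mu \in \Lambda(G)\smallsetminus\Lambda'$, that the cartesian square
\[
\xymatrix@=16pt{
\fprod{\lambda \in \Lambda' \dotcup \{\mu\}} H_\lambda
\arr[rr] \arr[d]
&& \fprod{\lambda \in \Lambda'} H_\lambda \arr[d]^{\eta_{\Lambda'}}
\\
H_\mu \arr[rr]^{\eta_{I_\mu}} && G
}
\]
is compact. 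Writing $J' = \bigdotcup_{\lambda \in \Lambda'} I_\lambda$, the transitivity of fiber products (Remark~\ref{properties of fiber product}(d)) identifies $\fprod{\lambda \in \Lambda'} H_\lambda$ with $\fprod{i \in J'} H_i$ and its structure map $\eta_{\Lambda'}$ with $\eta_{J'}$; likewise $H_\mu = \fprod{i \in I_\mu} H_i$ with structure map $\eta_{I_\mu}$.

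Suppose, toward a contradiction, that this square is not compact. By Definition~\ref{compact cartesian square}(d) there are epimorphisms $\alpha'\colon \fprod{i \in J'} H_i \onto E$ and $\phi'\colon \fprod{i \in I_\mu} H_i \onto E$ together with an indecomposable $\zeta\colon E \onto G$ satisfying $\zeta \circ \alpha' = \eta_{J'}$ and $\zeta \circ \phi' = \eta_{I_\mu}$. Thus $\zeta$ is one indecomposable cover of $G$ that is dominated both by the fiber product over $J'$ and by the fiber product over $I_\mu$. I now apply Theorem~\ref{indecomposable quotients of a fiber product pi is id} (the case $\pi = \id_G$) to each of the two sub-families $(\eta_i)_{i \in J'}$ and $(\eta_i)_{i \in I_\mu}$.

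Set $C = \Ker \zeta$. If $C$ is non-abelian, then condition (a) of the theorem applied to $(\eta_i)_{i \in I_\mu}$ produces an index $i \in I_\mu$ with $\eta_i \isom_G \zeta$; since every $i \in I_\mu$ satisfies $\eta_i \isom_G \mu$, this forces $\zeta \isom_G \mu$, and in particular $\mu \in \Lambda_\na(G)$. Condition (a) applied to $(\eta_i)_{i \in J'}$ then produces an index $i \in J'$ with $\eta_i \isom_G \zeta \isom_G \mu$, so $i \in I_\mu \cap J'$. If instead $C$ is abelian, condition (b) applied to $(\eta_i)_{i \in I_\mu}$ requires some $i \in I_\mu$ with $\Ker \eta_i \isom_G C$, whence $\mu = C \in \Lambda_\ab(G)$; and condition (b) applied to $(\eta_i)_{i \in J'}$ requires some $i \in J'$ with $\Ker \eta_i \isom_G C = \mu$, so again $i \in I_\mu \cap J'$. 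In either case there is an index lying in both $I_\mu$ and $J' = \bigdotcup_{\lambda \in \Lambda'} I_\lambda$; as the sets $(I_\lambda)_{\lambda \in \Lambda(G)}$ partition $I$, this index forces $\mu \in \Lambda'$, contradicting $\mu \notin \Lambda'$. Hence the square is compact, and by Lemma~\ref{compact characterization} so is $\fprod{\lambda \in \Lambda(G)} H_\lambda$.

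The only delicate point is the bookkeeping of isomorphism types in the last paragraph: one must keep the abelian and non-abelian alternatives strictly separate, since $\Lambda(G) = \Lambda_\ab(G)\dotcup\Lambda_\na(G)$ and the index sets $I_\lambda$ are defined by \emph{kernel}-type for $\lambda\in\Lambda_\ab(G)$ but by \emph{cover}-type for $\lambda\in\Lambda_\na(G)$; and one must use genuinely that the $I_\lambda$ partition $I$, so that a single index lying simultaneously in $I_\mu$ and in $J'$ collapses $\mu$ into $\Lambda'$. Everything else is a direct invocation of the finite-stage compactness criterion and of the domination dichotomy of Theorem~\ref{indecomposable quotients of a fiber product pi is id}.
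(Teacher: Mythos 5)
Your proof is correct and follows essentially the same route as the paper's: reduce to finite subsets of $\Lambda(G)$ via Lemma~\ref{compact characterization}, negate compactness of the two-factor square via Definition~\ref{compact cartesian square}(d), and rule out the indecomposable $\zeta$ in both the non-abelian and abelian cases by Theorem~\ref{indecomposable quotients of a fiber product pi is id}, using that the sets $I_\lambda$ partition $I$. The only cosmetic point is that in the non-abelian case the inference $\mu \in \Lambda_\na(G)$ should precede the assertion that every $i \in I_\mu$ satisfies $\eta_i \isom_G \mu$ (it follows because $\Ker \eta_i \isom \Ker \zeta$ is non-abelian, so $i$ cannot lie in any $I_A$ with $A$ abelian), a bookkeeping issue your own closing paragraph already flags.
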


\begin{proof}
By Lemma~\ref{compact characterization}
we have to show
for every
finite $\Lambda \subseteq \Lambda(G)$,
$\mu \in \Lambda(G) \smallsetminus \Lambda$,
and
$\Lambdahat = \Lambda \dotcup \{\mu\}$
that
the square in 
\begin{equation}\label{simple partition lambda}
\xymatrix@=5pt{
\fprod{\lambda \in \Lambdahat} H_\lambda \arr[rrrr]_{\pr_{\Lambdahat,\Lambda}} \arr[ddd]_{\pr_{\Lambdahat,\mu}}
&&&& \fprod{\lambda \in \Lambda} H_\lambda \arr[ddd]^{\eta_{I_\Lambda}}
\dotarr[ddll]_(.6){\eps'}
\\
\\
&& E \dotarr[rrd]^{\zeta}
\\
H_\mu \arr[rrrr]_{\eta_{I_\mu}} \dotarr[rru]^{\eps}  &&&& G
}
\end{equation}
is compact.
Thus,
by Definition~\ref{compact cartesian square}(d),
we have to show that
there are no epimorphisms $\eps, \eps', \zeta$
such that
$\zeta$ is indecomposable,
$\zeta \circ \eps' = \eta_{I_\Lambda}$
and
$\zeta \circ \eps = \eta_{I_\mu}$.

Assume that there is such a diagram.
Then $\zeta$ is dominated both by $\eta_{I_\Lambda}$ and $\eta_{I_\mu}$.
By definition
$H_\mu$ is the fiber product of
$(\eta_i)_{i \in I_\mu}$;
by Remark~\ref{properties of fiber product}(d),
$\fprod{\lambda \in \Lambda} H_\lambda$
is the fiber product of 
$(\eta_i)_{i \in \bigdotcup_{\lambda \in \Lambda} I_\lambda}$.
If $C = \Ker \zeta$ is non-abelian,
then by Theorem~\ref{indecomposable quotients of a fiber product pi is id},
there is a unique $j \in I$
such that
$\eta_j \isom_G \zeta$
and 
$j \in I_\mu$ and 
$j \in \bigdotcup_{\lambda \in \Lambda}I_\lambda$,
so $\mu \in \Lambda$,
a contradiction.
If $C$ is abelian,
then by Theorem~\ref{indecomposable quotients of a fiber product pi is id},
$\mu = C$ and $C \in \Lambda$, a contradiction.
\end{proof}

\begin{Lemma}\label{predecompose na}
Let
$(H,\{p_i\}_{i \in I},\pbar) \in \calC'(\calH)$.
If $I = I_\na$
and 
$\Ker p_i \ne \Ker p_j$
for distinct $i,j \in I$,
then the induced map $p_I \colon H \to \fprod{i \in I} H_i$
is surjective.
\end{Lemma}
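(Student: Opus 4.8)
The plan is to verify the surjectivity of $p_I$ through the finitary criterion of Lemma~\ref{surjectivity}. Write $N_i = \Ker p_i$; from $\eta_i \circ p_i = \pbar$ we get $N_i \le K := \Ker \pbar$ for every $i$, and by Lemma~\ref{injective surjective}(a) we have $\Ker p_{I'} = \bigcap_{i \in I'} N_i$ for each $I' \subseteq I$. Since the $p_i$ are epimorphisms, Lemma~\ref{surjectivity} reduces the claim to showing that, for every finite $I' \subseteq I$ for which $p_{I'}$ is surjective and every $j \in I \smallsetminus I'$, the square \eqref{ss} (for this $I'$ and $j$) is semi-cartesian. Its bottom arrow $\eta_j$ is indecomposable, so Corollary~\ref{indecomposable semi} translates semi-cartesianness into the single non-containment $\Ker p_{I'} \not\subseteq N_j$. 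Thus the whole problem reduces to proving $\bigcap_{i \in I'} N_i \not\subseteq N_j$.

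To establish this non-containment I would argue by contradiction. The case $I' = \emptyset$ is immediate, since there $\Ker p_{I'} = K$ while $N_j \subsetneq K$ (because $\eta_j$, being indecomposable, is not an isomorphism). So assume $I' \ne \emptyset$ and $\bigcap_{i \in I'} N_i \subseteq N_j$. As $p_{I'}$ is surjective with kernel contained in $\Ker p_j$, the first isomorphism theorem yields an epimorphism $\eps \colon \fprod{i \in I'} H_i \onto H_j$ with $\eps \circ p_{I'} = p_j$; substituting $p_j = \eps \circ p_{I'}$ into $\eta_j \circ p_j = \pbar = \eta_{I'} \circ p_{I'}$ and cancelling the surjection $p_{I'}$ gives $\eta_j \circ \eps = \eta_{I'}$. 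Now I would apply Theorem~\ref{indecomposable quotients of a fiber product pi is id} to the subfamily $(\eta_i)_{i \in I'}$, with $\zeta = \eta_j$ and this $\eps$: since $C = \Ker \eta_j$ is non-abelian (as $j \in I = I_\na$), conclusion (a') applies and produces a \emph{unique} $j_0 \in I'$ together with an isomorphism $\eps_{j_0} \colon H_{j_0} \to H_j$ with $\eps_{j_0} \circ \pr_{I',j_0} = \eps$. Composing with $p_{I'}$ gives $p_j = \eps_{j_0} \circ p_{j_0}$, whence $N_j = N_{j_0}$ because $\eps_{j_0}$ is an isomorphism. But $j_0 \in I'$ and $j \in I \smallsetminus I'$ are distinct, contradicting the hypothesis that $N_i \ne N_j$ for distinct indices. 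This completes the reduction and hence the proof.

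The step I expect to be the crux is the appeal to Theorem~\ref{indecomposable quotients of a fiber product pi is id}(a'): it is precisely the rigidity of the non-abelian case---an epimorphism from a fiber product onto an indecomposable cover with non-abelian kernel must factor, up to isomorphism, through a single coordinate, and that coordinate is pinned down by its kernel---that converts the assumed containment $\bigcap_{i \in I'} N_i \subseteq N_j$ into the forbidden identification $N_{j_0} = N_j$. The hypothesis $I = I_\na$ is \emph{essential} for this: in the abelian case an indecomposable cover can be a nontrivial linear combination of several of the $\eta_i$ (condition (b) of that theorem), so no single coordinate is singled out and kernels may coincide even when all the $N_i$ are distinct---exactly the collapse illustrated by the covers $C_4 \onto C_2$ and $C_2 \times C_2 \onto C_2$ of the introduction. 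It is the non-abelian assumption that secures the clean coordinate-factorization on which the argument rests.
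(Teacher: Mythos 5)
Your proof is correct and follows essentially the same route as the paper: reduce via Lemma~\ref{surjectivity} to semi-cartesianness of \eqref{ss}, translate this through Corollary~\ref{indecomposable semi} into an epimorphism $\eps$ with $\eta_j \circ \eps = \eta_{I'}$, and invoke Theorem~\ref{indecomposable quotients of a fiber product pi is id}(a') to pin down a unique coordinate whose kernel then coincides with $\Ker p_j$, contradicting the hypothesis. The only (harmless) cosmetic differences are that you derive $\Ker p_j = \Ker p_{j_0}$ via the isomorphism $\eps_{j_0}$ rather than the paper's direct computation $\Ker p_j = p_{I'}^{-1}(\Ker \eps) = p_{I'}^{-1}(\Ker \pr_{I',j_0}) = \Ker p_{j_0}$, and that you spell out the $I' = \emptyset$ case explicitly.
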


\begin{proof}
We have to show condition (b) of Lemma~\ref{surjectivity}.
So let $I' \dotcup \{j\} \subseteq I$
and assume that $p_{I'}$ in \eqref{ss} is surjective.
If \eqref{ss} is not semi-cartesian,
by Corollary~\ref{indecomposable semi}
there is $\eps \colon \fprod{i \in I'} H_i \onto H_j$
such that
$\eps \circ p_{I'} = p_j$,
and hence
$\eta_j \circ \eps = \eta_{I'}$.
By Theorem~\ref{indecomposable quotients of a fiber product pi is id}
there is a unique $i \in I'$ 
such that
$\Ker \eps = \Ker \pr_{I',i}$.
Thus
$p_{I'}^{-1}(\Ker \eps) = p_{I'}^{-1}(\Ker \pr_{I',i})$.
As
$\eps \circ p_{I'} = p_j$
and
$\pr_{I',i} \circ p_{I'} = p_i$,
the above equation is
$\Ker p_j = \Ker p_i$,
a contradiction.
\end{proof}

\begin{Proposition}\label{decompose}
Let 
$\calH = (\eta_i \colon H_i \onto G)_{i \in I}$
be a family of indecomposable epimorphisms
and let
$(H,\{p_i\}_{i \in I},\pbar) \in \calC'(\calH)$.
Assume that
$\Ker p_i \ne \Ker p_j$
for distinct $i,j \in I_\na$,
and that the induced map
$p_A \colon H \to \fprod{i \in I_A} H_i$
is surjective,
for every  $A \in \Lambda_\ab(G)$.
Then the induced map $p_I \colon H \to \fprod{i \in I} H_i$
is surjective.
If also
$\bigcap_{i \in I} \Ker p_i = 1$,
then $p_I$ is an isomorphism.
\end{Proposition}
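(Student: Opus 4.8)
The plan is to reduce the surjectivity of $p_I$ to the compactness established in Lemma~\ref{predecompose}, organising $I$ by the block decomposition $I = \bigdotcup_{\lambda \in \Lambda(G)} I_\lambda$ underlying that lemma. Set $H_\lambda = \fprod{i \in I_\lambda} H_i$ for each $\lambda \in \Lambda(G)$, and let $p_\lambda \colon H \to H_\lambda$ be the map induced by $(p_i)_{i \in I_\lambda}$. By Remark~\ref{properties of fiber product}(d) we have $\fprod{i \in I} H_i = \fprod{\lambda \in \Lambda(G)} H_\lambda$; comparing universal properties (for $i \in I_\lambda$ one has $\pr_{I,i} = \pr_{I_\lambda,i} \circ \pr_{\Lambda(G),\lambda}$ and $p_i = \pr_{I_\lambda,i} \circ p_\lambda$), the map $p_I$ is identified with the map $p_{\Lambda(G)}$ induced by $(p_\lambda)_{\lambda \in \Lambda(G)}$. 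It therefore suffices to prove that $p_{\Lambda(G)}$ is surjective.

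First I would check that every $p_\lambda$ is surjective. For $\lambda = A \in \Lambda_\ab(G)$ this is precisely the hypothesis on $p_A$. For $\lambda = \eta \in \Lambda_\na(G)$, apply Lemma~\ref{predecompose na} to the subfamily $(\eta_i)_{i \in I_\eta}$ together with $(H, \{p_i\}_{i \in I_\eta}, \pbar)$: this object lies in $\calC'$ of the subfamily, every kernel of the subfamily is non-abelian (so the hypothesis $I = I_\na$ of that lemma holds for the subfamily), and the hypothesis $\Ker p_i \ne \Ker p_j$ for distinct $i,j \in I_\na$ holds a fortiori on $I_\eta \subseteq I_\na$. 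Hence $p_\eta$ is surjective. Consequently $(H, \{p_\lambda\}_{\lambda \in \Lambda(G)}, \pbar)$ lies in $\calC'\big((\eta_{I_\lambda})_{\lambda \in \Lambda(G)}\big)$, with all its structure maps $p_\lambda$ epimorphisms.

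The conclusion is then immediate: by Lemma~\ref{predecompose} the fiber product $\fprod{\lambda \in \Lambda(G)} H_\lambda$ is compact, so by the observation in Definition~\ref{compact fiber product} the induced map $p_{\Lambda(G)}$ cannot have proper image, i.e.\ it is surjective; transporting back through the identification of the first paragraph shows that $p_I$ is surjective. For the last sentence of the statement, assume in addition that $\bigcap_{i \in I} \Ker p_i = 1$; by Lemma~\ref{injective surjective}(a) this says $\Ker p_I = 1$, so $p_I$ is injective as well, hence an isomorphism.

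I expect no serious obstacle once the block reduction is set up; the only point requiring care is the verification that the hypotheses of Lemma~\ref{predecompose na} genuinely transfer to each non-abelian subfamily $(\eta_i)_{i \in I_\eta}$, after which the compactness of Lemma~\ref{predecompose} does all the remaining work.
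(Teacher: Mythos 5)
Your proof is correct and follows essentially the same route as the paper's: the same block decomposition $I = \bigdotcup_{\lambda \in \Lambda(G)} I_\lambda$ via Remark~\ref{properties of fiber product}(d), surjectivity of each $p_\lambda$ via Lemma~\ref{predecompose na} (non-abelian blocks) and the hypothesis (abelian blocks), compactness from Lemma~\ref{predecompose} together with the observation in Definition~\ref{compact fiber product}, and Lemma~\ref{injective surjective}(a) for the isomorphism claim. Your explicit check that the hypotheses of Lemma~\ref{predecompose na} transfer to each subfamily $(\eta_i)_{i \in I_\eta}$ is a detail the paper leaves implicit, but the argument is the same.
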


\begin{proof}
As 
$I = \bigdotcup_{\lambda \in \Lambda(G)} I_\lambda$,
by Remark~\ref{properties of fiber product}(d) \
$\fprod{i \in I} H_i = \fprod{\lambda \in \Lambda(G)} H_\lambda$,
where
$H_\lambda = \fprod{i \in I_\lambda} H_i$
is the fiber product of
$(\eta_{I_\lambda} \colon H_\lambda \onto G)_{\lambda \in \Lambda(G)}$,
for all $\lambda \in \Lambda(G)$.

For every $\lambda \in \Lambda(G)$
let
$p_\lambda \colon H \to H_\lambda$
be the map induced from $(p_i)_{i \in I_\lambda}$.
Then $p_I$ is the map
$H \to \fprod{\lambda \in \Lambda(G)} H_\lambda$
induced from $(p_\lambda)_{\lambda \in \Lambda(G)}$.
If $\lambda \in \Lambda_\na(G)$,
then $p_\lambda$ is surjective
by Lemma~\ref{predecompose na}.
If $\lambda \in \Lambda_\ab(G)$,
then $p_\lambda$ is surjective by assumption.
By Lemma~\ref{predecompose},
$\fprod{i \in I_\lambda} H_i$
is compact.
Hence $p$ is surjective.

The last assertion follows from
Lemma~\ref{injective surjective}(a).
\end{proof}

\begin{Corollary}\label{compact of indec}
Assume that $(\eta_i)_{i \in I}$
are pairwise non-isomorphic
indecomposable epimorphisms,
$I \ne \emptyset$,
and for every abelian simple $G$-module $A$
the family
$T_A = (\eta_i \st i \in I,\ \Ker \eta_i \isom_G A,
\ \eta_i \textnormal{ does not split})$
is linearly independent over $\End_G(A)$.
Then the fiber product
$\fprod{i \in I} H_i$
is compact.
\end{Corollary}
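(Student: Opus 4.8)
The plan is to deduce compactness from Proposition~\ref{decompose}. To prove that $\fprod{i \in I} H_i$ is compact, I would take an arbitrary closed subgroup $H' \le \fprod{i \in I} H_i$ with $\pr_{I,i}(H') = H_i$ for every $i \in I$, set $p_i = \pr_{I,i}|_{H'}$ and $\pbar = \eta_I|_{H'}$, so that $(H',\{p_i\}_{i \in I},\pbar) \in \calC'(\calH)$, and show that the induced map $p_I \colon H' \to \fprod{i \in I} H_i$ --- which is here just the inclusion --- is surjective; this forces $H' = \fprod{i \in I} H_i$. Thus it suffices to verify the two hypotheses of Proposition~\ref{decompose} for this datum.

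The first hypothesis, that $\Ker p_i \ne \Ker p_j$ for distinct $i,j \in I_{\na}$, follows from pairwise non-isomorphy: if $\Ker p_i = \Ker p_j =: N$, then both $H_i$ and $H_j$ are quotients of $H'/N$, giving an isomorphism $\sigma \colon H_i \to H_j$ with $\sigma \circ p_i = p_j$; since $\eta_j \circ \sigma \circ p_i = \eta_j \circ p_j = \pbar = \eta_i \circ p_i$ and $p_i$ is surjective, we get $\eta_j \circ \sigma = \eta_i$, i.e.\ $\eta_i \isom_G \eta_j$, a contradiction. For the second hypothesis I must show $p_A \colon H' \to \fprod{i \in I_A} H_i$ is surjective for every $A \in \Lambda_{\ab}(G)$; this I would obtain from the observation following Definition~\ref{compact fiber product} once I know that the fiber product $\fprod{i \in I_A} H_i$ is itself compact, because $p_A(H')$ projects under each $\pr_{I_A,i}$ onto $p_i(H') = H_i$ and so cannot be a proper subgroup.

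The core of the argument is therefore the compactness of $\fprod{i \in I_A} H_i$ for a fixed abelian $A$. Here I would invoke Lemma~\ref{compact characterization} to reduce to showing, for each finite $I'' \subseteq I_A$ and $j \in I_A \smallsetminus I''$, that the cartesian square \eqref{simple partition 2} with bottom map $\eta_j$ is compact. As $\Ker \eta_j = A$ is simple, $\eta_j$ is indecomposable, so by Proposition~\ref{indecomposable cartesian square} compactness is equivalent to the non-existence of $\gamma \colon \fprod{i \in I''} H_i \onto H_j$ with $\eta_j \circ \gamma = \eta_{I''}$. By Theorem~\ref{indecomposable quotients of a fiber product pi is id} (applied to the subfamily $(\eta_i)_{i \in I''}$, with $C = A$ abelian and $I_C = I''$), such a $\gamma$ exists precisely when $\eta_j$ is a nontrivial $F_A$-linear combination of $(\eta_i)_{i \in I''}$ in $H^2(G,A)$, and I would rule this out using linear independence of $T_A$. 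The one subtlety, which I expect to be the main obstacle, is the split extensions: their cocycle is $0 \in H^2(G,A)$ and all split extensions of $G$ by $A$ are isomorphic, so pairwise non-isomorphy forces at most one index of $I_A$ to carry a split $\eta_i$, and that index is the one excluded from $T_A$. Passing to cocycles $f_i$, a relation $f_j = \sum_{i \in I''} \alpha_i f_i$ with some $\alpha_i \ne 0$ loses nothing upon discarding the at most one split term (which contributes $0$); since $j \notin I''$, the split index lies in at most one of $\{j\}$ and $I''$, so the surviving identity is a genuine nontrivial $F_A$-linear dependence among members of $T_A$ --- carrying coefficient $1$ on $f_j$ when $j$ is non-split, and forcing some nonzero coefficient on a non-split $I''$-term when $j$ is the split index --- contradicting the hypothesis. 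Hence no such $\gamma$ exists, each square \eqref{simple partition 2} is compact, $\fprod{i \in I_A} H_i$ is compact, and Proposition~\ref{decompose} then yields that $p_I$ is surjective, completing the proof.
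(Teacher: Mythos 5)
Your proposal is correct, and the one genuinely delicate point --- the split member of an isotypy class, which lies outside $T_A$ and carries the class $0 \in H^2(G,A)$ --- is handled exactly right: pairwise non-isomorphy allows at most one split cover per abelian $A$, the discarded split term contributes $0$, and nontriviality of the resulting relation among members of $T_A$ survives both when $j$ is split and when it is not. Your assembly, however, differs from the paper's. The paper makes a single pass over the whole family: by Lemma~\ref{compact characterization} it fixes a finite $I' \subseteq I$ and $j \in I \smallsetminus I'$, supposes an indecomposable $\pi \colon G' \onto G$ dominated both by $\eta_{I'}$ and by $\eta_j$ (the latter forcing $\eta_j \isom_G \pi$, since $\eta_j$ is indecomposable), and applies Theorem~\ref{indecomposable quotients of a fiber product pi is id} to $(\eta_i)_{i \in I'}$ to obtain the dichotomy: either $\Ker \pi$ is non-abelian and $\pi \isom_G \eta_k$ for some $k \in I'$, excluded because $k \ne j$ and the covers are pairwise non-isomorphic, or $\Ker \pi \isom_G A$ is abelian and $\pi$ is a nontrivial combination of the $\eta_i$, $i \in I'$, with kernel $\isom_G A$, excluded by the same $T_A$ computation you perform. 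You instead factor the statement through Proposition~\ref{decompose}, recasting compactness as surjectivity of the inclusion of a closed subgroup with full projections --- a correct and rather elegant reformulation --- so that the non-abelian indices are absorbed into the kernel-distinctness hypothesis (which you verify correctly from pairwise non-isomorphy via the first isomorphism theorem), and only the isotypic abelian blocks need a hands-on compactness argument, which coincides in substance with the paper's abelian case. What your route buys is modularity: the corollary appears as a formal consequence of the decomposition machinery, with the cohomological input confined to one isotypic block at a time. What it costs is logical weight: Proposition~\ref{decompose} itself rests on Lemmas~\ref{predecompose} and~\ref{predecompose na}, which invoke the very same Theorem~\ref{indecomposable quotients of a fiber product pi is id}, so the paper's direct argument is shorter and uses strictly less machinery. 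Two immaterial loose ends in your write-up: for $I_A = \emptyset$ compactness of the empty fiber product is not defined, but there $p_A = \pbar$ is trivially surjective; and for $I'' = \emptyset$ the theorem excludes $\gamma$ vacuously, consistent with the square being compact since its left edge is the identity.
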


\begin{proof}
Let $I'$ be a finite subset of $I$
and $j \in I \smallsetminus I'$.
Let
$\eps' \colon \fprod{i \in I'} H_i \onto G'$
and
$\pi \colon G' \onto G$
be epimorphisms,
$\pi$ indecomposable,
such that
$\pi \circ \eps' = \eta_{I'}$.
By Lemma~\ref{compact characterization}
it suffices to show that
there is no
$\eps_j \colon H_j \onto G'$
such that
$\pi \circ \eps_j = \eta_j$,
that is,
since $\eta_j$ is indecomposable,
that
$\eta_j \not\isom_G \pi$.

Denote
$A = \Ker \pi$.
By Theorem~\ref{indecomposable quotients of a fiber product}
either
\begin{itemize}
\item[(a)]
$A$ is non-abelian and there is $k \in I'$ such that
$\eta_k \isom_G \pi$;
or
\item[(b)]
$A$ is abelian and 
$\pi$ is a nontrivial linear combination of
$T'_A = \{\eta_i \st i \in I',\ \Ker \eta_i \isom_G A\}$.
\end{itemize}

In the first case
$j \ne k$,
hence 
$\eta_j \not\isom_G \eta_k$,
whence
$\eta_j \not\isom_G \pi$.

In the second case,
if $\eta_j$ is the split extension
(corresponding to $0 \in H^2(G,A)$),
then no $\eta_i \in T'_A$ can be split,
because $i \in I'$, and hence $i \ne j$.
Thus $T'_A \subseteq T_A$,
hence a nontrivial linear combination of
elements of $T'_A$
is not $0$,
whence not isomorphic to $\eta_j$.

If, in the second case,
$\eta_j$ is not split,
then $\eta_j \in T_A$, hence $\eta_j$
is linearly independent
of $T'_A$,
whence $\pi$ is not isomorphic to $\eta_j$.
\end{proof}

\section{Epimorphisms between fiber products
of indecomposable epimorphisms}
\label{epi}

Fix a profinite group $G$
and recall the notation
$\Lambda_\ab(G)$,
$\Lambda_\na(G)$,
and
$\Lambda(G)$
from Notation~\ref{Lambda}.

Let
$\calH = (\eta_i \colon H_i \onto G)_{i \in I}$
be a family
of indecomposable epimorphisms.
We will consider the map
$\eta_I \colon \fprod{i \in I} H_i \onto G$.
For every $A \in \Lambda_\ab(G)$
denote
$I_A = \{ i \in I \st \Ker \eta_i \isom_G A\}$
and
$H_A = \fprod{i \in I_A} H_i$.
Let
$B_A = \Ker(\eta_{I_A} \colon H_A \onto G)$,
let
$f_A \in H^2(G, B_A)$
be the cocycle corresponding to the extension
$1 \to B_A \to H_A \to G \to 1$,
and let $(V_A, S_A) = X^2(B_A, f_A)$,
where $V_A$ is a vector space over $F_A = \End_G(A)$.
Then 
$\Img S_A$ is an $F_A$-subspace of
$H^2(G,A)$
and
$\dim \Ker S_A$ 
is a cardinality.

For every indecomposable cover $\zeta$ of $G$
denote
$I_\zeta= \{ i \in I \st \eta_i \isom_G \zeta\}$.

Notice that
$I = \big(\bigdotcup_{\zeta \in \Lambda_\na(G)} I_\zeta\big)
\dotcup
\big( \bigdotcup_{A \in \Lambda_\ab(G)} I_A\big)$.

Now let
$\pi \colon G \onto G'$ 
be an epimorphism,
and let
$\calH' = (\eta'_i \colon H'_i \onto G')_{i \in I'}$
be a family
of indecomposable covers of $G'$.
For $A' \in \Lambda(G')$
and for an indecomposable cover $\zeta'$ of $G'$
we define
in the same way
the invariants
$\eta'_{I'} \colon \fprod[G']{i \in I'} \onto G'$,
$I'_{A'}$,
$H'_{A'}$,
$B_{A'}$,
$f'_{A'}$,
$V'_{A'}$,
$S'_{A'}$,
$F'_{A'}$,
and
$I'_{\zeta'}$.

We address the question,
when there is
$\theta \colon \fprod{i \in I} H_i \onto \fprod{i \in I'} H'_i$
such that
$\eta'_{I'} \circ \theta = \pi \circ \eta_I$.
But first we assume that
$G' = G$ and $\pi = \id_G$.

\begin{Theorem}\label{epimorphism G' is G}
Assume that
$G' = G$ and $\pi = \id_G$.
There is 
$\theta \colon \fprod{i \in I} H_i \onto \fprod{i \in I'} H'_i$
such that
$\eta'_{I'} \circ \theta = \eta_{I}$
if and only if
\begin{itemize}
\item[(a)]
$|I'_\zeta| \le |I_\zeta|$,
for every $\zeta \in \Lambda_\na(G)$;
and
\item[(b)]
$\Img S'_A \subseteq \Img S_A$
and 
$\dim \Ker S'_A \le \dim \Ker S_A$,
for every $A \in \Lambda_\ab(G)$.
\end{itemize}
\end{Theorem}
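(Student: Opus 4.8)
The plan is to reduce the existence of a single epimorphism $\theta \colon \fprod{i \in I} H_i \onto \fprod{i \in I'} H'_i$ with $\eta'_{I'}\circ\theta = \eta_I$ to coordinatewise data and to treat the non-abelian and abelian ``types'' separately. By the universal property of $\fprod{i \in I'} H'_i$, giving a homomorphism $\theta$ with $\eta'_{I'} \circ \theta = \eta_I$ is the same as giving the family $\theta'_j := \pr_{I',j} \circ \theta \colon \fprod{i \in I} H_i \to H'_j$ with $\eta'_j \circ \theta'_j = \eta_I$ for every $j \in I'$; the real issue is surjectivity of $\theta$. The two engines are Theorem~\ref{indecomposable quotients of a fiber product pi is id}, which analyses a single indecomposable cover dominated by $\eta_I$, and Proposition~\ref{decompose}, which glues a compatible family of coordinate epimorphisms into a surjection onto the whole fiber product; for the abelian types I route everything through the duality of Section~\ref{second cohomology}.

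For necessity, assume $\theta$ exists, so each $\theta'_j$ is an epimorphism with $\eta'_j \circ \theta'_j = \eta_I$. If $j \in I'_\zeta$ with $\zeta \in \Lambda_\na(G)$, Theorem~\ref{indecomposable quotients of a fiber product pi is id}(a') gives a unique $\sigma(j) \in I_\zeta$ with $\Ker \theta'_j = \Ker \pr_{I,\sigma(j)}$. Because $\theta$ is surjective, $\theta(\theta^{-1}(N)) = N$, so $\Ker \theta'_j = \Ker \theta'_{j'}$ forces $\Ker \pr_{I',j} = \Ker \pr_{I',j'}$ and hence $j = j'$; thus $\sigma$ is injective on each $I'_\zeta$ and $|I'_\zeta| \le |I_\zeta|$, giving (a). If instead $A \in \Lambda_\ab(G)$, then for every $j \in I'_A$ Theorem~\ref{indecomposable quotients of a fiber product pi is id}(b') shows $\theta'_j$ factors through $\pr_{I,I_A}$; intersecting kernels, the induced surjection $\fprod{i \in I} H_i \onto H'_A$ kills $\Ker \pr_{I,I_A}$ and therefore descends to an epimorphism $\theta_A \colon H_A \onto H'_A$ over $G$. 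Its restriction $\beta_A$ to kernels is a surjective $G$-module map $B_A \onto B'_A$ with $(\beta_A)_* f_A = f'_A$; by Remark~\ref{inj surj dual}(d) and the functoriality in Construction~\ref{two functors}(a), $T_A := \beta_A^* \colon V'_A \to V_A$ is injective and satisfies $S'_A = S_A \circ T_A$. Then $\Img S'_A = \Img(S_A \circ T_A) \subseteq \Img S_A$, and $T_A$ embeds $\Ker S'_A = T_A^{-1}(\Ker S_A)$ into $\Ker S_A$, whence $\dim \Ker S'_A \le \dim \Ker S_A$; this is (b).

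For sufficiency, assume (a) and (b) and build the $\theta'_j$. For each $\zeta \in \Lambda_\na(G)$ fix an injection $\iota_\zeta \colon I'_\zeta \hookrightarrow I_\zeta$; since $\eta_{\iota_\zeta(j)} \isom_G \eta'_j$, compose $\pr_{I,\iota_\zeta(j)}$ with the isomorphism of these two extensions to obtain a surjection $\theta'_j$ with $\Ker \theta'_j = \Ker \pr_{I,\iota_\zeta(j)}$; injectivity of the $\iota_\zeta$ and disjointness of the $I_\zeta$ make the $\Ker \theta'_j$ pairwise distinct over $j \in I'_\na$. For each $A \in \Lambda_\ab(G)$ the hypotheses let me choose complements $V'_A = \Ker S'_A \oplus W'$ and $V_A = \Ker S_A \oplus W$, set $T_A = (S_A|_W)^{-1} \circ S'_A$ on $W'$ (legitimate since $\Img S'_A \subseteq \Img S_A$) and any injection $\Ker S'_A \hookrightarrow \Ker S_A$ on the first summand; this $T_A$ is injective with $S_A \circ T_A = S'_A$. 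Dualising through Proposition~\ref{equivalence of categories} gives a surjective $G$-module map $\beta_A \colon B_A \onto B'_A$ with $(\beta_A)_* f_A = f'_A$, which by Lemma~\ref{map of extensions} extends to a surjection $\theta_A \colon H_A \onto H'_A$ over $G$; precomposing with $\pr_{I,I_A}$ and projecting defines $\theta'_j$ for $j \in I'_A$ so that the induced map $\fprod{i \in I} H_i \onto H'_A$ is surjective. The resulting family $(\theta'_j)_{j \in I'}$ lies in $\calC'(\calH')$, has distinct kernels on $I'_\na$, and induces a surjection onto each $H'_A$, so Proposition~\ref{decompose} yields the desired surjective $\theta = \theta'_{I'}$ with $\eta'_{I'} \circ \theta = \eta_I$.

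The main obstacle is the abelian part: everything there must pass through the contravariant duality of Section~\ref{second cohomology}, so that the group-theoretic statement ``there is an epimorphism $H_A \onto H'_A$ over $G$'' becomes the linear statement ``there is an injective $T_A \colon V'_A \to V_A$ with $S_A \circ T_A = S'_A$'', which in turn is equivalent to the image and kernel-dimension inequalities by the elementary complement argument above. The bookkeeping requiring care is twofold: the non-abelian factors are rigid (each determined by its isomorphism class), so they contribute only a counting condition and must be kept strictly separate from the cohomological factors; and the passage from the coordinatewise epimorphisms back to a genuine surjection of the full fiber products is exactly what Proposition~\ref{decompose} supplies, so the crux of the gluing is the verification of its hypotheses, namely distinct kernels on the non-abelian indices and surjectivity onto each abelian block.
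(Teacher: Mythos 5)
Your proof is correct, and its necessity half is essentially the paper's: both directions of (a) and (b) are extracted exactly as in the paper's Parts A and B, via Theorem~\ref{indecomposable quotients of a fiber product pi is id}(a') for the non-abelian indices and (b') plus the descent to a block map $\theta_A \colon H_A \onto H'_A$, dualized through Remark~\ref{inj surj dual}, for the abelian ones. Where you genuinely diverge is the sufficiency half. The paper (Part C) never constructs $\theta$ from pieces: it uses Remark~\ref{basis choice} to re-choose the bases of $B_A^*$ so that each abelian block $H_A$ is re-presented as a fiber product whose factors realize a basis of $\Img S_A$ together with $\dim\Ker S_A$ split copies, and similarly for the primed side; after this normalization one literally has $I' \subseteq I$ with $H'_i = H_i$, $\eta'_i = \eta_i$, and $\theta := \pr_{I,I'}$ is surjective for free. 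You instead keep both presentations fixed, build an injective $F_A$-linear $T_A \colon V'_A \to V_A$ with $S_A \circ T_A = S'_A$ by the complement construction (which is the correct elementary translation of (b), valid for infinite dimensions), dualize through Proposition~\ref{equivalence of categories} and Lemma~\ref{map of extensions} to get surjections $\theta_A \colon H_A \onto H'_A$ over $G$, handle the non-abelian coordinates by injections $I'_\zeta \hookrightarrow I_\zeta$ composed with isomorphisms of extensions, and then glue with Proposition~\ref{decompose} — whose two hypotheses (pairwise distinct kernels on $I'_\na$, which hold because distinct indices give distinct $\Ker\pr_{I,i}$ and the sets $I_\zeta$ are disjoint, and surjectivity onto each $H'_A$, which holds since $\theta_A \circ \pr_{I,I_A}$ is a composition of surjections) you verify correctly. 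The trade-off: the paper's normalization is shorter, gets surjectivity of $\theta$ trivially, and is reused verbatim to prove Corollary~\ref{isomorphism} (equalities give $I' = I$ and identical fiber products), whereas your route leaves the fiber products untouched, makes $\theta$ explicit in coordinates, and makes the linear-algebra content of (b) transparent, at the cost of invoking the compactness-based gluing machinery behind Proposition~\ref{decompose}; to recover Corollary~\ref{isomorphism} along your lines you would additionally observe that under equalities $T_A$ can be chosen bijective and the $\iota_\zeta$ bijections.
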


\begin{proof}
Put $H = \fprod{i \in I} H_i$,
$H' = \fprod{i \in I'} H'_i$,
$K = \Ker \eta_I$,
and
$K' = \Ker \eta'_{I'}$.

\subdemoinfo{Part A}
{If $\theta$ exists, then (a) holds.}
Fix $\zeta \in \Lambda_\na(G)$.
Let $i \in I'$.
Then
$\eta'_i \circ (\pr_{I',i} \circ \theta) = 
\eta'_{I'} \circ \theta = \eta_I$.
By Theorem~\ref{indecomposable quotients of a fiber product pi is id}(a')
there is a unique $j \in I_\zeta$
such that
$\Ker \pr_{I,j} = \Ker (\pr_{I',i} \circ \theta)
= \theta^{-1}(\Ker \pr_{I',i})$.
This defines a map
$i \mapsto j$ 
from $I'_\zeta$ to $I_\zeta$.
This map is injective:
If $i_1,i_2 \in I'_\zeta$ map onto the same $j \in I_\zeta$,
then
$\theta^{-1}(\Ker \pr_{I',i_1}) = \theta^{-1}(\Ker \pr_{I',i_2})$,
hence
$\Ker \pr_{I',i_1} = \Ker \pr_{I',i_2}$,
whence
$i_1 = i_2$,
by
Theorem~\ref{indecomposable quotients of a fiber product pi is id}(a').
Thus 
$|I'_\zeta| \le |I_\zeta|$.

\subdemoinfo{Part B}
{If $\theta$ exists, then (b) holds.}
Fix $A \in \Lambda_\ab(G)$
and let
$F_A = \End_G(A)$.

For  every $i \in I'_A$,\  
$\eps_i := \pr_{I',i} \circ \theta \colon H \onto H'_i$
satisfies
$\eta'_i \circ \eps_i = \eta_I$.
By Theorem~\ref{indecomposable quotients of a fiber product pi is id}(b'),
there is an epimorphism $\eps'_i \colon H_A \onto H'_i$
such that
$\eps'_i \circ \pr_{I,I_A} = \eps_i = \pr_{I',i} \circ \theta$.
We have
$$
\eta_{I_A} \circ \pr_{I,I_A} =
\eta_I =
\eta'_{I'} \circ \theta =
\eta'_i \circ \pr_{I',i} \circ \theta =
\eta'_i \circ \eps'_i \circ \pr_{I,I_A},
$$
hence 
$
\eta_{I_A} = \eta'_i \circ \eps'_i
$.
Thus
$\eta'_i \circ \eps'_i$
does not depend on $i$.
By the universal property of the fiber product
$\fprod{i \in I'_A} H'_i$, \
$(\eps'_i)_{i \in I'_A}$
defines a homomorphism
$\theta_A \colon H_A \to H'_A$
such that
$\pr_{I'_A,i} \circ \theta_A = \eps'_i$
for all $i \in I'_A$.
So
$$
\eta'_{I'_A} \circ \theta_A = 
\eta'_i \circ \pr_{I'_A,i} \circ \theta_A = 
\eta'_i \circ \eps'_i =
\eta_{I_A}.
$$
We claim that
\begin{equation}\label{7}
\theta_A \circ \pr_{I, I_A} =
\pr_{I',I'_A} \circ  \theta.
\end{equation}
Indeed,
both sides are maps
$H \to H'_A = \fprod{i \in I'_A} H'_i$,
and for every $i \in I'_A$
$$
\pr_{I'_A,i} \circ \theta_A \circ \pr_{I, I_A} =
\eps'_i \circ \pr_{I, I_A} =
\eps_i =
\pr_{I',i} \circ  \theta =
\pr_{I'_A,i} \circ \pr_{I',I'_A} \circ \theta,
$$
hence 
by the universal property of the fiber product
both sides of \eqref{7} are equal.
In particular, as
$\pr_{I',I'_A}, \theta$ are surjective, so is $\theta_A$.
We have
$$
\eta_{I_A} \circ \pr_{I, I_A} =
\eta_I =
\eta'_{I'} \circ \theta =
\eta'_{I'_A} \circ \pr_{I',I'_A} \circ \theta =
\eta'_{I'_A} \circ \theta_A \circ \pr_{I, I_A},
$$
hence
\begin{equation}
\eta_{I_A} = \eta'_{I'_A} \circ \theta_A.
\end{equation}

\begin{equation*}
\xymatrix@=10pt{
&&& H_{A} \arr[rrrr]^{\eps'_i}
&&&& H'_i \arr@/^2pc/[lllddddd]^{\eta'_i}
\\
&& H_{A} \dotarr[rrrr]^{\theta_A} \arr@{=}[ru]
\arr'[ldd][lldddd]^(0.3){\eta_{I_A}}
&&&& H'_A \arr[ru]^(.25){\pr_{I'_A,i}} \arr[lldddd]^{\eta'_{I'_A}}
\\
 \\
H \arr[rrrr]^{\theta} \arr[dd]_{\eta_I} \arr[rruu]^{\pr_{I,I_A}}
&&&& H' \arr[dd]_{\eta'_{I'}} \arr[rruu]^{\pr_{I',I'_A}}
 \\
 \\
G \arr@{=}[rrrr]
&&&& G 
}
\end{equation*}
Thus,
replacing $\theta$ by $\theta_A$,
we may assume that $I = I_A$ and $I' = I'_A$.

By the equivalence of categories $\T^2$ and $\calH^2$
of Proposition~\ref{equivalence of categories},
$\theta$ induces an $F_A$-linear map
$T_A \colon V'_A \to V_A$
such that
$S_A \circ T_A = S'_A$.
As $\theta$ is surjective,
$T_A$ is injective
(Remark~\ref{inj surj dual}).
The last equation implies that
$\Img S'_A = S_A(\Img T_A) \subseteq \Img S_A$
and 
$T_A(\Ker S'_A)  \subseteq \Ker S_A$.
This gives (b).

\subdemoinfo{Part C}
{If (a) and (b) hold, then $\theta$ exists.}
Let $\zeta \in \Lambda_\na(G)$.
By (a) we may assume that
$I'_\zeta \subseteq I_\zeta$.
As $\eta_i \isom_G \eta'_i \isom_G \zeta$,
for every $i \in I'_\zeta$,
we may assume that
$H'_i = H_i$ and $\eta'_i = \eta_i$,
for every $i \in I'_\zeta$.

Let $A \in \Lambda_\ab(G)$.
By (b)
there is a basis $\{f_i\}_{i \in J_A}$ of $\Img S_A$
such that,
for some $J'_A \subseteq J_A$,
the set
$\{f_i\}_{i \in J'_A}$ is a basis of $\Img S'_A$.
By Remark~\ref{basis choice} we may assume that
$I_A = J_A \dotcup R_A$,
where
$|R_A| = \dim \Ker S_A$
and the extension $\eta_i$ of $G$ by $A$
corresponds to
$
\begin{cases}
f_i & \textnormal{if $i \in J_A$}\\
0 & \textnormal{if $i \in R_A$}
\end{cases}
\in  H^2(G, A)
$.
Similarly we may assume that
$I'_A = J'_A \dotcup R'_A$,
where
$|R'_A| = \dim \Ker S'_A$
and the extension $\eta'_i$ of $G$ by $A$
corresponds to
$
\begin{cases}
f_i & \textnormal{if $i \in J'_A$}\\
0 & \textnormal{if $i \in R'_A$}
\end{cases}
\in  H^2(G, A)
$.
As $|R'_A| \le |R_A|$,
without loss of generality $R'_A \subseteq R_A$.
Thus $I'_A \subseteq I_A$
and $H'_i = H_i$ and $\eta'_i = \eta_i$,
for every $i \in I'_A$.

After these adjustments,
it follows that $I' \subseteq I$
and $H'_i = H_i$ and $\eta'_i = \eta_i$,
for every $i \in I'$.
Therefore $\theta := \pr_{I,I'} \colon H \onto H'$
satisfies
$\eta'_{I'} \circ \theta = \eta_{I}$.
\end{proof}

\begin{Corollary}\label{isomorphism}
Assume that
$G' = G$ and $\pi = \id_G$.
There is an isomorphism
$\theta \colon \fprod{i \in I} H_i \to \fprod{i \in I'} H'_i$
such that $\eta'_{I'} \circ \theta = \eta_{I}$
if and only if
\begin{itemize}
\item[(a)]
$|I_\zeta| = |I'_\zeta|$,
for every $\zeta \in \Lambda_\na(G)$;
and

\item[(b)]
$\Img S_A = \Img S'_A$
and 
$\dim \Ker S_A = \dim \Ker S'_A$,
for every $A \in \Lambda_\ab(G)$.
\end{itemize}
\end{Corollary}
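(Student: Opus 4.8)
The plan is to deduce both directions from Theorem~\ref{epimorphism G' is G}, applied to the two fiber products in both directions, together with a closer look at the construction in its proof. For necessity, suppose $\theta \colon \fprod{i \in I} H_i \to \fprod{i \in I'} H'_i$ is an isomorphism with $\eta'_{I'} \circ \theta = \eta_I$. Then $\theta$ is in particular an epimorphism over $G$, so Theorem~\ref{epimorphism G' is G} yields $|I'_\zeta| \le |I_\zeta|$ for every $\zeta \in \Lambda_\na(G)$ together with $\Img S'_A \subseteq \Img S_A$ and $\dim \Ker S'_A \le \dim \Ker S_A$ for every $A \in \Lambda_\ab(G)$. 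Applying the same theorem to $\theta^{-1} \colon \fprod{i \in I'} H'_i \to \fprod{i \in I} H_i$, which satisfies $\eta_I \circ \theta^{-1} = \eta'_{I'}$, with the roles of the two families interchanged, gives the reverse inequalities $|I_\zeta| \le |I'_\zeta|$, $\Img S_A \subseteq \Img S'_A$, and $\dim \Ker S_A \le \dim \Ker S'_A$. Combining the two sets of inequalities produces exactly (a) and (b).

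For sufficiency the equalities in (a) and (b) certainly imply both inequalities needed by Theorem~\ref{epimorphism G' is G}, so an epimorphism $\theta$ over $G$ exists; but a mere epimorphism is not enough, and this is where the main obstacle lies. One is tempted to obtain epimorphisms in both directions and compose them into a surjective endomorphism that one would then declare injective. This fails in general: the fiber products here need not be finitely generated, hence need not be Hopfian (for instance $A^{\N}$ carries a continuous surjective shift endomorphism with nontrivial kernel, which is $G$-equivariant and so extends to the corresponding fiber product over $G$), so composing two epimorphisms does not by itself give an isomorphism. I would therefore avoid the composition argument entirely and instead re-run Part~C of the proof of Theorem~\ref{epimorphism G' is G} under the present equality hypotheses.

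Carrying this out, for each $\zeta \in \Lambda_\na(G)$ the equality $|I'_\zeta| = |I_\zeta|$ upgrades the inclusion $I'_\zeta \subseteq I_\zeta$ used in Part~C to $I'_\zeta = I_\zeta$. For each $A \in \Lambda_\ab(G)$, since $\{f_i\}_{i \in J'_A}$ is a subset of the basis $\{f_i\}_{i \in J_A}$ of $\Img S_A$ and spans $\Img S'_A = \Img S_A$, we must have $J'_A = J_A$; likewise $\dim \Ker S_A = \dim \Ker S'_A$ forces $|R'_A| = |R_A|$ and hence $R'_A = R_A$, so $I'_A = J'_A \dotcup R'_A = J_A \dotcup R_A = I_A$. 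All these identifications are made up to isomorphism over $G$, and after them we obtain $I' = I$ with $H'_i = H_i$ and $\eta'_i = \eta_i$ for every $i \in I$. Consequently the epimorphism produced by the construction is $\theta = \pr_{I,I'} = \pr_{I,I} = \id$, which is an isomorphism satisfying $\eta'_{I'} \circ \theta = \eta_I$. This proves sufficiency and completes the characterization.
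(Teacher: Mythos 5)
Your proof is correct and follows essentially the same route as the paper: necessity by applying Theorem~\ref{epimorphism G' is G} to both $\theta$ and $\theta^{-1}$, and sufficiency by re-running Part~C of its proof under the equality hypotheses to force $I' = I$, $H'_i = H_i$, $\eta'_i = \eta_i$, so that the identity serves as $\theta$. Your aside correctly identifying why a composition-of-epimorphisms argument would fail (fiber products over $G$ need not be Hopfian, e.g.\ the shift on $A^{\N} \times G$) is a sound observation, consistent with the fact that the paper proves Corollary~\ref{from epi to iso} \emph{from} this corollary rather than by composing the two epimorphisms.
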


\begin{proof}
Conditions (a), (b) are necessary by
Theorem~\ref{epimorphism G' is G}.
If they hold,
in Part C of the proof of Theorem~\ref{epimorphism G' is G}
we may assume that
$I' = I$
and $H'_i = H_i$ and $\eta'_i = \eta_i$,
for every $i \in I$.
Therefore
$\fprod{i \in I} H_i = \fprod{i \in I'} H'_i$.
\end{proof}

\begin{Corollary}\label{from epi to iso}
Assume that
$G' = G$ and $\pi = \id_G$.
Suppose there are epimorphisms
$\rho \colon \fprod{i \in I} H_i \onto \fprod{i \in I'} H'_i$
and
$\rho' \colon \fprod{i \in I'} H'_i \onto \fprod{i \in I} H_i$
such that
$\eta'_{I'} \circ \rho = \eta_{I}$
and
$\eta_{I} \circ \rho' = \eta'_{I'}$.
Then there is an isomorphism
$\theta \colon \fprod{i \in I} H_i \to \fprod{i \in I'} H'_i$
such that $\eta_{I} \circ \theta = \eta'_{I'}$.
\end{Corollary}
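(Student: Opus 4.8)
The plan is to invoke Theorem~\ref{epimorphism G' is G} twice, once for each of the two given epimorphisms, and then to upgrade the resulting one-sided comparisons of invariants into the two-sided equalities demanded by Corollary~\ref{isomorphism}. Throughout, both $\calH = (\eta_i \colon H_i \onto G)_{i \in I}$ and $\calH' = (\eta'_i \colon H'_i \onto G)_{i \in I'}$ are families of indecomposable covers of the \emph{same} group $G$, so the whole setup is symmetric in the two families: the invariants $I_\zeta$, $\Img S_A$, $\dim \Ker S_A$ on one side correspond to $I'_\zeta$, $\Img S'_A$, $\dim \Ker S'_A$ on the other.

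First I would apply Theorem~\ref{epimorphism G' is G} to $\rho \colon \fprod{i \in I} H_i \onto \fprod{i \in I'} H'_i$. Since $\eta'_{I'} \circ \rho = \eta_I$, the theorem yields $|I'_\zeta| \le |I_\zeta|$ for every $\zeta \in \Lambda_\na(G)$, together with $\Img S'_A \subseteq \Img S_A$ and $\dim \Ker S'_A \le \dim \Ker S_A$ for every $A \in \Lambda_\ab(G)$. Next I would apply the same theorem to $\rho' \colon \fprod{i \in I'} H'_i \onto \fprod{i \in I} H_i$, which satisfies $\eta_I \circ \rho' = \eta'_{I'}$; this is the hypothesis of Theorem~\ref{epimorphism G' is G} with the roles of $\calH$ and $\calH'$ interchanged, so it is legitimate by the symmetry noted above. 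It produces the reverse comparisons $|I_\zeta| \le |I'_\zeta|$, $\Img S_A \subseteq \Img S'_A$, and $\dim \Ker S_A \le \dim \Ker S'_A$.

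Combining the two applications finishes the argument. The inclusions $\Img S'_A \subseteq \Img S_A \subseteq \Img S'_A$ force $\Img S_A = \Img S'_A$ outright, while the two-sided inequalities between the cardinals $|I_\zeta|$, $|I'_\zeta|$ and between the (possibly infinite) dimensions $\dim \Ker S_A$, $\dim \Ker S'_A$ give $|I_\zeta| = |I'_\zeta|$ and $\dim \Ker S_A = \dim \Ker S'_A$ by the Cantor--Schr\"oder--Bernstein theorem. These are exactly conditions (a) and (b) of Corollary~\ref{isomorphism}, which therefore supplies an isomorphism $\theta \colon \fprod{i \in I} H_i \to \fprod{i \in I'} H'_i$ with $\eta'_{I'} \circ \theta = \eta_I$; this is the asserted isomorphism (equivalently, its inverse satisfies $\eta_I \circ \theta^{-1} = \eta'_{I'}$). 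The only step requiring genuine attention is the symmetric reapplication of Theorem~\ref{epimorphism G' is G} and the invocation of Cantor--Schr\"oder--Bernstein for infinite cardinalities; neither presents a real obstacle, so the corollary is essentially a formal consequence of the two preceding results.
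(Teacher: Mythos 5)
Your proof is correct and follows essentially the same route as the paper's: two applications of Theorem~\ref{epimorphism G' is G} (once for $\rho$, once symmetrically for $\rho'$) yield both directions of the comparisons of invariants, which together are exactly conditions (a) and (b) of Corollary~\ref{isomorphism}. Your explicit appeal to the Cantor--Schr\"oder--Bernstein theorem for the cardinal equalities, and your remark reconciling $\eta'_{I'} \circ \theta = \eta_I$ with the statement's formulation via $\theta^{-1}$, merely spell out steps the paper leaves implicit.
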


\begin{proof}
By Theorem~\ref{epimorphism G' is G},
the existence of $\rho$ implies
conditions (a), (b) of Theorem~\ref{epimorphism G' is G}.
Similarly,
the existence of $\rho'$ implies
the symmetrical conditions.
Thus conditions (a), (b) of Corollary~\ref{isomorphism}
hold.
The conclusion now follows by Corollary~\ref{isomorphism}.
\end{proof}

Now we approach the general case,
of an epimorphism $\pi \colon G \onto G'$.

Every $A' \in \Lambda_\ab(G')$
can be also viewed as a $G$-module
(with $G$ acting on $A'$ via $\pi$),
denoted $\Inf_\pi(A')$,
in $\Lambda_\ab(G)$.
Let
$\Inf_{\pi,A'}$ denote the inflation
$H^2(G',A') \to H^2(G, \Inf_\pi(A'))$.
Every indecomposable cover
$\zeta' \colon H' \onto G'$
of $G'$
defines an indecomposable cover
$\zeta := \Inf_\pi(\zeta') \colon H \onto G$
of $G$.
If $\Ker \zeta' = A' \in \Lambda_\ab(G')$,
then
$\Ker \zeta = \Inf_\pi(A') \in \Lambda_\ab(G)$.

\begin{Theorem}\label{epimorphism}
There is $\theta$
such that the following square is semi-cartesian
\begin{equation}\label{start gen}
\xymatrix@=25pt{
\fprod[G]{i \in I} H_i 
\arr[rr]_{\theta} \arr[d]^{\eta_I}
&& \fprod[G']{i \in I'} H'_i \arr[d]^{\eta'_{I'}}
\\
G \arr[rr]^{\pi} && G' \\
}
\end{equation}
if and only if
\begin{itemize}
\item[(a)]
$|I'_{\zeta'}| \le |I_{\zeta}|$,
for every $\zeta' \in \Lambda_\na(G')$
and $\zeta := \Inf_\pi(\zeta') \in \Lambda_\na(G)$;
and
\item[(b)]
$\Img(\Inf_{\pi,A'} \circ S'_{A'})  \subseteq \Img S_{A}$
and 
$\dim \Ker(\Inf_{\pi,A'} \circ S'_{A'}) \le \dim \Ker S_{A}$,
for every $A' \in \Lambda_\ab(G')$
and $A := \Inf_\pi(A') \in \Lambda_\ab(G)$.
\end{itemize}
\end{Theorem}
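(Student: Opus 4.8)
The plan is to reduce the general statement to the already-settled case $\pi = \id_G$ (Theorem~\ref{epimorphism G' is G}) by inflating the family $\calH'$ along $\pi$. For each $i \in I'$ I set $\hat H'_i = H'_i \times_{G'} G$ and $\hat\eta'_i = \Inf_\pi(\eta'_i) \colon \hat H'_i \onto G$; by Lemma~\ref{inflation in diagram} each $\hat\eta'_i$ sits in a cartesian square over $\eta'_i$, so $\hat\eta'_i$ is again indecomposable by Corollary~\ref{cartesian indecomposable}, and $\Inf_\pi(\calH') = (\hat\eta'_i)_{i\in I'}$ is a family of indecomposable covers of $G$. By Remark~\ref{properties of fiber product}(h) the fiber product of this family is the pullback, $\fprod[G]{i\in I'}\hat H'_i = (\fprod[G']{i\in I'}H'_i)\times_{G'}G$, so its structure map $\hat\eta_{I'}$ together with the projection $\rho \colon \fprod[G]{i\in I'}\hat H'_i \onto \fprod[G']{i\in I'}H'_i$ form a cartesian square sitting over $\pi$.

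The heart of the argument is the equivalence: an epimorphism $\theta$ making \eqref{start gen} semi-cartesian exists if and only if an epimorphism $\hat\theta \colon \fprod{i\in I}H_i \onto \fprod[G]{i\in I'}\hat H'_i$ with $\hat\eta_{I'}\circ\hat\theta = \eta_I$ exists. Given such a $\hat\theta$, I would set $\theta = \rho\circ\hat\theta$: then $\theta$ is an epimorphism with $\eta'_{I'}\circ\theta = \pi\circ\eta_I$, the square over $G$ cut out by $\hat\theta$ is semi-cartesian because $\hat\theta$ is a surjection over $G$ (Definition~\ref{semi-cartesian}(f)), the pullback square for $\rho$ is cartesian hence semi-cartesian, and Lemma~\ref{two semi squares}(b) splices the two into the semi-cartesian square \eqref{start gen}. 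Conversely, given a semi-cartesian $\theta$, the universal property of the pullback produces a map $\hat\theta$ with $\rho\circ\hat\theta = \theta$ and $\hat\eta_{I'}\circ\hat\theta = \eta_I$; the surjectivity of $\hat\theta$ is then exactly Lemma~\ref{expand}(b), applied to the semi-cartesian square \eqref{start gen} and the cartesian square for $\rho$, which share the cospan $G \xrightarrow{\pi} G' \xleftarrow{\eta'_{I'}} \fprod{i \in I'} H'_i$. I expect this forward direction --- extracting surjectivity of the induced map into the pullback from the mere semi-cartesianness of \eqref{start gen} --- to be the step requiring the most care.

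It then remains to translate the criterion of Theorem~\ref{epimorphism G' is G}, applied to $\calH$ and the inflated family $\Inf_\pi(\calH')$, into conditions (a) and (b). For the non-abelian part I would check that inflation along $\pi$ is injective on isomorphism classes of indecomposable covers (for non-abelian kernels via \cite[Theorem 6.6]{Br}, since a surjective $\pi$ preserves the kernel and the outer action faithfully), so that $\{i\in I' \st \hat\eta'_i\isom_G \Inf_\pi(\zeta')\}$ coincides with $I'_{\zeta'}$; covers of $G$ not of the form $\Inf_\pi(\zeta')$ contribute empty index sets on the $\calH'$-side and impose no constraint, matching the range in (a). For the abelian part I would use Lemma~\ref{cocycle of a fiber product} to see that the cocycle of $\fprod[G]{i\in I'_{A'}}\hat H'_i$ is $\Inf_\pi(f_{A'})$, and then Lemma~\ref{duality and inflation} to identify the invariant $\hat S'_A$ of the inflated family with $\Inf_{\pi,A'}\circ S'_{A'}$ under the canonical identification $\Hom_G(\Inf_\pi B_{A'}, A) = \Hom_{G'}(B_{A'}, A')$ (legitimate because $\pi$ is surjective, whence $\End_G(A) = \End_{G'}(A')$). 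Abelian simple $G$-modules not of the form $\Inf_\pi(A')$ again give zero invariant and hence no constraint, so the inclusions of images and the inequalities of kernel dimensions produced by Theorem~\ref{epimorphism G' is G}(b) are precisely those recorded in (b).
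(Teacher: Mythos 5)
Your proposal is correct and follows essentially the same route as the paper: your inflated family $(\hat\eta'_i = \Inf_\pi(\eta'_i))_{i\in I'}$ is exactly the pulled-back family $H''$ of the paper's Part~A (via Remark~\ref{properties of fiber product}(h)), and your equivalence between semi-cartesian $\theta$'s and epimorphisms $\hat\theta$ over $G$ is the paper's Part~B, using the universal property of the cartesian square together with Lemma~\ref{expand}(b) in the forward direction and Lemma~\ref{expand}(a) (your Lemma~\ref{two semi squares}(b) variant is equivalent) in the converse, before invoking Theorem~\ref{epimorphism G' is G}. Your translation of invariants --- injectivity of inflation on isomorphism classes in the non-abelian case and the identification $\hat S'_A = \Inf_{\pi,A'}\circ S'_{A'}$ via Lemma~\ref{duality and inflation} --- makes explicit what the paper asserts in Part~A (the equality $I_\zeta = I'_{\zeta'}$ and diagram \eqref{V}), but is the same argument in substance.
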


\begin{proof}
Put $H = \fprod[G]{i \in I} H_i$,
$H' = \fprod[G']{i \in I'} H'_i$,
$K = \Ker \eta_I$,
and
$K' = \Ker \eta'_{I'}$.

\subdemoinfo{Part A}
{A special case.}
Let $I = I'$
and,
for every $i \in I$,
let
$H_i = G \times_{G'} H'_i$
(with respect to $\pi, \eta'_i$)
and $\eta_i = \Inf_\pi(\eta'_i) \colon H_i \onto G_i$
be the projection on the first coordinate.
By Remark~\ref{properties of fiber product}(h),
$\fprod[G]{i \in I} H_i$
can be viewed as
$G \times_{G'} (\fprod{i \in I'} H'_i)$
with respect to $\pi$ and $\eta'_{I'}$,
so there is a cartesian square \eqref{start gen}.

For every $\zeta' \in \Lambda_\na(G')$ and $\zeta = \Inf_\pi(\zeta')$
we have
$I_{\zeta} = I'_{\zeta'}$,
hence (a) holds.

Let $A' \in \Lambda_\ab(G')$.
If we replace $I = I'$ by $I_{A'} = I'_{A'}$,
then,
by Remark~\ref{properties of fiber product}(h),
we still get a cartesian square \eqref{start gen},
say, with $\theta_A$ instead of $\theta$.
Then $\theta_A$ maps $K$ isomorphically onto $K'$.
So there is an isomorphism
$\theta_A^* \colon V'_{A'} = (K')^* \to V_A = K^*$
and it follows from the definitions that 
there is a commutative diagram
\begin{equation}\label{V}
\xymatrix@=20pt{
V_A
\arr[d]^{S_A}
&& V'_{A'} \arr[ll]_{\theta_A^*} \arr[d]^{S'_{A'}}
\\
H^2(G,A) && H^2(G',A') \arr[ll]^{\Inf_{\pi,A'}}  \rlap{.} \\
}
\end{equation}
Thus
$\Img(\Inf_{\pi,A'} \circ S'_{A'}) = \Img S_{A}$
and 
$\dim \Ker(\Inf_{\pi,A'} \circ S'_{A'}) = \dim \Ker S_{A}$,
and hence (b) holds.

\subdemoinfo{Part B}
{The general case.}
By Part A
there is
a fiber product
$\eta''_{I''} \colon H'' \onto G$
of indecomposables $(\eta''_i)_{i \in I''}$
with $I'' = I'$ and
the invariant
$S''_A \colon V''_A \to H^2(G,A)$,
for every $A' \in \Lambda_\ab(G')$
and $A := \Inf_\pi(A') \in \Lambda_\ab(G)$,
and there is
a cartesian square
- the right-handed square in \eqref{with inflation} below -
\begin{equation}\label{with inflation}
\xymatrix{
H \arr[r]_{\theta'} \arr@/^1pc/[rr]^{\theta} \arr[d]_{\eta_I} 
& H'' \arr[r]_{\theta''} \arr[d]^{\eta''_{I''}}
& H' \arr[d]^{\eta'_{I'}} 
 \\
G \ar@{=}[r] & G \arr[r]^{\pi}
& G' 
}
\end{equation}
such that
\begin{itemize}
\item[(a')]
$|I'_{\zeta'}| = |I''_{\zeta}|$,
for every $\zeta' \in \Lambda_\na(G')$
and $\zeta := \Inf_\pi(\zeta') \in \Lambda_\na(G)$;
and
\item[(b')]
$\Img(\Inf_{\pi,A} \circ S'_A) = \Img S''_{A}$
and 
$\dim \Ker(\Inf_{\pi,A} \circ S'_A) = \dim \Ker S''_{A}$,
for every $A' \in \Lambda_\ab(G')$
and $A := \Inf_\pi(A') \in \Lambda_\ab(G)$.
\end{itemize}
Using this,
conditions (a) and (b) are equivalent to:
\begin{itemize}
\item[(a'')]
$|I''_\zeta| \le |I_\zeta|$,
for every $\zeta \in \Lambda_\na(G)$;
and
\item[(b'')]
$\Img S''_A \subseteq \Img S_A$
and 
$\dim \Ker S''_A \le \dim \Ker S_A$,
for every $A \in \Lambda_\ab(G)$.
\end{itemize}

Suppose there is $\theta$ 
such that
\eqref{start gen}
is semi-cartesian.
As
$(\theta'', \eta'_{I'}, \eta''_{I''}, \pi)$
is cartesian,
there is a unique homomorphism
$\theta' \colon H \to H''$
such that \eqref{with inflation} commutes.
By Lemma~\ref{expand}(b),
$\theta'$ is an epimorphism.
By Theorem~\ref{epimorphism G' is G}
(a'') and (b'') hold.

Conversely, suppose that
(a'') and (b'') hold.
By Theorem~\ref{epimorphism G' is G}
there is 
$\theta' \colon H \onto H''$
such that
the left-handed square in \eqref{with inflation}
commutes.
Put $\theta = \theta'' \circ \theta'$,
then \eqref{start gen} commutes.
By Lemma~\ref{expand}(a)
it is a semi-cartesian square.
\end{proof}

\begin{Example}\label{gen epi example}
With the notation at the beginning of this section
assume that
$I' \subseteq I$
and,
for every $i \in I'$,
$\eta_i = \Inf_\pi \eta'_i$,
so that
there is
a cartesian square
\begin{equation}\label{pullback}
\xymatrix{
H_i \arr[r]_{\rho_i} \arr[d]^{\eta_i} & H'_i \arr[d]^{\eta'_i} \\
G \arr[r]^\pi & G' \\
}
\end{equation}
Then
$I'_{\zeta'} \subseteq I_{\zeta}$
for every $\zeta \in \Lambda_\na(G')$,
so condition (a) of Theorem~\ref{epimorphism} is satisfied.
So is condition (b),
for every $A' \in \Lambda_\ab(G')$,
because there is a homomorphism
$\theta_A^*$
such that
\eqref{V} commutes.
Indeed,  $F_A = F_{A'}$,
and
$V_A = \directsum_{i \in I_A} F_A \phi_i$,
where
$S_A(\phi_i) = \eta_i$, for every $i$;
similarly,
$V_A' = \directsum_{i \in I'_{A'}} F_A \phi'_i$,
where
$S'_{A'}(\phi_i) = \eta'_i$, for every $i$,
so
$\phi'_i \mapsto \phi_i$,
for $i \in I'$,
defines the required homomorphism
$\theta_A^*$.

Thus, by Theorem~\ref{epimorphism},
there is
$\theta \colon \fprod{i \in I} H_i \to \fprod[G']{i \in I'} H'_i$
such that
\eqref{start gen} is a semi-cartesian square.

An explicit definition of $\theta$ is:
$(h_i)_{i \in I} \mapsto (\rho_i(h_i))_{i \in I'}$.
This map is well defined by the commutativity of \eqref{pullback}
and extends $\pi$,
that is, the following (not necessarily cartesian) diagram commutes:
\begin{equation}\label{gen pullback}
\xymatrix{
\fprod{i \in I} H_i \arr[r]_{\theta} \arr[d]^{\eta_I}
& \fprod[G']{i \in I'} H'_i \arr[d]^{\eta'_{I'}} \\
G \arr[r]^\pi & G' \\
}
\end{equation}
Moreover, $\theta$ is an epimorphism.
Indeed,
let $(h'_i)_{i \in I'} \in \fprod[G']{i \in I'} H'_i$.
Choose $j \in I'$ and $h_j \in H_j$ such that
$\rho_j(h_j) = h'_j$.
Put $g = \eta_j(h_j)$
and
$g' = \eta'_j(h'_j)$.
Then $\pi(g) = g'$.
As \eqref{pullback} is cartesian,
for every $i \in I'$
there is $h_i \in H_i$
such that
$\eta_i(h_i) = g$ and $\rho_i(h_i) = h'_i$.
For every $i \in I \smallsetminus I'$,
there is $h_i \in H_i$
such that
$\eta_i(h_i) = g$.
Thus
$(h_i)_{i \in I} \in \fprod{i \in I} H_i$
and
$(h_i)_{i \in I} \mapsto (h'_i)_{i \in I'}$.

Notice that both $\theta$ and $\eta_I$
factor through
$\pr_{I,I'} \colon \fprod{i \in I} H_i \onto \fprod{i \in I'} H_i$,
so \eqref{gen pullback} remains commutative, if
$\fprod{i \in I} H_i$ 
is replaced by
$\fprod{i \in I'} H_i$;
then $I' = I$.
If $I' = I$,
then \eqref{gen pullback} is a cartesian square.

If $I' = \emptyset$,
then
$\fprod{i \in I'} H'_i = G$  and $\eta'_{I'}$ is the identity,
so $\theta = \pi \circ \eta_I$,
whence
\begin{multline}
\Ker \theta = \eta_I^{-1}(\Ker \pi) =
\{(h_i)_{i \in I} \in \fprod{i \in I} H_i \st
(\forall i \in I) \eta_i(h_i) \in \Ker \pi\} =
\\
\fprod[\Ker \pi]{i \in I} \eta_i^{-1}(\Ker \pi).
\end{multline}

If $I' \ne \emptyset$,

$$
\Ker \theta =
\big(\fprod{i \in I'} \Ker \rho_i \big)
\times_G
\big(\fprod{i \in I \smallsetminus I'} H_i\big)
$$
and $\eta_{I'} \colon \fprod{i \in I'} H_i$ maps
$\fprod{i \in I'} \Ker \rho_i$
isomorphically onto $\Ker \pi$.
\end{Example}

Conversely,
every epimorphism from
$\theta \colon \fprod{i \in I} H_i \onto H'$
onto some profinite group
is essentially of this form.
I.e.,
we show below that
if $L \normal \fprod{i \in I} H_i$,
then
$(\fprod{i \in I} H_i)/L$
is a fiber product of indecomposables over
$G' = G/\eta_I(L)$.

\begin{Proposition}\label{gen epi}
Let
\eqref{another}
be a semi-cartesian square.

\noindent\begin{minipage}{.495\linewidth}
\begin{equation}\label{another}
\xymatrix{
\fprod{i \in I} H_i \arr[r]_{\rho} \arr[d]^{\eta_I}
& H' \arr[d]^{\eta'} \\
G \arr[r]^\pi & G' \\
}
\end{equation}
\end{minipage}%
\begin{minipage}{.495\linewidth}
\begin{equation}\label{bar}
\xymatrix{
\Hbar_i \arr[r]_{\rhobar_i} \arr[d]^{\etabar_i} & H'_i \arr[d]^{\eta'_i} \\
G \arr[r]^\pi & G' \\
}
\end{equation}
\end{minipage}
\newline
Then there are families
$\bar\calH = (\etabar_i \colon \Hbar_i \onto G)_{i \in I}$
and
$\calH' = \{\eta'_i \colon H'_i \onto G')_{i \in I'}$
of indecomposable epimorphisms
such that
$I' \subseteq I$
and
$\etabar_i = \Inf_\pi(\eta'_i)$
with a cartesian square \eqref{bar}
for every $i \in I'$,
and there are isomorphisms $\omega$ and $\omega'$
such that the following diagram commutes.
\begin{equation}\label{triangle}
\xymatrix@=13pt{ 
& \fprod{i \in I} H_i
\arr[rr]_(.6){\rho}
\arr'[d][dd]^(.4){\eta_I}
\arr[ld]_{\omega}
&& H'
\arr[dd]^(.55){\eta'}
\arr[ld]_(.55){\omega'}
\\
\fprod{i \in I} \Hbar_i
\arr[rr]^(.65){\theta}
\arr[dr]_(.65){\etabar_I}
&& \fprod[G']{i \in I'} H'_i
\arr[dr]^(.55){\eta'_{I'}} 
\\
& G
\arr[rr]^{\pi}
&& G'
}
\end{equation}
Here
$\etabar_I$
and $\eta'_{I'}$
are the structure maps of the corresponding fiber products
and $\theta$ is the map
$(h_i)_{i \in I} \mapsto (\rhobar_i(h_i))_{i \in I'}$.
\end{Proposition}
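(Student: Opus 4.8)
The plan is to show that, up to isomorphisms on both corners of \eqref{another}, the epimorphism $\rho$ is exactly the ``standard'' epimorphism of Example~\ref{gen epi example}. The strategy is to analyze $L=\Ker\rho$ through its intersection with $K=\Ker\eta_I$: normalize $L\cap K$ to a sub-product of coordinate kernels, factor $\rho$ through the corresponding projection, observe that what remains is a \emph{cartesian} square, and then descend $H'$ to a fiber product over $G'$ whose factors pull back to the $\Hbar_i$.

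Set $H=\fprod{i\in I}H_i$, $K=\Ker\eta_I$, $L=\Ker\rho$ and $L_0=L\cap K\normal H$, so $L_0\subseteq K$. First I would apply Corollary~\ref{isom with subgroup gen} to $L_0$: it supplies a family of indecomposables $(\gamma_i\colon G_i\onto G)_{i\in I}$ and an isomorphism $\sigma\colon H\to\fprod{i\in I}G_i$ with $\gamma_I\circ\sigma=\eta_I$ and $\sigma(L_0)=\Ker\pr_{I,I'}$ for a suitable $I'\subseteq I$ (Corollary~\ref{isom with subgroup gen} delivers $\prod_{i\in I\smallsetminus I'}\Kbar_i$). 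Replacing $(H_i,\eta_i,\rho)$ by $(G_i,\gamma_i,\rho\circ\sigma^{-1})$, which preserves the semi-cartesian property of \eqref{another} by Lemma~\ref{expand}(a) since $\sigma^{-1}$ is surjective, I may assume $L\cap K=\Ker\pr_{I,I'}$; the final isomorphism $\omega$ is recovered by precomposing with $\sigma$ at the end. Since now $L\supseteq\Ker\pr_{I,I'}$, the map $\rho$ factors as $\rho=\rho'\circ\pr_{I,I'}$ with $\rho'\colon\fprod{i\in I'}H_i\onto H'$; by Lemma~\ref{expand}(a) the square $(\rho',\eta_{I'},\eta',\pi)$ is again semi-cartesian, and a short computation gives $\Ker\rho'\cap\Ker\eta_{I'}=1$. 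Hence $\eta_{I'}$ maps $\Ker\rho'$ bijectively onto $\Ker\pi$, so by Definition~\ref{cartesian}(d) this square is in fact cartesian, i.e. $\fprod{i\in I'}H_i=G\times_{G'}H'$.

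Next I would descend $H'$ to a fiber product over $G'$. For $i\in I'$ put $A'_i=\rho'(K_i)$; as $K_i$ is a minimal normal subgroup of $\fprod{i\in I'}H_i$ (because $\eta_i$ is indecomposable, Remark~\ref{rem: minimal normal subgroup}(a)) and $\rho'|_{K_i}$ is injective (as $K_i\cap\Ker\rho'\subseteq\Ker\eta_{I'}\cap\Ker\rho'=1$), Lemma~\ref{image and preimage}(a) shows $A'_i$ is a minimal normal subgroup of $H'$ contained in $\Ker\eta'$. Moreover $\rho'$ restricts to an isomorphism $\Ker\eta_{I'}=\prod_{i\in I'}K_i\xrightarrow{\sim}\Ker\eta'$, so $\Ker\eta'=\prod_{i\in I'}A'_i$ is an internal direct product and $\bigcap_{k\ne i}\prod_{l\ne k}A'_l=A'_i$. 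Setting $H'_i=H'/\prod_{k\ne i}A'_k$ with quotient map $q_i$ and induced $\eta'_i\colon H'_i\onto G'$, Lemma~\ref{image and preimage}(a) again makes each $\eta'_i$ indecomposable, and Lemma~\ref{kernels of a fiber product} yields an isomorphism $\omega'\colon H'\to\fprod[G']{i\in I'}H'_i$ with $\eta'_{I'}\circ\omega'=\eta'$, $\prbar'_{I',i}\circ\omega'=q_i$ and $\omega'(A'_i)=\Kbar'_i$ (the cases $|I'|\le1$ being trivial). I then define $\Hbar_i=G\times_{G'}H'_i$ with $\etabar_i=\Inf_\pi(\eta'_i)$ for $i\in I'$, so that \eqref{bar} is cartesian and $\etabar_i$ is indecomposable by Corollary~\ref{cartesian indecomposable}, and $\Hbar_i=G_i$, $\etabar_i=\gamma_i$ for $i\in I\smallsetminus I'$.

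Finally I would assemble $\omega$ and check \eqref{triangle}. Base-changing $\omega'$ along $\pi$ and using Remark~\ref{properties of fiber product}(h), the map $\id_G\times\omega'$ gives an isomorphism $\mu\colon\fprod{i\in I'}H_i=G\times_{G'}H'\to G\times_{G'}(\fprod[G']{i\in I'}H'_i)=\fprod{i\in I'}\Hbar_i$ over $G$; combining it via Remark~\ref{properties of fiber product}(d) with the identity on $\fprod{i\in I\smallsetminus I'}H_i$, and precomposing with $\sigma$, produces $\omega\colon\fprod{i\in I}H_i\to\fprod{i\in I}\Hbar_i$ with $\etabar_I\circ\omega=\eta_I$. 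The commutativity $\theta\circ\omega=\omega'\circ\rho$ of the top square is verified coordinatewise: for $j\in I'$, tracing the identification of Remark~\ref{properties of fiber product}(h) one finds $\rhobar_j\circ\prbar_{I,j}\circ\omega=q_j\circ\rho'\circ\pr_{I,I'}=q_j\circ\rho=\prbar'_{I',j}\circ\omega'\circ\rho$, while by definition $\prbar'_{I',j}\circ\theta=\rhobar_j\circ\prbar_{I,j}$; the remaining faces commute by construction and by Example~\ref{gen epi example}, which also guarantees that $\theta$ is a well-defined epimorphism and that $\eta'_{I'}\circ\theta=\pi\circ\etabar_I$. The main obstacle is precisely this last bookkeeping: keeping the three isomorphisms $\sigma,\mu,\omega'$ and the coordinate projections of the two fiber products aligned so that the explicit map $\theta$ of Example~\ref{gen epi example} matches $\omega'\circ\rho$. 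The conceptual content—that $\rho'$ is cartesian and that $H'$ descends to a fiber product over $G'$—is comparatively direct once $L_0$ has been normalized via Corollary~\ref{isom with subgroup gen}.
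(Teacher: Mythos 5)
Your proposal is correct and follows essentially the same route as the paper's proof: normalize $L\cap\Ker\eta_I$ to $\Ker\pr_{I,I'}$ via Corollary~\ref{isom with subgroup gen}, factor $\rho$ through $\pr_{I,I'}$ to obtain a cartesian square, decompose $H'$ as $\fprod[G']{i\in I'}H'_i$ with $H'_i=H'/\prod_{k\ne i}\rho'(K_k)$ using Lemma~\ref{image and preimage}(a) and Lemma~\ref{kernels of a fiber product}, and realize each $\etabar_i$ as the pullback $\Inf_\pi(\eta'_i)$. Your only deviation is cosmetic: you assemble $\omega$ by base-changing $\omega'$ along $\pi$ (via Remark~\ref{properties of fiber product}(h)), whereas the paper builds the coordinatewise isomorphisms $\omega_i\colon H_i\to\Hbar_i$ from the uniqueness of the fiber product of $\pi$ and $\eta'_i$ -- the two bookkeeping schemes yield the same map.
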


\begin{proof}
Let $L = \Ker \rho$
and
$K = \Ker \eta_I$.
By
Corollary~\ref{isom with subgroup gen},
there is a family
$\bar\calH = (\etabar_i \colon \Hbar_i \onto G)_{i \in I}$
of indecomposable epimorphisms
and an isomorphism
$\omega \colon 
\fprod{i \in I} H_i \to \fprod{i \in I} \Hbar_i$
such that
$\etabar_I \circ \omega = \eta_I$
and $\omega(K \cap L) = \prod_{i \in I \smallsetminus I'} \Kbar_i$,
for some $I' \subseteq I$.
Here
$\Kbar_j$ is the kernel of the projection
$\fprod{i \in I} \Hbar_i \onto
\fprod{j \in I \smallsetminus\{j\}} \Hbar_j$,
for every $j \in I$.
So, replacing
$\fprod{i \in I} H_i$ by $\fprod{i \in I} \Hbar_i$
and $\rho$ by $\omega^{-1} \circ \rho$
we may assume that
$K \cap L = \prod_{j \in I \smallsetminus I'} K_j$,
where
$K_j = \Ker \pr_{I, I \smallsetminus \{j\}}$.
Thus
$K \cap L = \Ker \pr_{I, I'}$.
Of course, then
\eqref{pullback} replaces \eqref{bar}.

We may now replace
$\fprod{i \in I} H_i$ by $\fprod{i \in I'} H_i$
and $\rho$ by the induced map
\break
${\fprod{i \in I'} H_i \onto H'}$.
The new square, that replaces \eqref{another},
is still semi-cartesian, by Lemma~\ref{expand}(a).
So we may assume that
$K \cap L = 1$
and
$I' = I$.
Put $H = \fprod{i \in I} H_i$.

As $K \cap L = 1$,
\eqref{another}
is cartesian
and $\rho$ is injective on
$K = \prod_{i \in I} K_i$.
Thus
$\rho(K) = \prod_{i \in I} \rho(K_i)$.
For every $i \in I$
let $H'_i = H'/\prod_{j \ne i} \rho(K_j)$,
let $p_i \colon H' \onto H'_i$ be the quotient map,
and let $\eta'_i \colon H'_i \onto G'$
be the map induced from $\eta'$.
By Lemma~\ref{image and preimage}(a),
$\Ker \eta'_i \isom K_i$ is a minimal normal subgroup of $H'_i$.
Hence $\eta'_i$ is indecomposable.
By Lemma~\ref{kernels of a fiber product}.
$H'$ is the fiber product of $H'_i$ over $G'$,
with $\eta' = \eta'_I$
and $p_i$ is the coordinate projection,
for every $i \in I$.

Let $\rho_i \colon H_i \onto H'_i$
be the map induced from $\rho \colon H \onto H'$.
Then \eqref{tower} below commutes.

\noindent
\begin{minipage}{.395\linewidth}
\begin{equation}\label{tower}
\xymatrix@=24pt{
H \arr[r]_{\rho} \arr[d]^{\pr_{I,i}}
\arr@/_1pc/[dd]_{\eta_{I}}
& H' \arr[d]_{p_i} \arr@/^1pc/[dd]^{\eta'}
\\
H_i \arr[r]^{\rho_i} \arr[d]^{\eta_i} & H'_i \arr[d]_{\eta'_i}
\\
G \arr[r]^{\pi} & G'
}
\end{equation}
\end{minipage}
\begin{minipage}{.595\linewidth}
\begin{equation}\label{replace by bar}
\xymatrix{
\fprod{i \in I} \Hbar_i \arr[r]_{\omega^{-1}} \arr[d]_{\pr_{I,i}} 
& \fprod{i \in I} H_i \arr[r]_{\rho} \arr[d]^{\pr_{I',i}}
& H' \arr[d]^{p_i}
 \\
\Hbar_i \arr[r]^{\omega_i^{-1}} & H_i \arr[r]^{\rho_i}
& H'_i
}
\end{equation}
\end{minipage}
\newline
Its upper square is cartesian,
because
$\rho$ maps
$\Ker \pr_{I,i} = \prod_{j \ne i} K_j$
isomorphically onto
$\Ker p_i = \prod_{j \ne i} \rho(K_j)$.
The concatenation of the uppper and the lower square
is the semi-cartesian square \eqref{another},
which is cartesian, because $K \cap L = 1$.
Thus the lower square,
that is \eqref{pullback},
is cartesian by Remark~\ref{square trivialities}(a).
The commutativity of the upper square gives that
$\rho$ is
$(h_i)_{i \in I} \mapsto (\rho_i(h_i))_{i \in I}$.

If $H_i = \Hbar_i$ and $\rho_i = \rhobar_i$, for every $i \in I'$,
we are done.
If not,
\eqref{bar}
and
the lower square in \eqref{tower}
are both the fiber products of $\pi$ and $\eta'_i$.
By the uniqueness of the fiber product,
there is an isomorphism
$\omega_i \colon H_i \to \Hbar_i$
such that
$\etabar_i \circ \omega_i = \eta_i$
and
$\rhobar_i \circ \omega_i = \rho_i$,
for every $i \in I'$.
Let $\Hbar = H_i$ and $\omega_i = \id_{H_i}$
for $i \in I \smallsetminus I'$
Then
$(\omega_i)_{i \in I}$
define an isomorphism
$\omega \colon \fprod{i \in I} H_i \to \fprod{i \in I} \Hbar_i$
such that
$\etabar_I \circ \omega = \eta_I$
and
\eqref{replace by bar} above
commutes
for every $i \in I'$.
Replace
$\fprod{i \in I} H_i$
by
$\fprod{i \in I} \Hbar_i$
and $\rho$ by $\rho \circ \omega^{-1}$.
Then $\rho_i \colon H_i \onto H'_i$
in \eqref{tower}
is replaced by $\rhobar_i$,
as required.
\end{proof}

\section{Fundaments of epimorphisms}\label{fundaments}

We now come to the `fundamental' notion of this work:

\begin{Definition}\label{fundament}
Let $\pi \colon H \onto G$ be an epimorphism of profinite groups.
The \textbf{fundament kernel} of $\pi$,
denoted $\calM(\pi)$,
is the intersection of all
$N \in \calN_{\Ker \pi}(H)$,
i.e., $N$ that satisfy the equivalent conditions of
Lemma~\ref{trivial}
with $M = \Ker \pi$.
Clearly $\calM(\pi) \normal H$ and $\calM(\pi) \le \Ker \pi$.
An epimorphism
$\bar\pi \colon \Hbar \onto G$
is the \textbf{fundament} of $\pi$,
if it is isomorphic to the quotient map
$\bar\pi \colon H/\calM(\pi) \onto G$,
that is,
if there is
$\rho \colon H \onto \Hbar$
such that
$\bar\pi \circ \rho = \pi$
and
$\Ker \rho = \calM(\pi)$;
we then also say that
$\bar\pi$ is
\textbf{the fundament of $\pi$ by $\rho$}.
We say that $\pi$ is \textbf{fundamental},
if $\bar\pi = \pi$, that is,
if $\calM(\pi) = 1$.
Notice that the fundament of an epimorphism
is itself fundamental.
\end{Definition}

\begin{Remark}
Here is the Galois-theoretic interpretation of Definition~\ref{fundament}:

Let $L' \subseteq L$ be
Galois extensions of a field $K$,
and let 
$\pi \colon \Gal(L/K) \onto \Gal(L'/K)$ 
be the restriction map.
Then the fundament of $\pi$ is
the restriction
$\bar\pi \colon \Gal(\Lbar/K) \onto \Gal(L'/K)$,
where $\Lbar$ is the compositum of all
Galois extensions of $K$ contained in $L$, properly containing $L'$,
and minimal with respect to these conditions.
The fundament kernel of $\pi$ is $\Gal(L/\Lbar)$.
\end{Remark}

The \textbf{Melnikov group}
of a profinite group $H$
is the intersection of 
the maximal open normal subgroups of $H$
(\cite[p.~637]{FJ}).
Thus
the Melnikov group of $H$
is the fundament kernel of $H \onto 1$.

Using Notation~\ref{Lambda} we have:

\begin{Definition}\label{characteristic}
Let $\pi \colon H \onto G$
be an epimorphism.

(a)
For every
$\zeta \in \Lambda_\na(G)$ let
$\calC_\zeta = 
\{N \normal H \st N \le \Ker \pi,\ (H/N \onto G) \isom_G \zeta\}$.
We call $|\calC_\zeta|$
the \textbf{$\zeta$-multiplicity}
of $\pi$,
denoted $\mult_\zeta(\pi)$.
\newline
Put
$N_\zeta = \bigcap_{N \in \calC_\zeta} N$
and
$H_\zeta = H/N_\zeta$.

(b)
For every
$A \in \Lambda_\ab(G)$ let
$\calC_A = \{N \normal H \st N \le \Ker \pi,\ H/N \isom_G A\}$.
Put
$N_A = \bigcap_{N \in \calC_A} N$
and
$H_A = H/N_A$.
\newline
Let $\pibar_A \colon H_A \onto G$ be the induced map.
Then $B_A = \Ker \pibar_A$
is a $G$-submodule of
$\prod_{N \in \calC_\zeta} H/N$,
and hence,
by Remark~\ref{A-generated rudiments}(b),
is an $A$-generated $G$-module.
Let
$f_A \in H^2(G, B_A)$
be the cocycle corresponding to the extension
$1 \to B_A \to H_A \to G \to 1$
and let $(B_A^*, S_A) = X^2(B_A, f_A)$.
So
$B_A^*$ is a vector space over $F_A = \End_G(A)$
and 
$S_A \colon B_A^* \to H^2(G,A)$
is the $F_A$-linear map
$\phi \mapsto (f_A)_*\phi$.
\newline
We call $\dim \Ker S_A$ the \textbf{$A$-multiplicity}
of $\pi$,
denoted $\mult_A(\pi)$.
We call
$\Img S_A$
the \textbf{$A$-support}
of $\pi$,
denoted $\supp_A(\pi)$.

(c)
Furthermore,
let $(v_i)_{i \in I}$
be a family of vectors in a vector space $V$ over a field $F$.
Denoting by $F^I$ the vector space with basis $I$,
let $T \colon F^I \to V$
be the unique linear map that maps $i$ onto $v_i$,
for all $i \in I$.
Then 
$\dim \Ker T$ is the \textbf{relation dimension}
of $(v_i)_{i \in I}$.

Thus,
if $(\phi_i)_{i \in I}$ is a basis of $B_A^*$ in (b),
then $i \mapsto \phi_i$ is an isomorphism
$F^I \isom_G B_A^*$,
and hence
$\supp_A(\pi) = \Span((\eta_i)_{i \in I})$
and
$\mult_A(\pi)$
is the relation dimension of
$(\eta_i)_{i \in I}$,
where
$\eta_i = (f_A)_*\phi_i$,
for every $i \in I$.
\end{Definition}

\begin{Example}\label{fprod mult and supp}
Let $H$ be the fiber product of a family
$(\eta_I \colon H_i \onto G)_{i \in I}$
of indecomposable epimorphisms
and let $\pi = \eta_I \colon H \onto G$.
Fix
$\zeta \in \Lambda_\na(G)$
and
$A \in \Lambda_\na(G)$,
and let
$I_\zeta = 
\{i \in I \st \eta_i \isom_G \zeta\}$
and
$I_A = 
\{i \in I \st \Ker \eta_i \isom_G A\}$.

Let $i \in I$. Then
$\Ker \pr_{I,i} \in \calN_{\Ker \pi}(H)$
and
$H/\Ker \pr_{I,i} \onto H/\Ker \pi$
is
$\eta_i \colon H_i \onto G$.
Therefore,
$\Ker \pr_{I,i} \in \calC_\zeta$
if and only if
$i \in I_\zeta$.
Similarly,
$\Ker \pr_{I,i} \in \calC_A$
if and only if
$i \in I_A$.

As
$\calM(\pi) \le \bigcap_{j \in I} \Ker \pr_{I,j} = 1$,\
$\pi$ is fundamental.

By Theorem~\ref
{indecomposable quotients of a fiber product pi is id}(a'),
every $N \in \calC_\zeta$ is $\Ker \pr_{I,i}$ for some $i \in I$,
hence
$\mult_\zeta(\pi) = |\calC_\zeta| = |I_\zeta|$.

We have
$\bigcap_{i \in I_A}\Ker \pr_{I,i} = \Ker \pr_{I,I_A}$.
By Theorem~\ref
{indecomposable quotients of a fiber product pi is id}(b'),
every $N \in \calC_A$ contains $\Ker \pr_{I,I_A}$,
hence
$N_A = \bigcap_{N \in \calC_A} N = \Ker \pr_{I,I_A}$,
whence
$H/N_A = \fprod{i \in I_A} H_i$.
Therefore
$B_A = \Ker(\eta_{I_A} \colon \fprod{i \in I_A} H_i \onto G)$.
For every $i \in I_A$ we have
$\eta_i \circ \pr_{I_A,i} = \eta_{I_A}$,
hence by diagram~\eqref{induced} in
Construction~\ref{two functors}
we have
$S_A(\pr_{I_A,i}) = \eta_i$.
But
$(\pr_{I_A,i})_{i \in I_A}$
is a basis of $B_A^*$,
hence
$\supp_A(\pi) = \Span(\eta_i \st i \in I_A)$,
and
$\mult_A(\pi) = \Ker S_A$
is the relation dimension of
$(\eta_i)_{i \in I_A}$.
\end{Example}

\begin{Proposition}\label{fundamental is fiber product}
Let $\pi \colon H \onto G$ be
a fundamental epimorphism.
Then
$
H \isom_G \fprod{\lambda \in \Lambda(G)} H_\lambda
\ \big[ =
(\fprod{\zeta \in \Lambda_\na(G)} H_\zeta)
\times_G
(\fprod{A \in \Lambda_\na(G)} H_A)
\big]
$.
Moreover:
\begin{itemize}
\item[(a)]
For every $\zeta \in \Lambda_\na(G)$, 
$H_\zeta \isom_G \fprod{i \in I_\zeta} H_i$,
where
$|I_\zeta| = |\calC_\zeta|$
and
${\eta_i \colon H_i \onto G}$
is isomorphic to
$\zeta$,
for every $i \in I_\zeta$.
\item[(b)]
For every $A \in \Lambda_\ab(G)$
let
$\{\phi_i\}_{i \in I_A}$ be a basis of $B_A^*$
over $F_A = \End_G(A)$.
Then
$H_A \isom_G \fprod{i \in I_A} H_i$,
where
$\eta_i \colon H_i \onto G$
is
$(\phi_i)_* f_A = S_A(\phi_i) \in H^2(G,A)$,
for every $i \in I_A$.
\end{itemize}

Thus
$H$ is the fiber product
of the family
$(\eta_i \colon H_i \onto G)_{i \in I}$
of indecomposable extensions of $G$,
where
$I =
\bigdotcup_{\zeta \in \Lambda_\na(G)}
I_\zeta
\dotcup
\bigdotcup_{A \in \Lambda_\ab(G)}
I_A
$.
\end{Proposition}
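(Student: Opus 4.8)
The plan is to realize $H$ as the fiber product of a single family $\calH=(\eta_i\colon H_i\onto G)_{i\in I}$ of indecomposable epimorphisms, where $I=\bigdotcup_{\zeta\in\Lambda_\na(G)}I_\zeta\dotcup\bigdotcup_{A\in\Lambda_\ab(G)}I_A$, and then to regroup by $\lambda\in\Lambda(G)$. I build the family as follows. For each $\zeta\in\Lambda_\na(G)$ I index $\calC_\zeta=\{N_i\st i\in I_\zeta\}$, so $|I_\zeta|=|\calC_\zeta|$, put $H_i=H/N_i$, let $\eta_i\colon H_i\onto G$ be the induced epimorphism --- isomorphic to $\zeta$ by the definition of $\calC_\zeta$ --- and let $p_i\colon H\onto H_i$ be the quotient map, so $\Ker p_i=N_i$. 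For each $A\in\Lambda_\ab(G)$ I apply Corollary~\ref{A-gen ext is fiber product} to $\pibar_A\colon H_A\onto G$, whose kernel $B_A$ is $A$-generated: a chosen basis $\{\phi_i\}_{i\in I_A}$ of $B_A^*$ produces indecomposable $\eta_i\colon H_i\onto G$ with $\eta_i=S_A(\phi_i)$ and a $G$-isomorphism $H_A\isom_G\fprod{i\in I_A}H_i$. Composing the quotient $H\onto H_A$ with this isomorphism and the coordinate projections yields epimorphisms $p_i\colon H\onto H_i$ and a surjection $p_A\colon H\to\fprod{i\in I_A}H_i$ with $\Ker p_A=N_A$. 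This already proves part~(b), and clearly $(H,\{p_i\}_{i\in I},\pi)\in\calC'(\calH)$.

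Next I verify the hypotheses of Proposition~\ref{decompose}. Each $\eta_i$ is indecomposable (by assumption in the $\zeta$-case; by Lemma~\ref{action}(c) in the $A$-case), and $p_A$ is surjective for every $A$. For distinct $i,j\in I_\na=\bigdotcup_\zeta I_\zeta$ the kernels differ: within a single $\calC_\zeta$ because $N_i\ne N_j$; and for $i\in I_\zeta$, $j\in I_{\zeta'}$ with $\zeta\ne\zeta'$ because $H/\Ker p_i\onto G$ is isomorphic to $\zeta$ and $H/\Ker p_j\onto G$ to the non-isomorphic $\zeta'$. The crucial point is the identity
\[
\bigcap_{i\in I}\Ker p_i=\Big(\bigcap_{\zeta}N_\zeta\Big)\cap\Big(\bigcap_{A}N_A\Big)=\calM(\pi).
\]
Indeed, by Lemma~\ref{trivial} and Remark~\ref{rem: minimal normal subgroup}(a) a subgroup $N\le\Ker\pi$ lies in $\calN_{\Ker\pi}(H)$ exactly when $H/N\onto G$ is indecomposable, in which case its minimal normal kernel $\Ker\pi/N$ is either non-abelian --- putting $N$ in a unique $\calC_\zeta$ --- or a simple $G$-module by Lemma~\ref{action}(b) --- putting $N$ in a unique $\calC_A$; conversely each member of some $\calC_\zeta$ or $\calC_A$ lies in $\calN_{\Ker\pi}(H)$. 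Hence $\calN_{\Ker\pi}(H)=\bigdotcup_\zeta\calC_\zeta\dotcup\bigdotcup_A\calC_A$, and intersecting yields the displayed equality, using $\bigcap_{i\in I_\zeta}N_i=N_\zeta$ and $\bigcap_{i\in I_A}\Ker p_i=\Ker p_A=N_A$.

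Since $\pi$ is fundamental, $\calM(\pi)=1$, so $\bigcap_{i\in I}\Ker p_i=1$; Proposition~\ref{decompose} then gives that $p_I\colon H\to\fprod{i\in I}H_i$ is a $G$-isomorphism. Regrouping the index set by the transitivity of fiber products (Remark~\ref{properties of fiber product}(d)) gives $H\isom_G\fprod{\lambda\in\Lambda(G)}\big(\fprod{i\in I_\lambda}H_i\big)$, which is the bracketed form in the statement. It remains to identify $H_\lambda$ with the inner fiber products. For $A\in\Lambda_\ab(G)$ this is part~(b). For $\zeta\in\Lambda_\na(G)$, Lemma~\ref{predecompose na}, applied to $(\eta_i)_{i\in I_\zeta}$ (all with non-abelian kernel and pairwise distinct $\Ker p_i$), shows that $p_\zeta\colon H\to\fprod{i\in I_\zeta}H_i$ is surjective with kernel $N_\zeta$, whence $H_\zeta=H/N_\zeta\isom_G\fprod{i\in I_\zeta}H_i$; this proves part~(a).

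The main obstacle is the bookkeeping in the second paragraph: one must check that the families $\calC_\zeta$ and $\calC_A$ partition $\calN_{\Ker\pi}(H)$, which rests on the dichotomy that a minimal normal subgroup is either a simple module or non-abelian. Only then does the fundamentality hypothesis $\calM(\pi)=1$ translate precisely into $\bigcap_{i\in I}\Ker p_i=1$, i.e. into the injectivity half of the isomorphism criterion. Everything else reduces to Corollary~\ref{A-gen ext is fiber product} and Proposition~\ref{decompose}, whose proofs carry the real analytic weight.
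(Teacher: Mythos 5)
Your proof is correct and follows essentially the same route as the paper's: the same indexing of $\calC_\zeta$ and $\calC_A$, Corollary~\ref{A-gen ext is fiber product} for the abelian blocks, and Proposition~\ref{decompose} (with Remark~\ref{properties of fiber product}(d)) to assemble the global isomorphism. The only difference is cosmetic --- you make explicit the partition $\calN_{\Ker \pi}(H) = \bigdotcup_\zeta \calC_\zeta \dotcup \bigdotcup_A \calC_A$ and the resulting identity $\bigcap_{i \in I} \Ker p_i = \calM(\pi)$, which the paper leaves implicit in its appeal to Proposition~\ref{decompose}, and you derive part (a) from the global isomorphism via Lemma~\ref{predecompose na} rather than applying Proposition~\ref{decompose} to $H_\zeta$ directly.
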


\begin{proof}
Let $\zeta \in \Lambda_\na(G)$.
Write
$\calC_\zeta = \{N_i\}_{i \in I_\zeta}$.
For every $i \in I_\zeta$
put $H_i = H/N_i$
and let $\eta_i \colon H_i \onto G$ be the map induced from
$\pi \colon H \onto G$.
Then
$\eta_i \isom_G \zeta$
and
$H_\zeta = H/(\bigcap_{i \in I_\zeta} N_i)$.
Therefore
$H_\zeta \to \fprod{i \in I_\zeta} H_i$
is an isomorphism by 
Proposition~\ref{decompose}.

Let $A \in \Lambda_\ab(G)$.
Then $B_A$ is an $A$-generated $G$-module.
For every $i \in I_A$
let $\eta_i \colon H_i \onto G$
be the extension represented by
$(\phi_i)_* f_A = S_A(\phi_i) \in H^2(G,A)$.
Then 
$H_A \to \fprod{i \in I_A} H_i$
is an isomorphism
by Corollary~\ref{A-gen ext is fiber product}.

By
Remark~\ref{properties of fiber product}(d),
$\fprod{\lambda \in \Lambda(G)} \kern-3pt H_\lambda \isom_G \fprod{i \in I} H_i$,
where
$I =
\bigdotcup_{\zeta \in \Lambda_\na(G)}
I_\zeta
\dotcup
\bigdotcup_{A \in \Lambda_\ab(G)}
I_A
$.
Thus,
$H \to  \kern-3pt \fprod{\lambda \in \Lambda(G)} \kern-5pt H_\lambda$
is an isomorphism
by Proposition~\ref{decompose}.
\end{proof}

From Proposition~\ref{fundamental is fiber product}
and Example~\ref{fprod mult and supp}
we get:

\begin{Corollary}\label{fiber product is fundamental}
An epimorphism $\pi \colon H \onto G$
is fundamental
if and only if it is isomorphic to
the structure map
of the fiber product
of indecomposable extensions of $G$.
\end{Corollary}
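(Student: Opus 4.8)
The statement is the conjunction of the two results just established, so the plan is simply to assemble them and to record the one invariance fact that glues them together.

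For the forward implication I would argue as follows. Suppose $\pi \colon H \onto G$ is fundamental. Proposition~\ref{fundamental is fiber product} then furnishes a family $(\eta_i \colon H_i \onto G)_{i \in I}$ of indecomposable extensions of $G$, indexed by $I = \bigdotcup_{\zeta \in \Lambda_\na(G)} I_\zeta \dotcup \bigdotcup_{A \in \Lambda_\ab(G)} I_A$, together with a $G$-isomorphism $H \isom_G \fprod{i \in I} H_i$. Unwinding the meaning of $\isom_G$, this is an isomorphism $\rho \colon H \to \fprod{i \in I} H_i$ with $\eta_I \circ \rho = \pi$; hence $\pi$ is isomorphic over $G$ to the structure map $\eta_I$ of a fiber product of indecomposable extensions, as required.

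For the converse I would reduce to the computation already done. Assume $\pi$ is isomorphic over $G$ to the structure map $\eta_I \colon \fprod{i \in I} H_i \onto G$ of such a fiber product. Being fundamental is a property of the lattice $\calN_{\Ker \pi}(H)$ alone, and a $G$-isomorphism of sources carries $\Ker \pi$ to $\Ker \eta_I$ and $\calN_{\Ker \pi}(H)$ bijectively to $\calN_{\Ker \eta_I}(\fprod{i \in I} H_i)$; thus it preserves the fundament kernel, and it suffices to check that $\eta_I$ itself is fundamental. That is exactly the content of Example~\ref{fprod mult and supp}: each $\Ker \pr_{I,i}$ lies in $\calN_{\Ker \eta_I}(\fprod{i \in I} H_i)$, so $\calM(\eta_I) \le \bigcap_{i \in I} \Ker \pr_{I,i} = 1$, whence $\eta_I$ (and therefore $\pi$) is fundamental.

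There is essentially no obstacle to overcome here: all the work has been front-loaded into Proposition~\ref{fundamental is fiber product} (for one direction) and Example~\ref{fprod mult and supp} (for the other). The only point deserving a sentence of justification, and the closest thing to a subtlety, is the invariance of fundamentality under isomorphism of epimorphisms over $G$, which is immediate from the fact that $\calM(\pi)$ is defined purely in terms of the normal-subgroup structure of the source below $\Ker \pi$.
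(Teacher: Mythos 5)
Your proof is correct and follows exactly the route the paper intends: the corollary is stated there as an immediate consequence of Proposition~\ref{fundamental is fiber product} (forward direction) and the observation in Example~\ref{fprod mult and supp} that $\calM(\eta_I) \le \bigcap_{i \in I} \Ker \pr_{I,i} = 1$ (converse). Your explicit remark that fundamentality is invariant under isomorphism over $G$, being determined by the lattice $\calN_{\Ker\pi}(H)$, is a point the paper leaves tacit, and making it is a reasonable finishing touch rather than a deviation.
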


\begin{Theorem}\label{fundamental mult supp}
Let $\pi \colon H \onto G$ be fundamental
and
let
$\eta_I \colon \fprod{i \in I} H_i \onto G$
be the fiber product
of a family
of indecomposable extensions
$(\eta_i \colon H_i \onto G)_{i \in I}$
of $G$.
Denote
$I_\zeta =\{i \in I \st \eta_i \isom_G \zeta\}$,
for every $\zeta \in \Lambda_\na(G)$,
and
$I_A =\{i \in I \st \Ker \eta_i \isom_G A\}$,
for every $A \in \Lambda_\ab(G)$.
Then $\pi \isom_G \eta_I$
if and only if
\begin{itemize}
\item[(a)]
$|I_\zeta| = \mult_\zeta(\pi)$,
for every $\zeta \in \Lambda_\na(G)$,
\item[(b)]
$\Span(\eta_i)_{i \in I_A} = \supp_A(\pi)$,
for every $A \in \Lambda_\ab(G)$,
and
\item[(c)]
$\mult_A(\pi)$
is the relation dimension of
$(\eta_i)_{i \in I_A}$,
for every $A \in \Lambda_\ab(G)$.
\end{itemize}
\end{Theorem}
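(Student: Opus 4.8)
The plan is to reduce the statement to Corollary~\ref{isomorphism}, which already characterizes when two fiber products of indecomposable extensions are isomorphic over $G$ in terms of the invariants $|I_\zeta|$, $\Img S_A$ and $\dim\Ker S_A$ attached to a family in Section~\ref{epi}. The crucial bridge is Example~\ref{fprod mult and supp}: applied to the fiber product $\eta_I$ it identifies those Section~\ref{epi} invariants with the \emph{intrinsic} quantities of Definition~\ref{characteristic}, namely $\mult_\zeta(\eta_I)=|I_\zeta|$, $\supp_A(\eta_I)=\Span(\eta_i)_{i\in I_A}$, and $\mult_A(\eta_I)$ equal to the relation dimension of $(\eta_i)_{i\in I_A}$. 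In particular it checks that $N_A=\Ker\pr_{I,I_A}$, so $H_A=\fprod{i\in I_A}H_i$ and $B_A=\Ker\eta_{I_A}$, whence the $S_A$ built intrinsically in Definition~\ref{characteristic}(b) coincides with the $S_A$ of Section~\ref{epi}. Thus conditions (a)--(c) of the theorem say exactly that the intrinsic invariants of $\pi$ agree with those of $\eta_I$.

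First I would settle the ``only if'' direction by noting that $\mult_\zeta$, $\supp_A$ and $\mult_A$ depend only on the isomorphism class of $\pi$ over $G$: an isomorphism $\psi\colon H\to\fprod{i\in I}H_i$ with $\eta_I\circ\psi=\pi$ carries $\Ker\pi$ onto $\Ker\eta_I$, induces a bijection $\calC_\zeta\to\calC_\zeta(\eta_I)$ and a $G$-isomorphism $B_A\isom_G B_A(\eta_I)$ compatible with the defining cochains, hence an isomorphism $B_A^*\to B_A(\eta_I)^*$ intertwining the two maps $S_A$. Therefore $\mult_\zeta(\pi)=\mult_\zeta(\eta_I)$, $\supp_A(\pi)=\supp_A(\eta_I)$ and $\mult_A(\pi)=\mult_A(\eta_I)$, and by the computation of these invariants for $\eta_I$ recalled above these equalities are precisely (a), (b), (c).

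For the ``if'' direction I would use that $\pi$ is fundamental: by Proposition~\ref{fundamental is fiber product} there is a family $(\eta'_i\colon H'_i\onto G)_{i\in I'}$ of indecomposable extensions and an isomorphism over $G$ identifying $\pi$ with the structure map $\eta'_{I'}\colon\fprod{i\in I'}H'_i\onto G$. Applying the isomorphism-invariance of the previous paragraph to $\eta'_{I'}$, together with Example~\ref{fprod mult and supp} applied to $\eta'_{I'}$, the Section~\ref{epi} invariants of $\eta'_{I'}$ satisfy $|I'_\zeta|=\mult_\zeta(\pi)$, $\Img S'_A=\supp_A(\pi)$ and $\dim\Ker S'_A=\mult_A(\pi)$. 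Combining this with hypotheses (a)--(c) and the values of the invariants of $\eta_I$, I obtain $|I_\zeta|=|I'_\zeta|$ for every $\zeta\in\Lambda_\na(G)$ and $\Img S_A=\Img S'_A$, $\dim\Ker S_A=\dim\Ker S'_A$ for every $A\in\Lambda_\ab(G)$. Corollary~\ref{isomorphism} then produces an isomorphism $\fprod{i\in I}H_i\isom_G\fprod{i\in I'}H'_i$ over $G$, and by transitivity of $\isom_G$ with $\pi\isom_G\eta'_{I'}$ this gives $\pi\isom_G\eta_I$.

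The one point requiring care is purely the bookkeeping in matching the two \emph{a priori} distinct definitions of $S_A$ --- the intrinsic one in Definition~\ref{characteristic}(b) and the family-dependent one in Section~\ref{epi} --- so that ``$\Span$'' and ``relation dimension'' translate correctly between $\Img S_A$ and $\dim\Ker S_A$. This is exactly what Example~\ref{fprod mult and supp} supplies, so no new cohomological computation is needed; everything else is a formal comparison of invariants across the two fiber-product presentations.
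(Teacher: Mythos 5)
Your proposal is correct and follows essentially the same route as the paper's proof: the ``only if'' direction via Example~\ref{fprod mult and supp} (with the isomorphism-invariance of the invariants, which the paper leaves implicit, spelled out), and the ``if'' direction by presenting $\pi$ as a fiber product of indecomposables via Proposition~\ref{fundamental is fiber product} (equivalently Corollary~\ref{fiber product is fundamental}), computing its invariants by Example~\ref{fprod mult and supp}, and concluding with Corollary~\ref{isomorphism}. Your careful matching of the intrinsic $S_A$ of Definition~\ref{characteristic}(b) with the family-dependent $S_A$ of Section~\ref{epi} is exactly the bookkeeping the paper delegates to Example~\ref{fprod mult and supp}.
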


\begin{proof}
If $\pi \isom_G \eta_I$,
then
(a), (b), (c) hold
by Example~\ref{fprod mult and supp}.

Conversely,
assume that
(a), (b), (c) hold.
By
Corollary~\ref{fiber product is fundamental}
we may assume that
$H$ is the fiber product of a family
$(\eta'_i \colon H'_i \onto G)_{i \in I'}$
of indecomposable extensions of $G$.
If we define
$I'_\zeta$ and $I'_A$
for $\pi$, analogously to
$I_\zeta$ and $I_A$ for $\eta_I$,
then, again 
by Example~\ref{fprod mult and supp},
conditions (a), (b) of 
Corollary~\ref{isomorphism} are satisfied.
Thus
$\pi \isom_G \eta_I$.
\end{proof}

For an indecomposable cover $\eta$ of $G$
and a cardinality $\kappa$ 
let
$\eta^{(\kappa)}$
denote the fiber product of $\kappa$ copies of $\eta$.

\begin{Example}\label{multiple of one}
Let $\eta$ be an indecomposable cover of $G$
and $\kappa$ a cardinality.

(a)
If $\kappa = 0$,
then $\eta^{(\kappa)}$ is the identity of $G$,
hence
$\mult_\lambda(\eta^{(\kappa)}) = 0$
for all $\lambda \in \Lambda(G)$
and
$\supp_A(\eta^{(\kappa)}) = 0$
for all $A \in \Lambda_\ab(G)$.

Assume that
$\kappa > 0$.
Put $C = \Ker \eta$.

(b)
If $C$ is non-abelian,
then
for all $\lambda \in \Lambda(G)$
and all $A \in \Lambda_\ab(G)$
\begin{equation*}
\mult_\lambda(\eta^{(\kappa)}) =
\begin{cases}
\kappa & \textnormal{if $\lambda = \eta$}
\\
0 & \textnormal{if $\lambda \ne \eta$}
\end{cases},
\qquad
\supp_A(\eta^{(\kappa)}) = 0.
\end{equation*}
(This is true even if $\kappa = 0$, by (a).)

(c)
If $C$ is abelian,
then
$\mult_\lambda(\eta^{(\kappa)}) = 0$
for every $\lambda \in \Lambda_\na(G)$.
By Example~\ref{ex1},
$\dim B_C^* = \kappa$
and
$\Img S_C = \Span(\eta)$,
while
$\dim B_A^* = 0$
for all $C \ne A \in \Lambda_\ab(G)$.
We have
$\dim B_A^* = \dim \Img S_{A} + \dim \Ker S_{A}$,
for all $A \in \Lambda_\ab(G)$.
Thus for all $\lambda \in \Lambda(G)$
and all $A \in \Lambda_\ab(G)$
\begin{align*}
\mult_\lambda(\eta^{(\kappa)}) &=
\begin{cases}
0 & \textnormal{if $\lambda \ne C$}
\\
\kappa & \textnormal{if $\lambda = C$ and $\eta$ splits}
\\
\kappa - 1  & \textnormal{if $\lambda = C$, $\eta$ does not split,}
\end{cases}
\\
\supp_A(\eta^{(\kappa)}) &=
\begin{cases}
0 & \textnormal{if $A \ne C$}
\\
\Span(\eta) & \textnormal{if $A = C$.}
\end{cases}
\end{align*}
\end{Example}

\begin{Corollary}\label{reinterprete}
Let $\tau \colon H \onto G$
and
$\tau' \colon H' \onto G'$
be fundamental.

Let $\pi \colon G \onto G'$
be an epimorphism.
There is $\theta \colon H \onto H'$
such that
\begin{equation}\label{two taus}
\xymatrix@=20pt{
H 
\arr[rr]_{\theta} \arr[d]^{\tau}
&& H' \arr[d]^{\tau'}
\\
G \arr[rr]^{\pi} && G' \\
}
\end{equation}
is semi-cartesian
if and only if
\begin{itemize}
\item[(a)]
$\mult_{\zeta'}(\tau') \le \mult_\zeta(\tau)$,
for every
$\zeta' \in \Lambda_\na(G')$
and
$\zeta := \Inf_\pi(\zeta') \in \Lambda_\na(G)$;
\item[(b)]
$\Inf_{\pi,A'}(\supp_{A'}(\tau')) \subseteq \supp_A(\tau)$
for every
$A' \in \Lambda_\ab(G)$ and
\break
$A := \Inf_\pi(A') \in \Lambda_\ab(G)$;
and
\item[(c)]
$\nu_{\pi,A'} + \mult_{A'}(\tau') \le \mult_A(\tau)$,
for every
$A' \in \Lambda_\ab(G)$ and
\break
$A := \Inf_\pi(A') \in \Lambda_\ab(G)$;
here
$\nu_{\pi,A'}$ is the nullity of the restriction
$r \colon \supp_{A'}(\tau') \to \supp_A(\tau)$
of 
$\Inf_{\pi,A'}$ to $\supp_{A'}(\tau')$.
\end{itemize}
\end{Corollary}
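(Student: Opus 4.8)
The plan is to deduce the corollary from Theorem~\ref{epimorphism} by presenting both fundamental epimorphisms as fiber products of indecomposables and then translating that theorem's combinatorial conditions into the invariants $\mult$, $\supp$, and the relation dimension. First I would invoke Corollary~\ref{fiber product is fundamental} to write $\tau \isom_G \eta_I$, the structure map of the fiber product of a family $(\eta_i \colon H_i \onto G)_{i \in I}$ of indecomposable extensions, and likewise $\tau' \isom_{G'} \eta'_{I'}$ for a family over $G'$. Since $\mult_\zeta$, $\supp_A$, and $\mult_A$ are $G$-isomorphism invariants (the assignment $N \mapsto \phi(N)$ gives a bijection of the sets $\calC_\zeta, \calC_A$ under an isomorphism over $G$), Example~\ref{fprod mult and supp} identifies them with the fiber-product data: $\mult_\zeta(\tau) = |I_\zeta|$, $\supp_A(\tau) = \Img S_A$, $\mult_A(\tau) = \dim \Ker S_A$, and symmetrically for $\tau'$. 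Theorem~\ref{epimorphism} then characterizes the existence of a semi-cartesian $\theta$ through its conditions (a) and (b), so it remains to match these against (a), (b), (c) of the corollary.

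Next I would match the conditions term by term. Condition (a) of Theorem~\ref{epimorphism}, namely $|I'_{\zeta'}| \le |I_\zeta|$ for $\zeta' \in \Lambda_\na(G')$ and $\zeta := \Inf_\pi(\zeta')$, is literally Corollary~(a) after the identification $|I'_{\zeta'}| = \mult_{\zeta'}(\tau')$ and $|I_\zeta| = \mult_\zeta(\tau)$. For $A' \in \Lambda_\ab(G')$ and $A := \Inf_\pi(A')$, the first half of Theorem~\ref{epimorphism}(b) reads $\Img(\Inf_{\pi,A'} \circ S'_{A'}) \subseteq \Img S_A$; since $\Img S'_{A'} = \supp_{A'}(\tau')$ and $\Img S_A = \supp_A(\tau)$, this is exactly $\Inf_{\pi,A'}(\supp_{A'}(\tau')) \subseteq \supp_A(\tau)$, which is Corollary~(b). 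Once (b) holds, the restriction $r \colon \supp_{A'}(\tau') \to \supp_A(\tau)$ of $\Inf_{\pi,A'}$ is a well-defined $F_A$-linear map, so its nullity $\nu_{\pi,A'}$ is meaningful.

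The one genuine computation is to rewrite the second half of Theorem~\ref{epimorphism}(b), the inequality $\dim \Ker(\Inf_{\pi,A'} \circ S'_{A'}) \le \dim \Ker S_A = \mult_A(\tau)$. Here I would factor $\Inf_{\pi,A'} \circ S'_{A'} = r \circ S'_{A'}$, where $S'_{A'}$ maps $V'_{A'}$ onto $\supp_{A'}(\tau')$ with kernel of dimension $\mult_{A'}(\tau')$. Then $\Ker(r \circ S'_{A'}) = (S'_{A'})^{-1}(\Ker r)$ sits in a short exact sequence $0 \to \Ker S'_{A'} \to \Ker(r \circ S'_{A'}) \to \Ker r \to 0$, because $S'_{A'}$ is surjective onto $\supp_{A'}(\tau')$; this yields the dimension identity
\[
\dim \Ker(\Inf_{\pi,A'} \circ S'_{A'}) = \mult_{A'}(\tau') + \nu_{\pi,A'}.
\]
Substituting this turns the inequality into $\nu_{\pi,A'} + \mult_{A'}(\tau') \le \mult_A(\tau)$, which is Corollary~(c). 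Assembling the three translations, the existence of a semi-cartesian $\theta$ is equivalent to (a), (b), and (c) holding simultaneously, in both directions of the equivalence.

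The only delicate point is the dimension identity, and I expect the main obstacle to be the bookkeeping of cardinal arithmetic rather than anything structural. The kernels and supports may be infinite-dimensional $F_A$-vector spaces, so the short exact sequence above must be read as a vector-space decomposition (split by choosing a complement) giving $\dim = \mult_{A'}(\tau') + \nu_{\pi,A'}$ as a sum of cardinals, and the inequalities in (a) and (c) must be interpreted throughout as inequalities of cardinalities. Provided $\dim$ is consistently understood as a cardinal, the argument transcribes Theorem~\ref{epimorphism} verbatim.
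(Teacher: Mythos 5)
Your proposal is correct and follows essentially the same route as the paper: reduce via Proposition~\ref{fundamental is fiber product} to the case $\tau = \eta_I$, $\tau' = \eta'_{I'}$, identify the invariants through Example~\ref{fprod mult and supp}, and translate Theorem~\ref{epimorphism}(b) using the factorization $\Inf_{\pi,A'} \circ S'_{A'} = r \circ S'_{A'}$ and the identity $\dim \Ker(\Inf_{\pi,A'} \circ S'_{A'}) = \mult_{A'}(\tau') + \nu_{\pi,A'}$, which is exactly the paper's computation $(S'_{A'})^{-1}(\Ker r)$. Your short-exact-sequence phrasing and the remark on cardinal dimension are harmless reformulations of the same step.
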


\begin{proof}
This is reinterpretation of Theorem~\ref{epimorphism}:
By Proposition~\ref{fundamental is fiber product}
we may assume that
$\tau = \eta_I$
and
$\tau' = \eta'_{I'}$,
in the notation of Theorem~\ref{epimorphism}
(fixed at the beginning of Section~\ref{epi}),
and then
$\Img S_{A} = \supp_A(\tau)$,
and
$$
\Img(\Inf_{\pi,A'} \circ S'_{A'}) = 
\Inf_{\pi,A'}(\Img S'_{A'}) = 
\Inf_{\pi,A'}(\supp_{A'}(\tau')).
$$
Moreover,
if
$\Inf_{\pi,A'}(\supp_{A'}(\tau')) \subseteq \supp_A(\tau)$,
then 
$$
\Ker(\Inf_{\pi,A'} \circ S'_{A'}) = 
\Ker(r \circ S'_{A'}) = 
(S'_{A'})^{-1}( \Ker r),
$$
hence
$$
\dim \Ker(\Inf_{\pi,A'} \circ S'_{A'}) = \dim \Ker S'_{A'}
+ \dim \Ker r
= \mult_{A'}(\tau') + \nu_{\pi,A'}.
$$
\end{proof}

\begin{Corollary}\label{reinterprete G is G'}
Let $\tau \colon H \onto G$
and
$\tau' \colon H' \onto G$
be fundamental.
\begin{itemize}
\item[(1)]
$\tau' \mle \tau$
if and only if
\begin{itemize}
\item[(1a)]
$\mult_\lambda(\tau') \le \mult_\lambda(\tau)$,
for every $\lambda \in \Lambda(G)$;
and
\item[(1b)]
$\supp_A(\tau') \subseteq \supp_A(\tau)$,
for every $A \in \Lambda_\ab(G)$.
\end{itemize}

\item[(2)]
$\tau' \isom_G \tau$
if and only if
\begin{itemize}
\item[(2a)]
$\mult_\lambda(\tau') = \mult_\lambda(\tau)$,
for every $\lambda \in \Lambda(G)$;
and
\item[(2b)]
$\supp_A(\tau') = \supp_A(\tau)$,
for every $A \in \Lambda_\ab(G)$
\end{itemize}
\end{itemize}
\end{Corollary}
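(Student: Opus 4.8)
The plan is to obtain Corollary~\ref{reinterprete G is G'} as the specialization $G'=G$, $\pi=\id_G$ of the machinery already in place, so that no genuinely new argument is needed.

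For part~(1), the first observation is that when $\pi=\id_G$ the square \eqref{two taus} is semi-cartesian as soon as it commutes. Indeed, if $\theta\colon H\onto H'$ satisfies $\tau'\circ\theta=\tau$, then $\Ker\theta\le\Ker(\tau'\circ\theta)=\Ker\tau$, i.e.\ $\tau(\Ker\theta)=1=\Ker\id_G$, which is criterion~(b) of Definition~\ref{semi-cartesian}. Hence the existence of some $\theta$ making \eqref{two taus} semi-cartesian is exactly the statement $\tau'\mle\tau$. Consequently part~(1) is Corollary~\ref{reinterprete} with $G'=G$ and $\pi=\id_G$. In that case $\Inf_\pi$ is the identity on $\Lambda_\na(G)$ and on $\Lambda_\ab(G)$, and $\Inf_{\pi,A}$ is the identity map of $H^2(G,A)$; in particular the restriction $r$ of Corollary~\ref{reinterprete}(c) is injective, so its nullity $\nu_{\pi,A}$ is $0$. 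Thus conditions (a) and (c) of Corollary~\ref{reinterprete} merge into the single inequality $\mult_\lambda(\tau')\le\mult_\lambda(\tau)$ for all $\lambda\in\Lambda(G)=\Lambda_\na(G)\dotcup\Lambda_\ab(G)$, which is (1a), while (b) becomes (1b).

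For part~(2), the cleanest route is to reduce to fiber products and quote Corollary~\ref{isomorphism}. By Corollary~\ref{fiber product is fundamental} we may present $\tau\isom_G\eta_I$ and $\tau'\isom_G\eta'_{I'}$ as structure maps of fiber products of indecomposable covers of $G$. Since $\mult_\lambda$ and $\supp_A$ are isomorphism invariants, Example~\ref{fprod mult and supp} supplies the dictionary $\mult_\zeta(\tau)=|I_\zeta|$ for $\zeta\in\Lambda_\na(G)$, and $\supp_A(\tau)=\Img S_A$, $\mult_A(\tau)=\dim\Ker S_A$ for $A\in\Lambda_\ab(G)$ (and likewise for $\tau'$). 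Under this translation, conditions (2a) and (2b) are precisely conditions (a) and (b) of Corollary~\ref{isomorphism}, so $\tau'\isom_G\tau$ if and only if they hold. Equivalently, one may deduce (2) from (1) by antisymmetry: $\tau'\isom_G\tau$ forces $\tau'\mle\tau$ and $\tau\mle\tau'$, and the two instances of part~(1) give exactly the equalities (2a), (2b), the converse passing through mutual domination as in Corollary~\ref{from epi to iso}.

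The argument is essentially bookkeeping, and I expect no substantive obstacle. The only two points that require a moment's care are the elementary observations flagged above: that $\pi=\id_G$ makes the semi-cartesian condition automatic, so that ``semi-cartesian'' and ``dominated'' coincide here, and that the identity inflation contributes zero nullity, which is what lets the separate abelian-multiplicity conditions (a), (c) of Corollary~\ref{reinterprete} collapse into the uniform condition (1a) over the whole of $\Lambda(G)$.
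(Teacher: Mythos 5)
Your proposal is correct and follows essentially the paper's own route: part (2) is exactly the paper's argument (reduce to fiber products via Corollary~\ref{fundamental is fiber product}, translate by Example~\ref{fprod mult and supp}, quote Corollary~\ref{isomorphism}), and your alternative antisymmetry derivation via Corollary~\ref{from epi to iso} is also sound. For part (1) the paper simply cites Theorem~\ref{epimorphism G' is G}, where the hypothesis is plain commutation $\eta'_{I'}\circ\theta=\eta_I$ and so matches domination with no further remark, while you instead specialize Corollary~\ref{reinterprete} at $\pi=\id_G$ --- a cosmetic difference, since that corollary rests on the same machinery, and your two supporting observations (that commutation forces the square to be semi-cartesian when $\pi=\id_G$, and that the identity inflation has nullity $\nu_{\pi,A}=0$, merging conditions (a) and (c) into (1a)) are both verified correctly.
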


\begin{proof}
(1) is reinterpretation of Theorem~\ref{epimorphism G' is G}.

(2) is reinterpretation of Corollary~\ref{isomorphism}.
\end{proof}

Applying the criterion of Corollary~\ref{reinterprete G is G'}(1)
to Example~\ref{multiple of one} we get

\begin{Corollary}\label{dominates multiple of one}
Let $\pi \colon H \onto G$ be fundamental.
Let $\eta$ be an indecomposable cover of $G$
and $\kappa \ge 0$ a cardinality.
Put $C = \Ker \eta$.
\begin{itemize}
\item[(a)]
If $C$ is not abelian, then
$\eta^{(\kappa)} \mle \pi$ 
if and only if
$\kappa \le \mult_\eta(\pi)$.

\item[(b)]
If $C \in \Lambda_\ab(G)$, then
$\eta^{(\kappa)} \mle \pi$ 
if and only if
\begin{equation*}
\begin{cases}
\kappa \le 
\mult_C(\pi)&
\textnormal{if $\eta \in \supp_C(\pi)$ splits}
\\
\kappa \le 
\mult_C(\pi) + 1 &
\textnormal{if $\eta \in \supp_C(\pi)$ does not split}
\\
\kappa = 
0 &
\textnormal{if $\eta \notin \supp_C(\pi)$}
\end{cases}
\end{equation*}
\end{itemize}
\end{Corollary}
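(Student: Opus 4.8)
The plan is to specialize Corollary~\ref{reinterprete G is G'}(1) to $\tau = \pi$ and $\tau' = \eta^{(\kappa)}$, and then substitute the multiplicities and supports of $\eta^{(\kappa)}$ recorded in Example~\ref{multiple of one}. Since $\eta^{(\kappa)}$ is by definition a fiber product of indecomposable covers of $G$, it is fundamental by Corollary~\ref{fiber product is fundamental} (and $\eta^{(0)} = \id_G$ is fundamental with all invariants vanishing), so the corollary applies with $\pi$ fundamental by hypothesis. It yields that $\eta^{(\kappa)} \mle \pi$ if and only if (1a) $\mult_\lambda(\eta^{(\kappa)}) \le \mult_\lambda(\pi)$ for every $\lambda \in \Lambda(G)$ and (1b) $\supp_A(\eta^{(\kappa)}) \subseteq \supp_A(\pi)$ for every $A \in \Lambda_\ab(G)$. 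Everything then reduces to reading off the invariants of $\eta^{(\kappa)}$ and simplifying.

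First I would treat (a), where $C = \Ker\eta$ is non-abelian. By Example~\ref{multiple of one}(a),(b) every support $\supp_A(\eta^{(\kappa)})$ vanishes, so (1b) is automatic; and $\mult_\lambda(\eta^{(\kappa)})$ vanishes for $\lambda \ne \eta$ while $\mult_\eta(\eta^{(\kappa)}) = \kappa$. Hence (1a) collapses to the single inequality $\kappa \le \mult_\eta(\pi)$, which is exactly (a) (and it remains correct at $\kappa = 0$ since $0 \le \mult_\eta(\pi)$).

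Next I would treat (b), where $C \in \Lambda_\ab(G)$, using Example~\ref{multiple of one}(a),(c). For $\kappa > 0$ the only possibly-nonzero support is $\supp_C(\eta^{(\kappa)}) = \Span(\eta)$, so (1b) reduces to $\Span(\eta) \subseteq \supp_C(\pi)$, i.e.\ $\eta \in \supp_C(\pi)$, while the only constraining multiplicity is at $\lambda = C$, where $\mult_C(\eta^{(\kappa)})$ equals $\kappa$ if $\eta$ splits and $\kappa - 1$ if it does not. Combining the two: if $\eta$ splits then $\eta = 0 \in \supp_C(\pi)$ automatically (the zero class lies in the subspace $\supp_C(\pi)$), so (1a)\,\&\,(1b) becomes $\kappa \le \mult_C(\pi)$; if $\eta$ is non-split but lies in $\supp_C(\pi)$ it becomes $\kappa - 1 \le \mult_C(\pi)$, i.e.\ $\kappa \le \mult_C(\pi)+1$; and if $\eta \notin \supp_C(\pi)$ then (1b) fails for every $\kappa > 0$, forcing $\kappa = 0$. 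Each conclusion is then checked to persist at the boundary $\kappa = 0$, which holds since $0 \le \mult_C(\pi)$ and $0 \le \mult_C(\pi)+1$ always.

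The argument is essentially bookkeeping once Corollary~\ref{reinterprete G is G'} and Example~\ref{multiple of one} are in hand. The one point that needs care — and the main obstacle — is the off-by-one in the non-split case, which comes from $\dim\supp_C(\eta^{(\kappa)}) = 1$ when $\eta \ne 0$ (so that one of the $\kappa$ "degrees of freedom" is absorbed by the support rather than the relation dimension); alongside this I must verify that the split extension always lies in $\supp_C(\pi)$, so that the three alternatives in (b) genuinely partition the possibilities for $\eta$ and the boundary value $\kappa = 0$ is handled consistently in all three.
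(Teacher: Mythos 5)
Your proposal is correct and follows essentially the same route as the paper's own argument: specialize Corollary~\ref{reinterprete G is G'}(1) to $\tau' = \eta^{(\kappa)}$, substitute the invariants computed in Example~\ref{multiple of one}, and observe that all conditions except the one at $\lambda = C$ (resp.\ $\lambda = \eta$) are vacuous, with the split case absorbed into $\eta = 0 \in \supp_C(\pi)$ and the boundary $\kappa = 0$ handled separately. Your explicit checks that $\eta^{(\kappa)}$ is fundamental and that the zero class lies in the subspace $\supp_C(\pi)$ are points the paper leaves implicit, but they change nothing in substance.
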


\begin{Corollary}\label{Max}
Let $\pi \colon H \onto G$
and
$\pi' \colon H' \onto G'$
be fundamental.
Then 
$\pi' \mle \pi$
if and only if
$\eta^{(\kappa)} \mle \pi'
\implies
\eta^{(\kappa)} \mle \pi$,
for
every indecomposable cover $\eta$ of $G$
and
every $\kappa$.
\end{Corollary}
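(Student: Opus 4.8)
The plan is to prove the two implications of the equivalence separately, using the already-established numerical description of domination between fundamental covers. Throughout I treat $\pi$ and $\pi'$ as covers of the same group $G$ (this is forced, since $\mle$ and the covers $\eta^{(\kappa)}$ all live over one base).

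The forward implication is a triviality, resting only on the transitivity of $\mle$. If $\pi' \mle \pi$, witnessed by $\psi \colon H \onto H'$ with $\pi = \pi' \circ \psi$, and if $\eta^{(\kappa)} \mle \pi'$, witnessed by $\psi' \colon H' \onto F$ with $\pi' = \sigma \circ \psi'$ (where $\sigma$ is the structure map of $\eta^{(\kappa)}$ on $F$), then $\pi = \sigma \circ (\psi' \circ \psi)$, so $\eta^{(\kappa)} \mle \pi$. Thus the transfer property holds automatically.

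For the converse I would reduce to Corollary~\ref{reinterprete G is G'}(1), which asserts that $\pi' \mle \pi$ holds exactly when (1a) $\mult_\lambda(\pi') \le \mult_\lambda(\pi)$ for all $\lambda \in \Lambda(G)$, and (1b) $\supp_A(\pi') \subseteq \supp_A(\pi)$ for all $A \in \Lambda_{\ab}(G)$. So it suffices to extract (1a) and (1b) from the hypothesis, and the translation device is Corollary~\ref{dominates multiple of one}, which rewrites each assertion $\eta^{(\kappa)} \mle \pi$ as a condition on the invariants of $\pi$. The key formal observation is that, for a fixed indecomposable cover $\eta$, that corollary exhibits $\{\kappa \st \eta^{(\kappa)} \mle \pi\}$ as an initial segment of the cardinals, namely $\{\kappa \st \kappa \le m(\eta,\pi)\}$ for a suitable cardinal $m(\eta,\pi)$ (with $m(\eta,\pi)=0$ precisely when $\eta \notin \supp_{\Ker\eta}(\pi)$). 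Hence the hypothesis, read one $\eta$ at a time and specialized to $\kappa = m(\eta,\pi')$, yields $m(\eta,\pi') \le m(\eta,\pi)$. I would then run the cases. For $\zeta \in \Lambda_{\na}(G)$ take $\eta = \zeta$: Corollary~\ref{dominates multiple of one}(a) gives $m(\zeta,\pi) = \mult_\zeta(\pi)$, so we obtain $\mult_\zeta(\pi') \le \mult_\zeta(\pi)$. For $A \in \Lambda_{\ab}(G)$, apply the hypothesis to the split extension $\eta_0$ of $G$ by $A$, which is indecomposable by Lemma~\ref{action}(c) and lies in every $A$-support (its class is $0 \in \Img S_A$); since $\eta_0$ splits, Corollary~\ref{dominates multiple of one}(b) gives $m(\eta_0,\pi) = \mult_A(\pi)$, whence $\mult_A(\pi') \le \mult_A(\pi)$. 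Together these give (1a). For (1b), let $\eta$ be any non-split indecomposable cover with kernel $A$ whose class is a nonzero element of $\supp_A(\pi')$; the non-split clause of Corollary~\ref{dominates multiple of one}(b) gives $\eta^{(1)} \mle \pi'$, hence by hypothesis $\eta^{(1)} \mle \pi$, and the same corollary (whose third clause would force $\kappa = 0$ were $\eta \notin \supp_A(\pi)$) forces $\eta \in \supp_A(\pi)$. Since $0 \in \supp_A(\pi)$ automatically, this gives $\supp_A(\pi') \subseteq \supp_A(\pi)$, i.e.\ (1b). Corollary~\ref{reinterprete G is G'}(1) then delivers $\pi' \mle \pi$.

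The main obstacle is the abelian case: the split/non-split trichotomy in Corollary~\ref{dominates multiple of one}(b) means that a single family $(\eta^{(\kappa)})_\kappa$ does not on its own separate the two invariants $\mult_A$ and $\supp_A$. The delicate point is therefore the choice of test covers — the split extension $\eta_0$ to read off the multiplicity, and the nonzero non-split classes to read off the support — together with the verification that the $+1$ shift appearing in the non-split multiplicity bound does not obstruct detecting membership in the support already at $\kappa = 1$.
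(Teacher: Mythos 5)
Your proposal is correct and takes essentially the same route as the paper's proof: transitivity of $\mle$ for the forward direction, and for the converse the same three test families — $\zeta^{(\kappa)}$ with $\kappa = \mult_\zeta(\pi')$ for non-abelian $\zeta$, the split extension with kernel $A$ to read off $\mult_A$, and non-split classes at $\kappa = 1$ to read off $\supp_A$ — concluding via Corollary~\ref{reinterprete G is G'}(1), which is exactly the paper's Claims A, B, C. Your ``initial segment'' function $m(\eta,\pi)$ is just a repackaging of Corollary~\ref{dominates multiple of one}, and your choice of the split extension for the abelian multiplicity correctly sidesteps the cardinal-arithmetic issue with the $+1$ shift in the non-split case.
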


\begin{proof}
If
$\pi' \mle \pi$,
then the condition holds
by the transitivity of $\mle$.

Conversely, assume that
the condition holds.

\subdemoinfo{Claim A}
{Let $C \in \Lambda_\ab(G)$.
Then
$\supp_C(\pi') \subseteq \supp_C(\pi)$.}

Indeed,
let $\eta \in H^2(G,C) \smallsetminus \supp_C(\pi)$;
in particular, $\eta \ne 0$, so $\eta$ does not split.
By Corollary~\ref{dominates multiple of one}(b),
$\eta = \eta^{(1)} \not\mle \pi$,
hence
$\eta^{(1)} \not\mle \pi'$.
As
$1 \le \mult_C(\pi') + 1$,
this implies,
again by Corollary~\ref{dominates multiple of one}(b),
that
$\eta \notin \supp_C(\pi')$.

\subdemoinfo{Claim B}
{Let $\eta \in \Lambda_\na(G)$.
Then
$\mult_\eta(\pi') \le \mult_\eta(\pi)$.}
Put
$\kappa = \mult_\eta(\pi')$.
By Corollary~\ref{dominates multiple of one}(a),
$\eta^{(\kappa)} \mle \pi'$,
hence
$\eta^{(\kappa)} \mle \pi$,
whence, again
by Corollary~\ref{dominates multiple of one}(a),
$\kappa \le \mult_\eta(\pi)$.

\subdemoinfo{Claim C}
{Let $C \in \Lambda_\ab(G)$.
Then
$\mult_C(\pi') \le \mult_C(\pi)$.}
Put
$\kappa = \mult_C(\pi')$ and
let $\eta$ be the split extension of $G$ with kernel $C$.
By Corollary~\ref{dominates multiple of one}(b),
$\eta^{(\kappa)} \mle \pi'$,
hence
$\eta^{(\kappa)} \mle \pi$,
whence, again
by Corollary~\ref{dominates multiple of one}(b),
$\kappa \le \mult_C(\pi)$.

Thus
$\pi' \mle \pi$
by Corollary~\ref{reinterprete G is G'}(1).
\end{proof}

\begin{Corollary}\label{fundament normal subgr}
Let
$\pi \colon H \onto G$
be a fundamental epimorphism
and let  $L$ be a normal subgroup of $H$,
contained in $\Ker \pi$.
Then there is an isomorphism
$\theta \colon H \onto \fprod{i \in I} H_i$
onto a fiber product of
indecomposable covers of $G$
such that
$\theta(L) = \prod_{i \in I'} K_i$,
for some
$I' \subseteq I$,
where
$K_i = \Ker \pr_{I, I \smallsetminus \{i\}}$,
for every $i \in I$.
\end{Corollary}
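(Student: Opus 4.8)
The plan is to obtain the statement by composing two results already established: Corollary~\ref{fiber product is fundamental}, which realizes a fundamental epimorphism as the structure map of a fiber product of indecomposables, and Corollary~\ref{isom with subgroup gen}, which normalizes a normal subgroup sitting inside the kernel of such a structure map into ``coordinate'' form after re-choosing the indecomposable factors. There is no genuinely hard step here; the whole content is already packaged in those two corollaries, and the only care needed is to keep track of the structure maps so that the composite isomorphism remains a cover-isomorphism over $G$.

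First I would invoke Corollary~\ref{fiber product is fundamental}. Since $\pi$ is fundamental, it furnishes a family of indecomposable covers $(\eta_i \colon H_i \onto G)_{i \in I}$ and a $G$-isomorphism $\theta_0 \colon H \to \fprod{i \in I} H_i$ with $\eta_I \circ \theta_0 = \pi$. Because $\theta_0$ is an isomorphism satisfying $\eta_I \circ \theta_0 = \pi$, it maps $\Ker \pi$ onto $\Ker \eta_I$; consequently $\theta_0(L)$ is a normal subgroup of $\fprod{i \in I} H_i$ contained in $\Ker \eta_I$. This is exactly the input required for the next step.

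Next I would apply Corollary~\ref{isom with subgroup gen} to the fiber product $\fprod{i \in I} H_i$ together with the normal subgroup $\theta_0(L)$. This produces a new family of indecomposable epimorphisms $(\etabar_i \colon \Hbar_i \onto G)_{i \in I}$ and an isomorphism $\omega \colon \fprod{i \in I} H_i \to \fprod{i \in I} \Hbar_i$ with $\etabar_I \circ \omega = \eta_I$ and $\omega(\theta_0(L)) = \prod_{i \in \Ibar} \Kbar_i$ for some $\Ibar \subseteq I$, where $\Kbar_i = \Ker \prbar_{I, I \smallsetminus \{i\}}$. I would then set $\theta = \omega \circ \theta_0 \colon H \to \fprod{i \in I} \Hbar_i$. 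This $\theta$ is an isomorphism, it satisfies $\etabar_I \circ \theta = \etabar_I \circ \omega \circ \theta_0 = \eta_I \circ \theta_0 = \pi$ (so it is an isomorphism onto a fiber product of indecomposable covers of $G$, compatibly with $\pi$), and $\theta(L) = \omega(\theta_0(L)) = \prod_{i \in \Ibar} \Kbar_i$.

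To finish, I would simply relabel the output family $(\etabar_i \colon \Hbar_i \onto G)_{i \in I}$, the index subset $\Ibar$, and the kernels $\Kbar_i$ as $(\eta_i \colon H_i \onto G)_{i \in I}$, $I'$, and $K_i = \Ker \pr_{I, I \smallsetminus \{i\}}$, yielding precisely the asserted $\theta$ with $\theta(L) = \prod_{i \in I'} K_i$. The only points to check are the routine observations that $\theta_0(L)$ is normal and lies in $\Ker \eta_I$ (immediate since $\theta_0$ is a $G$-isomorphism) and that the two intertwining relations $\eta_I \circ \theta_0 = \pi$ and $\etabar_I \circ \omega = \eta_I$ compose correctly; the ``main obstacle,'' such as it is, is merely this bookkeeping of structure maps rather than any substantive difficulty.
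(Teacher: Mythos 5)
Your proof is correct and takes essentially the same route as the paper: the paper reduces via Proposition~\ref{fundamental is fiber product} to the case where $\pi$ is itself the structure map of a fiber product of indecomposables and then cites Corollary~\ref{isom with subgroup gen}, which is exactly your composition $\theta = \omega \circ \theta_0$ with the reduction step spelled out explicitly. Your invocation of Corollary~\ref{fiber product is fundamental} in place of Proposition~\ref{fundamental is fiber product} is immaterial, since the direction you need (fundamental implies $G$-isomorphic to a fiber product of indecomposable covers) is precisely that proposition, and your bookkeeping of the intertwining relations $\eta_I \circ \theta_0 = \pi$ and $\etabar_I \circ \omega = \eta_I$ is accurate.
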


\begin{proof}
By Proposition~\ref{fundamental is fiber product}
we may assume that
$\pi \colon H \onto G$
is the fiber product of indecomposable covers of $G$.
So the assertion follows by Corollary~\ref{isom with subgroup gen}.
\end{proof}

\begin{Corollary}\label{fundament quotient}
In a semi-cartesian square \eqref{two taus},
if $\pi$ is fundamental, then so is $\pi'$.
\end{Corollary}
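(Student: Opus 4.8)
The plan is to read this off from Proposition~\ref{gen epi}. Denote by $\tau\colon H\onto G$ and $\tau'\colon H'\onto G'$ the left- and right-hand vertical maps of the semi-cartesian square \eqref{two taus}, by $\theta\colon H\onto H'$ its top map, and by $\pi\colon G\onto G'$ its bottom map; the hypothesis is that the left vertical map $\tau$ is fundamental, and the goal is that the right vertical map $\tau'$ is fundamental.

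First I would put $\tau$ into standard shape. By Corollary~\ref{fiber product is fundamental} there is an isomorphism $\omega_0\colon H\to\fprod{i\in I}H_i$ onto a fiber product of indecomposable covers of $G$ with $\eta_I\circ\omega_0=\tau$. Replacing $H$ by $\fprod{i\in I}H_i$ and $\theta$ by $\theta\circ\omega_0^{-1}$ turns the square into one whose left vertical map is the structure map $\eta_I$; since $\omega_0^{-1}$ is surjective, Lemma~\ref{expand}(a) guarantees that the transported square is again semi-cartesian. Thus I may assume $\tau=\eta_I\colon\fprod{i\in I}H_i\onto G$, so that the square is exactly of the form \eqref{another} with $\rho=\theta\circ\omega_0^{-1}$ and $\eta'=\tau'$.

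Then I would invoke Proposition~\ref{gen epi} directly: it produces a family $\calH'=(\eta'_i\colon H'_i\onto G')_{i\in I'}$ of indecomposable epimorphisms together with an isomorphism $\omega'\colon H'\to\fprod[G']{i\in I'}H'_i$ such that $\eta'_{I'}\circ\omega'=\tau'$ (this is the commutativity of the right-hand face of \eqref{triangle}). Hence $\tau'\isom_{G'}\eta'_{I'}$, i.e.\ $\tau'$ is isomorphic over $G'$ to the structure map of a fiber product of indecomposable covers of $G'$, and therefore $\tau'$ is fundamental by the converse direction of Corollary~\ref{fiber product is fundamental}.

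The argument is essentially immediate once Proposition~\ref{gen epi} is available, so I do not expect a genuine obstacle, only bookkeeping. The one point that needs care is the reduction step: Proposition~\ref{gen epi} is stated for a square whose upper-left corner is literally a fiber product with structure map $\eta_I$, whereas fundamentality of $\tau$ supplies this only up to an isomorphism over $G$; so I must transport the square along $\omega_0$ and confirm via Lemma~\ref{expand}(a) that semi-cartesianness is preserved. The isomorphism invariance of fundamentality at the very end costs nothing, since it is built into the ``if and only if'' of Corollary~\ref{fiber product is fundamental}.
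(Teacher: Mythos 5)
Your proof is correct and follows essentially the same route as the paper: the paper likewise writes the fundamental left-hand map as a fiber product of indecomposables (Proposition~\ref{fundamental is fiber product}), feeds the semi-cartesian square into Proposition~\ref{gen epi} to present $H'$ as a fiber product of indecomposable extensions of $G'$, and concludes with Corollary~\ref{fiber product is fundamental}. The only difference is that you spell out the transport of the square along the isomorphism $\omega_0$ and its compatibility with semi-cartesianness via Lemma~\ref{expand}(a), a bookkeeping step the paper leaves implicit.
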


\begin{proof}
By Proposition~\ref{fundamental is fiber product},
$H$ is the fiber product of indecomposable extensions of $G$.
As \eqref{two taus} is semi-cartesian,
$\Ker \theta_0 = \pi(\Ker \theta)$.
Hence, by Proposition~\ref{gen epi},
$H'$ is the fiber product of indecomposable extensions of $G'$.
By Corollary~\ref{fiber product is fundamental},\
$\pi'$ is fundamental.
\end{proof}

\begin{Proposition}\label{fundament characterization}
Let
$\rho \colon H \onto G_1$,
$\pibar \colon G_1 \onto G$,
and $\pi = \pibar \circ \rho \colon H \onto G$
be epimorphisms.
Assume that
$\pibar$ is fundamental.
Then
$\pibar$ is the fundament of $\pi$ by $\rho$
if and only if
there is no
commutative diagram
\begin{equation}\label{no double}
\xymatrix{
H \arr[r]_{\rho} \arr[d]^{\gamma} \arr[rd]^{\pi} & G_1 \arr[d]_{\pibar}
\\
H_{0} \arr[r]^{\eta_{0}} & G
}
\end{equation}
with
a semi-cartesian square
and
indecomposable $\eta_{0}$.
\end{Proposition}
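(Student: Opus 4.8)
The plan is to move everything into the lattice of normal subgroups of $H$ contained in $\Ker\pi$ and reduce the statement to a single comparison between $\Ker\rho$ and $\calM(\pi)$. Since $\pi=\pibar\circ\rho$ is assumed, the definition of ``$\pibar$ is the fundament of $\pi$ by $\rho$'' amounts exactly to the equality $\Ker\rho=\calM(\pi)$. I would identify $G=H/\Ker\pi$ and $G_1=H/\Ker\rho$, so that $\Ker\rho\le\Ker\pi$, the map $\pibar$ is the canonical projection with $\Ker\pibar=\Ker\pi/\Ker\rho$, and, by Lemma~\ref{trivial}, $\calN_{\Ker\pi}(H)$ is precisely the set of $N\normal H$ with $N<\Ker\pi$ and $H/N\onto G$ indecomposable.

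The first key step exploits the hypothesis that $\pibar$ is fundamental. By the correspondence theorem the members of $\calN_{\Ker\pibar}(G_1)$ are exactly the $N/\Ker\rho$ with $N\in\calN_{\Ker\pi}(H)$ and $\Ker\rho\le N$: any normal subgroup of $H$ lying strictly between such an $N$ and $\Ker\pi$ automatically contains $\Ker\rho$, so maximality among subgroups containing $\Ker\rho$ coincides with maximality in $\calN_{\Ker\pi}(H)$. Hence $\calM(\pibar)=\big(\bigcap_{N\in\calN_{\Ker\pi}(H),\,N\ge\Ker\rho}N\big)/\Ker\rho$, and $\calM(\pibar)=1$ gives $\bigcap_{N\ge\Ker\rho}N=\Ker\rho$. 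Intersecting over the possibly larger family of all $N\in\calN_{\Ker\pi}(H)$ then yields $\calM(\pi)\le\Ker\rho$, with equality if and only if $\Ker\rho\le N$ for every $N\in\calN_{\Ker\pi}(H)$. Consequently $\pibar$ fails to be the fundament of $\pi$ by $\rho$ exactly when there is some $N\in\calN_{\Ker\pi}(H)$ with $\Ker\rho\not\le N$.

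The second key step matches this condition with the existence of diagram~\eqref{no double}. Evaluating the semi-cartesian condition (Definition~\ref{semi-cartesian}(c)) for the square with $\eta=\rho$, $\phi=\pibar$, $\beta=\gamma$, $\alpha=\eta_0$ shows it is equivalent to $\rho(\Ker\gamma)=\Ker\pibar$, i.e.\ to $(\Ker\gamma)\Ker\rho=\Ker\pi$. For the forward direction, given $N\in\calN_{\Ker\pi}(H)$ with $\Ker\rho\not\le N$ I take $\gamma\colon H\onto H_0=H/N$ and let $\eta_0$ be the induced map; then $\eta_0$ is indecomposable by Lemma~\ref{trivial}, and $N<N\Ker\rho\le\Ker\pi$ forces $N\Ker\rho=\Ker\pi$ by maximality of $N$, so the square is semi-cartesian. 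For the converse, given such a diagram I set $N=\Ker\gamma$; indecomposability of $\eta_0$ gives $N\in\calN_{\Ker\pi}(H)$ and $N<\Ker\pi$ (Lemma~\ref{trivial} again), while the semi-cartesian equality $N\Ker\rho=\Ker\pi\ne N$ forces $\Ker\rho\not\le N$. Combining the two steps gives the claimed equivalence.

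The main obstacle, and essentially the only delicate point, is the bookkeeping in the first step: correctly identifying $\calN_{\Ker\pibar}(G_1)$ with $\{N\in\calN_{\Ker\pi}(H):N\ge\Ker\rho\}$ and tracking the degenerate cases (such as $\Ker\pi=1$ or $\Ker\rho=\Ker\pi$) under the convention that an empty intersection inside $\Ker\pi$ equals $\Ker\pi$. Once the equivalence ``$\pibar$ is the fundament $\iff$ every $N\in\calN_{\Ker\pi}(H)$ contains $\Ker\rho$'' is secured, the role of the semi-cartesian hypothesis is merely to upgrade $\Ker\rho\not\le N$ to the stronger-looking $N\Ker\rho=\Ker\pi$, which maximality of $N$ renders automatic, so no further effort is needed there.
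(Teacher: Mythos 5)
Your proof is correct and takes essentially the same route as the paper: your key equivalence ``$\pibar$ is the fundament of $\pi$ by $\rho$ iff $\Ker\rho \le N$ for every $N \in \calN_{\Ker\pi}(H)$'' is exactly the paper's comparison of the families $\calF \subseteq \calF'$ obtained by pulling the fundamentality of $\pibar$ back along $\rho$, and your passage between such an $N$ and the semi-cartesian square (taking $\gamma$ the quotient by $N$, with maximality of $N$ upgrading $\Ker\rho \not\le N$ to $N\Ker\rho = \Ker\pi$, and conversely setting $N = \Ker\gamma$) matches the paper's argument step for step. The only cosmetic difference is bookkeeping: you record the first step as $\calM(\pi) \le \Ker\rho$ with equality iff every $N \in \calN_{\Ker\pi}(H)$ contains $\Ker\rho$, whereas the paper compares the intersections $\bigcap\calF = \Ker\rho$ and $\bigcap\calF' = \calM(\pi)$ directly.
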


\begin{proof}
Suppose that
there is a semi-cartesian square
\eqref{no double}
with $\eta_{0}$ indecomposable.
Then
$(\Ker \gamma) (\Ker \rho) = \Ker \pi$.
As $\pi = \eta_0 \circ \gamma$
and $\eta_0$ is not an isomorphism,
$\Ker \pi$ strictly contains $\Ker \gamma$.
Hence
$\Ker \rho \not\le \Ker \gamma$.
But $\Ker \gamma \in \calN_{\Ker \pi}(H)$,
hence
$\Ker \rho \ne \calM(\pi)$.

Conversely,
suppose that
$\Ker \rho \ne \calM(\pi)$.

As $\pibar \colon G_1 \onto G$ is fundamental,
the intersection of the groups in
$$
\{\Nbar \normal G_1 \st \Nbar \le \Ker \pibar,\ G_1/\Nbar \onto G
\textnormal{ is indecomposable}\}
$$
is $1$.
Taking the inverse image of this set under $\rho$ we get that
the intersection of the groups in
$$
\calF =
\{N \normal H \st \Ker \rho \le N \le \Ker \pi,\ H/N \onto G
\textnormal{ is indecomposable}\}
$$
is $\Ker \rho$.
But $\calM(\pi)$ is the intersection of the groups in
$$
\calF' =
\{N \normal H \st N \le \Ker \pi,\ H/N \onto G
\textnormal{ is indecomposable}\}.
$$
So
$\calF \ne \calF'$.
Clearly 
$\calF \subseteq \calF'$.
Thus there is
$N \in \calF'$
such that
$\Ker \rho \not\le N$,
that is,
$N$ is properly contained in
$N \Ker \rho$.
As $N$ is a maximal normal subgroup of $H$
contained in $\Ker \pi$,
we have 
$N \Ker \rho = \Ker \pi$.

Put
$H_0 = H/N$,
let $\gamma \colon H \onto H_0$ be the quotient map,
and let
$\eta_0 \colon H_0 \onto G$
be the epimorphism induced from $\pi$.
Then \eqref{no double} is a semi-cartesian square.
\end{proof}

\section{Fundament series}\label{fundament series}

\begin{Definition}\label{def fundament series}
The \textbf{fundament kernel series} of 
an epimorphism $\pi \colon H \onto G$
of profinite groups
is the descending sequence
of normal subgroups of $H$
\begin{equation*}
M_0(\pi) \ge M_1(\pi) \ge M_2(\pi) \ge \cdots
\end{equation*}
inductively defined by
$M_0(\pi) = \Ker \pi$,
and $M_i(\pi)$
is the fundament kernel of the quotient map
$H \onto H/M_{i-1}(\pi)$,
for each $i \ge 1$,
that is,
$M_i(\pi)$ is the intersection of all
$N \in \calN_{M_{i-1}(\pi)}(H)$.
In particular,
$M_1(H) = \calM(\pi)$.

Put $G_i = H/M_i(\pi)$ for each $i \ge 0$.
The sequence of the induced quotient maps
\begin{equation*}
\xymatrix{
\cdots \arr[r]^{\pi_3}
& G_2 \arr[r]^{\pi_2}
& G_1 \arr[r]^{\pi_1}
& G_0\rlap{ $ = G$}
}
\end{equation*}
is the \textbf{fundament series} of $\pi$.
We call $\pi_k$
the \textbf{$k$-th fundament} of $\pi$.
Thus
$\pi_k$ is the fundament of the quotient map
$H \onto G_{k-1}$.
\end{Definition}

\begin{Proposition}\label{invlim}
We have $\bigcap_{i=0}^\infty M_i(\pi) = 1$
and hence $H = \varprojlim_{i} G_i$.
\end{Proposition}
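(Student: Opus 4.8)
The plan is to argue by contradiction. Set $M = \bigcap_{i=0}^\infty M_i(\pi)$; this is a closed normal subgroup of $H$ contained in $\Ker \pi = M_0(\pi)$, and clearly $M \le M_i(\pi)$ for every $i$. Suppose $M \ne 1$. By Remark~\ref{rem: minimal normal subgroup}(d) there is $N \in \calN_M(H)$, i.e.\ a normal subgroup $N \normal H$ maximal subject to $N < M$; by Lemma~\ref{trivial} the quotient $M/N$ is a finite minimal normal subgroup of $H/N$, so in particular $N$ is open in $M$ and $M \not\le N$. Since $N \le M \le M_i(\pi)$ for all $i$, every element of $M$ lies in every $M_i(\pi)$; the whole difficulty is to produce, for some $i$, a member of $\calN_{M_i(\pi)}(H)$ that does not contain $M$, which would force $M \le M_{i+1}(\pi) \le N < M$.

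Next I would separate $N$ from $M$ by an open subgroup. Because $N$ is open in $M$, there is an open normal $V \normal H$ with $V \cap M \le N$; replacing $V$ by $VN$ and using Dedekind's modular law I may assume $N \le V$ and $V \cap M = N$, so that $M \not\le V$. Now I pass to the finite quotient $H/V$: the images $M_i(\pi)V/V$ form a descending chain in the finite group $H/V$, hence are eventually constant, and by the standard compactness identity $\bigcap_i M_i(\pi)V = \big(\bigcap_i M_i(\pi)\big)V = MV$ their common eventual value is $MV/V$. Thus there is an $i$ with $M_i(\pi)V = MV$.

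The key step is to show that for such an $i$ the subgroup $N_i := M_i(\pi) \cap V$ lies in $\calN_{M_i(\pi)}(H)$. From $M_i(\pi)V = MV$ together with $M \le M_i(\pi)$, the modular law gives $M_i(\pi) = M \cdot N_i$, while $M \cap N_i = M \cap V = N$. Hence $mN \mapsto mN_i$ is an isomorphism $M/N \to M_i(\pi)/N_i$ of groups, equivariant for the conjugation action of $H$; therefore $M_i(\pi)/N_i$ has no proper nontrivial $H$-invariant subgroup, since $M/N$ has none. As $H$-invariant subgroups of $M_i(\pi)/N_i$ correspond to normal subgroups of $H$ between $N_i$ and $M_i(\pi)$, this says exactly that $M_i(\pi)/N_i$ is minimal normal in $H/N_i$, i.e.\ $N_i \in \calN_{M_i(\pi)}(H)$. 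Consequently $M \le M_{i+1}(\pi) = \bigcap_{N' \in \calN_{M_i(\pi)}(H)} N' \le N_i \le V$, contradicting $M \not\le V$. This forces $M = 1$.

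Finally, $H = \varprojlim_i G_i$ follows formally: the $M_i(\pi)$ are closed normal subgroups forming a descending chain with $\bigcap_i M_i(\pi) = 1$, so the canonical continuous homomorphism $H \to \varprojlim_i G_i = \varprojlim_i H/M_i(\pi)$ has trivial kernel and compact, hence closed, image projecting onto every $G_i$, and is therefore an isomorphism of profinite groups. I expect the main obstacle to be the key step above: arranging $V$ so that $N_i = M_i(\pi) \cap V$ is \emph{genuinely maximal} below $M_i(\pi)$ rather than merely a proper normal subgroup. This is exactly where the minimal-normality of $M/N$ and the stabilization $M_i(\pi)V = MV$ must be combined, via the equivariant isomorphism $M/N \cong M_i(\pi)/N_i$.
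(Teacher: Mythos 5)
Your proof is correct, but it is organized quite differently from the paper's. The paper argues directly: for each open $U \normal H$ it shows that the chain of images $M_iU/U$ in the finite group $H/U$ \emph{strictly} decreases as long as it is nontrivial, the descent step being Lemma~\ref{image and preimage}(c), which pulls back an $\Nbar \in \calN_{\Mbar}(H/U)$ to an $N \in \calN_{M_{i-1}}(H)$ with $\rho(N) = \Nbar$; hence $M_i \le U$ for some $i$, and $\bigcap_i M_i \le \bigcap_U U = 1$. You instead argue by contradiction from the limit object: assuming $M = \bigcap_i M_i \ne 1$, you choose $N \in \calN_M(H)$, separate it by an open normal $V$ with $V \cap M = N$, invoke the compactness identity $\bigcap_i M_iV = MV$ to find an index where $M_iV = MV$ stabilizes, and then show by the modular law and the $H$-equivariant isomorphism $M/N \cong M_i/N_i$ (with $N_i = M_i \cap V$) that $N_i \in \calN_{M_i}(H)$, forcing $M \le M_{i+1} \le N_i \le V$, a contradiction. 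Your key step is in effect a hand-proved special case of the paper's Lemma~\ref{image and preimage}(c): taking $\rho \colon H \onto H/V$, the stabilization gives $M_iV/V = MV/V \cong M/N$, which is minimal normal in $H/V$, so $\Nbar = 1 \in \calN_{\Mbar}(H/V)$ and the lemma returns exactly your $N_i = \rho^{-1}(1) \cap M_i$ — citing it would have shortened your argument. What each route buys: the paper's strict-descent induction is constructive and even effective (modulo each open $U$, the series trivializes within at most the length of a chain of normal subgroups of $H/U$, with no appeal to the intersection $M$ or to stabilization), whereas your version concentrates all the work at a single index at the cost of the extra compactness identity $\bigcap_i M_iV = \big(\bigcap_i M_i\big)V$, which you correctly use and which does hold for descending chains of closed subgroups in a profinite group. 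Your closing deduction of $H = \varprojlim_i G_i$ matches the paper's.
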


\begin{proof}
Put $M_i = M_i(\pi)$ for each $i$.

The second assertion follows from the first one.
To prove the first one,
it suffices to show, for every open $U \normal H$,
that there is $i$ such that $M_i \le U$,
that is, $M_i U = U$.
Thus, as $H/U$ is finite,
it suffices to show that
if $M_{i-1}U \ne U$, 
then
$M_iU < M_{i-1}U$. 

So put $M = M_{i-1}$ and assume $MU \ne U$.
Let $\rho \colon H \onto \Hbar \defeq H/U$ be the quotient map.
By assumption,
$\Mbar \defeq \rho(M) \ne 1$,
hence, by Remark~\ref{rem: minimal normal subgroup}(d),
there is $\Nbar \in \calN_\Mbar(\Hbar)$.
By Lemma~\ref{image and preimage}(c)
there is $N \in \calN_M(H)$
such that $\rho(N) = \Nbar$.

By the inductive definition of $M_i$
we have $M_i \le N$.
It follows that
$M_iU /U = \rho(M_i) \le \Nbar < \Mbar = M_{i-1}U/U$,
that is,
$M_i U < M_{i-1}U$.
\end{proof}

\begin{Lemma}\label{fundaments image}
Let
\begin{equation}\label{ex2 square}
\xymatrix@=20pt{
H 
\arr[rr]_{\theta} \arr[d]^{\pi}
&& H' \arr[d]^{\pi'}
\\
G \arr[rr]^{\theta_0} && G' \\
}
\end{equation}
be a commutative diagram.
Then 
$\theta(M_k(\pi)) \le M_k(\pi')$,
for every $k \ge 0$.
If \eqref{ex2 square} is semi-cartesian,
then
$\theta(M_k(\pi)) = M_k(\pi')$,
for every $k \ge 0$.
\end{Lemma}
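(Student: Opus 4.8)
The plan is to argue by induction on $k$, handling the inequality (which holds for an arbitrary commutative square of epimorphisms) and the equality (under the semi-cartesian hypothesis) in parallel. Throughout write $M = M_{k-1}(\pi)$ and $M' = M_{k-1}(\pi')$, and recall that $M_k(\pi) = \bigcap\calN_M(H)$ while $M_k(\pi') = \bigcap\calN_{M'}(H')$. For the base case $k=0$ we have $M_0(\pi) = \Ker\pi$ and $M_0(\pi') = \Ker\pi'$; since $\pi'\circ\theta = \theta_0\circ\pi$, any $x\in\Ker\pi$ satisfies $\pi'(\theta(x)) = \theta_0(\pi(x)) = 1$, giving $\theta(M_0(\pi))\le M_0(\pi')$, and when the square is semi-cartesian Definition~\ref{semi-cartesian}(c) upgrades this to $\theta(\Ker\pi) = \Ker\pi'$.

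For the inductive step of the inequality I would fix $N'\in\calN_{M'}(H')$ and show $\theta(M_k(\pi))\le N'$; intersecting over all such $N'$ yields $\theta(M_k(\pi))\le M_k(\pi')$. The inductive hypothesis gives $\theta(M)\le M'$. If $\theta(M)\le N'$, then $M_k(\pi)\le M\le\theta^{-1}(N')$ and we are done. Otherwise the image of $M$ in $H'/N'$ is a nontrivial normal subgroup contained in $M'/N'$, which is minimal normal by Lemma~\ref{trivial}, so it equals $M'/N'$. Setting $L = \theta^{-1}(N')\cap M$ and passing to the surjection $H/L\onto H'/N'$ induced by $\theta$, one checks that $M/L$ meets its kernel trivially and maps onto the minimal normal subgroup $M'/N'$; Lemma~\ref{image and preimage}(b) then makes $M/L$ minimal normal in $H/L$, whence $L\in\calN_M(H)$ by Lemma~\ref{trivial}. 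Therefore $M_k(\pi) = \bigcap\calN_M(H)\le L\le\theta^{-1}(N')$, again giving $\theta(M_k(\pi))\le N'$.

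For the equality I may assume by induction that $\theta(M) = M'$ (the semi-cartesian hypothesis is really only used at the base step to initialize this). The inequality being established, it remains to prove $M_k(\pi')\le\theta(M_k(\pi))$. Here I would exploit that $q\colon H/M_k(\pi)\onto H/M$ is the fundament of $H\onto H/M$, hence is fundamental. Put $K_0 = M\cap\Ker\theta = \Ker(\theta|_M)$; quotienting $H/M_k(\pi)$ by the image of $K_0$, over $H/M$, produces a semi-cartesian square whose lower edge is the identity (Remark~\ref{iso is semi}), so by Corollary~\ref{fundament quotient} the resulting cover of $H/M$ is again fundamental. Unwinding the correspondence theorem, this says exactly that $\bigcap\{N\in\calN_M(H)\st N\supseteq K_0\} = K_0\cdot M_k(\pi)$. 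On the other hand, Lemma~\ref{image and preimage}(a),(c) show that $N\mapsto\theta(N)$ carries $\{N\in\calN_M(H)\st N\supseteq K_0\}$ onto $\calN_{M'}(H')$, with $\theta(\theta^{-1}(N')\cap M) = N'$ providing the reverse assignment; since every such $N$ contains $K_0$, taking images commutes with the intersection, so $\theta\bigl(\bigcap\{N\supseteq K_0\}\bigr) = \bigcap\calN_{M'}(H') = M_k(\pi')$. Combining the two, $\theta(M_k(\pi)) = \theta\bigl(K_0\cdot M_k(\pi)\bigr) = \theta\bigl(\bigcap\{N\supseteq K_0\}\bigr) = M_k(\pi')$, where the first equality uses $K_0\le\Ker\theta$.

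The routine direction is the inequality. The main obstacle is the reverse inclusion in the equality case: the maximal subgroups $N\in\calN_M(H)$ on which $\theta$ fails to be injective over $M$ (precisely those not containing $K_0$) could a priori shrink $\bigcap\calN_M(H)$, and hence its image. What rescues the argument is that the fundament $H/M_k(\pi)\onto H/M$ is fundamental and that this property descends to its quotient by $K_0$ via Corollary~\ref{fundament quotient}; the resulting identity $\bigcap\{N\supseteq K_0\} = K_0\cdot M_k(\pi)$ shows that these extra subgroups leave the image of the fundament kernel unchanged.
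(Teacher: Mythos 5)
Your proof is correct, and its two halves relate to the paper's in different ways. The inequality half is essentially the paper's own argument in different clothing: your $L=\theta^{-1}(N')\cap M$ is exactly the paper's $M\cap N$ with $N=\theta^{-1}(N')$, and where the paper argues via maximality (if $M\not\le N$ then $NM=\theta^{-1}(M')$, whence $M\cap N\in\calN_M(H)$), you pass through minimal normal subgroups and Lemma~\ref{image and preimage}(b); the two are interchangeable. The equality half, however, takes a genuinely different route. The paper pushes forward along $\theta$: setting $L'=\theta(M_k(\pi))$, it forms the semi-cartesian square with top row $H/M_k(\pi)\onto H'/L'$ and bottom row $H/M\onto H'/M'$, applies Corollary~\ref{fundament quotient} once, directly in $H'$, to conclude that $H'/L'\onto H'/M'$ is fundamental, hence $L'=\bigcap\{N'\in\calN_{M'}(H')\st N'\ge L'\}\ge M_k(\pi')$ — no lattice transfer is needed. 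You instead apply Corollary~\ref{fundament quotient} on the source side (quotienting by $K_0=M\cap\Ker\theta$ over the identity map of $H/M$) to obtain the intrinsic identity $\bigcap\{N\in\calN_M(H)\st N\supseteq K_0\}=K_0\,M_k(\pi)$, and then transfer through the bijection $N\mapsto\theta(N)$. This works, but it makes you responsible for two verifications the paper's route sidesteps, both of which you assert rather than prove: (i) forward well-definedness of the bijection — for $N\in\calN_M(H)$ with $N\supseteq K_0$ one must check $\theta(N)\ne M'$ before Lemma~\ref{image and preimage}(a) applies to $H/N\onto H'/\theta(N)$; this holds by the modular law, since $\theta(N)=\theta(M)$ would give $M=M\cap N\Ker\theta=N(M\cap\Ker\theta)=NK_0=N$, contradicting $N<M$; and (ii) the commuting of $\theta$ with the (possibly infinite) intersection, which is legitimate because every $N$ in the family lies between $K_0=\Ker(\theta|_M)$ and $M$, so the subgroup correspondence for the restriction $\theta|_M\colon M\onto M'$ applies. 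Since both checks go through, these are expository rather than mathematical gaps. What your route buys is an intrinsic reformulation (fundamentality of the fundament survives quotienting by any normal $K_0\le M$), which makes explicit — as does the paper's induction, though silently — that the semi-cartesian hypothesis is consumed entirely at the base case $\theta(\Ker\pi)=\Ker\pi'$; what the paper's route buys is brevity: a single application of Corollary~\ref{fundament quotient} in $H'$ with no bijection bookkeeping.
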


\begin{proof}
By induction on $k$.
For $k = 0$ 
the assertions follow from definitions.
Let $k \ge 1$.
Put
$M = M_{k-1}(\pi)$
and
$M' = M_{k-1}(\pi')$,
and
assume that
$\theta(M) \le M'$.
Then
$M \le \theta^{-1}(M')$.

By definition,
$M_k(\pi) = \bigcap_{N \in \calN_{M}(H)} N$
and
$M_k(\pi') = \bigcap_{N' \in \calN_{M'}(H')} N'$.

For
$N' \in \calN_{M'}(H')$
let
$N = \theta^{-1}(N')$.
Then
$N \in \calN_{\theta^{-1}(M')}(H)$.
If $M \le N$, then
$M_k(\pi) \le M \le N$.
If
$M \not\le N$,
then,
as $N$ is a maximal normal subgroup of $H$
contained in $\theta^{-1}(M')$,
we have
$N M = \theta^{-1}(M')$.
Thus 
$N \in \calN_{N M}(H)$,
whence
$M \cap N \in \calN_M(H)$.
Therefore
$M_k(\pi) \le M \cap N \le N$.

Thus,
$M_k(\pi) \le 
\bigcap_{N' \in \calN_{M'}(H')}
\theta^{-1}(N')
 =\theta^{-1}(M_k(\pi'))$.
It follows that
$\theta(M_k(\pi)) \le M_k(\pi')$.

Now assume that
$\theta(M) = M'$.
Let $L' = \theta(M_k(\pi))$.
Then, by the above,
$L' \le M_k(\pi') \le M'$
and $L' \normal H'$,
and the square
\begin{equation*}
\xymatrix@=12pt{
H/M_k(\pi) \arr[r] \arr[d] & H'/L' \arr[d]
\\
H/M \arr[r] & H'/M'
}
\end{equation*}
with
vertical quotient maps
and horizontal maps induced from $\theta$,
is semi-cartesian.
Hence, by Corollary~\ref{fundament quotient},
$H'/L' \onto H'/M'$
is fundamental.
Therefore
$L'$ is the intersection of groups in 
$\calN_{M'}(H')$,
and hence 
$L' \ge M_k(\pi')$.
Thus $L' = M_k(\pi')$.
\end{proof}

When, instead of normal subgroups,
we consider the quotient maps modulo these subgroups,
we get the following equivalent formulation:

\begin{Theorem}\label{two fundament series}
Let \eqref{ex2 square}
be a commutative diagram.
Let
\begin{equation*}
\xymatrix{
\cdots \arr[r]^{\pi_3}
& G_2 \arr[r]^{\pi_2}
& G_1 \arr[r]^{\pi_1}
& G_0 = G 
}
\textnormal{ and }
\xymatrix{
\cdots \arr[r]^{\pi'_3}
& G'_2 \arr[r]^{\pi'_2}
& G'_1 \arr[r]^{\pi'_1}
& G'_0 = G'
}
\end{equation*}
be the fundament series
of $\pi \colon H \onto G$ and $\pi' \colon H' \onto G'$, respectively.
Then
\begin{itemize}
\item[(a)]
there is a commutative diagram
\begin{equation}\label{two series diagram}
\xymatrix{
H \arr@/^1.1pc/[rr]_{\rho_k}
\arr@/^1.7pc/[rrr]^(.7){\rho_{k-1}}
\arr[d]_{\theta}
& \cdots \arr[r]_{\pi_{k+1}}
& G_{k} \arr[r]^{\pi_{k}}
\arr[d]_{\theta_{k}}
& G_{k-1} \arr[r]_{\pi_{k-1}}
\arr[d]_{\theta_{k-1}}
& \cdots \arr[r]_{\pi_{3}}
& G_{2} \arr[r]_{\pi_{2}}
\arr[d]_{\theta_{2}}
& G_{1} \arr[r]_{\pi_{1}}
\arr[d]_{\theta_{1}}
& G_{0}
\arr[d]_{\theta_{0}}
\\
H' \arr@/_1.1pc/[rr]^{\rho'_k}
\arr@/_1.7pc/[rrr]_(.7){\rho'_{k-1}}
& \cdots \arr[r]^{\pi'_{k+1}}
& G'_{k} \arr[r]^{\pi'_{k}}
& G'_{k-1} \arr[r]^{\pi'_{k-1}}
& \cdots \arr[r]^{\pi'_{3}}
& G'_{2} \arr[r]^{\pi'_{2}}
& G'_{1} \arr[r]^{\pi'_{1}}
& G'_{0}
}
\end{equation}
\item[(b)]
If 
\eqref{ex2 square}
is a semi-cartesian square,
then so are the squares
$(\rho_k, \theta_k, \theta, \rho'_k)$
and
$(\pi_{k+1}, \theta_k, \theta_{k+1}, \pi'_{k+1})$
for every $k \ge 0$.
\item[(c)]
If $\theta_0$ and $\theta$
are isomorphisms,
then so is $\theta_k$,
for every $k \ge 0$.
\end{itemize}
\end{Theorem}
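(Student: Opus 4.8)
The plan is to read the whole statement off Lemma~\ref{fundaments image}, translating its assertions about the normal subgroups $M_k(\pi)\normal H$ and $M_k(\pi')\normal H'$ into assertions about the induced quotient maps. Write $M_k = M_k(\pi)$ and $M'_k = M_k(\pi')$, so that $G_k = H/M_k$, $G'_k = H'/M'_k$, and $\rho_k\colon H\onto G_k$, $\rho'_k\colon H'\onto G'_k$ are the quotient maps, with $\pi_k,\pi'_k$ the canonical maps between consecutive quotients. For part (a), Lemma~\ref{fundaments image} gives $\theta(M_k)\le M'_k$ for every $k$. Hence the epimorphism $\rho'_k\circ\theta\colon H\onto G'_k$ kills $M_k = \Ker\rho_k$, so it factors uniquely through $\rho_k$ as $\rho'_k\circ\theta = \theta_k\circ\rho_k$ for a unique epimorphism $\theta_k\colon G_k\onto G'_k$. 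This defines the vertical maps and makes the outer squares $\theta_k\circ\rho_k = \rho'_k\circ\theta$ commute by construction. The inner squares $\pi'_k\circ\theta_k = \theta_{k-1}\circ\pi_k$ then commute after precomposing with the epimorphism $\rho_k$: using $\pi_k\circ\rho_k = \rho_{k-1}$ and $\pi'_k\circ\rho'_k = \rho'_{k-1}$ both sides become $\rho'_{k-1}\circ\theta$, and $\rho_k$ may be cancelled. This produces diagram~\eqref{two series diagram}.

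For part (b) we assume \eqref{ex2 square} is semi-cartesian, so Lemma~\ref{fundaments image} upgrades the inclusions to equalities $\theta(M_k) = M'_k$. The one computation carrying the argument is the kernel identity
$$\theta^{-1}(M'_k) = (\Ker\theta)\,M_k,$$
whose inclusion $\supseteq$ uses only $\theta(M_k)\subseteq M'_k$ and whose inclusion $\subseteq$ uses the equality $\theta(M_k)=M'_k$ (if $\theta(h)\in M'_k = \theta(M_k)$, correct $h$ by an element of $M_k$ into $\Ker\theta$). Since $\Ker\theta_k = \theta^{-1}(M'_k)/M_k$, the identity yields $\Ker\theta_k = (\Ker\theta)M_k/M_k = \rho_k(\Ker\theta)$, which is precisely criterion (b) of Definition~\ref{semi-cartesian} for the square $(\rho_k,\theta_k,\theta,\rho'_k)$; hence it is semi-cartesian for every $k$.

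For the inner squares $(\pi_{k+1},\theta_k,\theta_{k+1},\pi'_{k+1})$ I would argue in either of two ways. Directly: the same identity one level up, combined with $M_{k+1}\le M_k$, gives $\theta^{-1}(M'_{k+1})\,M_k = (\Ker\theta)M_k = \theta^{-1}(M'_k)$, which reads $\pi_{k+1}(\Ker\theta_{k+1}) = \Ker\theta_k$, i.e. criterion (b) again. More structurally: the vertical stack of $(\rho_{k+1},\theta_{k+1},\theta,\rho'_{k+1})$ over $(\pi_{k+1},\theta_k,\theta_{k+1},\pi'_{k+1})$ composes to the already-verified semi-cartesian square $(\rho_k,\theta_k,\theta,\rho'_k)$, and one applies Lemma~\ref{two semi squares}(a), noting that semi-cartesianness is invariant under transposing a square since transposition merely interchanges criteria (b) and (c) of Definition~\ref{semi-cartesian}. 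For part (c), if $\theta$ and $\theta_0$ are isomorphisms then \eqref{ex2 square} is cartesian: $\Ker\theta_0 = 1$ forces $\theta^{-1}(\Ker\pi') = \Ker\pi$, so $\theta$ maps $\Ker\pi$ bijectively onto $\Ker\pi'$ (Definition~\ref{cartesian}(d)), hence it is semi-cartesian. Lemma~\ref{fundaments image} then gives $\theta(M_k) = M'_k$, and $\theta$, being a bijection $H\to H'$ carrying $M_k$ onto $M'_k$, descends to a bijective homomorphism $\theta_k$, i.e. an isomorphism.

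The only place the semi-cartesian hypothesis is genuinely consumed is the inclusion $\subseteq$ in the kernel identity of part (b); everything else is formal bookkeeping of induced maps. I therefore expect the main (modest) obstacle to be the correct translation of the normal-subgroup equalities of Lemma~\ref{fundaments image} into the kernel identities for the induced maps—especially for the inner squares, where both $M_{k+1}$ and $M_k$ intervene—so I would isolate $\theta^{-1}(M'_k) = (\Ker\theta)M_k$ as the first step and let the outer semi-cartesian claim, the inner semi-cartesian claim, and the isomorphism claim of part (c) all follow from it.
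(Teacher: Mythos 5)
Your proposal is correct and follows essentially the same route as the paper: part (a) and the outer squares of part (b) come straight from Lemma~\ref{fundaments image} (your explicit kernel identity $\theta^{-1}(M_k(\pi'))=(\Ker\theta)M_k(\pi)$ is just an unpacking of its second assertion, since semi-cartesianness of $(\rho_k,\theta_k,\theta,\rho'_k)$ is literally the equality $\theta(M_k(\pi))=M_k(\pi')$), and part (c) is the specialization where $\theta_0$ being an isomorphism makes \eqref{ex2 square} semi-cartesian. For the inner squares the paper uses exactly your second, structural alternative --- Lemma~\ref{two semi squares}(a) applied to the horizontal concatenation of the $k{+}1$ and inner squares, whose composite is the already-established outer square --- while your direct computation with $M_{k+1}(\pi)\le M_k(\pi)$ is an equivalent substitute.
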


\begin{proof}
(a)
For every $k \ge 0$ we have
$G_k = H/M_k(\pi)$
and
$G'_k = H'/M_k(\pi')$,
and $\pi_k$, $\pi'_k$, $\rho_k$ and $\rho'_k$ 
are the quotient maps.
As $\theta(M_k(\pi)) \le M_k(\pi')$,
\break
$\theta$ induces the $\theta_k$ such that the diagram commutes.

(b)
If
$(\pi, \theta_0, \theta, \pi')$
is semi-cartesian,
then so are
$(\rho_k, \theta_k, \theta, \rho'_k)$,
by the second assertion of Lemmma~\ref{fundaments image}.
By Lemmma~\ref{two semi squares},
so are
$(\pi_{k+1}, \theta_k, \theta_{k+1}, \pi'_{k+1})$.

(c)
By (b),
$(\rho_k, \theta_k, \theta, \rho'_k)$
is semi-cartesian.
As
$\theta$ is an isomorphism,
so is $\theta_k$.
\end{proof}

For the next proposition we use the following setup.
Let
\begin{equation}\label{seq fund epi}
\xymatrix{
\cdots \arr[r]^{\pi_3}
& G_2 \arr[r]^{\pi_2}
& G_1 \arr[r]^{\pi_1}
& G_0\rlap{ $ = G$}
}
\end{equation}
be a sequence of fundamental epimorphisms.
Let $H = \varprojlim_{i} G_i$,
and let $\rho_i \colon H \onto G_i$ be the maps 
of the inverse limit,
so that
$\rho_{i-1} = \pi_i \circ \rho_i$,
for every $i \ge 1$.
Also, for all $\ell > k$, let
$\pi_{\ell,k} = \pi_{k+1} \circ \cdots \circ \pi_{\ell}
\colon G_\ell \onto G_k$.
Put $\pi = \rho_0 \colon H \onto G$.
We then ask,
when is \eqref{seq fund epi}
the fundament series of $\pi$?

\begin{Proposition}\label{fundament series characterization}
In the above setup
the following are equivalent:
\begin{itemize}
\item[(a)]
for no $k \ge 1$ there is a semi-cartesian square
\eqref{large}
with indecomposable $\eta_{k-1}$.
\item[(b)]
for no $\ell \ge k$ there is a semi-cartesian square
\eqref{middle}
with indecomposable $\eta_{k-1}$.
\item[(c)]
for no $k \ge 1$ there is a semi-cartesian square
\eqref{small}
with indecomposable $\eta_{k-1}$.
\item[(d)]
\eqref{seq fund epi}
is the fundament series of $\pi$.
\end{itemize}
\noindent\begin{minipage}{.315\linewidth}
\begin{equation}\label{large}
\xymatrix{
H \arr[r]^{\rho_k} \arr[d]_{\sigma} \dotarr[rd]^{\rho_{k-1}} 
& G_{k} \arr[d]^{\pi_k}
\\
H_{k-1} \arr[r]_{\eta_{k-1}} & G_{k-1}
}
\end{equation}
\end{minipage}%
\begin{minipage}{.355\linewidth}
\begin{equation}\label{middle}
\xymatrix{
H \arr[r]^{\rho_\ell} \arr[d]_{\sigma} \dotarr[rd]^{\rho_{k-1}} 
& G_{\ell} \arr[d]^{\pi_{\ell,k-1}}
\\
H_{k-1} \arr[r]_{\eta_{k-1}} & G_{k-1}
}
\end{equation}
\end{minipage}
\begin{minipage}{.315\linewidth}
\begin{equation}\label{small}
\xymatrix{
G_{k+1} \arr[r]^{\pi_{k+1}} \arr[d]_{\tau} & G_k \arr[d]^{\pi_k}
\\
H_{k-1} \arr[r]_{\eta_{k-1}} & G_{k-1}
}
\end{equation}
\end{minipage}
\end{Proposition}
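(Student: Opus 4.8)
The plan is to treat condition (a) as the hub and prove (a)$\iff$(d), (a)$\iff$(b) and (a)$\iff$(c), the last equivalence containing the only genuine difficulty. For (a)$\iff$(d) I would show by induction on $k$ that $M_k(\pi)=\Ker\rho_k$. The case $k=0$ is a definition; for the step, assuming $M_{k-1}(\pi)=\Ker\rho_{k-1}$, the quotient map $H\onto H/M_{k-1}(\pi)$ is exactly $\rho_{k-1}=\pi_k\circ\rho_k$ with $\pi_k$ fundamental, so Proposition~\ref{fundament characterization} applies verbatim with \eqref{large} as the instance of \eqref{no double}: $\pi_k$ is the fundament of $\rho_{k-1}$ by $\rho_k$, i.e.\ $\Ker\rho_k=\calM(\rho_{k-1})=M_k(\pi)$, if and only if there is no semi-cartesian \eqref{large} with indecomposable $\eta_{k-1}$. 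Thus (a) says precisely that every inductive step succeeds, which is (d).

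For (a)$\iff$(b): since $\pi_{k,k-1}=\pi_k$, \eqref{large} is the case $\ell=k$ of \eqref{middle}, giving (b)$\implies$(a) at once; conversely, given a semi-cartesian \eqref{middle}, factoring its right edge $\pi_{\ell,k-1}=\pi_k\circ\pi_{\ell,k}$ and applying Corollary~\ref{twist semi} yields a semi-cartesian square with top $\pi_{\ell,k}\circ\rho_\ell=\rho_k$ and right $\pi_k$, that is \eqref{large}, so (a)$\implies$(b). The easy half of (a)$\iff$(c) is analogous: from a semi-cartesian \eqref{small} with left map $\tau$ put $\sigma=\tau\circ\rho_{k+1}$; as $\rho_{k+1}$ is surjective and $\rho_k=\pi_{k+1}\circ\rho_{k+1}$, Lemma~\ref{expand}(a) promotes \eqref{small} to a semi-cartesian \eqref{large}, giving (a)$\implies$(c).

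The hard direction is (c)$\implies$(a), where the inverse-limit structure is unavoidable. Suppose \eqref{large} is semi-cartesian at some $k$, with $\sigma\colon H\onto H_{k-1}$ and $C=\Ker\eta_{k-1}$ finite. From $\eta_{k-1}\circ\sigma=\rho_{k-1}$ the subgroup $\Ker\sigma$ is open in $\Ker\rho_{k-1}$ with quotient $\cong C$; and since $H=\varprojlim_i G_i$ gives $\bigcap_\ell\Ker\rho_\ell=1$, compactness produces a least $m\ge k$ with $\Ker\rho_m\subseteq\Ker\sigma$. By Corollary~\ref{indecomposable semi} semi-cartesianness of \eqref{large} means $\Ker\rho_k\not\subseteq\Ker\sigma$, so $m\ge k+1$, and minimality gives $\Ker\rho_{m-1}\not\subseteq\Ker\sigma$; pushing to $G_m$ this reads $\Ker\pi_m\not\subseteq\Ker\tau_m$ for the induced $\tau_m\colon G_m\onto H_{k-1}$. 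I would then form the cover $E=H_{k-1}\times_{G_{k-1}}G_{m-2}$, whose projection $q\colon E\onto G_{m-2}$ is indecomposable by Corollary~\ref{cartesian indecomposable}. The crux is that the square on $G_m,G_{m-2},H_{k-1},G_{k-1}$ is semi-cartesian: its transpose has the indecomposable bottom $\eta_{k-1}$ and satisfies $\Ker\pi_{m,m-2}\not\subseteq\Ker\tau_m$ (as $\Ker\pi_m\subseteq\Ker\pi_{m,m-2}$), so Corollary~\ref{indecomposable semi} applies, using the symmetry of conditions (b) and (c) of Definition~\ref{semi-cartesian}. This makes the induced $\tau'\colon G_m\to E$ surjective, and a last application of Corollary~\ref{indecomposable semi} (with $\Ker\pi_m\not\subseteq\Ker\tau'$, which reduces to $\Ker\pi_m\not\subseteq\Ker\tau_m$) shows the one-step square \eqref{small} at index $m-1$, with cover $q$, is semi-cartesian, contradicting (c).

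The main obstacle is exactly this conversion: (c) is local, involving only three consecutive terms, whereas (a) sees all of $H$, so an $H$-level semi-cartesian square must be transported to a single level. Compactness locates the correct finite level $m$, but the cover $\eta_{k-1}$ then sits over $G_{k-1}$ instead of over $G_{m-2}$; repairing this by pulling it back along $\pi_{m-2,k-1}$, and verifying that the resulting square remains semi-cartesian and that the comparison map $\tau'$ is surjective, is the delicate part, which I expect to handle entirely through the transpose symmetry of semi-cartesian squares together with the indecomposable criterion of Corollary~\ref{indecomposable semi}.
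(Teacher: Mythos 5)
Your proof is correct. Three of its four legs coincide with the paper's: (a)$\iff$(d) is exactly the paper's appeal to Proposition~\ref{fundament characterization} (your explicit induction $M_k(\pi)=\Ker\rho_k$ just spells out what the paper compresses into ``by definition''), (b)$\iff$(a) is the same two-liner (the paper cites Lemma~\ref{two semi squares}(a) where you cite Corollary~\ref{twist semi}, an immaterial difference --- your use of \ref{twist semi} with $\pi_{\ell,k-1}=\pi_k\circ\pi_{\ell,k}$ is if anything the cleaner reference), and the passage from \eqref{small} to \eqref{large} via $\sigma=\tau\circ\rho_{k+1}$ and Lemma~\ref{expand}(a) is verbatim the paper's ($\neg$c)$\implies$($\neg$a). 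Where you genuinely diverge is ($\neg$a)$\implies$($\neg$c). The paper pulls $\eta_{k-1}$ back one level at a time, building a tower $H_i=G_i\times_{G_{i-1}}H_{i-1}$ for $k\le i\le m$ with comparison maps $\delta_i\colon G_m\to H_i$, and argues that not all the resulting cartesian squares can be compact (else $\delta_m$ would be onto, forcing the indecomposable $\eta_m$ to be an isomorphism); Proposition~\ref{indecomposable cartesian square} then converts a non-compact square into a splitting $\gamma_i$, producing a semi-cartesian \eqref{small} at some unspecified level $i\in[k,m)$. You instead exploit the \emph{minimality} of the level $m$ with $\Ker\rho_m\subseteq\Ker\sigma$: Corollary~\ref{indecomposable semi}(c) turns semi-cartesianness of \eqref{large} into $\Ker\rho_k\not\subseteq\Ker\sigma$ (so $m\ge k+1$) and minimality into $\Ker\pi_m\not\subseteq\Ker\tau_m$; a single pullback $E=H_{k-1}\times_{G_{k-1}}G_{m-2}$, indecomposable over $G_{m-2}$ by Corollary~\ref{cartesian indecomposable}, replaces the whole tower; Lemma~\ref{expand}(b) gives surjectivity of $\tau'\colon G_m\onto E$ from the semi-cartesian square with corner $G_m$ (which your kernel computation, using $\Ker\pi_m\subseteq\Ker\pi_{m,m-2}$ and the transpose symmetry of conditions (b),(c) of Definition~\ref{semi-cartesian}, correctly certifies); and since $\Ker\tau'=\Ker\tau_m\cap\Ker\pi_{m,m-2}$, one more application of Corollary~\ref{indecomposable semi} makes \eqref{small} semi-cartesian exactly at level $m-1\ge k$, contradicting (c). I checked the details, including the degenerate case $m=k+1$, where $E\cong H_{k-1}$ and $\tau'=\tau_m$: everything goes through. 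The trade-off: your route is shorter, avoids the compactness theory of Definition~\ref{compact cartesian square} entirely in this proof, and pinpoints the violating level explicitly, whereas the paper's tower argument is less constructive about the level but showcases, and reuses, the compactness machinery of Section~\ref{section cartesian}.
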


\begin{proof} 
It will be simpler to work with the negations
($\neg$a), ($\neg$b), ($\neg$c), ($\neg$d)
of 
(a), (b), (c), (d),
respectively.

($\neg$a) $\implies$ ($\neg$b):
Put $\ell = k$.

($\neg$b) $\implies$ ($\neg$a):
As $\pi_{\ell,k-1} = \pi_k \circ \pi_{\ell,k}$,
this follows by
Lemma~\ref{two semi squares}(a).

($\neg$a) $\implies$ ($\neg$c):
Suppose 
there is a semi-cartesian square
\eqref{large}
with indecomposable $\eta_{k-1}$.

As $\Ker \eta_{k-1}$ is finite,
that is,
$1$ is open in $\Ker \eta_{k-1}$,
we get that
$\Ker \sigma = \sigma^{-1}(1)$
is open in
$\Ker \rho_{k-1} = \sigma^{-1}(\Ker \eta_{k-1})$.
But
$\bigcap_m \Ker \rho_m = 1$,
hence
there is $m > k$ such that
$\Ker \rho_m \le \Ker \sigma$.
Thus there is an epimorphism
$\delta_{k-1} \colon G_m \onto H_{k-1}$
such that
$\sigma = \delta_{k-1} \circ \rho_m$,
whence
$\eta_{k-1} \circ \delta_{k-1} = \pi_{m,k-1}$.

\begin{equation*}
\xymatrix{
& H \arr[ldd]_{\sigma} \arr[d]^{\rho_m}
\arr@/^4pc/[dd]^{\rho_{k-1}} 
\\
& G_m \arr[d]
^{\pi_{m,k-1}}
\arr[ld]^{\delta_{k-1}}
\\
H_{k-1} \arr[r]_{\eta_{k-1}} & G_{k-1} 
}
\end{equation*}

Form the following diagram
(without the dotted arrows)
\begin{equation*}
\xymatrix{
G_m \arr@/_1.5pc/@{=}[dd]
\arr@/^2.0pc/[drrrrrrr]^(.8){\delta_{k-1}} 
\ar@/^1.4pc/@{.>}[drrrrrr]^(.7){\delta_{k}} 
\ar@/^1.2pc/@{.>}[drrr]^(.7){\delta_{i}} 
\ar@/^.5pc/@{.>}[drr]^(.6){\delta_{i+1}} 
\ar@/^.3pc/@{.>}[d]^(.4){\delta_{m}} 
\\
H_m \arr[r]^{\beta_m}
\arr[d]^{\eta_{m}}
& \cdots \arr[r]^{\beta_{i+2}}
& H_{i+1} \arr[r]^{\beta_{i+1}}
\arr[d]^{\eta_{i+1}}
& H_{i} \arr[r]^{\beta_{i}}
\arr[d]^{\eta_{i}}
& H_{i-1} \arr[r]^{\beta_{i-1}}
\arr[d]^{\eta_{i-1}}
& \cdots \arr[r]^{\beta_{k+1}}
& H_{k} \arr[r]^{\beta_{k}}
\arr[d]^{\eta_{k}}
& H_{k-1}
\arr[d]^{\eta_{k-1}}
\\
G_m \arr[r]^{\pi_m}
& \cdots \arr[r]^{\pi_{i+2}}
& G_{i+1} \arr[r]^{\pi_{i+1}}
& G_{i} \arr[r]^{\pi_{i}}
& G_{i-1} \arr[r]^{\pi_{i-1}}
& \cdots \arr[r]^{\pi_{k+1}}
& G_{k} \arr[r]^{\pi_{k}}
& G_{k-1}
}
\end{equation*}
in which,
recursively,
$H_{i}$ is the fiber product
$G_{i} \times_{G_{i-1}} H_{i-1}$
with respect to
$\pi_{i}$ and $\eta_{i-1}$,
for every $k \le i \le m$.
The universal property of a fiber product
gives a homomorphism $\delta_{i}$,
recursively for every $k-1 \le i \le m$,
such that the above diagram commutes.

The cartesian squares in the diagram
with sides
$\pi_{k},\ldots, \pi_{m}$
cannot be all compact.
Indeed, if all of them were compact,
then, recursively,
$\delta_{k-1},\ldots, \delta_{m}$
would be epimorphisms.
In particular,
$\delta_{m}$ would be an epimorphism,
whence $\eta_m$
would be an isomorphism,
a contradiction, since, recursively,
the $\eta_i$ are indecomposable
and hence not isomorphisms.

Let $k \le i < m$
such that
the square with side $\pi_{i+1}$ is not compact.
By Proposition~\ref{indecomposable cartesian square}
there is
$\gamma_i \colon G_{i+1} \onto H_i$
such that
$\pi_{i+1} = \eta_i \circ \gamma_i$.
Put
$\tau = \beta_i \circ \gamma_i \colon G_{i+1} \onto H_{i-1}$.
This gives diagram \eqref{small},
with $i$ instead of $k$.
As $\gamma_i$ is surjective,
by Lemma~\ref{expand}(a)
this square is semi-cartesian.

($\neg$c) $\implies$ ($\neg$a):
Diagram
\eqref{small} gives diagram
\eqref{large},
with $\sigma \defeq \tau \circ \rho_{k+1}$.
It is semi-cartesian by Lemma~\ref{expand}(a).

(d) $\Leftrightarrow$ (a):
By definition,
(d) means that
$\pi_k$ is the fundament of $\rho_{k-1}$ by $\rho_k$,
for every $k \ge 1$.
By Lemma~\ref{fundament characterization}
this is equivalent to (a).
\end{proof}

\end{document}